\newcommand{\B}{\mathrm{B}}
\newcommand{\C}{\mathrm{C}}
\newcommand{\W}{\mathrm{W}}
\let\L\relax
\newcommand{\L}{\mathrm{L}}
\newcommand{\SL}{\mathrm{SL}}
\newcommand{\M}{\mathrm{M}}
\newcommand{\N}{\ensuremath{\mathbb{N}}}
\newcommand{\R}{\ensuremath{\mathbb{R}}}
\renewcommand{\leq}{\ensuremath{\leqslant}}
\renewcommand{\geq}{\ensuremath{\geqslant}}
\newcommand{\qed}{\hfill \vrule height6pt  width6pt depth0pt}
\newcommand{\bnorm}[1]{ \big\| #1  \big\|}
\newcommand{\norm}[1]{\left\Vert#1\right\Vert}
\newcommand{\xra}{\xrightarrow}
\newcommand{\ot}{\otimes}
\newcommand{\epsi}{\varepsilon}
\newcommand{\ovl}{\overline}
\newcommand{\otvn}{\ovl\ot}
\newcommand{\la}{\langle}
\newcommand{\ra}{\rangle}
\newcommand{\co}{\colon}
\renewcommand{\d}{\mathop{}\mathopen{}\mathrm{d}} 
\let\i\relax 
\newcommand{\i}{\mathrm{i}}
\newcommand{\w}{\mathrm{w}}
\newcommand{\ov}{\overset}
\let\cal\relax
\newcommand{\cal}{\mathcal}
\newcommand{\dec}{\mathrm{dec}}
\newcommand{\Dec}{\mathrm{Dec}}
\newcommand{\Ad}{\mathrm{Ad}}
\newcommand{\QWEP}{\mathrm{QWEP}}
\newcommand{\dist}{\mathrm{dist}}
\newcommand{\Id}{\mathrm{Id}}
\newcommand{\VN}{\mathrm{VN}}
\newcommand{\CB}{\mathrm{CB}}
\newcommand{\IF}{\mathrm{(IF)}}
\DeclareMathOperator{\ind}{ind}
\DeclareMathOperator{\tr}{Tr}
\DeclareMathOperator{\supp}{supp} 
\DeclareMathOperator{\card}{card} 
\DeclareMathOperator{\Span}{span} 
\newcommand{\cb}{\mathrm{cb}} 
\newcommand{\HS}{\mathrm{HS}} 
\newcommand{\CC}{\mathrm{CC}}
\newcommand{\inner}{\mathrm{int}}
\renewcommand{\subseteq}{\subset}
\newtheorem{thm}{Theorem}[section]
\newtheorem{defi}[thm]{Definition}
\newtheorem{prop}[thm]{Proposition}
\newtheorem{conj}[thm]{Conjecture}
\newtheorem{cor}[thm]{Corollary}
\newtheorem{lemma}[thm]{Lemma}
\newtheorem{example}[thm]{Example}
\newtheorem{remark}[thm]{Remark}
\newenvironment{proof}[1][]{\noindent {\it Proof #1} : }{\hbox{~}\qed
\smallskip
}
\numberwithin{equation}{section}
\let\OLDthebibliography\thebibliography
\renewcommand\thebibliography[1]{
  \OLDthebibliography{#1}
  \setlength{\parskip}{0pt}
  \setlength{\itemsep}{0pt plus 0.3ex}
}
\begin{document}
\selectlanguage{english}
\title{\bfseries{Decomposable Fourier Multipliers and an Operator-Algebraic Characterization of Amenability}}
%
\date{}

\author{\bfseries{C\'edric Arhancet - Christoph Kriegler}}

\maketitle

\begin{abstract}
We study the algebra $\mathfrak{M}^{\infty,\mathrm{dec}}(G)$ of decomposable Fourier multipliers on the group von Neumann algebra $\mathrm{VN}(G)$ of a locally compact group $G$, and its relation to the Fourier–Stieltjes algebra $\mathrm{B}(G)$. For discrete groups, we prove that these two algebras coincide isometrically. In contrast, we show that the identity $\mathfrak{M}^{\infty,\mathrm{dec}}(G) = \mathrm{B}(G)$ fails for various classes of non-discrete groups, and that, among second-countable unimodular groups, inner amenability ensures the equality. Our approach relies on the existence of contractive projections preserving complete positivity from the space of completely bounded weak* continuous operators on $\mathrm{VN}(G)$ onto the subspace of completely bounded Fourier multipliers. We show that such projections exist in the inner amenable case. As an application, we obtain a new operator-algebraic characterization of amenability. We also investigate the analogous problem for the space of completely bounded Fourier multipliers on the noncommutative $\mathrm{L}^p$-spaces $\mathrm{L}^p(\mathrm{VN}(G))$, for $1 \leq p \leq \infty$. Using Lie group theory and results stemming from the solution to Hilbert's fifth problem, we prove that second-countable unimodular finite-dimensional amenable locally compact groups admit compatible projections at \( p = 1 \) and \( p = \infty \). These results reveal new structural links between harmonic analysis, operator algebras, and the geometry of locally compact groups.
\end{abstract}

\makeatletter
 \renewcommand{\@makefntext}[1]{#1}
 \makeatother
 \footnotetext{\noindent
 2020 {\it Mathematics subject classification:}
 46L51, 43A15, 46L07, 43A07. 
\\
{\it Key words}: Fourier-Stieltjes algebras, von Neumann algebras, decomposable operators, Fourier multipliers, Schur multipliers, inner amenability, amenability, complementations, groupoids, operator spaces.}

\tableofcontents

\section{Introduction}
\label{sec:Introduction}

\subsection{Context and motivation} The theory of Fourier–Stieltjes algebras and Fourier multipliers has played a central role in abstract harmonic analysis since the foundational works of Eymard \cite{Eym64}, Haagerup \cite{Haa79} and de Canniere--Haagerup \cite{DCH85}. The theory of Fourier multipliers on general (possibly non-abelian) groups  has seen significant developments \cite{CGPT23}, \cite{CoH89}, \cite{HaL13}, \cite{JuR03}, \cite{JMP14}, \cite{JMP18}, \cite{LaS11}, \cite{MeR17}, \cite{PRS22}, \cite{PST25} particularly in relation to weak amenability, related approximation properties of von Neumann algebras, noncommutative $\L^p$-spaces and Schur multipliers.

In the setting of group von Neumann algebras, the space of completely bounded Fourier multipliers provides a natural extension of the classical Fourier--Stieltjes algebra. The aim of this paper is to explore the structural relation between the Fourier-Stieltjes algebra $\B(G)$ and the algebra $\frak{M}^{\infty,\dec}(G)$ of decomposable Fourier multipliers on the group von Neumann algebra $\VN(G)$ of a locally compact group $G$, and to use this relation to characterize various forms of amenability in locally compact groups. Our results offer a new analytic perspective on inner amenability and decomposability, and provide an operator-algebraic route to understanding the harmonic analytic properties of $G$.

We prove that, for discrete groups, these two algebras coincide isometrically. In contrast, we show that the identity $\frak{M}^{\infty,\dec}(G) = \B(G)$ fails in general for non-discrete groups. One of our main results establishes that the equality is a consequence of inner amenability among second-countable unimodular locally compact groups. Our approach involves a detailed study of bounded projections from the space of completely bounded operators on the von Neumann algebra $\VN(G)$ preserving complete positivity onto the space of completely bounded Fourier multipliers. We show that the existence of such projections is intimately related to structural properties of the group, such as (inner) amenability. This leads to a new analytic characterization of amenability, formulated in operator-algebraic terms.

We further investigate the setting of noncommutative $\L^p$-spaces, using tools from geometric group theory and the structure theory of locally compact groups. In particular, we also study the existence of bounded projections from the space of completely bounded operators on the noncommutative $\L^p$-space $\L^p(\VN(G))$ onto the space of completely bounded $\L^p$-Fourier multipliers.

Recall that the Fourier-Stieltjes algebra $\B(G)$ of a locally compact group $G$ is a generalization of the algebra of bounded regular complex Borel measures of an abelian locally compact group to non-abelian groups. Since its introduction by Eymard in \cite{Eym64}, this commutative unital Banach algebra has become a central object in noncommutative harmonic analysis and is closely related to the unitary representation theory of $G$. More precisely, the elements of $\B(G)$ are exactly the matrix coefficients of continuous unitary representations of $G$ on complex Hilbert spaces, i.e. 
\begin{equation}
\label{BG-as-entries}
\B(G) 
\ov{\mathrm{def}}{=} \big\{ \langle \pi(\cdot)\xi,\eta \rangle_H : \pi \text{ is a unitary representation of } G  \text{ on $H$ and } \xi,\eta \in  H \big\}.
\end{equation}
The norm is defined by
\begin{equation}
\label{Norm-BG}
\norm{\varphi}_{\B(G)}
\ov{\mathrm{def}}{=}  \inf_{} \norm{\xi}\norm{\eta},
\end{equation}
where the infimum is taken over all $\pi, \xi,\eta$ such that $\varphi=\langle\pi(\cdot)\xi,\eta\rangle_H $. The operations of this algebra are pointwise multiplication and addition. Also note that $\B(G)$ is a complete invariant of $G$, i.e.~$\B(G_1)$ and $\B(G_2)$ are isometrically isomorphic as Banach algebras if and only if $G_1$ and $G_2$ are topologically isomorphic as locally compact groups as proved by Walter in \cite{Wal74} (see also \cite{Wal70} and \cite[Theorem 3.2.5 p.~99]{KaL18}).

Decomposable maps is a class of operators between $\mathrm{C}^*$-algebras generalizing completely positive maps. The class of decomposable maps is perhaps the most general class of tractable operators. If $A$ and $B$ are $\mathrm{C}^*$-algebras, recall that a linear map $T \co A \to B$ is called decomposable \cite{Haa85} if there exist linear maps $v_1,v_2 \co A \to B$ such that the linear map
\begin{equation}
\label{Matrice-2-2-Phi}
\Phi=\begin{bmatrix}
   v_1  &  T \\
   T^\circ  &  v_2  \\
\end{bmatrix}
\co \M_2(A) \to \M_2(B), \quad \begin{bmatrix}
   a  &  b \\
   c &  d  \\
\end{bmatrix}\mapsto 
\begin{bmatrix}
   v_1(a)  &  T(b) \\
   T^\circ(c)  &  v_2(d)  \\
\end{bmatrix}
\end{equation}
is completely positive, where $T^\circ(c) \ov{\mathrm{def}}{=} T(c^*)^*$. In this case, the maps $v_1$ and $v_2$ are completely positive and the decomposable norm of $T$ is defined by
\begin{equation}
\label{Norm-dec}
\norm{T}_{\dec,A \to B}
\ov{\mathrm{def}}{=} \inf\big\{\max\{\norm{v_1},\norm{v_2}\}\big\},
\end{equation}
where the infimum is taken over all maps $v_1$ and $v_2$. See the books \cite{BlM04}, \cite{EfR00} and \cite{Pis03} for more information on this classical notion. We also refer to \cite{ArK23} and \cite{JuR04} for the analogue notion for operators acting on a noncommutative $\L^p$-space $\L^p(\mathcal{M})$ associated to a von Neumann algebra $\cal{M}$ endowed with a normal semifinite faithful trace, for any $1 \leq p \leq \infty$. If $\cal{M}$ is approximately finite-dimensional (which is equivalent to injective), it is known that we have the isometric complex interpolation formula  
\begin{equation}
\label{Regular-as-interpolation-space}
\Dec(\L^p(\mathcal{M})) 
=(\CB(\mathcal{M}),\CB(\L^1(\mathcal{M})))^\frac{1}{p},
\end{equation}
for the Banach space $\Dec(\L^p(\mathcal{M}))$ of decomposable operators acting on the noncommutative $\L^p$-space $\L^p(\mathcal{M})$, which is a combination of \cite[Theorem 3.7]{Pis95} and the isometric identification \cite[Theorem 3.24 p.~41]{ArK23} between regular and decomposable operators. Here $\CB(\L^1(\mathcal{M}))$ is the space of completely bounded operators acting on the Banach space $\L^1(\mathcal{M})$ and the space $\CB(\mathcal{M})$ is defined similarly.

Recall that the group von Neumann algebra $\VN(G)$ of a locally compact group $G$ is the von Neumann algebra generated by the range $\lambda(G)$ of the left regular representation $\lambda$ of $G$ on the complex Hilbert space $\L^2(G)$ and that the subspace $\Span \{\lambda_s : s \in G\}$ is weak* dense in $\VN(G)$. If $G$ is abelian, then the von Neumann algebra $\VN(G)$ is $*$-isomorphic to the algebra $\L^\infty(\hat{G})$ of essentially bounded functions on the Pontryagin dual $\hat{G}$ of $G$. As fundamental models of quantum groups, these algebras play a crucial role in operator algebras. A Fourier multiplier acting on $\VN(G)$ is a weak* continuous linear operator $T \co \VN(G) \to \VN(G)$ that satisfies $T(\lambda_s)=\varphi_s\lambda_s$ for all $s \in G$, for some measurable function $\varphi \co G \to \mathbb{C}$. In this case, we let $M_\varphi \ov{\mathrm{def}}{=} T$. Our first result, proved in Corollary \ref{dec-vs-B(G)-discrete-group} is the following statement. This identification provides a concrete realization of the abstract space $\B(G)$ in terms of decomposable Fourier multipliers on $\VN(G)$, thereby bridging representation theory and decomposable operators.

\begin{thm}
The Fourier-Stieltjes algebra $\B(G)$ of a discrete group $G$ is canonically isometrically isomorphic to the algebra $\frak{M}^{\infty,\dec}(G)$ of decomposable Fourier multipliers on the group von Neumann algebra $\VN(G)$ via the map $\varphi \to M_\varphi$.
\end{thm}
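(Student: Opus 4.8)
The plan is to prove the statement by passing through the classical dictionary between \emph{completely positive} Fourier multipliers and \emph{positive-definite functions} on $G$, and then upgrading it to the $2\times 2$ (decomposable) level. Write $\lambda\co G\to\B(\ell^2(G))$ for the left regular representation, $\tau$ for the canonical trace on $\VN(G)$, and recall that a Fourier multiplier is the normal map $M_\phi$ determined by $M_\phi(\lambda_s)=\phi(s)\lambda_s$, so that $M_\phi^\circ=M_{\check\phi}$ with $\check\phi(s)=\ovl{\phi(s^{-1})}$. The canonical map in the statement is $\phi\mapsto M_\phi$; it is linear, unital and sends pointwise product to composition ($M_\phi M_{\phi'}=M_{\phi\phi'}$), so once I prove the equivalence ``$M_\phi$ decomposable $\iff \phi\in\B(G)$'' together with the norm identity $\|M_\phi\|_{\dec,\VN(G)\to\VN(G)}=\|\phi\|_{\B(G)}$, it will automatically be an isometric isomorphism of unital Banach algebras. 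I prove the two inequalities separately.

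For ``$\le$'', start from $\phi\in\B(G)$ written, as in \eqref{BG-as-entries}, as $\phi(s)=\la\pi(s)\xi,\eta\ra$ for a continuous unitary representation $\pi\co G\to\mathrm{U}(H)$ and vectors $\xi,\eta\in H$ with $\|\xi\|\,\|\eta\|$ arbitrarily close to $\|\phi\|_{\B(G)}$. By Fell's absorption principle $\lambda\ot\pi\cong\lambda\ot\Id_H$, so $J\co\lambda_s\mapsto\lambda_s\ot\pi(s)$ extends to a normal unital $*$-homomorphism $\VN(G)\to\VN(G)\otvn\B(H)$. Compressing $J$ by $V\co\mathbb{C}^2\to H$, $Ve_1=\eta$, $Ve_2=\xi$, produces a normal completely positive map $\Phi\co\M_2(\VN(G))\to\M_2(\VN(G))$ whose $(i,j)$-entry is the Fourier multiplier with symbol $\la\pi(s)v_j,v_i\ra$, where $v_1=\eta$, $v_2=\xi$. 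Thus $\Phi$ has exactly the shape \eqref{Matrice-2-2-Phi}, with off-diagonal corner $M_\phi$ and diagonal corners $M_{\psi_1},M_{\psi_2}$ for the positive-definite functions $\psi_1=\la\pi(\cdot)\eta,\eta\ra$, $\psi_2=\la\pi(\cdot)\xi,\xi\ra$; these corners are completely positive multipliers, so $\|M_{\psi_1}\|=\psi_1(e)=\|\eta\|^2$ and $\|M_{\psi_2}\|=\psi_2(e)=\|\xi\|^2$. Hence $M_\phi$ is decomposable with $\|M_\phi\|_{\dec}\le\max\{\|\eta\|^2,\|\xi\|^2\}$ by \eqref{Norm-dec}, and replacing $(\xi,\eta)$ by $(t\xi,t^{-1}\eta)$ and optimizing over $t>0$ yields $\|M_\phi\|_{\dec}\le\|\xi\|\,\|\eta\|$, whence $\|M_\phi\|_{\dec}\le\|\phi\|_{\B(G)}$.

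For ``$\ge$'', suppose $M_\phi$ is decomposable and fix a witnessing completely positive $\Phi$ as in \eqref{Matrice-2-2-Phi} with $\max\{\|v_1\|,\|v_2\|\}$ close to $\|M_\phi\|_{\dec}$. The key step is to replace $v_1,v_2$ by Fourier multipliers, and this is where discreteness enters decisively. Since $G$ is discrete, $(\VN(G),\Delta)$ with $\Delta(\lambda_s)=\lambda_s\ot\lambda_s$ is a compact quantum group whose Haar state is $\tau$, and averaging against it yields a normal, unital, completely contractive projection $P$ from $\CB(\VN(G))$ onto the Fourier multipliers, namely $P(T)=M_{\hat T}$ with $\hat T(s)=\tau(T(\lambda_s)\lambda_s^*)$, which moreover \emph{preserves complete positivity}. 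Applying $P$ in each entry (it commutes with the $\M_2$-structure and fixes the corners $M_\phi,M_\phi^\circ$, which are already multipliers) turns $\Phi$ into a completely positive $\Phi'$ of the same shape, whose diagonal corners are now completely positive multipliers $M_{\psi_1},M_{\psi_2}$ with $\psi_k(e)=\|M_{\psi_k}\|\le\|v_k\|$. By the matrix-valued version of the dictionary ``completely positive multiplier $\iff$ positive-definite symbol'' (proved exactly as the scalar case, via absorption of $\lambda\ot\pi$ with $\M_2$ coefficients), complete positivity of $\Phi'$ is equivalent to positive-definiteness of the $\M_2(\mathbb{C})$-valued function $\Psi$ with $\Psi_{11}=\psi_1$, $\Psi_{12}=\phi$, $\Psi_{21}=\check\phi$, $\Psi_{22}=\psi_2$. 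A matrix-valued positive-definite function is, by the Naimark/GNS reconstruction, of the form $\Psi(s)=[\la\pi(s)w_j,w_i\ra]$ for a unitary representation $\pi$ of $G$ and $w_1,w_2\in H$ with $\|w_k\|^2=\psi_k(e)$; reading off the $(1,2)$-entry gives $\phi(s)=\la\pi(s)w_2,w_1\ra$, so $\phi\in\B(G)$ with $\|\phi\|_{\B(G)}\le\|w_1\|\,\|w_2\|=\sqrt{\psi_1(e)\psi_2(e)}\le\max\{\|v_1\|,\|v_2\|\}$. Taking the infimum over decompositions gives $\|\phi\|_{\B(G)}\le\|M_\phi\|_{\dec}$, completing the norm identity.

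I expect the main obstacle to be the projection step in the second direction: everything else is a mechanical transcription of the scalar correspondence, but the reduction of $v_1,v_2$ to completely positive multipliers requires a projection onto Fourier multipliers that preserves complete positivity. For discrete $G$ this is automatic because the dual is a compact quantum group carrying a Haar state, so the averaging $P$ exists and is completely positive; in the non-discrete setting the very existence of such a projection is the delicate point that the rest of the paper is devoted to. It is worth stressing also that the representation $\pi$ furnished by the matrix-valued GNS construction is an \emph{arbitrary} continuous unitary representation, not one weakly contained in $\lambda$; this is exactly why one recovers the full Fourier–Stieltjes algebra $\B(G)=\C^*(G)^*$ rather than its reduced counterpart, matching the fact that Fell absorption parametrizes completely positive multipliers by \emph{all} positive-definite functions.
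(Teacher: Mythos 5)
Your proof is correct, and at the structural level it follows the same scheme as the paper, which obtains the discrete case (Corollary \ref{dec-vs-B(G)-discrete-group}) from Proposition \ref{prop-B(G)-inclus-dec} together with Theorem \ref{dec-vs-B(G)} and the fact that $\kappa_{\infty,2}(G)=1$ for discrete $G$: take Haagerup's $2\times 2$ witness \eqref{Matrice-2-2-Phi}, apply entrywise a complete-positivity-preserving projection onto Fourier multipliers, and identify the resulting matrix of symbols with an operator-valued positive definite function. Where you genuinely differ is in how the two key ingredients are proved. For the inclusion $\B(G)\subseteq \frak{M}^{\infty,\dec}(G)$, the paper gives two proofs — one via the universal representation, the weak* extension $\tilde{\omega}_\varphi$ on $\W^*(G)$ and the factorization $M_\varphi=(\Id\ot\tilde{\omega}_\varphi)\circ\Delta$, the other via Renault--Paterson groupoid theory — whereas your Fell-absorption-plus-compression argument, with the $(t\xi,t^{-1}\eta)$ rescaling to pass from $\max\{\norm{\eta}^2,\norm{\xi}^2\}$ to $\norm{\xi}\norm{\eta}$, is more elementary and self-contained, and, like the paper's, uses nothing about discreteness. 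For the reverse inequality, the paper outsources the step ``completely positive matrix of multipliers $\Rightarrow\varphi\in\B(G)$ with $\norm{\varphi}_{\B(G)}\leq\norm{\psi_1}_\infty\norm{\psi_2}_\infty$'' to the Fourier--Stieltjes theory of the groupoid $\mathrm{P}_2\times G$ (Lemma \ref{Lemma-Bloc-def-pos} and Proposition \ref{Prop-carac-BG-2-2}), while you run the classical matrix-valued GNS/Naimark construction directly; for groups the two are equivalent (the groupoid formulation is what the paper needs for its locally compact statements), and your version even yields the slightly better intermediate bound $\sqrt{\psi_1(e)\psi_2(e)}$.

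The one step you should make explicit is the assertion that ``applying $P$ in each entry commutes with the $\M_2$-structure'' and preserves complete positivity. This is not a formal consequence of $P$ being a contractive, CP-preserving projection: what is needed is precisely that $\Id_{\M_2}\ot P$ preserves complete positivity, which is the property $(\kappa_{\infty,2})$ the paper isolates in Definition \ref{Defi-tilde-kappa-2} and imports from \cite[Theorem 4.2]{ArK1}. Your averaging construction does furnish it — writing $P(T)$ via the coproduct factorization as in \eqref{Projection-discrete-case}, the amplification inherits complete positivity because each factor amplifies — but that verification (or the citation) must be supplied, since this is exactly the point that fails for general locally compact groups and the reason the paper's Theorem \ref{dec-vs-B(G)} is conditional. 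Two small further remarks: \cite[Remark 1.5]{Haa} gives the witness with exact equality $\max\{\norm{v_1},\norm{v_2}\}=\norm{M_\varphi}_{\dec}$, so no $\epsi$ is needed; and the paper first replaces $v_1,v_2$ by weak* continuous maps via \cite[Proposition 3.1]{ArK1} before projecting — with your symbol formula $\hat{T}(s)=\tau(T(\lambda_s)\lambda_s^*)$ the projection is defined on all of $\CB(\VN(G))$, but the coproduct factorization you would invoke to prove CP-preservation of the amplification requires normality, so this reduction should be kept.
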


This identification further highlights the ubiquity of the Fourier-Stieltjes algebra $\B(G)$ for a discrete group $G$. Indeed, we will show in Proposition \ref{prop-B(G)-inclus-dec} that for any locally compact group $G$ there exists a well-defined injective \textit{contractive} map from the Fourier-Stieltjes algebra $\B(G)$ into the space $\frak{M}^{\infty,\dec}(G)$ of decomposable Fourier multipliers and we will also examine the surjectivity of this map. We will show that the following property\footnote{\thefootnote. The subscript w* means <<weak* continuous>>. With the projection provided by \cite[Proposition 3.1 p.~24]{ArK23}, we could replace the space $\CB_{\w^*}(\VN(G))$ by the space $\CB(\VN(G))$ in this definition.} plays an important role in this problem.

\begin{defi}
\label{Defi-tilde-kappa}
Let $G$ be a locally compact group. We say that $G$ has property $(\kappa_\infty)$ if there exists a bounded projection $P_{G}^\infty \co \CB_{\w^*}(\VN(G)) \to \CB_{\w^*}(\VN(G))$ preserving the complete positivity onto the space $\mathfrak{M}^{\infty,\cb}(G)$ of completely bounded Fourier multipliers on the von Neumann algebra $\VN(G)$. In this case, the infimum of bounds of such projections will be denoted $\kappa_\infty(G)$:
\begin{equation}
\label{Kprime-def}
\kappa_\infty(G)
\ov{\mathrm{def}}{=} \inf \norm{P_{G}^\infty}_{\CB_{\w^*}(\VN(G)) \to \CB_{\w^*}(\VN(G))}.
\end{equation}
Finally, we let $\kappa_\infty(G) \ov{\mathrm{def}}{=} \infty$ if the locally compact group $G$ does not have $(\kappa_\infty)$.
\end{defi}

The constant $\kappa_\infty(G)$ is a variant of the relative projection constant 
$$
\inf \big\{\norm{P}_{X \to X} : P\text{ is a bounded projection from $X$ onto }Y \big\}
$$ 
of a closed subspace $Y$ of a Banach space $X$, e.g.~\cite[Definition 4.b.1 p.~231]{Kon86} or \cite[p.~112]{Wot91}. The property $(\kappa_\infty)$ means in particular that the space\footnote{\thefootnote. This space is denoted sometimes $\M_0\mathrm{A}(G)$ or $\M_\cb \mathrm{A}(G)$.} $\mathfrak{M}^{\infty,\cb}(G)$ is complemented in the space $\CB_{\w^*}(\VN(G))$ of weak* continuous completely bounded operators acting on $\VN(G)$. Indeed, in Proposition \ref{conj-1-1-correspondance}, we will prove that this property suffices to ensure that the previous inclusion $\B(G) \hookrightarrow \frak{M}^{\infty,\dec}(G)$ is a bijection. In order to prove that this map is an isometry, we need a matricial generalization of property $(\kappa_\infty)$ (satisfied for any discrete group $G$) and surprisingly (at first sight) the use of results on \textit{groupoids}. Note also that the existence of non-discrete and non-abelian locally compact groups with $(\kappa_\infty)$ (and even with the stronger property $(\kappa)$ of Definition \ref{Defi-complementation-G}) was a rather surprising result of the paper \cite{ArK23} since the proof of property $(\kappa_\infty)$ of a discrete group $G$ is an average argument relying on the compactness of the compact quantum group $(\VN(G),\Delta)$ defined by the group von Neumann algebra $\VN(G)$ and its canonical coproduct $\Delta$. According to \cite[Theorem 6.38 p.~121]{ArK23}, a second-countable pro-discrete locally compact group $G$ satisfies $\kappa_\infty(G)=1$. With sharp contrast, we will observe in this paper (see Example \ref{example-SL}), as announced in \cite{ArK23}, that the unimodular locally compact group $G=\SL_2(\R)$ does not have $(\kappa_\infty)$.

If $G$ is a locally compact group, with our result we can insert the space $\frak{M}^{\infty,\dec}(G)$ of decomposable Fourier multipliers acting on the von Neumann algebra $\VN(G)$ in the classical contractive inclusion $\B(G) \subseteq \frak{M}^{\infty,\cb}(G)$:
\begin{equation}
\label{Inclusions}
\B(G) 
\subseteq \frak{M}^{\infty,\dec}(G)
\subseteq \frak{M}^{\infty,\cb}(G).
\end{equation}
It is known \cite[p.~54]{Pis01} that the equality $\B(G) = \frak{M}^{\infty,\cb}(G)$ characterizes amenability for locally compact groups. This observation allows us to revisit another nice characterization of amenability of Lau and Paterson \cite[Corollary 3.2 p.~161]{LaP91} \cite[p.~85]{Pat88a}, which is described by the next theorem. 

\begin{thm}[Lau-Paterson]
\label{Th-Lau-Paterson}
Let $G$ be a locally compact group. The following properties are equivalent.
\begin{enumerate}
	\item The group von Neumann algebra $\VN(G)$ is injective and $G$ is inner amenable.
	\item $G$ is amenable. 
\end{enumerate}
\end{thm}

Recall that a locally compact group $G$ equipped with a left Haar measure is inner amenable if there exists a conjugation-invariant state on the algebra $\L^\infty(G)$. We introduce the following conjecture.

\begin{conj}
\label{conj}
Let $G$ be a locally compact group.
\begin{enumerate}
	\item $G$ is inner amenable if and only if we have the equality $\B(G) = \frak{M}^{\infty,\dec}(G)$.
	\item The von Neumann algebra $\VN(G)$ is injective if and only if we have $\frak{M}^{\infty,\dec}(G)= \frak{M}^{\infty,\cb}(G)$.
\end{enumerate}
\end{conj}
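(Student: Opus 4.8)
The plan is to prove the two equivalences separately, organizing the argument around the contractive chain $\B(G) \subseteq \frak{M}^{\infty,\dec}(G) \subseteq \frak{M}^{\infty,\cb}(G)$ of \eqref{Inclusions}, the classical fact that $\B(G) = \frak{M}^{\infty,\cb}(G)$ characterizes amenability, and the Lau--Paterson description of amenability as the conjunction of injectivity of $\VN(G)$ and inner amenability. A guiding consistency check is that if both conjectured equivalences hold, then the chain collapses to $\B(G) = \frak{M}^{\infty,\cb}(G)$ exactly when $G$ is both inner amenable and has injective $\VN(G)$; thus the conjecture factors the amenability characterization through the middle space $\frak{M}^{\infty,\dec}(G)$ and is forced to be compatible with Lau--Paterson. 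I would therefore aim to show that the first equality isolates the inner amenability half and the second equality the injectivity half of that factorization.

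For the forward implications the strategy is concrete. For part (2), if $\VN(G)$ is injective then by Haagerup's theorem every normal completely bounded map into an injective von Neumann algebra is decomposable with equal decomposable and completely bounded norms; applied to completely bounded Fourier multipliers this gives $\frak{M}^{\infty,\cb}(G) \subseteq \frak{M}^{\infty,\dec}(G)$ isometrically, and with \eqref{Inclusions} the equality follows. For part (1), if $G$ is inner amenable I would establish property $(\kappa_\infty)$ together with its matricial refinement: an inner invariant mean furnishes an averaging device playing the role that the Haar state of the compact quantum group $(\VN(G),\Delta)$ plays for discrete $G$, and averaging a given element of $\CB_{\w^*}(\VN(G))$ against it produces the projection $P_{G}^\infty$ onto $\frak{M}^{\infty,\cb}(G)$; being an average, it preserves complete positivity. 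Proposition \ref{conj-1-1-correspondance} then turns $(\kappa_\infty)$ into the bijection $\B(G) = \frak{M}^{\infty,\dec}(G)$, and the matricial version yields the isometry. The mechanism is transparent: a decomposable multiplier $M_\varphi$ sits in a completely positive $2 \times 2$ matrix with completely positive dominating maps on the diagonal; applying the projection entrywise fixes the off-diagonal multipliers and replaces the diagonal by completely positive Fourier multipliers, i.e. positive-definite symbols, and the resulting completely positive block exhibits $\varphi$ as a matrix coefficient of a continuous unitary representation, so $\varphi \in \B(G)$.

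The converse implications are where I expect the real difficulty, and they are presumably the reason the statement is a conjecture rather than a theorem: both require delocalizing from the restricted class of Fourier multipliers to a global property of $G$ or of $\VN(G)$. For part (2) one must recover injectivity of $\VN(G)$ from $\frak{M}^{\infty,\dec}(G) = \frak{M}^{\infty,\cb}(G)$, a localized converse to Haagerup's theorem whose global form only gives injectivity from the coincidence of decomposable and completely bounded maps on all of $\CB_{\w^*}(\VN(G))$; the plan would be to bootstrap the coincidence from multipliers to arbitrary normal completely bounded maps, presumably via the Fourier-to-Schur multiplier correspondence and a transference argument. For part (1) one must recover inner amenability from $\B(G) = \frak{M}^{\infty,\dec}(G)$, which I would approach by contraposition, trying to manufacture from the absence of an inner invariant mean a decomposable Fourier multiplier whose symbol is not a coefficient of a continuous unitary representation; the non-amenable connected groups for which $(\kappa_\infty)$ already fails, such as $\SL_2(\R)$, are the natural proving ground. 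The decisive obstacle in both converses is exactly this passage from multiplier-level to operator-level or group-level data: the forward directions need only build or apply one averaging projection, whereas the converses demand that an equality between two algebras of multipliers force structure that a priori involves much more than multipliers.
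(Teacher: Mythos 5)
Your skeleton matches the paper's for everything that is actually provable: the contractive chain $\B(G) \subseteq \frak{M}^{\infty,\dec}(G) \subseteq \frak{M}^{\infty,\cb}(G)$, Haagerup's theorem for the forward half of (2), property $(\kappa_\infty)$ fed into Proposition \ref{conj-1-1-correspondance} for the forward half of (1) (your description of the $2\times 2$ completely positive block mechanism is exactly the paper's), and the correct recognition that both converses are open --- the paper only proves the forward half of (1) for second countable unimodular groups (Theorem \ref{thm-SAIN-tilde-kappa}) and the full equivalence (2) for discrete and for second countable unimodular inner amenable groups (Theorem \ref{Thm-conj-discrete-case}, Corollary \ref{cor-inner-66}). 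One route difference on the converse of (2) is worth noting: in the tractable cases the paper does \emph{not} bootstrap the coincidence from multipliers to all normal completely bounded maps as you propose; it instead uses the already-established identification $\frak{M}^{\infty,\dec}(G)=\B(G)$, so that the hypothesis becomes $\B(G)=\frak{M}^{\infty,\cb}(G)$, which by Ruan's theorem forces amenability and hence injectivity of $\VN(G)$ via Lau--Paterson. That detour through $\B(G)$ is cheaper than any transference to arbitrary completely bounded maps, which remains unproven.

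The genuine gap is your mechanism for the forward half of (1). You propose to average an element of $\CB_{\w^*}(\VN(G))$ directly against an inner invariant mean, in analogy with the discrete case. This does not work as stated: the mean $m$ is a state on $\L^\infty(G)$ arising as a weak* cluster point, not integration against a measure, and there is no meaningful $m$-average of an operator-valued expression like $s \mapsto \Ad(\lambda_s)\, T\, \Ad(\lambda_{s}^{*})$ that lands back in $\CB_{\w^*}(\VN(G))$, let alone one whose output is a weakly* continuous Fourier multiplier fixing $\frak{M}^{\infty,\cb}(G)$. This is precisely why the paper stresses that non-discrete groups with $(\kappa_\infty)$ were a surprise: the discrete averaging argument rests on the compactness of the quantum group $(\VN(G),\Delta)$. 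What the paper actually does occupies Sections \ref{Sec-approach}--\ref{Sec-Th-complementation}: it attaches to every normal completely bounded $T$ and suitable $x,y \in \L^1(\VN(G)) \cap \VN(G)$ a Schur multiplier on $\B(\L^2(G))$ with symbol $\varphi_{x,y,T}(s,t)=\tau_G\big(\lambda_t y \lambda_{s^{-1}} T(\lambda_s x \lambda_{t^{-1}})\big)$, constructed from the fundamental unitary $W$ through a delicate approximation reducing to the level $p=2$; it then invokes the inner F\o{}lner characterization of inner amenability (Theorem \ref{thm-inner-amenable-Folner}), choosing $y_j = c_j|\lambda(1_{V_j})|^2$, to force every weak* cluster point of these symbols to be Herz-Schur (Lemma \ref{lem-SAIN-Herz-Schur}); and it finally transfers Herz-Schur multipliers back to Fourier multipliers via the Bo\.{z}ejko--Fendler isometry. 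Unimodularity (the Plancherel trace $\tau_G$, the F\o{}lner characterization) and second countability enter essentially, so even this forward half is only known under hypotheses your plan omits. A smaller overclaim: you assert that inner amenability yields the matricial property $(\kappa_{\infty,2})$ and hence an isometry $\B(G)=\frak{M}^{\infty,\dec}(G)$; the paper explicitly leaves that matricial version unproven (Remark \ref{rem-K2}) and, from $(\kappa_\infty)$ alone, obtains the bijection plus the norm equality only for symbols satisfying $\check{\varphi}=\ovl{\varphi}$.
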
  

We will prove the <<only if>> part of the first assertion for second-countable unimodular locally compact groups by showing in Theorem \ref{thm-SAIN-tilde-kappa} that inner amenability implies $\kappa_\infty(G)=1$, and hence $\B(G) = \frak{M}^{\infty,\dec}(G)$. We refer to Section \ref{Sec-approach} for a detailed presentation of our approach. This is our first main result.

\begin{thm}
Let $G$ be a second-countable unimodular locally compact group. If $G$ is inner amenable then we have $\B(G) = \frak{M}^{\infty,\dec}(G)$.
\end{thm}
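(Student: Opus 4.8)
The plan is to reduce everything to the single estimate $\kappa_\infty(G)=1$. Indeed, once $G$ has property $(\kappa_\infty)$ of Definition \ref{Defi-tilde-kappa} with constant one, Proposition \ref{conj-1-1-correspondance} turns the canonical contractive inclusion $\B(G)\hookrightarrow\mathfrak{M}^{\infty,\dec}(G)$ of Proposition \ref{prop-B(G)-inclus-dec} into a bijection, and the norm-one bound (applied, for the isometric statement, at the matrix level, i.e. to the $2\times2$ completely positive matrix $\Phi$ of \eqref{Matrice-2-2-Phi} witnessing decomposability) forces the inverse to be contractive as well. So the whole content is the construction of a complete-positivity-preserving projection $P_{G}^\infty\co\CB_{\w^*}(\VN(G))\to\CB_{\w^*}(\VN(G))$ with range $\mathfrak{M}^{\infty,\cb}(G)$ and $\norm{P_{G}^\infty}\le1$.

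To build it I would exploit inner amenability in its vectorial form. Since $G$ is second countable and unimodular, inner amenability furnishes a net of nonnegative densities $f_i\in\L^1(G)$ with $\int f_i=1$ that are asymptotically invariant under the conjugation action; setting $\xi_i\ov{\mathrm{def}}{=}f_i^{1/2}\in\L^2(G)$ and $\omega_i\ov{\mathrm{def}}{=}\la\,\cdot\,\xi_i,\xi_i\ra$ one obtains unit vectors with $\norm{\lambda_g\rho_g\xi_i-\xi_i}_2\to0$ for every $g$, equivalently $\lambda_g\xi_i\approx\rho_g^*\xi_i$ (this last equivalence is where unimodularity enters, as it removes the modular factor from $\rho$). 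For $T\in\CB_{\w^*}(\VN(G))$ I then regularise the ``diagonal Fourier coefficient'' of $T$ by
\begin{equation}
\varphi_T^{(i)}(s)\ov{\mathrm{def}}{=}\omega_i\big(\lambda_s^* T(\lambda_s)\big)=\la T(\lambda_s)\xi_i,\lambda_s\xi_i\ra,\qquad s\in G,
\end{equation}
a continuous bounded function with $\norm{\varphi_T^{(i)}}_\infty\le\norm{T}_{\cb}$, and I set $P_{G}^\infty(T)\ov{\mathrm{def}}{=}M_{\varphi_T}$ with $\varphi_T\ov{\mathrm{def}}{=}\lim_{i\to\ul}\varphi_T^{(i)}$ taken weak-$*$ along a fixed ultrafilter $\ul$.

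Three properties must then be checked. First, $P_{G}^\infty$ is a projection onto the multipliers: for $T=M_\varphi$ one computes $\varphi_{M_\varphi}^{(i)}(s)=\varphi(s)\norm{\xi_i}_2^2=\varphi(s)$ already at finite $i$, so $P_{G}^\infty(M_\varphi)=M_\varphi$. Second, $P_{G}^\infty$ preserves complete positivity, and this is the crux where inner amenability does the real work: writing $t_{jk}=s_j^{-1}s_k$ and using $\lambda_{t_{jk}}\xi_i\approx\rho_{t_{jk}}^*\xi_i=\rho_{s_k}^*\rho_{s_j}\xi_i$ together with $\rho(G)\subset\VN(G)'$, the positive-definiteness sum of $\varphi_T^{(i)}$ rearranges asymptotically into
\begin{equation}
\sum_{j,k}\ovl{c_j}c_k\,\varphi_T^{(i)}(t_{jk})\approx\sum_{j,k}\big\la T(\lambda_{s_j}^*\lambda_{s_k})(c_k\rho_{s_k}\xi_i),\,c_j\rho_{s_j}\xi_i\big\ra\ge0,
\end{equation}
the last inequality being exactly complete positivity of $T$ applied to the vectors $c_k\rho_{s_k}\xi_i$. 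Passing to the limit along $\ul$, $\varphi_T$ is positive definite whenever $T$ is completely positive, i.e. $M_{\varphi_T}$ is completely positive. Third, $\norm{P_{G}^\infty}\le1$: on completely positive $T$ one has $\norm{M_{\varphi_T}}_{\cb}=\varphi_T(e)=\lim_{i\to\ul}\omega_i(T(1))\le\norm{T}_{\cb}$, and the general case follows by running the same averaging at the matrix level, so that $P_{G}^\infty$ is in fact completely contractive. Since $P_{G}^\infty$ is the identity on $\mathfrak{M}^{\infty,\cb}(G)$ this gives $\kappa_\infty(G)=1$, which by the first paragraph yields $\B(G)=\mathfrak{M}^{\infty,\dec}(G)$.

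The main obstacle is making the two approximations ``$\approx$'' rigorous in the limit: one must control the errors $\norm{\lambda_g\rho_g\xi_i-\xi_i}_2$ uniformly enough (for finitely many group elements at a time, which the ultrafilter limit accommodates) and, more seriously, upgrade the evident $\norm{\,\cdot\,}_\infty$-bound on the symbols to a genuine completely bounded multiplier bound while verifying that $\varphi_T$ still defines a normal, weak-$*$ continuous completely bounded multiplier. This is precisely where the standing hypotheses are spent: unimodularity to identify conjugation with $\lambda_g\rho_g$ cleanly, second countability to run the net and the predual duality for $\CB_{\w^*}(\VN(G))$, and inner amenability to supply the asymptotically invariant $\xi_i$. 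The isometric (rather than merely bijective) conclusion then comes from feeding the completely positive map $\Phi$ of \eqref{Matrice-2-2-Phi} into the matricial version of property $(\kappa_\infty)$, which produces a realisation of $\varphi$ as a coefficient of a continuous unitary representation with $\norm{\varphi}_{\B(G)}\le\max\{\norm{v_1},\norm{v_2}\}=\norm{M_\varphi}_{\dec}$.
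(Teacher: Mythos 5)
Your overall reduction is exactly the paper's: establish $\kappa_\infty(G)=1$ for inner amenable $G$ and then invoke Proposition \ref{conj-1-1-correspondance} together with the contractive inclusion of Proposition \ref{prop-B(G)-inclus-dec}. Your complete-positivity computation is also essentially sound: using $\rho(G)\subseteq \VN(G)'$ to move $\rho_t$ across $T(\lambda_t)$ and the asymptotic invariance $\norm{\lambda_t\rho_t\xi_i-\xi_i}_2\to 0$ (which does follow from inner amenability via Theorem \ref{thm-inner-amenable-Folner} and $|a^{1/2}-b^{1/2}|^2\le|a-b|$), one gets that $\varphi_T$ is positive definite when $T$ is completely positive; this is close in spirit to Lemma \ref{lem-SAIN-Herz-Schur}.

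However, there is a genuine gap at the decisive step, and you have flagged it yourself without closing it: nothing in your construction shows that, for a \emph{general} normal completely bounded $T$, the limit symbol $\varphi_T$ induces a weakly* continuous completely bounded Fourier multiplier with $\norm{M_{\varphi_T}}_{\cb}\le\norm{T}_{\cb}$ — i.e.\ that $P_G^\infty$ is even well defined on $\CB_{\w^*}(\VN(G))$. The uniform bound $\norm{\varphi_T^{(i)}}_\infty\le\norm{T}_{\cb}$ is far from sufficient: already for abelian $G$ one has $\mathfrak{M}^{\infty,\cb}(G)=\B(G)=\M(\hat{G})$, which is a proper subset of the bounded (continuous) functions, so sup-norm control of symbols never yields a multiplier bound. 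You cannot reduce to the completely positive case either, because a generic element of $\CB_{\w^*}(\VN(G))$ is \emph{not} decomposable when $\VN(G)$ is non-injective — which is precisely the regime where the theorem has content; and ``running the same averaging at the matrix level'' only reproduces sup-norm bounds on matrix symbols, not multiplier bounds. (A secondary issue: the weak* limit $\varphi_T$ of continuous symbols need not be continuous, so even identifying $P_G^\infty(T)$ as a multiplier requires a closedness argument.) This is exactly why the paper does not define a one-variable coefficient $\omega_i(\lambda_s^*T(\lambda_s))$ directly, but routes through \emph{two-variable Schur multipliers}: the coefficient construction with the fundamental unitary $W$ produces a Schur multiplier $M_{\varphi_{x,y,T}}$ on $\B(\L^2(G))$ whose cb norm is controlled by the factorization $\langle(\Id\ot T)(W(u\ot x)W^{-1}),W(v\ot y)W^{-1}\rangle$ (Lemma \ref{Lemma-estimation-cb}); weak* closedness of the Schur multiplier space (Lemma \ref{lem-Schur-weak-star-closed}) handles the limit; inner amenability then forces the limiting Schur symbol to be Herz--Schur (Lemma \ref{lem-SAIN-Herz-Schur}); and finally the Bo\.{z}ejko--Fendler transference \cite{BoF}, valid for \emph{every} locally compact group, converts the Herz--Schur multiplier isometrically into a Fourier multiplier. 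Without this mechanism (or a substitute), your map $T\mapsto M_{\varphi_T}$ has no proof of landing in $\mathfrak{M}^{\infty,\cb}(G)$. Finally, your closing remark that the isometric refinement follows by feeding $\Phi$ into the matricial property $(\kappa_{\infty,2})$ overreaches: the paper explicitly does \emph{not} establish $(\kappa_{\infty,2})$ for inner amenable groups (Remark \ref{rem-K2}); the stated theorem only requires the bijection coming from $\kappa_\infty(G)=1$, with the isometry available on symbols satisfying $\check{\varphi}=\ovl{\varphi}$ via Proposition \ref{conj-1-1-correspondance}.
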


The <<only if>> part of the second assertion of Conjecture \ref{conj} is true by a classical result of Haagerup \cite[Corollary 2.8 p.~201]{Haa85}. A consequence of our results is that the second point of Conjecture \ref{conj} is true for discrete groups, see Theorem \ref{Thm-conj-discrete-case}, and also for second-countable unimodular inner amenable locally compact groups (see Corollary \ref{cor-inner-66}), i.e.~we can state the following result. 
	
\begin{thm}
Let $G$ be a discrete group or a second-countable unimodular inner amenable locally compact group. Then the von Neumann algebra $\VN(G)$ is injective if and only if we have $\frak{M}^{\infty,\dec}(G)= \frak{M}^{\infty,\cb}(G)$.
\end{thm}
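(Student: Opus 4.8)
The plan is to prove the two implications separately, and to observe that the hypotheses on $G$ enter only through the already-established identification $\B(G) = \frak{M}^{\infty,\dec}(G)$. For the direction ``$\VN(G)$ injective $\Rightarrow \frak{M}^{\infty,\dec}(G) = \frak{M}^{\infty,\cb}(G)$'' I would simply invoke the result of Haagerup \cite{Haa} recalled above, namely the ``only if'' part of the second point of Conjecture \ref{conj}; this holds for an \emph{arbitrary} locally compact group and needs neither discreteness nor inner amenability, so no work is required here.

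For the converse, the key is that in both cases under consideration we have $\B(G) = \frak{M}^{\infty,\dec}(G)$: when $G$ is discrete this is Corollary \ref{dec-vs-B(G)-discrete-group}, and when $G$ is second countable, unimodular and inner amenable this is our first main result (obtained through $\kappa_\infty(G)=1$). Assuming $\frak{M}^{\infty,\dec}(G) = \frak{M}^{\infty,\cb}(G)$ and combining with this identification, the chain of inclusions \eqref{Inclusions} collapses to $\B(G) = \frak{M}^{\infty,\cb}(G)$. By Pisier's characterization of amenability \cite[p. 54]{Pis6}, $G$ is then amenable, and amenability of a locally compact group classically forces $\VN(G)$ to be injective. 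This closes the equivalence in both cases at once.

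I expect no genuine obstacle here: the statement is essentially a formal corollary, since all the substance has already been deposited in the earlier identification $\B(G) = \frak{M}^{\infty,\dec}(G)$, whose inner-amenable half rests on the groupoid machinery and property $(\kappa_\infty)$. The only care required is bookkeeping of which earlier theorem supplies $\B(G) = \frak{M}^{\infty,\dec}(G)$ in each of the two cases, and a check of the standard implication ``$G$ amenable $\Rightarrow \VN(G)$ injective'' in the locally compact generality. The latter is well known and is in any case compatible with the Lau--Paterson picture \cite[Corollary 3.2]{LaP2}, where amenability is equivalent to injectivity of $\VN(G)$ together with inner amenability; in our converse we even have inner amenability available, although the bare implication from amenability already suffices.
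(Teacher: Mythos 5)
Your proposal is correct and follows essentially the same route as the paper: Theorem \ref{Thm-conj-discrete-case} and Corollary \ref{cor-inner-66} likewise obtain the forward direction from Haagerup's theorem (implicit via the isometry in the discrete case), and the converse by combining the identification $\B(G)=\frak{M}^{\infty,\dec}(G)$ (Corollary \ref{dec-vs-B(G)-discrete-group}, respectively Proposition \ref{conj-1-1-correspondance} together with Theorem \ref{thm-SAIN-tilde-kappa}) with Pisier's characterization $\B(G)=\frak{M}^{\infty,\cb}(G)\Leftrightarrow G$ amenable and the implication that amenability forces injectivity of $\VN(G)$. Your bookkeeping of which earlier result supplies the identification in each case, and your remark that the bare implication ``amenable $\Rightarrow$ $\VN(G)$ injective'' suffices without the full Lau--Paterson equivalence, are both accurate.
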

	
As a byproduct, we also obtain in Theorem \ref{thm-links-K-injective} the following new characterization of amenability, which is in the same spirit as the characterization of Lau and Paterson, previously discussed in Theorem \ref{Th-Lau-Paterson}.
	
\begin{thm}
\label{thm-links-K-injective-intro}
Let $G$ be a second-countable unimodular locally compact group. Then the following are equivalent.
\begin{enumerate}
	\item $\VN(G)$ is injective and $G$ has $(\kappa_\infty)$.
	\item $G$ is amenable.
\end{enumerate}
\end{thm}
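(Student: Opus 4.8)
The plan is to deduce the equivalence purely by collapsing the chain of contractive inclusions \eqref{Inclusions} with the three inputs already recalled in the introduction: Pisier's characterization that $\B(G) = \frak{M}^{\infty,\cb}(G)$ holds \emph{if and only if} $G$ is amenable \cite{Pis6}, Haagerup's theorem \cite{Haa} that injectivity of $\VN(G)$ forces $\frak{M}^{\infty,\dec}(G) = \frak{M}^{\infty,\cb}(G)$, and Proposition \ref{conj-1-1-correspondance} asserting that property $(\kappa_\infty)$ turns the canonical contractive inclusion $\B(G) \hookrightarrow \frak{M}^{\infty,\dec}(G)$ into a bijection.

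First I would prove $(1) \Rightarrow (2)$. Assume $\VN(G)$ is injective and that $G$ has $(\kappa_\infty)$. Injectivity and Haagerup's result turn the right-hand inclusion of \eqref{Inclusions} into an equality, $\frak{M}^{\infty,\dec}(G) = \frak{M}^{\infty,\cb}(G)$. Property $(\kappa_\infty)$ and Proposition \ref{conj-1-1-correspondance} turn the left-hand inclusion into an equality, $\B(G) = \frak{M}^{\infty,\dec}(G)$. Composing the two equalities collapses \eqref{Inclusions} entirely, giving $\B(G) = \frak{M}^{\infty,\cb}(G)$, whence Pisier's criterion yields the amenability of $G$.

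For the converse $(2) \Rightarrow (1)$, assume $G$ is amenable. Amenability of a locally compact group entails injectivity (hyperfiniteness) of $\VN(G)$ — this is the ``only if'' half of the Lau--Paterson criterion \cite{LaP2} recalled above — so the first half of (1) is automatic. For the second half I would use that every amenable group is inner amenable; hence $G$ is a second countable unimodular inner amenable group, and the first main result (whose proof establishes $\kappa_\infty(G) = 1$) shows that $G$ has $(\kappa_\infty)$. This gives (1).

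The whole substance of the statement lives in the ingredients rather than in this bookkeeping. The genuinely hard step is already discharged in the proof of the first main result, namely producing, from inner amenability, a contractive completely-positivity-preserving projection onto $\frak{M}^{\infty,\cb}(G)$ inside $\CB_{\w^*}(\VN(G))$ (via the groupoid machinery and an inner-invariant mean), so that $(\kappa_\infty)$ even holds with constant one. Once that, Haagerup's theorem and Proposition \ref{conj-1-1-correspondance} are available, the equivalence is a two-line synthesis. The only delicate point requiring attention is to check that the projection supplied by $(\kappa_\infty)$ is exactly of the complete-positivity-preserving type demanded by Proposition \ref{conj-1-1-correspondance}, which is precisely what is built into Definition \ref{Defi-tilde-kappa}.
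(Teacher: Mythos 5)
Your proposal is correct and follows essentially the same route as the paper: for $(1)\Rightarrow(2)$ it combines Proposition \ref{conj-1-1-correspondance} (via $(\kappa_\infty)$), Haagerup's theorem (via injectivity) and Pisier's criterion exactly as the paper does, and for $(2)\Rightarrow(1)$ it derives injectivity from Theorem \ref{Th-Lau-Paterson} and property $(\kappa_\infty)$ from Theorem \ref{thm-SAIN-tilde-kappa} through the observation that amenability implies inner amenability. The only difference is expository: you make explicit the intermediate step ``amenable $\Rightarrow$ inner amenable'' needed to invoke Theorem \ref{thm-SAIN-tilde-kappa}, which the paper leaves implicit.
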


Finally, we will observe in Example \ref{Example-SL2} that the first inclusion in \eqref{Inclusions} can also be strict, e.g.~for $G=\SL_2(\R)$. The converses of these results will need further investigations. We also give in Section \ref{subsec-inner-Folner} other characterizations of inner amenability for unimodular locally compact groups and we will use one of these in the proofs of our results.

Note that if $G$ is a unimodular locally compact group, there exists a canonical normal semifinite faithful trace on the group von Neumann algebra $\VN(G)$, allowing to introduce the associated noncommutative $\L^p$-space $\L^p(\VN(G))$ for any $1 \leq p < \infty$. In this context, we can introduce the space $\mathfrak{M}^{p,\cb}(G)$ of completely bounded Fourier multipliers acting on the noncommutative $\L^p$-space $\L^p(\VN(G))$. 
 
We describe a new class of locally compact groups with the following property introduced in \cite[Definition 1.1 p.~3]{ArK23} which requires a bounded projection at the level $p=\infty$ and a compatible\footnote{\thefootnote. The following remark is important to note. If $P_G^\infty \co \CB_{\w^*}(\VN(G)) \to \CB_{\w^*}(\VN(G))$ is a bounded projection onto the subspace $\mathfrak{M}^{\infty,\cb}(G)$ then we can define a map $P_{G}^1 \co \CB(\L^1(\VN(G))) \to \CB(\L^1(\VN(G)))$ by
$$
P_G^1(T) 
\ov{\textrm{def}}{=} (P_{G}^\infty(T^*)_*), \quad T \in \CB(\L^1(\VN(G))). 
$$ 
It is then easy to check that $P_G^1$ is a bounded projection preserving complete positivity onto the subspace $\mathfrak{M}^{1,\cb}(G)$ and its norm is equal to the one of $P_G^\infty$. It is important to note that there is no evidence that the maps $P_G^\infty$ and $P_G^1$ are compatible in the sense of interpolation. Consequently, the properties $(\kappa_\infty)$ and $(\kappa)$ seem to be different.} bounded projection at the level $p=1$. The compatibility is taken in the sense of interpolation theory described in the books \cite{BeL76} and \cite{Tri95}. This compatibility property is crucial in a companion paper 
in order to describe the decomposable norm of Fourier multipliers acting on noncommutative $\L^p$-spaces with the interpolation formula \eqref{Regular-as-interpolation-space} and a classical argument.

\begin{defi}
\label{Defi-complementation-G}
We say that a locally compact group $G$ has property $(\kappa)$ if there exist compatible bounded projections $P_{G}^\infty \co \CB_{\w^*}(\VN(G)) \to \CB_{\w^*}(\VN(G))$ and $P_{G}^1 \co \CB(\L^1(\VN(G))) \to \CB(\L^1(\VN(G)))$ onto the subspaces $\mathfrak{M}^{\infty,\cb}(G)$ and $\mathfrak{M}^{1,\cb}(G)$, preserving the complete positivity. In this case, we introduce the constant 
\begin{equation}
\label{Kappa-eq-def}
\kappa(G)
\ov{\mathrm{def}}{=} \inf \max\Big\{\norm{P_G^\infty}_{\CB_{\w^*}(\VN(G)) \to \CB_{\w^*}(\VN(G))},\norm{P_G^1}_{\CB(\L^1(\VN(G))) \to \CB(\L^1(\VN(G)))} \Big\},
\end{equation}
where the infimum is taken on all admissible couples $(P_G^\infty,P_G^1)$ of projections. 
Finally, we let $\kappa(G) \ov{\mathrm{def}}{=} \infty$ if the locally compact group $G$ does not have $(\kappa)$.
\end{defi}

The well-known average trick \cite[proof of Lemma 2.5]{Haa16} of Haagerup essentially implies that $\kappa(G)=1$ for any discrete group $G$, see \cite[Section 4.2]{ArK23}. In \cite[Proposition 6.43 p.~125]{ArK23} and \cite[Theorem 6.38 p.~121]{ArK23}, it is proved that an abelian locally compact group satisfies $\kappa(G)=1$ and that a second-countable pro-discrete locally compact group $G$ satisfies $\kappa(G)=1$. It is equally proved in \cite[Theorem 6.16 p.~96]{ArK23} that some class of second-countable unimodular locally compact groups approximable by lattice subgroups have $(\kappa)$.

Another very significant result that we obtain in this paper is described in the following statement, see Corollary \ref{cor-the-compatible-complementation}. Let us first recall that the concept of dimension of a \textit{suitable} topological space can be defined using the small inductive dimension, the large inductive dimension, or the covering dimension. In the case of a locally compact group $G$, these three notions of dimension coincide. We refer to Section \ref{Sec-finite-dim} for more background.

\begin{thm}
\label{th-intro-kappa}
A second-countable unimodular finite-dimensional amenable locally compact group $G$ has property $(\kappa)$. 
\end{thm}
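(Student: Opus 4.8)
The plan is to establish $(\kappa)$ by building, simultaneously at the levels $p=\infty$ and $p=1$, a single averaging scheme whose elementary building blocks are conjugations by unitaries coming from $G$ together with integration, so that complete positivity and interpolation-compatibility are present by construction rather than recovered afterwards. The overall strategy is to reduce, via the structure theory issuing from the solution to Hilbert's fifth problem, the general finite-dimensional amenable group to a connected amenable Lie group, and then to treat the latter by a net-averaging argument controlled by a left-invariant doubling metric. This mirrors the role played by genuine lattices in the class of \cite{ArK1}, with nets replacing lattices so as to cover groups (such as unimodular solvable Lie groups of exponential growth) that need not be approximable by lattices at all.

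First I would invoke the Gleason--Yamabe and Montgomery--Zippin structure theorems: a second countable finite-dimensional locally compact group $G$ has an almost connected open subgroup $G'$ carrying a compact normal subgroup $K$ with $G'/K$ a Lie group of dimension at most $\dim G$. I would then assemble permanence properties for $(\kappa)$ and for the constants $\kappa_\infty,\kappa$: stability under passage to an open subgroup, and stability under extension by a compact normal subgroup, the latter realized by averaging against the normalized Haar measure of $K$ so as to project onto $K$-invariant multipliers, which is exactly the compact situation already available from \cite{ArK1}. This reduces the statement to producing a compatible couple of completely positive projections for a connected amenable Lie group $L=G'/K$. Equipping $L$ with a left-invariant Riemannian metric $d$ makes $(L,d)$ a locally doubling metric space, and unimodularity of $L$ makes the Riemannian volume bi-invariant and $\VN(L)$ tracial, which is what the $\L^p$-theory underlying $\mathfrak{M}^{1,\cb}(L)$ requires.

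The core construction replaces the Haagerup averaging of the discrete case — which relies on the Haar state of the compact dual quantum group, unavailable here — by an averaging over nets. Using second countability and amenability I would fix a Følner sequence $(F_n)$ in $L$, and inside each $F_n$ choose a maximal $\epsilon$-separated subset $\{x_{n,i}\}$; this plays the role of a lattice, while local doubling of $d$ bounds the overlap multiplicity of the balls $B(x_{n,i},\epsilon)$ and hence controls how many net points can meet any translate, which is precisely the combinatorial input a genuine lattice would provide for free. From these nets I would form finite averages $P_n$ of conjugation-type (transference) operations indexed by the $x_{n,i}$ and normalized by the net cardinality; each $P_n$ is completely positive, is given by the identical formula on $\CB_{\w^*}(\VN(L))$ and on $\CB(\L^1(\VN(L)))$, and is therefore a genuinely compatible couple on the interpolation scale. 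The projections $P_L^\infty$ and $P_L^1$ would then be extracted as limits of the $P_n$ along a fixed ultrafilter, in the point weak-$*$ and point-weak topologies respectively: the Følner condition forces the limit to be invariant under the covariance characterizing Fourier multipliers, so the limit is idempotent with range exactly $\mathfrak{M}^{\infty,\cb}(L)$ (resp.\ $\mathfrak{M}^{1,\cb}(L)$), while the doubling/multiplicity bounds keep the norms uniformly bounded.

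I expect the main obstacle to be concentrated in this last step and to have two faces. First, since a connected amenable Lie group may have exponential growth, $d$ is not globally doubling, so no global volume-doubling estimate is available; the argument must therefore extract from purely \emph{local} doubling, together with the asymptotic invariance supplied by the Følner sets, a uniform bound on the completely bounded norms of the $P_n$ and on the error measuring the failure of the net to be a subgroup, showing both remain controlled as $\epsilon\to 0$ and $n\to\infty$. Second, and most delicate, is ensuring that the limiting $P_L^\infty$ and $P_L^1$ remain compatible in the sense of interpolation, and not merely mutually dual as in the footnote's warning: here I would rely on the fact that each finite-stage $P_n$ is already a compatible couple and that, on the dense intersection $\VN(L)\cap\L^1(\VN(L))$, the two ultralimits are forced to coincide for a uniformly bounded family, so that a single ultrafilter yields a compatible couple realizing $(\kappa)$ for $L$; the permanence properties then propagate this back to $G$.
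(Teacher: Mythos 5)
There is a genuine gap, and it sits at the heart of your core construction. The only natural reading of your ``finite averages $P_n$ of conjugation-type operations indexed by the $x_{n,i}$'' is $T \mapsto \frac{1}{N}\sum_i \lambda_{x_{n,i}} T\big(\lambda_{x_{n,i}}^* \,\cdot\, \lambda_{x_{n,i}}\big)\lambda_{x_{n,i}}^*$, and an ultralimit of such averages has the wrong fixed-point set: for a Fourier multiplier one computes $M_\varphi(\lambda_g \lambda_s \lambda_g^*) = \varphi(gsg^{-1})\lambda_{gsg^{-1}}$ while $\lambda_g M_\varphi(\lambda_s)\lambda_g^* = \varphi(s)\lambda_{gsg^{-1}}$, so multipliers are $\Ad$-equivariant only when $\varphi$ is a class function, and conversely $\Ad$-equivariant normal completely bounded maps need not be multipliers at all. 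Hence your limiting map is not a projection onto $\mathfrak{M}^{\infty,\cb}(L)$; the assertion that ``the F\o{}lner condition forces the limit to be invariant under the covariance characterizing Fourier multipliers'' is precisely the unproved (and, as formulated, false) step. For non-discrete $G$ there is no analogue of the Haar state of the compact dual, and the paper's substitute is not an average of inner conjugations on $\VN(G)$ but the two-sided compression $T \mapsto M_{\varphi_{x,y,T}}$ with symbol $\varphi_{x,y,T}(s,t)=\tau_G\big(\lambda_t y\lambda_{s^{-1}}T(\lambda_s x\lambda_{t^{-1}})\big)$ of \eqref{Def-symbol-varphi-1}, built from the fundamental unitary $W$ and landing in \emph{Schur} multipliers on $S^p_G$; only afterwards does a F\o{}lner average of $\Ad(\rho_r)$ (Proposition \ref{prop-referee-step-2}) push Schur multipliers onto Herz--Schur multipliers, and the Bo\.zejko--Fendler (resp.\ Caspers--de la Salle, Neuwirth--Ricard) transfer returns to Fourier multipliers. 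Your $\epsilon$-net/overlap-multiplicity bookkeeping never enters a verifiable estimate, whereas in the paper local doubling is used for something quite different: with $f_j=1_{V_j}\ast 1_{V_j}$, $x_j=a_j\lambda(f_j)$, $y_j=b_j\lambda(f_j)$, the measure inequality \eqref{subtil-estimate} yields $\tau_G(x_jy_j)=1$ together with the uniform bound $\norm{x_j}_p\norm{y_j}_{p^*}\le c^3$, and $p$-compatibility comes from $a_jb_j=\norm{\lambda(f_j)}_2^{-2}$ being independent of $p$ plus a Tychonoff argument selecting one subnet for all $p$ simultaneously --- not from the projections being ``the identical formula'' at both endpoints.

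Your reduction step is a second substantive gap. The structure theory supplies only a \emph{local} isomorphism (Iwasawa local splitting, as used in Lemma \ref{lem-finite-dimensional-group-estimate}): a neighborhood of $e$ homeomorphic to $W\times U$ with $K$ compact totally disconnected and $L$ a Lie group. The paper never reduces $(\kappa)$ to a Lie quotient; it only transports the local product structure of the Haar measure \eqref{equ-1-proof-lem-finite-dimensional-group-estimate} to build the sets $V_j=K_j\times B_j$ near the identity, and then runs the entire projection argument on $G$ itself. Your claimed permanence properties --- that $(\kappa)$ passes through open subgroups and compact normal extensions --- are nowhere established and are not routine: a projection on $\CB_{\w^*}(\VN(G'))$ for $G'$ open does not obviously induce one on $\CB_{\w^*}(\VN(G))$, and averaging against the Haar measure of a compact normal $K$ produces symbols constant on $K$-cosets, i.e.\ multipliers factoring through $G'/K$, which is a \emph{proper} subspace of $\mathfrak{M}^{\infty,\cb}(G')$ rather than the full multiplier space you need to project onto. (Note also that unimodularity is not preserved under local isomorphism, one more reason the argument must stay on $G$.) Both pillars of your plan would therefore have to be replaced by the Schur-multiplier machinery of Sections \ref{Mappings}--\ref{Sec-Th-complementation}.
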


An upper estimate of $\kappa(G)$ is possible for some groups. For example, in the case of a second-countable unimodular totally disconnected amenable locally compact group, our method gives $\kappa(G)=1$, which is a sharp result. Note that these results complement the result of our previous paper \cite{ArK23}. From this point of view, totally disconnected locally compact groups behave better than Lie groups, phenomenon that we already noticed in \cite{ArK23}. We refer to Section \ref{Sec-approach} for a detailed presentation of our approach.

The proof relies on the structure of finite-dimensional locally compact groups extracted from the solution to Hilbert's fifth problem. More precisely, we use a version of Iwasawa's local splitting theorem, which says that an $n$-dimensional second-countable locally compact group is \textit{locally} isomorphic to the product of a totally disconnected compact group $K$ and a Lie group $L$ of dimension $n$, to reduce the problem to totally disconnected groups and to connected Lie groups. It allows us to use doubling constants of the Carnot-Carath\'eodory metric of connected Lie groups for small balls to construct special suitable <<noncommutative functions>>, which are crucial for our proof. 

We will prove in Corollary \ref{cor-the-full-referees-complementation} a different result for the case of a second-countable unimodular amenable locally compact group $G$, using some other special <<noncommutative functions>>. We obtain that the space $\mathfrak{M}^{p,\cb}(G)$ of completely bounded Fourier multipliers on the noncommutative $\L^p$-space $\L^p(\VN(G))$ is contractively complemented in the space $\CB(\L^p(\VN(G)))$ of completely bounded operators acting on the Banach space $\L^p(\VN(G))$, by a contractive projection preserving the complete positivity.
Note that this map is \textit{contractive} which is better than the \textit{boundedness} of the maps $P_G^1$ and $P_G^\infty$ provided by Theorem \ref{th-intro-kappa} (when it applies), but only for \textit{one} value of $p$.
Note also that this property does not characterize amenability if $1<p<\infty$ since a discrete group $G$ such that the von Neumann algebra $\VN(G)$ is $\QWEP$\footnote{\thefootnote. It is not clear if this assumption is removable or not. It is required by the use of vector-valued noncommutative $\L^p$-spaces and the most general known theory needs the $\QWEP$ assumption.} also satisfies this property, see \cite[p.~334]{JuR03} and \cite[Theorem 4.2 p.~62]{ArK23}.

\begin{thm}
\label{thm-the-full-referees-intro}
Let $G$ be a second-countable unimodular amenable locally compact group. Let $1 < p < \infty$ such that $\frac{p}{p^*}$ is rational. Then there exists a contractive projection
\[ 
P^p_G \co \CB(\L^p(\VN(G))) \to \CB(\L^p(\VN(G))) 
\]
onto the subspace $\mathfrak{M}^{p,\cb}(G)$, preserving the complete positivity.
\end{thm}

In contrast to Theorem \ref{th-intro-kappa}, this result cannot be used to characterize the norms of decomposable multipliers on noncommutative $\L^p$-spaces.

Our results deepen the connection between the structural theory of locally compact groups and the analytic properties of their associated operator algebras, with new characterizations of amenability emerging from the perspective of Fourier multipliers.

%

\subsection{Structure of the paper} 
To facilitate access to individual topics, each section is made as self-contained as possible. The paper is structured as follows.

Section \ref{Overview-kappa} provides background on Fourier-Stieltjes algebras, groupoids, and operator algebras. In Proposition \ref{Prop-Ruan-Dec}, we demonstrate that the space $\Dec(A,B)$ of decomposable operators between $\C^*$-algebras admits a canonical operator space structure. In Section \ref{Sec-FS-and-dec}, we prove in Proposition \ref{prop-B(G)-inclus-dec} that for any locally compact group $G$, there exists a well-defined injective completely contractive map from the Fourier-Stieltjes algebra $\B(G)$ into the space $\mathfrak{M}^{\infty,\dec}(G)$, consisting of decomposable Fourier multipliers on the group von Neumann algebra $\VN(G)$. Furthermore, we explore in Proposition \ref{conj-1-1-correspondance} the relationship between the equality $\B(G) = \mathfrak{M}^{\infty,\dec}(G)$ and property $(\kappa_{\infty})$. We also show in Theorem \ref{dec-vs-B(G)-discrete-group} that the Fourier-Stieltjes algebra $\B(G)$ of a discrete group $G$ is isometrically isomorphic to the algebra $\mathfrak{M}^{\infty,\dec}(G)$ of decomposable Fourier multipliers.  In Example \ref{Example-SL2}, we demonstrate that for $G = \SL_2(\R)$, the inclusion $\B(G) \subset \mathfrak{M}^{\infty,\dec}(G)$ is strict. 

In Section \ref{Sec-prelim-inner}, we provide background on inner amenability and amenability. In Section \ref{subsec-inner-Folner}, we establish various characterizations of inner amenability for unimodular locally compact groups, using asymptotically central nets of functions or inner F\o{}lner nets. These characterizations are employed in Section \ref{Sec-Herz-Schur}.

Section \ref{Sec-prel-complet} offers background on measurable Schur multipliers and Plancherel weights on group von Neumann algebras. In Section \ref{Sec-approach}, we outline the technical approach of this chapter. Section \ref{Mappings} presents the construction of some Schur multipliers derived from a (weak* continuous if $p = \infty$) completely bounded map $T \co \L^p(\VN(G)) \to \L^p(\VN(G))$, acting on the noncommutative $\L^p$-space $\L^p(\VN(G))$ of a second-countable unimodular locally compact group $G$. In Section \ref{Sec-Herz-Schur}, we show that the symbol can be chosen as a Herz-Schur symbol if the group $G$ is inner amenable. Section \ref{Section-p=1-p-infty} examines the symbols of these Schur multipliers for $p=1$ and $p=\infty$. Section \ref{Sec-convergence-continuous} explores the convergence of the symbols of Schur multipliers, while Section \ref{Sec-finite-dim} focuses on Lie groups and totally disconnected locally compact groups, culminating in the proof in Section \ref{Sec-Th-complementation} that unimodular finite-dimensional amenable locally compact groups have property $(\kappa)$. In Section \ref{Section-Schur}, we construct a contractive projection from the space of completely bounded Schur multipliers $\mathfrak{M}^{p,\cb}_G$ onto the subspace $\mathfrak{M}^{p,\cb,\HS}_G$ of Herz-Schur multipliers, in the case where $G$ is amenable. This result will be used in Section \ref{Sec-Th-complementation}, which contains our main complementation results.

In Section \ref{Sec-charac-amen}, Theorem \ref{thm-links-K-injective} presents a new characterization of amenability for second-countable unimodular locally compact groups. In Example \ref{example-SL}, we observe that the unimodular locally compact group $G=\SL_2(\R)$ does not have property $(\kappa_\infty)$.  Finally, in Section \ref{Sec-Herz}, we show that if there exists a bounded projection $Q \co \mathfrak{M}^{\infty}_G \to \mathfrak{M}^{\infty}_G$ onto the space of completely bounded Herz-Schur multipliers $\mathfrak{M}^{\infty,\HS}_G$ over the space $\cal{B}(\L^2(G))$ of bounded operators on the Hilbert space $\L^2(G)$, preserving the complete positivity for some second-countable unimodular locally compact group $G$ such that the von Neumann algebra $\VN(G)$ is injective, then $G$ must be amenable.

\section{Fourier-Stieltjes algebras and decomposable multipliers on $\VN(G)$}
\label{sec-Divers}
\subsection{Preliminaries}
\label{Overview-kappa}

%

\paragraph{Decomposable maps} 
Recall that the notion of decomposable map is defined in \eqref{Matrice-2-2-Phi}. Consider some $\mathrm{C}^*$-algebras $A$, $B$ and $C$. Let $T_1 \co A \to B$ and $T_2 \co B \to C$ be some decomposable maps. Then by \cite[Proposition 1.3 (5) p.~177]{Haa85}
that the composition $T_2 \circ T_1 \co A \to C$ is decomposable and that 
\begin{equation}
\label{Composition-dec}
\norm{T_2 \circ T_1}_{\dec, A \to C} 
\leq \norm{T_2}_{\dec, B \to C} \norm{T_1}_{\dec, A \to B}.
\end{equation}
By \cite[Proposition 1.3 (4) p.~177]{Haa85}, any completely positive map $T \co A \to B$ between $\C^*$-algebras is decomposable and we have
\begin{equation}
\label{dec-et-cp}
\norm{T}_{\dec,A \to B} 
=\norm{T}_{\cb,A \to B}
=\norm{T}_{A \to B}.
\end{equation}
It is known that the space $\Dec(A, B)$ of decomposable maps is a Banach space by \cite[Proposition 1.4 p.~182]{Haa85} and coincides with the span of completely positive maps, see \cite[p.~175]{Haa85}. By \cite[Proposition 1.3 (3) p.~177]{Haa85} or \cite[Lemma 5.4.3 p.~96]{EfR00}, any decomposable map $T \co A \to B$ is completely bounded with $\norm{T}_{\cb,A \to B}  \leq \norm{T}_{\dec,A \to B}$. Moreover, if $B$ is injective, then according to \cite[Theorem 1.6 p.~184]{Haa85}, we have
\begin{equation}
\label{dec=cb}
\norm{T}_{\dec,A \to B}
=\norm{T}_{\cb,A \to B}.
\end{equation}
We will use in the first proof of Proposition \ref{prop-B(G)-inclus-dec} the following elementary lemma.

\begin{lemma}
\label{Lemma-tensor-dec-2}
Let $\cal{M}_1,\cal{M}_2$ and $\cal{N}$ be von Neumann algebras and let $T \co \cal{M}_1 \to \cal{M}_2$ be a weak* continuous decomposable map. Then we have a well-defined weak* continuous decomposable map $\Id_\cal{N} \ot T \co \cal{N} \otvn \cal{M}_1 \to \cal{N} \otvn \cal{M}_2$ and 
\begin{equation}
\label{dec-tensor-2}
\norm{\Id_\cal{N} \ot T}_{\dec,\cal{N} \otvn \cal{M}_1 \to \cal{N} \otvn \cal{M}_2}
\leq \norm{T}_{\dec,\cal{M}_1 \to \cal{M}_2}.
\end{equation}
\end{lemma}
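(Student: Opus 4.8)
The plan is to reduce the statement to the definition of decomposability and exploit the stability of complete positivity under tensoring with the identity. Recall that by \eqref{Matrice-2-2-Phi}, saying $T$ is decomposable with $\norm{T}_{\dec,M_1 \to M_2} < c$ means there exist completely positive weakly* continuous maps $v_1, v_2 \co M_1 \to M_2$ with $\norm{v_1}, \norm{v_2} < c$ such that the $2\times 2$ amplification
\begin{equation}
\label{plan-Phi}
\Phi = \begin{bmatrix} v_1 & T \\ T^\circ & v_2 \end{bmatrix} \co \M_2(M_1) \to \M_2(M_2)
\end{equation}
is completely positive. The whole point is that tensoring by $\Id_N$ should preserve this block structure: I would take $\Id_N \ot v_1$ and $\Id_N \ot v_2$ as the witnessing maps for $\Id_N \ot T$, and show that the associated block matrix is exactly $\Id_N \ot \Phi$ up to the canonical shuffle isomorphism $\M_2(N \otvn M_i) \cong N \otvn \M_2(M_i)$.

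First I would verify that $\Id_N \ot v_1$ and $\Id_N \ot v_2$ are well-defined weakly* continuous completely positive maps $N \otvn M_1 \to N \otvn M_2$ with $\norm{\Id_N \ot v_j}_{\cb} = \norm{v_j}_{\cb}$. This is standard: a normal completely positive map between von Neumann algebras is automatically completely bounded with $\norm{v_j}_{\cb} = \norm{v_j}$, and tensoring a normal completely positive map with the identity on the normal spatial tensor product yields again a normal completely positive map of the same norm. Likewise $\Id_N \ot T$ and $\Id_N \ot T^\circ$ are well-defined and weakly* continuous, with $(\Id_N \ot T)^\circ = \Id_N \ot T^\circ$ since the $\circ$-operation commutes with tensoring by the identity (it is built from the involutions of $N \otvn M_1$ and $N \otvn M_2$, which restrict to the right maps on the $N$-factor). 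The delicate but routine point here is to justify the existence of $\Id_N \ot T$ as a normal map when $T$ itself is only assumed normal and decomposable; decomposability gives the completely positive bounds $v_1,v_2$, so $T$ sits inside a completely positive $\Phi$ and is therefore completely bounded, and tensoring completely bounded normal maps with $\Id_N$ on the spatial tensor product is legitimate.

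The heart of the argument is the identification of the block matrix for $\Id_N \ot T$ with an amplification of $\Phi$. Under the canonical normal $*$-isomorphism $\sigma_i \co \M_2(N \otvn M_i) \to N \otvn \M_2(M_i)$ that shuffles the two tensor legs, the block map
\begin{equation}
\label{plan-block}
\Psi = \begin{bmatrix} \Id_N \ot v_1 & \Id_N \ot T \\ \Id_N \ot T^\circ & \Id_N \ot v_2 \end{bmatrix}
\co \M_2(N \otvn M_1) \to \M_2(N \otvn M_2)
\end{equation}
satisfies $\sigma_2 \circ \Psi \circ \sigma_1^{-1} = \Id_N \ot \Phi$. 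Since $\Phi$ is completely positive and normal, $\Id_N \ot \Phi \co N \otvn \M_2(M_1) \to N \otvn \M_2(M_2)$ is completely positive; conjugating by the $*$-isomorphisms $\sigma_1, \sigma_2$ preserves complete positivity, so $\Psi$ is completely positive. This exhibits $\Id_N \ot T$ as decomposable via the pair $(\Id_N \ot v_1, \Id_N \ot v_2)$. Taking the infimum over all admissible $(v_1,v_2)$ and using $\max\{\norm{\Id_N \ot v_1}, \norm{\Id_N \ot v_2}\} = \max\{\norm{v_1},\norm{v_2}\}$ then yields the norm bound \eqref{dec-tensor-2}.

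I expect the main obstacle to be purely bookkeeping rather than conceptual: keeping track of the shuffle isomorphism $\sigma_i$ carefully enough to confirm that \eqref{plan-block} really corresponds to $\Id_N \ot \Phi$ and not to some permuted variant, and confirming that all the maps involved are genuinely weakly* continuous so that their spatial amplifications by $\Id_N$ are defined in the first place. The normality of $T$ together with Lemma \ref{Lemma-quasi-complete} (quasi-completeness in the $\sigma$-strong* topology) ensures the amplifications extend properly to the von Neumann algebraic tensor product; this is the one place where a little care is needed, but it is standard functional-analytic input rather than a real difficulty.
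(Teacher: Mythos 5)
Your overall route is the same as the paper's: tensor the witnessing maps $v_1,v_2$ with $\Id_N$, identify the resulting $2\times 2$ block map with an amplification of $\Phi$ under the canonical identification $\M_2(N \otvn M_i) \cong N \otvn \M_2(M_i)$, and invoke the stability of normal complete positivity under tensoring with the identity. The paper does exactly this, citing \cite[Proposition 4.3.7]{Bin} for the complete positivity of the amplified block map, \cite[Prop. 3.6]{Pau} for $\max\{\norm{\Id_N \ot v_1},\norm{\Id_N \ot v_2}\} \leq \max\{\norm{v_1},\norm{v_2}\}$, and \cite[Lemma 1.5 b)]{DCH} (after \cite[Proposition 1.3 3)]{Haa}) for the well-definedness and weak* continuity of $\Id_N \ot T$ --- the points you correctly flag as routine and handle in the same way.

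The one genuine gap is at the outset: you assert that $\norm{T}_{\dec,M_1 \to M_2}<c$ ``means there exist completely positive \emph{weakly* continuous} maps $v_1,v_2$'' making \eqref{Matrice-2-2-Phi} completely positive. Weak* continuity is not part of the definition: the infimum in \eqref{Norm-dec} runs over \emph{all} bounded $v_1,v_2$, and \cite[Remark 1.5]{Haa}, which attains the infimum, likewise provides no normality. This is not mere bookkeeping, because your construction needs it: for a non-normal $v_j$ the map $\Id_N \ot v_j \co N \otvn M_1 \to N \otvn M_2$ is simply not defined (extending from the algebraic tensor product to the von Neumann tensor product requires weak* continuity --- think of slicing by a singular state), so the block map $\Psi$ you want to exhibit would not exist. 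The repair is short and is precisely what the paper does: apply the contractive, complete-positivity-preserving projection $P_{\w^*}$ of \cite[Proposition 3.1]{ArK1} to replace $v_1,v_2$ by weakly* continuous completely positive maps of no larger norm, noting that $P_{\w^*}$ fixes the off-diagonal entries $T$ and $T^\circ$ (both already normal), so the new block map is still completely positive. With that sentence inserted, your proof coincides with the paper's.
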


\begin{proof}
Note that the decomposable map $T$ is completely bounded by \cite[Proposition 1.3 (3) p.~177]{Haa85}. By \cite[p.~40]{BlM04}, 
we infer that we have a well-defined weak* continuous completely bounded map $\Id_\cal{N} \ot T \co \cal{N} \otvn \cal{M}_1 \to \cal{N} \otvn \cal{M}_2$. By \cite[Remark 1.5 p.~183]{Haa85}, the infimum in the definition of the decomposable norm given in \eqref{Norm-dec} is actually a minimum. Consequently, there exist some linear maps $v_1,v_2 \co \cal{M}_1 \to \cal{M}_2$ such that the map $
\begin{bmatrix} 
v_1 & T \\ 
T^\circ & v_2 
\end{bmatrix} 
\co \M_2(\cal{M}_1) \to \M_2(\cal{M}_2)$ is completely positive with $\max\{\norm{v_1},\norm{v_2}\} = \norm{T}_{\dec,\cal{M}_1 \to \cal{M}_2}$. It is not difficult to see that we can suppose that $v_1$ and $v_2$ are weak* continuous by using \cite[Proposition 3.1 p.~24]{ArK23} as in the proof of \cite[Proposition 3.4 p.~26]{ArK23}. Then by \cite[Proposition 4.3.7 p.~225]{Li92} the tensor product
$$
\begin{bmatrix} 
\Id_\cal{N} \ot v_1 & \Id_\cal{N} \ot T \\ 
\Id_\cal{N} \ot T^\circ & \Id_\cal{N} \ot v_2 
\end{bmatrix} 
=\Id_{\cal{N}} \ot
\begin{bmatrix} 
v_1 &  T \\ 
T^\circ & v_2 
\end{bmatrix} 
\co \M_2(\cal{N} \otvn \cal{M}_1) \to \M_2(\cal{N} \otvn \cal{M}_2)
$$
is a well-defined completely positive map. We deduce that the map $\Id_\cal{N} \ot T \co \cal{N} \otvn \cal{M}_1 \to \cal{N} \otvn \cal{M}_2$ is decomposable with 
\begin{align*}
\MoveEqLeft
\norm{\Id_\cal{N} \ot T}_{\dec} 
\ov{\eqref{Norm-dec}}{\leq} \max\{\norm{\Id_\cal{N} \ot v_1},\norm{\Id_\cal{N} \ot v_2 }\} \leq \max\{\norm{v_1}_{\cb},\norm{v_2}_{\cb}\} \\ 
&\ov{\eqref{dec-et-cp}}{=} \max\{\norm{v_1},\norm{v_2}\} = \norm{T}_{\dec,\cal{M}_1 \to \cal{M}_2},
\end{align*}
where we use in the first equality the complete positivity of the linear maps $v_1$ and $v_2$.
\end{proof}

Finally, if $A$ and $B$ are $\mathrm{C}^*$-algebras, with $B$ unital, we will show that the space $\Dec(A, B)$ of decomposable maps can be endowed with an operator space structure. To demonstrate this, suppose that $[T_{ij}]$ belongs to the matrix space $\M_{n}(\Dec(A, B))$, where $n \geq 1$ is an integer. We identify the matrix $[T_{ij}]$ with the map $A \mapsto \M_{n}(B)$, $x \mapsto [T_{ij}(x)]$. We define a norm on the space $\M_{n}(\Dec(A,B))$ by setting 
\begin{equation}
\label{Norms-dec}
\bnorm{[T_{i j}]}_{\M_{n}(\Dec(A,B))} 
\ov{\mathrm{def}}{=} \bnorm{x \mapsto [T_{ij}(x)]}_{\Dec(A,\M_{n}(B))}.
\end{equation} 
In short, we make the identification $\M_{n}(\Dec(A,B)) = \Dec(A,\M_{n}(B))$.

\begin{prop}
\label{Prop-Ruan-Dec}
Let $A$ and $B$ be $\mathrm{C}^*$-algebras, with $B$ unital. When endowed with the matricial norms from \eqref{Norms-dec}, the Banach space $\Dec(A, B)$ acquires the structure of an operator space.
\end{prop}

\begin{proof}
Let $X,Y \in \M_{n}$ and $[T_{i j}] \in \M_{n}(\Dec(A,B))$ for some integer $n \geq 1$. Note that by \cite[Exercise 12.1 p.~251]{Pis03} the two-sided multiplication map $u \co \M_{n}(B) \to \M_{n}(B)$, $y \mapsto (X  \ot 1_B)y(Y  \ot 1_B)$ is decomposable with $\norm{u}_{\dec,\M_{n}(B) \to \M_{n}(B)} \leq \norm{X  \ot 1_B}_{\M_{n}(B)} \norm{Y  \ot 1_B}_{\M_{n}(B)}$. Using this observation in the  equality, we obtain
\begin{align*}
\MoveEqLeft
\bnorm{X[T_{i j}]Y}_{\M_{n}(\Dec(A,B))}  
\ov{\eqref{Norms-dec}}{=} \bnorm{x \mapsto X [T_{ij}(x)]Y}_{\Dec(A,\M_{n}(B))} \\          
&=\bnorm{x \mapsto (X  \ot 1_B) [T_{ij}(x)](Y  \ot 1_B)}_{\Dec(A,\M_{n}(B))} \\
&\leq \norm{X  \ot 1_B}_{\M_{n}(B)} \norm{Y  \ot 1_B}_{\M_{n}(B)} \bnorm{x \mapsto [T_{ij}(x)]}_{\Dec(A,\M_{n}(B))} \\
&\ov{\eqref{Norms-dec}}{=} \norm{X}_{\M_{n}}  \bnorm{[T_{i j}]}_{\M_{n}(\Dec(A,B))} \norm{Y}_{\M_{n}}.
\end{align*} 
Let $[T_{i j}] \in \M_{n}(\Dec(A,B))$ and $[S_{kl}] \in \M_{m}(\Dec(A,B))$ for some integers $n,m \geq 1$. Using \cite[Lemma 6.8 p.~118]{Pis20} in the second equality, we have
\begin{align*}
\MoveEqLeft
\bnorm{[T_{i j}] \oplus [S_{kl}]}_{\M_{n+m}(\Dec(A,B))}             
\ov{\eqref{Norms-dec}}{=} \bnorm{x \mapsto [T_{ij}(x)] \oplus [S_{kl}(x)]}_{\Dec(A,\M_{n+m}(B))} \\
&=\max\Big\{\bnorm{x \mapsto [T_{ij}(x)]}_{\Dec(A,\M_{n}(B))},\bnorm{x \mapsto [S_{kl}(x)]}_{\Dec(A,\M_{m}(B))} \Big\} \\
&\ov{\eqref{Norms-dec}}{=} \max\Big\{\norm{[T_{i j}]}_{\M_{n}(\Dec(A,B))} +\norm{[S_{kl}]}_{\M_{m}(\Dec(A,B))} \Big\}.
\end{align*} 
Now, it suffices to use Ruan's theorem \cite[p.~35]{Pis03} or \cite[Proposition 2.3.6 p.~34]{EfR00}. 
\end{proof}

We finish by providing another formula for the decomposable norm.

\begin{prop}
\label{prop-dec-sqrt}
Consider a decomposable map $T \co A \to B$ between $\C^*$-algebras. Then
\begin{equation}
\label{Norm-dec-sqrt}
\norm{T}_{\dec,A \to B}
= \inf\left\{ \norm{v_1}^{\frac12} \norm{v_2}^{\frac12} \right\},
\end{equation}
where the infimum is taken over all maps $v_1$ and $v_2$ such that the operator $\Phi$ introduced in \eqref{Matrice-2-2-Phi} is completely positive.
\end{prop}

\begin{proof}
The inequality $\geq$ is obvious. Now, we show the reverse inequality, assume that the operator $\Phi \ov{\mathrm{def}}{=} \begin{bmatrix}
   v_1  &  T \\
   T^\circ  &  v_2  \\
\end{bmatrix}$ of \eqref{Matrice-2-2-Phi} is completely positive for some linear maps $v_1,v_2$.
 Suppose that $v_1=0$. Consider a positive element $x \in A$. Since the element $\begin{bmatrix}
  x   &  x \\
  x   &  x \\
\end{bmatrix}$ in $\M_2(A)$ is positive, we see that $\left(\begin{bmatrix}
   0  &  T \\
   T^\circ  &  v_2  \\
\end{bmatrix}\right)\left(\begin{bmatrix}
  x   &  x \\
  x   &  x \\
\end{bmatrix}\right)=\begin{bmatrix}
   0  &  T(x) \\
   T^\circ(x)  &  v_2(x)  \\
\end{bmatrix}$ is positive. By \cite[Proposition 1.3.2 p.~13]{Bha07}, we infer that $\norm{T(x)} \leq \norm{0}\bnorm{v_2(x)^{\frac{1}{2}}}$. As every element of $A$ is a linear combination of positive elements \cite[p.~17]{BlM04}, we conclude that $T=0$. 

So we can suppose that $v_1 \not =0$. For any $t > 0$ we define the positive matrix $A_t \ov{\mathrm{def}}{=} 
\begin{pmatrix} 
\sqrt{t} & 0 \\ 0 & 
\frac{1}{\sqrt{t}}
\end{pmatrix}$ in $\M_2$. Then the linear map
$\Phi_t \co \M_2(A) \to \M_2(B)$, $x \mapsto A_t \Phi(x) A_t$ is also completely positive.
For any $t > 0$, observe that $
\Phi_t 
= \begin{pmatrix} 
t v_1 & T \\ 
T^\circ &  \frac{1}{t} v_2 
\end{pmatrix}
$. 
Hence $\norm{T}_{\dec,A \to B} \ov{\eqref{Norm-dec}}{\leq} \inf_{t > 0} \max \{ t \norm{v_1}, t^{-1} \norm{v_2} \} $. The choice $t = \norm{v_2}^{\frac12} \norm{v_1}^{-\frac12}$ gives the inequality $\norm{T}_{\dec,A \to B} \leq \norm{v_1}^{\frac12} \norm{v_2}^{\frac12}$. 
Taking the infimum on $v_1,v_2$, we obtain $\norm{T}_{\dec,A \to B} \leq \inf\left\{ \norm{v_1}^{\frac12} \norm{v_2}^{\frac12} \right\}$.
\end{proof}

\paragraph{Full group $\mathrm{C}^*$-algebras} 

Let $G$ be a locally compact group equipped with a left Haar measure $\mu_G$. Consider the direct sum $U$ of all equivalence classes of cyclic continuous unitary representations of $G$. We denote by $H$ the associated Hilbert space.  Following \cite[Definition 8.B.1 p.~243]{BeH20}, we define the  full group $\C^*$-algebra $\C^*(G)$ to be the norm closure in $\mathcal{B}(H)$ of $U(\L^1(G))$, where $U \co \L^1(G) \to \cal{B}(H)$, $f \mapsto \int_G f(s)U_s \d\mu_G(s)$ denotes the (injective) integrated representation associated to $U$. Here the latter integral is understood in the weak operator sense. So we can identify $\L^1(G)$ as a dense subspace of the algebra $\mathrm{C}^*(G)$. 

Recall that there is a one-to-one correspondence between the continuous unitary representations of the group $G$ and the non-degenerate representations of the $\mathrm{C}^*$-algebra $\mathrm{C}^*(G)$, see \cite[Theorem 13.9.3 p.~303]{Dix77} and \cite[Theorem 12.4.1 p.~1383]{Pal01} for details. 

Finally, we denote by $\W^*(G)$ the enveloping von Neumann algebra of the $\mathrm{C}^*$-algebra $\mathrm{C}^*(G)$, introduced in \cite{Ern64}, under the name <<big group algebra>>. This means that this von Neumann algebra is the weak closure of $\pi(\mathrm{C}^*(G))$, where $\pi$ is the universal representation of the $\C^*$-algebra $\mathrm{C}^*(G)$. By \cite[p.~265]{Dix77}, we have a canonical isometric isomorphism $\W^*(G)=\C^*(G)^{**}$, which is bicontinuous for the weak operator topology on $\W^*(G)$ and the weak* topology on the bidual $\C^*(G)^{**}$.

\begin{example} \normalfont
If the locally compact group $G$ is \textit{abelian} then by \cite[Example p.~225]{Fol16} the $\mathrm{C}^*$-algebra $\mathrm{C}^*(G)$ is $*$-isomorphic to the $\mathrm{C}^*$-algebra $\C_0(\hat{G})$, where $\hat{G}$ is the Pontryagin dual of $G$.
\end{example}


\paragraph{Fourier-Stieltjes algebras} Recall that we defined the Fourier-Stieltjes algebra $\B(G)$ of a locally compact group $G$ in \eqref{BG-as-entries}. It is known that it is the complex linear span of the set of all continuous positive definite functions on $G$, see \cite[Definition 2.1.5 p.~40]{KaL18}. Equipped with pointwise multiplication and addition $\B(G)$ becomes a commutative unital Banach algebra by \cite[Theorem 2.1.11 p.~44]{KaL18}. 

Let $\mu_G$ be a left Haar measure of $G$. If $\varphi \in \B(G)$ then by \cite[p.~193]{Eym64} the linear form $\omega_\varphi \co \L^1(G) \to \mathbb{C}$ defined by
\begin{equation}
\label{Def-omega-varphi}
\omega_\varphi(f)
\ov{\mathrm{def}}{=} \int_G \varphi(s)f(s) \d\mu_G(s)
\end{equation}
extends to a bounded linear form $\omega_\varphi \co \mathrm{C}^*(G) \to \mathbb{C}$ with $\norm{\varphi}_{\B(G)}=\norm{\omega_\varphi}$. It is well-known that each bounded linear form on $\mathrm{C}^*(G)$ satisfies this description, i.e.~we have $\B(G)=\mathrm{C}^*(G)^*$ isometrically, see \cite[p.~192]{Eym64} or \cite[p.~40]{KaL18}. Moreover, by \cite[Lemma 1.4 p.~370]{Fel60} and \cite[Theorem 1.6.1 p.~29]{KaL18} the linear form $\L^1(G) \to \mathbb{C}$, $f \mapsto \int_G \varphi(s)f(s) \d\mu_G(s)$ extends to a positive linear form on the $\mathrm{C}^*$-algebra $\mathrm{C}^*(G)$ if and only if $\varphi$ is a continuous positive definite function. 

\begin{example} \normalfont
\label{norm-B(G)-commutatif}
If the locally compact group $G$ is \textit{abelian}, recall that the Fourier transform $\hat{\mu} \co G \to \mathbb{C}$ of a bounded regular complex Borel measure $\mu \in \M(\hat{G})$ on the Pontryagin dual $\hat{G}$ is given by
$\hat{\mu}(s)
\ov{\mathrm{def}}{=} \int_{\hat{G}} \ovl{\chi(s)}\d\mu(\chi)$ where $s \in G$. 
According to \cite[Exemple p.~92]{Eym64}, $\B(G)$ is the space of Fourier transforms $\varphi=\hat{\mu}$ of bounded regular complex Borel measures $\mu \in \M(\hat{G})$ and $\norm{\varphi}_{\B(G)}=\norm{\mu}_{\M(\hat{G})}$. 

\end{example}

We will use the next observation written without proof in \cite[p.~188]{Eym64}. 
For the sake of completeness, we give a proof.

\begin{prop}
\label{Conj-pos-def}
Let $G$ be a locally compact group. Let $\varphi \co G \to \mathbb{C}$ be a continuous positive definite function. We have
\begin{equation}
\label{norm-pos-def}
\norm{\varphi}_{\B(G)}
=\varphi(e).
\end{equation} 
\end{prop}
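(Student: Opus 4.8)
The plan is to establish the two inequalities separately, both of which follow directly from the definition \eqref{Norm-BG} of the norm on $\B(G)$ together with the Gelfand--Naimark--Segal construction for continuous positive definite functions. We may assume $\varphi \neq 0$, since otherwise $\varphi(e)=0=\norm{\varphi}_{\B(G)}$ and there is nothing to prove. Note first that positive definiteness forces $\varphi(e)$ to be a nonnegative real number (take $n=1$ in the defining inequality), which is what will allow us to drop absolute values below.

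For the lower bound $\varphi(e) \leq \norm{\varphi}_{\B(G)}$, I would take an arbitrary continuous unitary representation $(\pi,H)$ and vectors $\xi,\eta \in H$ with $\varphi = \langle \pi(\cdot)\xi,\eta\rangle_H$. Evaluating at the identity gives $\varphi(e) = \langle \pi(e)\xi,\eta\rangle_H = \langle \xi,\eta\rangle_H$, and since $\varphi(e)$ is real and nonnegative the Cauchy--Schwarz inequality yields $\varphi(e) = |\langle \xi,\eta\rangle_H| \leq \norm{\xi}\,\norm{\eta}$. Passing to the infimum over all such triples $(\pi,\xi,\eta)$ in \eqref{Norm-BG} gives $\varphi(e) \leq \norm{\varphi}_{\B(G)}$.

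For the reverse inequality, I would invoke the GNS construction recalled in the preliminaries: the continuous positive definite function $\varphi$ is a \emph{diagonal} matrix coefficient of a single cyclic representation. Concretely, there is a continuous unitary representation $(\pi_\varphi, H_\varphi)$ and a cyclic vector $\xi_\varphi \in H_\varphi$ with $\varphi(s) = \langle \pi_\varphi(s)\xi_\varphi, \xi_\varphi\rangle_{H_\varphi}$ for every $s \in G$; evaluating at $s=e$ gives $\norm{\xi_\varphi}^2 = \langle \xi_\varphi,\xi_\varphi\rangle_{H_\varphi} = \varphi(e)$. Using this particular triple $(\pi_\varphi,\xi_\varphi,\xi_\varphi)$ as an admissible choice in \eqref{Norm-BG} yields $\norm{\varphi}_{\B(G)} \leq \norm{\xi_\varphi}\,\norm{\xi_\varphi} = \norm{\xi_\varphi}^2 = \varphi(e)$. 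Combining the two inequalities gives \eqref{norm-pos-def}.

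I do not expect a serious obstacle: the argument is essentially a bookkeeping of the definition of $\norm{\cdot}_{\B(G)}$ against the GNS representation. The only points deserving care are the observation that $\varphi(e) \geq 0$ is real (so that $\varphi(e) = |\varphi(e)|$ and Cauchy--Schwarz apply cleanly in the lower bound), and the correct normalization in the GNS construction, namely that the cyclic vector satisfies $\norm{\xi_\varphi}^2 = \varphi(e)$ rather than $=1$. Alternatively one may first normalize $\varphi/\varphi(e) \in \mathrm{P}_1(G)$, apply the version of the construction stated in the preliminaries, and rescale the cyclic vector by $\varphi(e)^{1/2}$.
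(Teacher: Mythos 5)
Your proof is correct and takes essentially the same route as the paper's second (general locally compact) proof: the upper bound is the identical GNS argument, writing $\varphi = \langle \pi_\varphi(\cdot)\xi_\varphi,\xi_\varphi\rangle_{H_\varphi}$ with $\norm{\xi_\varphi}^2 = \varphi(e)$ and using this triple as an admissible choice in \eqref{Norm-BG}. For the lower bound the paper simply cites $\norm{\varphi}_{\B(G)} \geq \norm{\varphi}_{\infty} \geq \varphi(e)$, whereas you unwind that citation into the direct Cauchy--Schwarz estimate $\varphi(e) = \langle \xi,\eta\rangle_H \leq \norm{\xi}\norm{\eta}$ over all representing triples --- a purely cosmetic difference.
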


\begin{proof}
\textit{First proof if $G$ is discrete.} Let $G$ be a discrete group. By \cite[13.9.2 p.~303]{Dix77},  the full $\mathrm{C}^*$-algebra $\mathrm{C}^*(G)$ of $G$ is unital\footnote{\thefootnote. Actually, by \cite{Mil71} the full $\mathrm{C}^*$-algebra $\mathrm{C}^*(G)$ of a locally compact group $G$ is unital if and only if $G$ is discrete.}. By \eqref{Def-omega-varphi}, we have a positive linear form $\omega_\varphi \co \mathrm{C}^*(G) \to \mathbb{C}$, $U(s) \mapsto \varphi(s)$. So using \cite[Theorem 4.3.2 p.~256]{KaR97} in the second equality, we conclude that 
$$
\norm{\varphi}_{\B(G)}
=\norm{\omega_\varphi}_{\mathrm{C}^*(G)^*}
=\omega_\varphi(1)
=\omega_\varphi(U(e))
=\varphi(e).
$$ 
\noindent\textit{Second proof if $G$ is locally compact.} Using \cite[Remark 2.1.10 p.~43]{KaL18}, we know that $\norm{\varphi}_{\B(G)} \geq \norm{\varphi}_{\L^\infty(G)} \geq \varphi(e)$.
Furthermore, with \cite[Theorem 13.4.5 p.~288]{Dix77}, we can write $\varphi=\langle \pi(\cdot)\xi,\xi \rangle_H$, where $\pi$ is a continuous unitary representation of $G$ on some complex Hilbert space $H$ and $\xi \in H$. We deduce that $\varphi(e)=\langle \xi,\xi \rangle_H=\norm{\xi}_H^2 
\ov{\eqref{Norm-BG}}{\geq} \norm{\varphi}_{\B(G)}$.
\end{proof}

\paragraph{Fell's absorption principle} Let $G$ be a locally compact group. Let $\pi \co G \to \mathcal{B}(H)$ be any continuous unitary representation of $G$. Recall Fell's absorption principle, e.g.~\cite[Lemma 5.5.3 p.~187]{KaL18} (see also \cite[Proposition 8.1 p.~149]{Pis03} for the discrete case). 
If $1_H \co G \to \mathcal{B}(H)$, $s \mapsto \Id_H$ is the identity representation, we have a unitary equivalence
\begin{equation}
\label{Fell}
\lambda \ot \pi
\approx\lambda \ot 1_H.
\end{equation}


\paragraph{Groupoids} We refer to \cite{Hah78}, \cite{Muh90}, \cite{Pat03}, \cite{Pat04}, \cite{Pat99}, \cite{Ren97} \cite{Ren80} and \cite{RaW97} for background on groupoids. 
A groupoid is a set $G$ together with a distinguished subset $G^{(2)} \subseteq G \times G$, a multiplication map $G^{(2)} \to G$, $(s,t) \mapsto st$ and an inverse map $G \to G$, $s \mapsto s^{-1}$ such that
\begin{enumerate}
\item\label{it:gpd0} for any $s \in G$ we have $(s^{-1})^{-1} = s$,

\item\label{it:gpd1} If $(s,t), (t,r) \in G^{(2)}$ then $(st,r)$ and $(s,tr)$ belong to $G^{(2)}$ and $(st)r = s(tr)$,

\item\label{it:gpd2} for any $s \in G$ we have $(s,s^{-1}) \in G^{(2)}$ and if $s,r \in G$ satisfies $(s,r) \in G^{(2)}$, we have $s^{-1} (s r) = r$ and $(sr)r^{-1} = s$.
\end{enumerate}

We say that $G^{(2)}$ is the set of composable pairs. Second axiom shows that for products of three groupoid elements, there is no ambiguity in dropping the parentheses, and simply writing
$str$ for $(st)r$. A groupoid $G$ is a group if and only if its unit space $G^{(0)}$ is a singleton.

Given a groupoid $G$ we shall write $G^{(0)} \ov{\mathrm{def}}{=} \{s^{-1}s : s \in G\}$ and refer to elements of $G^{(0)}$ as units and to $G^{(0)}$ itself as the unit space. Since $(s^{-1})^{-1} = s$ for any $s \in G$, we also have $G^{(0)} = \{ss^{-1} : s \in G\}$. We define the range and domain maps $r,d  \co G \to G^{(0)}$ by
$$
r(s) \ov{\mathrm{def}}{=} ss^{-1}
\qquad\text{ and }\qquad 
d(s)  \ov{\mathrm{def}}{=} s^{-1}s, \quad s \in G.
$$
For any $s,t \in G$ we have $(s,t) \in G^{(2)}$ if and only if $d(s) = r(t)$. For any unit $u \in G^{(0)}$, we let $G^u \ov{\mathrm{def}}{=} r^{-1}(\{u\})$ and $G_u \ov{\mathrm{def}}{=} d^{-1}(\{u\})$.

\paragraph{Measured groupoids} A locally compact groupoid is a groupoid $G$ equipped with a locally compact topology, where the inversion map $s \mapsto s^{-1}$ is continuous, and the multiplication map $(s,t) \mapsto st$ is continuous with respect to the relative topology on $G^{(2)}$, considered as a subset of $G \times G$.

Following \cite[Definition 2.2 p.~16]{Ren80} and \cite[Definition 2.28 p.~24]{Muh90}, a left Haar system for $G$ is defined as being a family $(\nu^u)_{u \in G^{(0)}}$ of positive Radon measures on $G$ such that
\begin{enumerate}
	\item the support $\supp \nu^u$ of the measure $\nu^u$ is $G^u$,
	
	\item for any function $f \in \C_c(G)$, the map $G^{(0)} \to \mathbb{C}$, $u \mapsto \int_G f \d\nu^u$ is continuous,
	
	\item for any function $f \in \C_c(G)$ and any $s \in G$ we have $\int_G f(st) \d \nu^{d(s)}(t)=\int_G f(t) \d \nu^{r(s)}(t)$.
\end{enumerate}

Roughly speaking, to each unit $u$ we associate a measure $\nu^u$ supported on $G^u$. With such system, the space $\C_c(G)$ of continuous functions with compact support, endowed with the operations
$$
(f*g)(s)
\ov{\mathrm{def}}{=} \int_G f(t)g(t^{-1}s) \d\nu^{r(s)}(t), \quad 
f^*(s)
\ov{\mathrm{def}}{=} \ovl{f(s^{-1})}, \quad s \in G,
$$
is a $*$-algebra, according to \cite[p.~38]{Pat99}.

Let us additionally consider a positive Radon measure $\mu$ on the unit space $G^{(0)}$. Following  \cite[Definition 3.1 p.~22]{Ren80} and \cite[p.~86]{Pat99}, we can introduce the measure $\nu \ov{\mathrm{def}}{=} \int_{G^{(0)}} \nu^u\d\mu(u)$ induced on $G$ by $\mu$. 
The measure $\mu$ is said to be quasi-invariant if the measure $\nu$ is equivalent to its image by the inversion map $G \to G$, $s \mapsto s^{-1}$. A measured groupoid $(G,\nu,\mu)$ is a locally compact groupoid equipped with a left Haar system $\nu$ and a quasi-invariant measure\footnote{\thefootnote. Strictly speaking, only the class on the measure is important. But we do not need this point in this paper.} $\mu$. 

\begin{example} \normalfont
Every locally compact group $G$ can be viewed as a locally compact groupoid, with $G^{(0)} = \{e\}$, multiplication given by the group operation, and inversion by the usual group inverse. We obtain a measured groupoid with a left Haar measure and the Dirac measure as a quasi-invariant measure on $G^{(0)}$. 
\end{example}

\begin{example} \normalfont 
Let $X$ be a locally compact space. Set $G \ov{\mathrm{def}}{=} X \times X$ and $G^{(2)}\ov{\mathrm{def}}{=} \{((x,y),(y,z)) : x,y,z \in X\}$. Moreover, for any $x,y,z \in X$ we define $(x,y)(y,z) \ov{\mathrm{def}}{=} (x,z)$ and $(x,y)^{-1} \ov{\mathrm{def}}{=} (y,x)$. We obtain the pair groupoid (or Brandt groupoid). We have $G^{0}=\{(x,x) : x \in X\}$, which can be identified with $X$. Moreover, for any $x,y \in X$ we have $r(x,y)=x$ and $d(x,y)= y$. For any unit $u \in X$, we have $G_u=X \times \{u\}$ and $G^u=\{u\} \times X$.

If we equip $X$ with a positive Radon measure $\mu$, we can define for any unit $u \in X$ the measure $\nu^u \ov{\mathrm{def}}{=} \delta_u \ot \mu$ on $G$, where $\delta_u$ is the unit measure at $u$. In this case, the measure $\mu$ is quasi-invariant. If we consider the discrete space $X=\{1,\ldots,n\}$ for some integer $n \geq 1$, equipped with the counting measure $\mu_n$, we denote by $\mathrm{P}_n$ the associated measured groupoid. 
\end{example}

\begin{example} \normalfont
If $G_1$ and $G_2$ are groupoids, it is clear that the product $G_1 \times G_2$ has a canonical structure of groupoid with $(G_1 \times G_2)^{(2)}=\big\{((x_1,y_1),(x_2,y_2)) : (x_1,x_2) \in G_1^{(2)}, (y_1,y_2) \in G_2^{(2)}\big\}$, 
\begin{equation}
\label{compo-product}
(x,y)^{-1}=(x^{-1},y^{-1}) 
\quad \text{and} \quad 
(x_1,y_1)(x_2,y_2)=(x_1x_2,y_1y_2).
\end{equation}
We have $(G_1 \times G_2)^{(0)}=G_1^{(0)} \times G_2^{(0)}$, $d(x,y)=(d(x),d(y))$ and $r(x,y)=(r(x),r(y))$ 

If $G_1=\mathrm{P}_n$ and $G_2=G$ is a group, an element of the product $\mathrm{P}_n \times G$ can be written under the form $(i,j,s)$ with $i,j \in \{1,\ldots,n\}$ and $s \in G$. We can see a complex function $F \co \mathrm{P}_n \times G \to \mathbb{C}$ as a $n \times n$ matrix-valued function $[F_{ij}]_{1 \leq i,j \leq n}$ on the group $G$, where $F_{ij} \co G \to \mathbb{C}$, $s \mapsto F(i,j,s)$.
\end{example}

\paragraph{von Neumann algebras and multipliers} The von Neumann algebra of a measured groupoid $(G,\lambda,\mu)$ is the von Neumann algebra generated by $\lambda(\C_c(G))$, where $\lambda$ is the regular representation defined in \cite[p.~55]{Ren80} and \cite[pp.~93-94]{Pat99} of the measured groupoid $(G,\nu,\mu)$. If $G=\mathrm{P}_n$ for some integer $n \geq 1$ then it is easy to check that the von Neumann algebra $\VN(G)$ is $*$-isomorphic to the matrix algebra $\M_n$.

Following \cite[Definition 3.1 p.~475]{Ren97}, we say that a function $\varphi \in \L^\infty(G)$ induces a bounded Fourier multiplier if it induces a weak* continuous\footnote{\thefootnote. In \cite[Proposition 3.1 p.~474]{Ren97}, <<bounded>> must be replaced with <<weak* continuous>> to ensure the correctness of the statement.} operator $\VN(G) \to \VN(G)$, $\lambda(f) \mapsto \lambda(\varphi f)$.

\paragraph{Positive definite functions} Let $(G,\nu,\mu)$ be a measured groupoid. By \cite[Proposition 1.1 p.~457 and Definition 1.1 p.~458]{Ren97}, a function $\varphi \in \L^\infty(G)$ is said to be positive definite if for any integer $n \geq 1$ and any complex numbers $\alpha_1,\ldots ,\alpha_n \in \mathbb{C}$, the inequality
\begin{equation}      
\label{def-pos-def}
\sum_{k,l=1}^n \alpha_{k} \ovl{\alpha_{l}} \varphi(\gamma_{k}^{-1}\gamma_{l})
\geq 0
\end{equation}
holds for $\mu$-almost all $u \in G^{(0)}$ and $\nu^u$-almost all $\gamma_1,\ldots ,\gamma_n \in G^{u}$. We now naturally relate this condition to \cite[Proposition 8.4 p.~166]{ArK23}, where the proof holds for the case of a locally compact group endowed with a trivial cocycle.

\begin{lemma}
\label{Lemma-Bloc-def-pos}
Let $G$ be a locally compact group and let $n \geq 1$ be an integer. The $n \times n$ matrix-valued function $F=[F_{ij}]_{1 \leq i,j \leq n}$ in the space $\L^\infty(\mathrm{P}_n \times G)$ defines a positive definite function on the groupoid $\mathrm{P}_n \times G$ if and only if for any integer $m \geq 1$, any elements $i_1,\ldots, i_m \in \{1,,\ldots,n\}$, any $s_1,\ldots,s_m \in G$ and any complex numbers $\alpha_1,\ldots,\alpha_m \in \mathbb{C}$, we have the inequality
\begin{equation}
\label{Condition-ArK}
\sum_{k,l=1}^{m} \alpha_k \ovl{\alpha_{l}} F_{i_{k}i_{l}}(s_{k}^{-1}s_{l}) 
\geq 0.
\end{equation}
\end{lemma}

\begin{proof}
Note that the unit space of the groupoid $\mathrm{P}_n \times G$ identifies to $(\mathrm{P}_n \times G)^{(0)}=\mathrm{P}_n^{(0)} \times G^{(0)}=\{1,\ldots,n\}$. Fix some $q \in \{1,\ldots,n\}$. If $\gamma_k=(q,i_k,s_k)$ and $\gamma_l=(q,j_l,s_l)$ are elements of the groupoid $\mathrm{P}_n \times G$ with $i_k,j_l \in \{1,\ldots,n\}$ and $s_k,s_l \in G$ then we have 
$$
\gamma_{k}^{-1}\gamma_{l}
=(q,i_k,s_k)^{-1}(q,j_l,s_l)
\ov{\eqref{compo-product}}{=} (i_k,q,s_k^{-1})(q,j_l,s_l)
\ov{\eqref{compo-product}}{=} (i_k,j_l,s_k^{-1}s_l).
$$
So the condition \eqref{def-pos-def} translates to
\begin{equation}      
\sum_{k,l=1}^m \alpha_{k} \ovl{\alpha_{l}} F(i_k,j_l,s_k^{-1}s_l)
\geq 0
\quad
\text{i.e.}     
\sum_{k,l=1}^m \alpha_{k} \ovl{\alpha_{l}} F_{i_k,j_l}(s_k^{-1}s_l) 
\geq 0
\end{equation}
\end{proof}

%
%
%


\paragraph{A characterization of functions of Fourier-Stieltjes algebras} In the case of groupoids, we caution the reader that there exist three notions of Fourier-Stieltjes algebra, introduced in the papers \cite{Pat04}, \cite{Ren97} and \cite{RaW97}. We refer to the excellent survey \cite{Pat03} for more information. We require a specific case (for groups) of a result, essentially stated in \cite[Proposition 1.3 p.~459 and Lemma 1.1 p.~460]{Ren97} and \cite[Proposition 5 p.~1266]{Pat04} that is more generally stated for Fourier-Stieltjes algebras associated with measured groupoids. Unfortunately, the proof of \cite[Proposition 1.3 p.~459]{Ren97} is false\footnote{\thefootnote. The operator $L'(\gamma)$ of \cite[Proposition 1.3 p.~459]{Ren97} is not a unitary.} (the result \cite[Proposition 5 p.~1266]{Pat04} is incomplete) and must be corrected. Consequently, we provide an argument sufficient for our purposes.

For the proof, we will use the notion of $G$-Hilbert bundle on a locally compact groupoid $G$, which is a Hilbert bundle $\cal{H}$ over its unit space $G^{(0)}$ such that there is a linear unitary operator $\pi_s \co \cal{H}_{d(s)} \to \cal{H}_{r(s)}$ for each $s \in G$ such that for all continuous bounded sections $\xi$ and $\eta$ of $\cal{H}$, the map $(\xi, \eta) \co G \to \cal{B}(\cal{H}_{d(s)},\cal{H}_{r(s)})$, $s \mapsto \big\la \pi_s\xi(d(s)), \eta(r(s)) \big\ra_{\cal{H}_{r(s)}}$ is continuous, and the map $s \mapsto \pi_s$ is a groupoid homomorphism from $G$ into the isomorphism groupoid of the fibered set $\cup_{u \in G^{(0)}} \cal{H}_u$, see \cite[Chapter~1]{Muh90}. Finally, recall that if $\phi \co G \to \mathbb{C}$ is a continuous function then $\phi$ is positive definite if and only if $\phi$ is of the form $(\xi,\xi)$ for some $G$-Hilbert bundle. This result is proved in \cite[Theorem 1 p.~1264]{Pat04}.

\begin{prop}
\label{Prop-carac-BG-2-2}
Let $G$ be a locally compact group. A continuous function $\varphi \co G \to \mathbb{C}$ belongs to the Fourier-Stieltjes algebra $\B(G)$ if and only if there exists continuous positive definite functions $\psi_1,\psi_2 \co G \to \mathbb{C}$ such that the matrix $\begin{bmatrix} 
\psi_1 & \varphi \\ 
\check{\ovl{\varphi}} &  \psi_2
\end{bmatrix}$ defines a continuous positive definite function on the measured groupoid $\mathrm{P}_2 \times G$. In this case, we have
\begin{equation}
\label{Norm-B-G-utile}
\norm{\varphi}_{\B(G)}
= \inf \norm{\psi_1}_{\L^\infty(G)}^{\frac{1}{2}} \norm{\psi_2}_{\L^\infty(G)}^{\frac{1}{2}} ,
\end{equation}
where the infimum is taken over all $\psi_1$ and $\psi_2$ satisfying the previous condition.
\end{prop}

\begin{proof}
$\Rightarrow$: Let $\epsi > 0$. Using \eqref{BG-as-entries} and \eqref{Norm-BG}, we can write $\varphi=\la \pi(\cdot)\xi_1,\xi_2 \ra_{H}$ for some vectors $\xi_1$ and $\xi_2$ in a complex Hilbert space $H$ and some continuous unitary representation $\pi$ of $G$ on $H$ with $\norm{\xi_1}_{H} \norm{\xi_2}_{H} \leq \norm{\varphi}_{\B(G)} + \epsi$. For any $s \in G$, we have 
$$
\check{\ovl{\varphi}}(s)
=\ovl{\la \pi(s^{-1})\xi_1,\xi_2 \ra_{H}}
=\la \xi_2, \pi(s)^*\xi_1\ra_{H}
=\la \pi(s)\xi_2, \xi_1\ra_{H}.
$$ 
Hence $\check{\ovl{\varphi}}=\la \pi(\cdot)\xi_2,\xi_1 \ra_{H}$. Now, we introduce the continuous positive definite functions $\psi_1 \ov{\mathrm{def}}{=} \la\pi(\cdot)\xi_1,\xi_1 \ra_H$ and $\psi_2 \ov{\mathrm{def}}{=} \la \pi(\cdot)\xi_2,\xi_2 \ra_H$ on the group $G$. Now, we consider the Hilbert $(\mathrm{P}_2 \times G)$-bundle $\cal{H}$ over the discrete space $(\mathrm{P}_2 \times G)^{(0)}=\{1,2\}$ defined by $\cal{H}_{1} \ov{\mathrm{def}}{=} H$ and $\cal{H}_{2} \ov{\mathrm{def}}{=} H$ and $\pi_{i,j,s} \ov{\mathrm{def}}{=} \pi(s) \co \cal{H}_{j} \to \cal{H}_{i}$ for any $i,j \in \{1,2\}$ and any $s \in G$. For any $i \in \{1,2\}$, we introduce the vector $\zeta(i) \ov{\mathrm{def}}{=} \xi_i$. This defines a section $\zeta$ of the bundle $\cal{H}$. For any $i,j \in \{1,2\}$ and any $s \in G$, we obtain
\begin{align}
\MoveEqLeft
\label{blabla-34}
(\zeta,\zeta)(i,j,s)   
= \big\la \pi_{i,j,s}\zeta(d(i,j,s)), \zeta(r(i,j,s)) \big\ra_{\cal{H}_i}      
=\big\la \pi_{i,j,s}\zeta(j), \zeta(i) \big\ra_{\cal{H}_i} \\
&=\big\la \pi(s)\xi_j, \xi_i \big\ra_{\cal{H}_i} 
=\la \pi(s)\xi_j, \xi_i \ra_{H}. \nonumber
\end{align}
We deduce that
\begin{align*}
\MoveEqLeft
\begin{bmatrix} 
\psi_1 & \varphi \\ 
\check{\ovl{\varphi}} &  \psi_2
\end{bmatrix}
=\begin{bmatrix} 
\la\pi(\cdot)\xi_1,\xi_1 \ra_H & \la \pi(\cdot)\xi_1,\xi_2 \ra_{H} \\ 
\la \pi(\cdot)\xi_2,\xi_1 \ra_{H} &  \la\pi(\cdot)\xi_2,\xi_2 \ra_H
\end{bmatrix}         
\ov{\eqref{blabla-34}}{=} (\zeta,\zeta).
\end{align*}
Consequently by \cite[Theorem 1 p.~1264]{Pat04}, the continuous function $\begin{bmatrix} 
\psi_1 & \varphi \\ 
\check{\ovl{\varphi}} &  \psi_2
\end{bmatrix}$ is positive definite on the measured groupoid $\mathrm{P}_2 \times G$. Moreover, we have
\begin{align*}
\MoveEqLeft
\norm{\psi_1}_{\L^\infty(G)}^{\frac{1}{2}} \norm{\psi_2}_{\L^\infty(G)}^{\frac{1}{2}} 
\ov{\eqref{norm-pos-def}}{=} \psi_1(e)^{\frac{1}{2}}\psi_2(e)^{\frac{1}{2}} 
=\la\pi(e) \xi_1 ,\xi_1 \ra_H^{\frac{1}{2}}  \la\pi(e) \xi_2,\xi_2 \ra_H^{\frac{1}{2}} \\
&= \norm{\xi_1}_H \norm{\xi_2}_H       
\leq \norm{\varphi}_{\B(G)} + \epsi.
\end{align*}
Since $\epsi > 0$ is arbitrary, we conclude that $\inf \norm{\psi_1}_{\L^\infty(G)}^{\frac{1}{2}} \norm{\psi_2}_{\L^\infty(G)}^{\frac{1}{2}} \leq \norm{\varphi}_{\B(G)}$.

$\Leftarrow$: Suppose that there exists some continuous positive definite functions $\psi_1,\psi_2 \co G \to \mathbb{C}$ such that the matrix $F \ov{\mathrm{def}}{=} \begin{bmatrix} 
\psi_1 & \varphi \\ 
\check{\ovl{\varphi}} &  \psi_2
\end{bmatrix}$ defines a continuous positive definite function on the measured groupoid $\mathrm{P}_2 \times G$. By \cite[Theorem 1 p.~1264]{Pat04}, there exists a Hilbert $(\mathrm{P}_2 \times G)$-bundle $\cal{H}$ over the discrete space $(\mathrm{P}_2 \times G)^{(0)}=\{1,2\}$, with groupoid homomorphism $(i,j,s) \mapsto \pi_{(i,j,s)}$, and a section $\zeta \co \{1,2\} \to $ of $\cal{H}$ such that 
$\begin{bmatrix} 
\psi_1 & \varphi \\ 
\check{\ovl{\varphi}} &  \psi_2
\end{bmatrix} = (\zeta,\zeta)$. So we have two complex Hilbert spaces $\cal{H}_1$ and $\cal{H}_2$. Here $\pi_{(i,j,s)} \co \cal{H}_j \to \cal{H}_i$ is a unitary operator. Note that $\pi_{(1,1,e)}=\Id_{\cal{H}_1}$ and $\pi_{(2,2,e)}=\Id_{\cal{H}_2}$. We consider the operator 
\begin{equation}
\label{}
P
\ov{\mathrm{def}}{=} \frac{1}{2}\begin{bmatrix} 
\Id_{\cal{H}_1} & \pi_{(1,2,e)} \\ 
\pi_{(2,1,e)} & \Id_{\cal{H}_2}
\end{bmatrix}
\end{equation}
acting on the Hilbert space $\cal{H}_1 \oplus \cal{H}_2$. It is easy to see that $P$ is a selfadjoint projection and that it commutes with each operator $\frac{1}{2}\begin{bmatrix} 
\pi_{(1,1,s)} & \pi_{(1,2,s)} \\ 
\pi_{(2,1,s)} & \pi_{(2,2,s)}
\end{bmatrix}$. We introduce the complex Hilbert space $H \ov{\mathrm{def}}{=} P(\cal{H}_1 \oplus \cal{H}_2)$. Observe that an element $(x,y)$ in $\cal{H}_1 \oplus \cal{H}_2$ belongs to the subspace $H$ if and only if $\pi_{(1,2,e)}(y)=x$ and $\pi_{(2,1,e)}(x)=y$. Consequently, we can consider the operator
$$
\tilde{\pi}_{s}
\ov{\mathrm{def}}{=} \frac{1}{2}\begin{bmatrix} 
\pi_{(1,1,s)} & \pi_{(1,2,s)} \\ 
\pi_{(2,1,s)} & \pi_{(2,2,s)}
\end{bmatrix}|_{H}, \quad s \in G.
$$
It is easy to check\footnote{\thefootnote. For any $s,t \in G$, we have
$$
\bigg(\frac{1}{2}\begin{bmatrix} 
\pi_{(1,1,s)} & \pi_{(1,2,s)} \\ 
\pi_{(2,1,s)} & \pi_{(2,2,s)}
\end{bmatrix}\bigg)
\bigg(\frac{1}{2}\begin{bmatrix} 
\pi_{(1,1,t)} & \pi_{(1,2,t)} \\ 
\pi_{(2,1,t)} & \pi_{(2,2,t)}
\end{bmatrix}\bigg)
=\frac{1}{2}\begin{bmatrix} 
\pi_{(1,1,st)} & \pi_{(1,2,st)} \\ 
\pi_{(2,1,st)} & \pi_{(2,2,st)}
\end{bmatrix}.
$$
Thus we also have
$$
\bigg(\frac{1}{2}\begin{bmatrix} 
\pi_{(1,1,s)} & \pi_{(1,2,s)} \\ 
\pi_{(2,1,s)} & \pi_{(2,2,s)}
\end{bmatrix}\bigg)
\bigg(\frac{1}{2}\begin{bmatrix} 
\pi_{(1,1,s^{-1})} & \pi_{(1,2,s^{-1})} \\ 
\pi_{(2,1,s^{-1})} & \pi_{(2,2,s^{-1})}
\end{bmatrix}\bigg)
=\frac{1}{2}\begin{bmatrix} 
\pi_{(1,1,e)} & \pi_{(1,2,e)} \\ 
\pi_{(2,1,e)} & \pi_{(2,2,e)}
\end{bmatrix}
= P = \Id_H.
$$} that we have a continuous unitary representation of $G$ on the  Hilbert space $H$. Indeed, for any $(x,y) \in H$, we have 
\begin{align*}
\MoveEqLeft
\norm{\tilde{\pi}_{s}(x,y)}_H^2         
=\frac14 \left( \norm{\pi_{1,1,s}(x) + \pi_{1,2,s}(y)}_{\cal{H}_1}^2 + \norm{\pi_{2,1,s}(x) + \pi_{2,2,s}(y)}_{\cal{H}_2}^2 \right) \\
&=\frac14 \left( \norm{\pi_{1,1,s}\big( x+ \pi_{1,2,e}(y))}_{\cal{H}_1}^2 + \norm{\pi_{2,2,s}(\pi_{2,1,e}(x)+ y)}_{\cal{H}_2}^2 \right)\\
&=\frac14 \left( \norm{ x+ \pi_{1,2,e}(y)}_{\cal{H}_1}^2 + \norm{\pi_{2,1,e}(x)+ y}_{\cal{H}_2}^2 \right) 
=\frac14 \left( \norm{2x}_{\cal{H}_1}^2 + \norm{2y}_{\cal{H}_2}^2 \right) \\
&= \norm{x}_{\cal{H}_1}^2 +\norm{y}_{\cal{H}_2}^2 
=\norm{(x,y)}_H^2.
\end{align*}
We consider the vectors $\xi \ov{\mathrm{def}}{=} \sqrt{2} P(0,\zeta(2))$ and $\eta \ov{\mathrm{def}}{=} \sqrt{2} P(\zeta(1),0)$ in the space $H$. Now, for any $s \in G$ we observe that
\begin{align*}
\MoveEqLeft
\varphi(s)
=(\zeta,\zeta)(1,2,s)
=\big\la \pi_{(1,2,s)}\zeta(d(1,2,s)), \zeta(r(1,2,s)) \big\ra \\
&=\big\la \pi_{(1,2,s)}\zeta(2), \zeta(1) \big\ra \\
&=\bigg\la\frac{1}{2}\frac{1}{2}\begin{bmatrix} 
\Id_{\cal{H}_1} & \pi_{(1,2,e)} \\ 
\pi_{(2,1,e)} & \Id_{\cal{H}_2}
\end{bmatrix}
\begin{bmatrix} 
\pi_{(1,1,s)} & \pi_{(1,2,s)} \\ 
\pi_{(2,1,s)} & \pi_{(2,2,s)}
\end{bmatrix}(0,\zeta(2)),(\zeta(1),0) \bigg\ra\\
&=2\la \tilde{\pi}_{s}P(0,\zeta(2)),P(\zeta(1),0) \ra_H
= \la \tilde{\pi}_s \sqrt{2} P(0,\zeta(2)),\sqrt{2}P(\zeta(1),0) \ra_H
= \la \tilde{\pi}_{s}\xi , \eta \ra_H.         
\end{align*}
Hence the function $\varphi$ belongs to the Fourier-Stieltjes algebra $\B(G)$. Furthermore, if $i \in \{1,2\}$ we have
\begin{align}
\MoveEqLeft
\label{inter-989}
\norm{\psi_i}_{\L^\infty(G)}
\ov{\eqref{norm-pos-def}}{=} \psi_i(e)
=(\zeta,\zeta)(i,i,e) 
=\big\la \pi_{(i,i,e)}\zeta(d(i,i,e)), \zeta(r(i,i,e)) \big\ra \\
&=\big\la \pi_{(i,i,e)}\zeta(i), \zeta(i) \big\ra
= \norm{\zeta(i)}_{\cal{H}_i}^2.   \nonumber      
\end{align}
Next, observe that 
\begin{align*}
\MoveEqLeft
\norm{\xi}_H^2 = \sqrt{2}^2 \norm{P(0,\zeta(2))}_H^2 = 2 \left( \frac14 \norm{\pi_{1,2,e}(\zeta(2))}_{\cal{H}_1}^2 + \frac14 \norm{\zeta(2)}_{\cal{H}_2}^2 \right) \\
& = \frac12 \left(\norm{\zeta(2)}_{\cal{H}_2}^2 + \norm{\zeta(2)}_{\cal{H}_2}^2\right) = \norm{\zeta(2)}_{\cal{H}_2}^2.
\end{align*}
In the same way, we have $\norm{\eta}_H^2 = \norm{\zeta(1)}_{\cal{H}_1}^2$.
Moreover, we have 
$$
\norm{\varphi}_{\B(G)}
\ov{\eqref{Norm-BG}}{\leq} \norm{\xi}_{H} \norm{\eta}_{H}
= \norm{\zeta(2)}_{\cal{H}_2} \norm{\zeta(1)}_{\cal{H}_1}
\ov{\eqref{inter-989}}{=} \norm{\psi_1}_{\L^\infty(G)}^{\frac{1}{2}} \norm{\psi_2}_{\L^\infty(G)}^{\frac{1}{2}}.
$$
\end{proof}

\subsection{Links between Fourier-Stieltjes algebras and decomposable multipliers}
\label{Sec-FS-and-dec}

Let $G$ be a locally compact group. Recall that by \cite[Corollary 1.8 (i) p.~465]{DCH85} or \cite[Corollary 5.4.11 p.~185]{KaL18} we have a contractive inclusion $\B(G) \subseteq \frak{M}^{\infty,\cb}(G)$. This is even a complete contraction by \cite[Corollary 4.3 p.~179]{Spr04}, where we equip the Fourier-Stieltjes algebra $\B(G)$ with the dual operator space structure induced by the equality $\mathrm{C}^*(G)^*=\B(G)$. In the next result, we strengthen this result by replacing the space $\frak{M}^{\infty,\cb}(G)$ of completely bounded Fourier multipliers on the von Neumann algebra $\VN(G)$ by the space $\frak{M}^{\infty,\dec}(G)$ of decomposable Fourier multipliers on $\VN(G)$. While the inclusion $\B(G) \subseteq \frak{M}^{\infty,\dec}(G)$ is straightforward\footnote{\thefootnote. Indeed, if $\varphi \in \B(G)$ then we can write $\varphi=\varphi_1-\varphi_2+\i \varphi_3-\varphi_4$, where each $\varphi_i$ is a continuous positive definite function. By \cite[Proposition 5.4.9 p.~184]{KaL18}, each Fourier multiplier $M_{\varphi_i} \co \VN(G)\to \VN(G)$ is completely positive. Then it is immediate that the Fourier multiplier
$$
M_\varphi
=M_{\varphi_1-\varphi_2+\i(\varphi_3-\varphi_4)}
=M_{\varphi_1}-M_{\varphi_2}+\i(M_{\varphi_3}-M_{\varphi_4})
$$
is decomposable.}, the \textit{contractivity} of the inclusion $\B(G) \subseteq \frak{M}^{\infty,\dec}(G)$ is new, even in the case where $G$ is discrete. Here, we equip the space $\frak{M}^{\infty,\dec}(G)$ with the operator space structure induced by the one of the operator space $\Dec(\VN(G))$. For the proof, we will use the notion of a quasi-complete locally convex space. Recall that a locally convex space $X$ is called quasi-complete if every bounded Cauchy net in $X$ converges \cite[Definition 4.23 p.~107]{Osb14}.

\begin{prop}
\label{prop-B(G)-inclus-dec}
Let $G$ be a locally compact group. The map $\B(G) \to \frak{M}^{\infty,\dec}(G)$, $\varphi \mapsto M_\varphi$ is a well-defined injective complete contraction from the Fourier-Stieltjes algebra $\B(G)$ into the space $\frak{M}^{\infty,\dec}(G)$ of decomposable Fourier multipliers.
\end{prop}

\begin{proof}
We will present two distinct proofs.

\noindent\textit{First proof.} We begin with a purely group-theoretic argument. Let $\varphi \in \B(G)$. By homogeneity, we can suppose that $\norm{\varphi}_{\B(G)}=1$. We will use the associated linear form $\omega_\varphi \co \C^*(G) \to \mathbb{C}$, $\int_G f(s) U_s \d\mu_G(s) \mapsto \int_G \varphi(s)f(s) \d\mu_G(s)$ defined in \eqref{Def-omega-varphi}. By \cite[Lemma A.2.2 p.~360]{BlM04}, we can consider the unique weak* continuous extension $\tilde{\omega}_\varphi \co \W^*(G) \to \mathbb{C}$ on the von Neumann algebra $\W^*(G)$, where we use here (and only in this step) the identification $\W^*(G)=\C^*(G)^{**}$ of \cite[p.~265]{Dix77}. We will prove that for any $s \in G$ the element $U_s$ belongs to the von Neumann algebra $\W^*(G)$ and the equality
\begin{equation}
\label{magic-equality-1}
\tilde{\omega}_\varphi(U(s))
=\varphi(s), \quad s \in G.
\end{equation}
Let $s \in G$ and let $\mathfrak{B}$ be a neighbourhood basis at $s$ constituted of compact neighbourhoods. For any $V \in \mathfrak{B}$, consider a positive continuous function $f_V \co G \to \R^+$ on $G$ such that $\int_G f_V \d \mu_G=1$ with support contained in $V$. Then by \cite[ Corollary 3, VIII.17]{Bou04b} the net $(\int_G f_V(t)U_t \d\mu_G(t))$ converges to $U_s$ in the strong operator topology, and therefore also in the weak operator topology. Moreover, by \cite[VIII.15]{Bou04b} for any $V$ we have the estimate
$$
\norm{\int_G f_V(t)U_t \d\mu_G(t)} 
\leq \int_G f_V \d\mu_G
=1.
$$
So the net $(\int_G f_V(t)U_t \d\mu_G(t))_V$ is bounded. We deduce that the net $(\int_G f_V(t)U_t \d\mu_G(t))_V$ converges to $U_s$ in the weak* topology by \cite[Lemma 2.5 p.~69]{Tak02}. In particular, we deduce that $U_s$ belongs to the von Neumann algebra $\W^*(G)$. Moreover, on the one hand, we infer by weak* continuity of the linear form $\tilde{\omega}_\varphi$ that the net $(\tilde{\omega}_\varphi(\int_G f_V(t)U_t \d\mu_G(t))_V)$ converges to $\tilde{\omega}_\varphi(U_s)$. On the other hand, using the continuity of the function $\varphi \co G \to \mathbb{C}$ and \cite[Corollary 2, VIII.17]{Bou04b} in the limit process, we obtain
$$
\tilde{\omega}_\varphi\bigg(\int_G f_V(t)U_t \d\mu_G(t)\bigg)
=\omega_\varphi\bigg(\int_G f_V(t)U_t \d\mu_G(t)\bigg)
\ov{\eqref{Def-omega-varphi}}{=} \int_G \varphi(t)f_V(t) \d\mu_G(t)
\xra[V]{} \varphi(s).
$$
By uniqueness of the limit, we conclude that \eqref{magic-equality-1} is true.

By Fell's absorption principle \eqref{Fell} applied to the representation $U \co G \to \mathcal{B}(H)$ instead of $\pi$, there exists a unitary $W \co \L^2(G,H) \to \L^2(G,H)$ such that for any $s \in G$
$$
W(\lambda_s \ot \Id_H)W^*
=\lambda_s \ot U_s.
$$
By \cite[p.~9 and p.~25]{Dix81}, we deduce that there exists a normal unital $*$-homomorphism $\Delta \co \VN(G) \to \VN(G) \otvn \W^*(G)$, $\lambda_s \mapsto \lambda_s \ot U_s$. Since any Banach space is barreled \cite[Theorem 4.5 p.~97]{Osb14}, we see by \cite[Corollary 4.25 (b) p.~107]{Osb14} that the weak* topology on the dual Banach space $\VN(G)$ is quasi-complete. Consequently, by \cite[Corollary 2, III p.~38]{Bou04a}, for any function $f \in \C_c(G)$, the integral $\int_G f(s)\lambda_s \d \mu_G(s)$ is a well-defined weak* integral. Using the weak* continuity of $\Delta$ together with \cite[Proposition 1, VI.3]{Bou04a} in the first equality, we deduce that
\begin{align}
\MoveEqLeft
\label{Eq-1356}
\Delta\bigg(\int_G f(s)\lambda_s \d \mu_G(s)\bigg)
=\int_G f(s)\Delta(\lambda_s) \d \mu_G(s)
=\int_G f(s) (\lambda_s \ot U_s) \d \mu_G(s). 
\end{align}  
Now, for any $s \in G$, we obtain again with \cite[Proposition 1, VI.3]{Bou04b} that
\begin{align*}
\MoveEqLeft
(\Id \ot \tilde{\omega}_\varphi) \circ \Delta\bigg(\int_G f(s)\lambda_s \d \mu_G(s)\bigg)         
\ov{\eqref{Eq-1356}}{=} (\Id \ot \tilde{\omega}_\varphi)\bigg(\int_G f(s) (\lambda_s \ot U_s) \d \mu_G(s)\bigg)\\
&=\bigg(\int_G f(s) (\Id \ot \tilde{\omega}_\varphi)(\lambda_s \ot U_s) \d \mu_G(s)\bigg)
\ov{\eqref{magic-equality-1}}{=}\int_G \varphi(s)f(s)\lambda_s \d \mu_G(s).
\end{align*} 
We conclude that the weak* continuous map $(\Id \ot \tilde{\omega}_\varphi) \circ \Delta$ is the Fourier multiplier $M_\varphi$ of symbol $\varphi$. 

Note that the $*$-homomorphism $\Delta$ is decomposable since it is completely positive. According to \cite[Lemma 5.4.3 p.~96]{EfR00}, the linear form $\tilde{\omega}_\varphi$ is equally decomposable with 
\begin{equation}
\label{inter-998}
\norm{\tilde{\omega}_\varphi}_{\dec}
\ov{\eqref{dec=cb}}{=} \norm{\tilde{\omega}_\varphi}_{\cb}
=\norm{\tilde{\omega}_\varphi}
=1,
\end{equation}
where we use \cite[Corollary 2.2.3 p.~24]{EfR00} in the second equality. By Lemma \ref{Lemma-tensor-dec-2}, we deduce that we have a well-defined weak* continuous decomposable map $\Id \ot \tilde{\omega}_\varphi \co \VN(G) \otvn \W^*(G) \to \VN(G)$. We conclude by composition that the linear map $M_\varphi=(\Id \ot \tilde{\omega}_\varphi) \circ \Delta \co \VN(G) \to \VN(G)$ is decomposable and that
\begin{align*} 
\MoveEqLeft
\norm{M_\varphi}_{\dec}
=\bnorm{(\Id \ot \tilde{\omega}_\varphi) \circ \Delta}_{\dec}
\ov{\eqref{Composition-dec}}{\leq} \norm{\Id \ot \tilde{\omega}_\varphi}_\dec \norm{\Delta}_\dec \\
&\ov{\eqref{dec-tensor-2}}{\leq} \norm{\tilde{\omega}_\varphi}_\dec \norm{\Delta}_\dec 
\ov{\eqref{inter-998}}{=} \norm{\Delta}_\dec
\ov{\eqref{dec-et-cp}}{\leq} 1.
\end{align*}
Finally, it is easy to check that the map $\B(G) \to \frak{M}^{\infty,\dec}(G)$, $\varphi \mapsto M_\varphi$ is also injective. For the complete contractivity, the argument is similar. Consider a matrix $[\varphi_{ij}] \in \M_{n}(\B(G))$. We have a completely bounded map $[\omega_{\varphi_{ij}}] \co \C^*(G) \to \M_n$. Using \cite[1.4.8 p.~24]{BlM04}, its unique weak* continuous extension $[\tilde{\omega}_{\varphi_{ij}}] \co \W^*(G) \to \M_n$ is completely bounded with the same completely bounded norm. Note that this linear map is decomposable and its decomposable norm coincides with its completely bounded norm by \eqref{dec=cb}. Finally, we can write $[M_{\varphi_{ij}}]=(\Id \ot [\tilde{\omega}_{\varphi_{ij}}]) \circ (\Id_{\M_n} \ot \Delta)$.

\noindent\textit{Second proof of the contractivity.} Now, we give a second proof using groupoids. Let $\varphi \in \B(G)$ and $\epsi >0$. By Proposition \ref{Prop-carac-BG-2-2}, there exists continuous positive definite functions $\psi_1$ and $\psi_2$ (hence bounded by \cite[Proposition C.4.2 p.~351]{BHV08}) such that the matrix $\begin{bmatrix} 
\psi_1 & \varphi \\ 
\check{\ovl{\varphi}} & \psi_2
\end{bmatrix}$ defines a continuous positive definite function $F$ on the groupoid $\mathrm{P}_2\times G$ with
\begin{equation}
\label{ine-epsi}
\norm{\psi_1}_{\L^\infty(G)}^{\frac{1}{2}} \norm{\psi_2}_{\L^\infty(G)}^{\frac{1}{2}}
\leq \norm{\varphi}_{\B(G)}+\epsi.
\end{equation}
Note the identification $\VN(\mathrm{P}_2 \times G)=\VN(\mathrm{P}_2) \otvn \VN(G)=\M_2 \otvn \VN(G)=\M_2(\VN(G))$. By generalizing the very transparent argument of \cite[Proposition 5.6.16 p.~206]{BrO08} with \cite[Proposition 4.12 and Remark 4.12]{Arh24}, we see that $F$ induces a completely positive multiplier on the von Neumann algebra $\VN( \mathrm{P}_2 \times G)$. This completely positive multiplier identifies to the map $\begin{bmatrix} 
M_{\psi_1} & M_\varphi \\ 
M_{\check{\ovl{\varphi}}} &  M_{\psi_2}
\end{bmatrix}=\begin{bmatrix} 
M_{\psi_1} & M_\varphi \\ 
M_\varphi^\circ &  M_{\psi_2}
\end{bmatrix} \co \M_2(\VN(G)) \to \M_2(\VN(G))$. Note that the Fourier multipliers $M_{\psi_1}$ and $M_{\psi_2}$ are completely positive. We conclude that the Fourier multiplier $M_\varphi \co \VN(G) \to \VN(G)$ is decomposable with
\begin{align*}
\MoveEqLeft
\norm{M_\varphi}_{\dec,\VN(G) \to \VN(G)}
\ov{\eqref{Norm-dec-sqrt}}{\leq} \norm{M_{\psi_1}}_{\VN(G) \to \VN(G)}^{\frac{1}{2}} \norm{M_{\psi_2}}_{\VN(G) \to \VN(G)}^{\frac{1}{2}} \\
&=\norm{\psi_1}_{\L^\infty(G)}^{\frac{1}{2}} \norm{\psi_2}_{\L^\infty(G)}^{\frac{1}{2}}
\ov{\eqref{ine-epsi}}{\leq} \norm{\varphi}_{\B(G)}+\epsi.            
\end{align*} 
\end{proof}

Now, we study the converse of Proposition \ref{prop-B(G)-inclus-dec} in Proposition \ref{conj-1-1-correspondance} and in Theorem \ref{dec-vs-B(G)-discrete-group}. We need the following result, which gives a description of the norm of the Fourier-Stieltjes algebra $\B(G)$ for some suitable functions. Here, we denote by $\mathrm{P}(G)$ the set of continuous positive definite functions on $G$, following \cite[Definition 1.4.18 p.~23]{KaL18}. 

\begin{prop}
Let $G$ be a locally compact group. Let $\varphi \in \B(G)$ such that $\check{\varphi}=\ovl{\varphi}$. We have
\begin{equation}
\label{norm-B(G)}
\norm{\varphi}_{\B(G)}
=\inf \big\{\varphi_1(e)+\varphi_2(e): \varphi=\varphi_1-\varphi_2, \varphi_1,\varphi_2 \in \mathrm{P}(G) \big\}.
\end{equation}
\end{prop}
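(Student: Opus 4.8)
The plan is to transport the problem to the dual of the full group $\C^*$-algebra and to invoke the Jordan decomposition of hermitian functionals. Recall from the preliminaries the isometric identification $\B(G) = \C^*(G)^*$ given by $\varphi \mapsto \omega_\varphi$, the fact that the positive linear forms on $\C^*(G)$ are exactly those of the shape $\omega_\psi$ with $\psi \in \mathrm{P}(G)$, and that by Proposition \ref{Conj-pos-def} one has $\norm{\omega_\psi} = \norm{\psi}_{\B(G)} = \psi(e)$ for every $\psi \in \mathrm{P}(G)$. Under this dictionary a decomposition $\varphi = \varphi_1 - \varphi_2$ with $\varphi_1,\varphi_2 \in \mathrm{P}(G)$ is precisely a decomposition $\omega_\varphi = \omega_{\varphi_1} - \omega_{\varphi_2}$ of $\omega_\varphi$ into positive functionals, and then $\varphi_1(e) + \varphi_2(e) = \norm{\omega_{\varphi_1}} + \norm{\omega_{\varphi_2}}$.

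First I would record the elementary inequality, which does not use the hypothesis $\check{\varphi} = \ovl{\varphi}$. For any such decomposition the triangle inequality in $\C^*(G)^*$ gives $\norm{\varphi}_{\B(G)} = \norm{\omega_\varphi} \leq \norm{\omega_{\varphi_1}} + \norm{\omega_{\varphi_2}} = \varphi_1(e) + \varphi_2(e)$, so $\norm{\varphi}_{\B(G)}$ is a lower bound for the infimum in \eqref{norm-B(G)}. It remains to exhibit a decomposition realizing this value.

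For the reverse inequality I would produce an optimal decomposition from the Jordan decomposition. The point is that the hypothesis $\check{\varphi} = \ovl{\varphi}$ says exactly that $\omega_\varphi$ is a hermitian (self-adjoint) functional on $\C^*(G)$: writing $a = U(f)$ for $f \in \C_c(G)$ and using that $U$ is a $*$-representation, the substitution $s \mapsto s^{-1}$ makes the modular factors cancel against the Haar measure and yields $\omega_\varphi(a^*) = \int_G \varphi(s^{-1})\ovl{f(s)} \d\mu_G(s)$, which equals $\ovl{\omega_\varphi(a)} = \int_G \ovl{\varphi(s)}\,\ovl{f(s)}\d\mu_G(s)$ for all $f$ precisely when $\varphi(s^{-1}) = \ovl{\varphi(s)}$, i.e. $\check{\varphi} = \ovl{\varphi}$. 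Since $\C^*(G)$ is a $\C^*$-algebra, a hermitian functional admits a Jordan decomposition $\omega_\varphi = \omega_+ - \omega_-$ into positive functionals with the additive norm property $\norm{\omega_\varphi} = \norm{\omega_+} + \norm{\omega_-}$ (obtained, for instance, through the enveloping von Neumann algebra $\W^*(G) = \C^*(G)^{**}$ and the normal Jordan decomposition there). Writing $\omega_+ = \omega_{\varphi_1}$ and $\omega_- = \omega_{\varphi_2}$ with $\varphi_1,\varphi_2 \in \mathrm{P}(G)$, the injectivity of $\varphi \mapsto \omega_\varphi$ gives $\varphi = \varphi_1 - \varphi_2$, and $\varphi_1(e) + \varphi_2(e) = \norm{\omega_+} + \norm{\omega_-} = \norm{\varphi}_{\B(G)}$. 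Hence the infimum in \eqref{norm-B(G)} is in fact attained and equals $\norm{\varphi}_{\B(G)}$.

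The main obstacle I expect is the bookkeeping in the middle step: checking cleanly that $\check{\varphi} = \ovl{\varphi}$ is equivalent to self-adjointness of $\omega_\varphi$ — in particular tracking the modular function of $G$ through the involution of $\C_c(G)$ in the non-unimodular case and verifying that it cancels — and making sure the Jordan decomposition is applied in a space whose norm is identified with $\norm{\cdot}_{\B(G)}$. Everything else is routine once the preliminaries are in place: the triangle-inequality bound, the correspondence between positive functionals on $\C^*(G)$ and elements of $\mathrm{P}(G)$, and the norm evaluation $\norm{\omega_{\varphi_i}} = \varphi_i(e)$, the last being exactly Proposition \ref{Conj-pos-def}.
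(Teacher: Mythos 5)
Your proposal is correct and takes essentially the same route as the paper: the easy inequality follows from the triangle inequality together with $\norm{\psi}_{\B(G)}=\psi(e)$ for $\psi \in \mathrm{P}(G)$ (Proposition \ref{Conj-pos-def}), and the reverse inequality comes from the Jordan decomposition of the hermitian functional $\omega_\varphi$ in $\C^*(G)^*=\B(G)$. The only difference is that the paper delegates this second step to a citation (\cite[(2.7)]{Eym}, resp. \cite[p. 41]{KaL1}), whereas you write out the argument in full — including the verification, with the modular factors duly cancelling, that $\check{\varphi}=\ovl{\varphi}$ is exactly self-adjointness of $\omega_\varphi$, a hypothesis the paper itself notes is missing in the second reference.
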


\begin{proof}
Suppose that $\varphi=\varphi_1-\varphi_2$ for some continuous positive definite functions $\varphi_1,\varphi_2 \co G \to \mathbb{C}$. We have
$$
\norm{\varphi}_{\B(G)}
=\norm{\varphi_1-\varphi_2}_{\B(G)}
\leq \norm{\varphi_1}_{\B(G)} + \norm{\varphi_2}_{\B(G)}
\ov{\eqref{norm-pos-def}}{=} \varphi_1(e)+\varphi_2(e).
$$
Passing to the infimum, we obtain that $\norm{\varphi}_{\B(G)} \leq \inf \big\{\varphi_1(e)+\varphi_2(e): \varphi=\varphi_1-\varphi_2, \varphi_1,\varphi_2 \in \mathrm{P}(G) \big\}$. Indeed, by \cite[(2.7) p.~193]{Eym64} (or \cite[p.~41]{KaL18}\footnote{\thefootnote. Note that in this reference, the assumption ``$\check{u}=\ovl{u}$'' is missing.}) we have an equality in this last inequality and the infimum is a minimum. 
\end{proof}

\begin{remark} \normalfont
Suppose that the locally compact $G$ is \textit{abelian}. For any real bounded regular Borel measure $\mu$ on the dual group $\hat{G}$, the previous result combined with Example \ref{norm-B(G)-commutatif} and \eqref{norm-pos-def} implies that
\begin{equation}
\label{norm-M(G)}
\norm{\mu}_{\M(\hat{G})}
=\inf \big\{\norm{\mu_1}_{\M(\hat{G})}+\norm{\mu_2}_{\M(\hat{G})}: \mu=\mu_1-\mu_2, \mu_1,\mu_2 \geq 0 \big\}.
\end{equation}
We can replace the group $\hat{G}$ by a locally compact space $X$. Indeed, for any \textit{real} bounded regular Borel measure $\mu$, we can decompose the measure $\mu$ with \cite[III \S1.~8 Corollary 2]{Bou04a} as $\mu=\mu^+ -\mu^-$ for some bounded positive regular Borel measures $\mu_+$ and $\mu_-$ on $X$ with $\norm{\mu}_{\M(X)}=\norm{\mu^+}_{\M(X)}+\norm{\mu^-}_{\M(X)}$ and we can use a similar reasoning.
\end{remark}

By adding property $(\kappa_\infty)$ to the group $G$, we obtain a partial converse to Proposition \ref{prop-B(G)-inclus-dec}. For the proof, we will use the folklore fact that says that the symbol of any bounded multiplier on the von Neumann algebra $\VN(G)$ of a locally compact group $G$ is almost everywhere equal to a continuous function. This follows from the <<regularity>> of the Fourier algebra, established in \cite[Theorem 2.3.8 p.~53]{KaL18}.

\begin{prop}
\label{conj-1-1-correspondance}
Let $G$ be a locally compact group. If $G$ has property $(\kappa_\infty)$, then the linear map $\B(G) \to \frak{M}^{\infty,\dec}(G)$, $\varphi \mapsto M_\varphi$ is a bijection from the Fourier-Stieltjes algebra $\B(G)$ onto the space $\frak{M}^{\infty,\dec}(G)$ of decomposable multipliers. Moreover, if $\kappa_\infty(G)=1$ and if the function $\varphi$ belongs to $\B(G)$ and satisfies $\check{\varphi}=\ovl{\varphi}$, we have $\norm{\varphi}_{\B(G)}=\norm{M_\varphi}_{\dec,\VN(G) \to \VN(G)}$.
\end{prop}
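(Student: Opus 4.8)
The inclusion map $\varphi\mapsto M_\varphi$ is already known to be injective and contractive by Proposition~\ref{prop-B(G)-inclus-dec}, so for the first assertion I only need surjectivity, and for the second only the reverse inequality $\norm{\varphi}_{\B(G)}\leq\norm{M_\varphi}_{\dec,\VN(G)\to\VN(G)}$. The plan is to start from an arbitrary decomposable Fourier multiplier and use $(\kappa_\infty)$ to push its Haagerup decomposition into the world of multipliers, then read off membership in $\B(G)$ through the groupoid $\mathrm{P}_2\times G$. Concretely, let $M_\varphi\in\frak{M}^{\infty,\dec}(G)$. By \cite[Remark 1.5]{Haa} there are completely positive maps $v_1,v_2$ such that $\begin{bmatrix} v_1 & M_\varphi \\ M_\varphi^\circ & v_2\end{bmatrix}\co\M_2(\VN(G))\to\M_2(\VN(G))$ is completely positive, and by \cite[Proposition 3.1]{ArK1} I may assume $v_1,v_2\in\CB_{\w^*}(\VN(G))$. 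I then set $M_{\psi_1}\ov{\mathrm{def}}{=}P_G^\infty(v_1)$ and $M_{\psi_2}\ov{\mathrm{def}}{=}P_G^\infty(v_2)$: since $P_G^\infty$ takes values in $\frak{M}^{\infty,\cb}(G)$ and preserves complete positivity, each $M_{\psi_i}$ is a completely positive Fourier multiplier, hence $\psi_i$ is a continuous positive definite function by \cite[Proposition 5.4.9]{KaL1}. Moreover $P_G^\infty$ fixes $M_\varphi$ and $M_\varphi^\circ$, as these are already multipliers.

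The heart of the argument is to show that the block with the \emph{projected} diagonal, namely $\tilde\Phi\ov{\mathrm{def}}{=}\begin{bmatrix} M_{\psi_1} & M_\varphi \\ M_\varphi^\circ & M_{\psi_2}\end{bmatrix}$, is still completely positive. Granting this, $\tilde\Phi$ is a completely positive Fourier multiplier on $\VN(\mathrm{P}_2\times G)=\M_2(\VN(G))$, so by Lemma~\ref{Lemma-Bloc-def-pos} its symbol $\begin{bmatrix}\psi_1 & \varphi \\ \check{\ovl{\varphi}} & \psi_2\end{bmatrix}$ is a continuous positive definite function on the groupoid $\mathrm{P}_2\times G$, and Proposition~\ref{Prop-carac-BG-2-2} yields $\varphi\in\B(G)$, giving surjectivity. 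I expect this block complete positivity to be the \emph{main obstacle}: an abstract complete-positivity–preserving projection on $\CB_{\w^*}(\VN(G))$ need not preserve complete positivity when applied entrywise to a $2\times2$ block map, which is exactly the content of the matricial variant $(\kappa_{\infty,2})$. I would resolve it by exploiting the special feature that the off-diagonal corners are already multipliers, hence fixed: in the model case the projection is realised as an average of conjugations $\Phi\mapsto\mathrm{Ad}(1_{\M_2}\ot\rho_s)\circ\Phi\circ\mathrm{Ad}(1_{\M_2}\ot\rho_s)^{-1}$ by the commutant (right-translation) unitaries, which leave the multiplier entries invariant, project the diagonal onto $M_{\psi_i}$, and preserve complete positivity because each conjugate is completely positive; for the general group this amplification of the scalar projection $P_G^\infty$ to the level of $\M_2(\VN(G))$ is what one must justify.

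For the second statement I assume $\kappa_\infty(G)=1$ and $\check{\varphi}=\ovl{\varphi}$, so that $M_\varphi^\circ=M_\varphi$. I first symmetrise: conjugating the completely positive block by the swap $\begin{bmatrix}0&1\\1&0\end{bmatrix}$ and averaging produces a completely positive block with equal diagonal entry $v\ov{\mathrm{def}}{=}\tfrac{1}{2}(v_1+v_2)$, where $\norm{v}\leq\max\{\norm{v_1},\norm{v_2}\}$; choosing the decomposition nearly optimal gives $\norm{v}\leq\norm{M_\varphi}_{\dec}+\epsi$. Applying a projection $P_G^\infty$ with $\norm{P_G^\infty}\leq 1+\epsi$ (possible since $\kappa_\infty(G)=1$) yields a single continuous positive definite symbol $\psi$ with $\norm{\psi}_\infty=\psi(e)=\norm{M_\psi}\leq(1+\epsi)\norm{v}$. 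The block complete positivity step (now with $\psi_1=\psi_2=\psi$) shows that $\begin{bmatrix}\psi & \varphi \\ \check{\ovl{\varphi}} & \psi\end{bmatrix}$ is positive definite on $\mathrm{P}_2\times G$, so Proposition~\ref{Prop-carac-BG-2-2} gives $\varphi\in\B(G)$ with $\norm{\varphi}_{\B(G)}\leq\norm{\psi}_\infty\leq(1+\epsi)(\norm{M_\varphi}_{\dec}+\epsi)$. Letting $\epsi\to 0$ gives $\norm{\varphi}_{\B(G)}\leq\norm{M_\varphi}_{\dec,\VN(G)\to\VN(G)}$, and combining with the contractivity of Proposition~\ref{prop-B(G)-inclus-dec} produces the announced equality.
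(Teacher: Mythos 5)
Your argument hinges on the claim that the block
$\begin{bmatrix} P_G^\infty(v_1) & M_\varphi \\ M_\varphi^\circ & P_G^\infty(v_2) \end{bmatrix}$
is still completely positive, and you correctly identify this as the main obstacle — but under the hypothesis of the proposition it cannot be closed. Property $(\kappa_\infty)$ provides only an abstract bounded projection preserving complete positivity of \emph{single} operators; the complete positivity of $(\Id_{\M_2} \ot P_G^\infty)$ applied to a $2\times 2$ block map is exactly property $(\kappa_{\infty,2})$ of Definition \ref{Defi-tilde-kappa-2}, a strictly stronger hypothesis which the paper introduces precisely for this purpose and assumes in Theorem \ref{dec-vs-B(G)}. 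Your proposed repair — realizing $P_G^\infty$ as an average of conjugations by commutant unitaries which fix the multiplier corners — is available only when the projection has that specific averaging form (as for discrete groups in \cite[Theorem 4.2]{ArK1}); nothing in $(\kappa_\infty)$ supplies such a structure, so what you have written is in substance the paper's proof of Theorem \ref{dec-vs-B(G)} under the unproved extra assumption $(\kappa_{\infty,2})$, not a proof of the present proposition. There is also a secondary slip in the isometric part: with $\psi_1=\psi_2=\psi$, Proposition \ref{Prop-carac-BG-2-2} yields $\norm{\varphi}_{\B(G)} \leq \norm{\psi}_\infty^2$, not $\norm{\psi}_\infty$; this is repairable by normalizing $\norm{M_\varphi}_{\dec}=1$, but it does not repair the block-positivity gap.

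The paper's proof shows the proposition is accessible without ever forming the $2\times 2$ block. For surjectivity it writes $M_\varphi = T_1 - T_2 + \i(T_3 - T_4)$ with each $T_i$ completely positive (Haagerup's decomposition of a decomposable map), makes the $T_i$ weakly* continuous via the projection $P_{\w^*}$ of \cite[Proposition 3.1]{ArK1}, and applies $P_G^\infty$ to each $T_i$ \emph{separately}: each $P_G^\infty(T_i)$ is a completely positive Fourier multiplier whose symbol is continuous and positive definite by \cite{DCH}, whence $\varphi = \varphi_1-\varphi_2+\i(\varphi_3-\varphi_4) \in \B(G)$ directly, with no groupoid input. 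For the isometric statement, the hypothesis $\check{\varphi}=\ovl{\varphi}$ makes $M_\varphi$ selfadjoint, so Haagerup gives a \emph{two-term} decomposition $M_\varphi = T_1 - T_2$ with $\norm{T_1+T_2} = \norm{M_\varphi}_{\dec}$; after projecting, $\varphi = \varphi_1 - \varphi_2$ with $\varphi_i$ continuous positive definite, and the norm is tracked not through Proposition \ref{Prop-carac-BG-2-2} but through the formula \eqref{norm-B(G)} combined with $\norm{M_{\varphi_1+\varphi_2}} = (\varphi_1+\varphi_2)(e)$ from \cite[Proposition 4.3]{DCH}, giving $\norm{\varphi}_{\B(G)} \leq \varphi_1(e)+\varphi_2(e) = \norm{P_G^\infty(T_1+T_2)} \leq \norm{T_1+T_2} = \norm{M_\varphi}_{\dec}$. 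This explains the architecture of the paper: the restriction $\check{\varphi}=\ovl{\varphi}$ is exactly what permits a scalar (entrywise) use of $P_G^\infty$, while your groupoid route is reserved for the stronger, unrestricted isometry of Theorem \ref{dec-vs-B(G)}, where $(\kappa_{\infty,2})$ is assumed.
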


\begin{proof}
In Proposition \ref{prop-B(G)-inclus-dec}, we established a (completely) contractive inclusion $\B(G) \subseteq \frak{M}^{\infty,\dec}(G)$. We show the reverse inclusion. Suppose that $M_\varphi \co \VN(G) \to \VN(G)$ is a decomposable Fourier multiplier (hence weak* continuous) with continuous symbol $\varphi \co G \to \mathbb{C}$. We can write 
\begin{equation}
\label{in-543}
M_\varphi
=T_1 + T_2 + \i(T_3 - T_4)
\end{equation}
for some completely positive maps $T_1,T_2,T_3,T_4 \co \VN(G) \to \VN(G)$. By using the contractive projection $P_{\w^*} \co \cal{B}(\VN(G)) \to \cal{B}(\VN(G))$ of \cite[Proposition 3.1 p.~24]{ArK23}, which preserves the complete positivity, as in the proof of \cite[Proposition 3.4 p.~26]{ArK23}, we can suppose that these maps $T_1,T_2,T_3,T_4$ are weak* continuous since $P_{\w^*}(M_\varphi)=M_\varphi$. Using the bounded projection $P_{G}^\infty \co \CB_{\w^*}(\VN(G)) \to \CB_{\w^*}(\VN(G))$ provided by property $(\kappa_\infty)$, we obtain
$$
M_\varphi
=P_{G}^\infty(M_\varphi)
\ov{\eqref{in-543}}{=} P_{G}^\infty\big(T_1-T_2+\i(T_3-T_4)\big)
=P_{G}^\infty(T_1)-P_{G}^\infty(T_2)+\i(P_{G}^\infty(T_3)-P_{G}^\infty(T_4)),
$$
where each $P_{G}^\infty(T_i) \co \VN(G) \to \VN(G)$ is a completely positive Fourier multiplier for some symbol $\varphi_i \co G \to \mathbb{C}$, i.e.~$P_{G}^\infty(T_i)=M_{\varphi_i}$. By \cite[Proposition 4.2 p.~487]{DCH85}, the function $\varphi_i$ is continuous and positive definite. We deduce that
$$
M_\varphi
=M_{\varphi_1}-M_{\varphi_2}+\i(M_{\varphi_3}-M_{\varphi_4})
=M_{\varphi_1-\varphi_2+\i(\varphi_3-\varphi_4)}.
$$
We infer that $\varphi=\varphi_1-\varphi_2+\i \varphi_3-\varphi_4$. We conclude that the function $\varphi$ belongs to the Fourier-Stieltjes algebra $\B(G)$. Hence we have an inclusion $\frak{M}^{\infty,\dec}(G) \subset \B(G)$.

Now, we prove the second part of the statement assuming $\kappa_\infty(G)=1$. Suppose that the function $\varphi$ belongs to the Fourier-Stieltjes algebra $\B(G)$ and satisfies $\check{\varphi}=\ovl{\varphi}$. This last condition means that the Fourier multiplier $M_\varphi \co \VN(G) \to \VN(G)$ is adjoint preserving, i.e.~$M_\varphi(x^*)=(M_\varphi(x))^*$ for any $x \in \VN(G)$. Let $\epsi > 0$. By \cite[Proposition 1.3 (1) p.~177]{Haa85} and \cite[p.~184]{Haa85}, there exists some completely positive operators $T_1,T_2 \co \VN(G) \to \VN(G)$ such that 
\begin{equation}
\label{eqa-456}
M_\varphi=T_1-T_2
\quad \text{with} \quad 
\norm{T_1+T_2} = \norm{M_\varphi}_{\dec,\VN(G) \to \VN(G)}.
\end{equation}
By using the contractive projection $P_{\w^*} \co \cal{B}(\VN(G)) \to \cal{B}(\VN(G))$ of \cite[Proposition 3.1 p.~24]{ArK23}, which preserves the complete positivity, it is easy to check that we can suppose that the linear maps $T_1$ and $T_2$ are weak* continuous since $P_{\w^*}(M_\varphi)=M_\varphi$. Let $\epsi>0$. Since $\kappa_\infty(G)=1$, we can consider a bounded projection $P_{G}^\infty \co \CB_{\w^*}(\VN(G)) \to \CB_{\w^*}(\VN(G))$ of norm $\leq 1+\epsi$, preserving the complete positivity. We deduce that
\begin{equation}
\label{Decompo-magic}
M_\varphi
=P_{G}^\infty(M_\varphi)
\ov{\eqref{eqa-456}}{=} P_{G}^\infty(T_1-T_2)
=P_{G}^\infty(T_1)-P_{G}^\infty(T_2).
\end{equation}
We denote by $\varphi_1$ and $\varphi_2$ the continuous symbols of the completely positive Fourier multipliers $P_{G}^\infty(T_1)$ and $P_{G}^\infty(T_2)$. These functions are positive definite again by \cite[Proposition 4.2 p.~487]{DCH85}. The equality \eqref{Decompo-magic} gives $\varphi=\varphi_1-\varphi_2$. By using \cite[Proposition 4.3 p.~489]{DCH85} in the second equality, we see that
\begin{align*}
\MoveEqLeft
\norm{\varphi}_{\B(G)}
\ov{\eqref{norm-B(G)}}{\leq} \varphi_1(e)+\varphi_2(e)
=(\varphi_1+\varphi_2)(e)
=\norm{M_{\varphi_1+\varphi_2}}_{\VN(G) \to \VN(G)} \\
&=\norm{M_{\varphi_1} + M_{\varphi_2}}_{\VN(G) \to \VN(G)} 
=\norm{P_{G}^\infty(T_1)+P_{G}^\infty(T_2)} 
= \norm{P_{G}^\infty(T_1+T_2)} \\
&\leq (1+\epsi)\norm{T_1+T_2}
\ov{\eqref{eqa-456}}{=} (1+\epsi) \norm{M_\varphi}_{\dec,\VN(G) \to \VN(G)}.
\end{align*}
Since $\epsi>0$ is arbitrary, we deduce that $\norm{\varphi}_{\B(G)} \leq  \norm{M_\varphi}_{\dec,\VN(G) \to \VN(G)}$. Combining with Proposition \ref{prop-B(G)-inclus-dec}, we conclude that $\norm{\varphi}_{\B(G)}=\norm{M_\varphi}_{\dec,\VN(G) \to \VN(G)}$.  
\end{proof}



Now, we observe that the first inclusion in \eqref{Inclusions} can be strict.

\begin{prop}
\label{prop-groups-with-bad-multiplier}
Let $G$ be a non-amenable locally compact group such that the von Neumann algebra $\VN(G)$ is injective. Then there exists a decomposable Fourier multiplier $T \co \VN(G) \to \VN(G)$, which is not induced by an element $\varphi \in \B(G)$.
\end{prop}

\begin{proof}
Since the von Neumann algebra $\VN(G)$ is injective, we have by \cite[Theorem 1.6 p.~184]{Haa85} the equality $\frak{M}^{\infty,\cb}(G)=\frak{M}^{\infty,\dec}(G)$ isometrically. Since the group $G$ is not amenable, we know by an unpublished result of Ruan stated in \cite[p.~54]{Pis01} and \cite[p.~190]{Spr04} that $\B(G) \varsubsetneq \frak{M}^{\infty,\cb}(G)$. We conclude that $\B(G) \varsubsetneq \frak{M}^{\infty,\dec}(G)$.
%
	%
\end{proof}

\begin{example} \normalfont
\label{Example-SL2}
By \cite[Corollary 7 p.~75]{Con76}, the von Neumann algebra $\VN(G)$ of a second-countable connected locally compact group $G$ is injective. This result applies for example to the locally compact group $G=\SL_2(\R)$, which is non-amenable by \cite[Example G.2.4 (i) p.~426]{BHV08}. We conclude that $\B(G) \varsubsetneq \frak{M}^{\infty,\dec}(G)$ in this case.
\end{example}

For discrete groups, a matricial improvement of property $(\kappa_\infty)$ is available in \cite[Theorem 4.2 p.~62]{ArK23}. Consequently, we can establish the following isometric result.

\begin{thm}
\label{dec-vs-B(G)-discrete-group}
Let $G$ be a discrete group. The map $\B(G) \to \frak{M}^{\infty,\dec}(G)$, $\varphi \mapsto M_\varphi$ is an isometric isomorphism from the Fourier-Stieltjes $\B(G)$ onto the algebra $\frak{M}^{\infty,\dec}(G)$ of decomposable multipliers on the von Neumann algebra $\VN(G)$.
\end{thm}

\begin{proof}
In Proposition \ref{prop-B(G)-inclus-dec}, we have seen that we have a contractive inclusion $\B(G) \subseteq \frak{M}^{\infty,\dec}(G)$. It suffices to show the reverse inclusion. Suppose that the Fourier multiplier $M_\varphi \co \VN(G) \to \VN(G)$ is  decomposable. By \cite[Remark 1.5 p.~183]{Haa85}, there exist some linear maps $v_1,v_2 \co \VN(G) \to \VN(G)$ such that the linear map 
$ 
\begin{bmatrix} 
v_1 & M_\varphi \\ 
M_{\check{\ovl{\varphi}}} & v_2 
\end{bmatrix} 
\co \M_2(\VN(G)) \to \M_2(\VN(G))
$ 
is completely positive with $\max\{\norm{v_1},\norm{v_2}\}=\norm{M_\varphi}_{\dec,\VN(G) \to \VN(G)}$. We can suppose that the completely positive maps $v_1$ and $v_2$ are in addition weak* continuous by using \cite[Proposition 3.1 p.~24]{ArK23}. 

Now, we consider the projection $P_{\{1,2\},G}^\infty \co \CB_{\w^*}(\M_2(\VN(G))) \to \CB_{\w^*}(\M_2(\VN(G)))$, preserving the complete positivity and contractive, provided by \cite[Theorem 4.2 p.~62]{ArK23}. The proof shows that in case it is applied to an element of special structure as
$\begin{bmatrix} 
v_1 & M_\varphi \\ 
M_{\check{\ovl{\varphi}}} & v_2 
\end{bmatrix}$,
the mapping is $P_{\{1,2\},G}^\infty=\begin{bmatrix}
  P_{G}^\infty   &  P_{G}^\infty \\
  P_{G}^\infty   &  P_{G}^\infty \\
\end{bmatrix}$, where $P_{G}^\infty \co \CB_{\w^*}(\VN(G)) \to \CB_{\w^*}(\VN(G))$ is the contractive projection onto the space of completely bounded Fourier multipliers, provided by \cite[Theorem 4.2 p.~62]{ArK23}. We obtain that the map
\begin{align}
\MoveEqLeft
\label{Map-2x2-ttt-discrete}
\begin{bmatrix} 
   P_G^\infty(v_1)  & M_\varphi  \\
   M_{\check{\ovl{\varphi}}}  & P_G^\infty(v_2)  \\
\end{bmatrix}
=
\begin{bmatrix} 
   P_G^\infty(v_1)  & P_G^\infty(M_\varphi)  \\
  P_G^\infty(M_{\check{\ovl{\varphi}}})  & P_G^\infty(v_2)  \\
  \end{bmatrix}
	=P_{\{1,2\},G}^\infty\left(
\begin{bmatrix} 
v_1 & M_\varphi \\ 
M_{\check{\ovl{\varphi}}} & v_2 
\end{bmatrix} 
\right)
\end{align}
is completely positive. Moreover, we have 
\begin{align}
\label{Useful-estimation}
\MoveEqLeft
\max\big\{\norm{P_G^\infty(v_1)},\norm{P_G^\infty(v_2)}\big\}
\leq \norm{P_G^\infty} \max\big\{\norm{v_1},\norm{v_2}\big\}
=\norm{M_\varphi}_{\dec,\VN(G) \to \VN(G)}.           
\end{align}
We can write $P_G^\infty(v_1)=M_{\psi_1}$ and $P_G^\infty(v_2)=M_{\psi_2}$ for some continuous positive definite functions $\psi_1,\psi_2 \co G \to \mathbb{C}$. By \cite[Proposition 8.4 p.~166]{ArK23}, the condition \eqref{Condition-ArK} is satisfied with $\begin{bmatrix} 
\psi_1 & \varphi \\ 
\check{\ovl{\varphi}} &  \psi_2
\end{bmatrix}$ instead of $\begin{bmatrix} 
F_{11} & F_{12}\\ 
F_{21} & F_{22}
\end{bmatrix}$. 
By Lemma \ref{Lemma-Bloc-def-pos}, we conclude that $F \ov{\mathrm{def}}{=} \begin{bmatrix} 
\psi_1 & \varphi \\ 
\check{\ovl{\varphi}} &  \psi_2
\end{bmatrix}$ identifies to a continuous positive definite function on the groupoid $\mathrm{P}_2 \times G$. According to Proposition \ref{Prop-carac-BG-2-2}, we obtain that the function $\varphi$ belongs to the Fourier-Stieltjes algebra $\B(G)$. Moreover, using the well-known contractive inclusion $\frak{M}^\infty(G) \subseteq \L^\infty(G)$ of \cite[Proposition 5.1.2 p.~154]{KaL18} in the first inequality, we infer that
\begin{align*}
\MoveEqLeft
\norm{\varphi}_{\B(G)}
\ov{\eqref{Norm-B-G-utile}}{\leq} \norm{\psi_1}_{\L^\infty(G)}^{\frac{1}{2}} \norm{\psi_2}_{\L^\infty(G)}^{\frac{1}{2}} 
\leq \max\big\{\norm{M_{\psi_1}},\norm{M_{\psi_2}}\big\} \\
&= \max\big\{\norm{P_G^\infty(v_1)},\norm{P_G^\infty(v_2)}\big\} 
\ov{\eqref{Useful-estimation}}{\leq} \norm{M_\varphi}_{\dec,\VN(G) \to \VN(G)}.         
\end{align*}
\end{proof}




Finally, we prove the second part of Conjecture \ref{conj} in the discrete case. This result improves \cite[Proposition 3.32 (1) p.~51]{ArK23} which says that the second inclusion of \eqref{Inclusions} is strict for any non-amenable weakly amenable discrete group $G$. This result can be seen as a new characterization of amenability for discrete groups.

\begin{thm}
\label{Thm-conj-discrete-case}
Let $G$ be a discrete group. The von Neumann algebra $\VN(G)$ is injective if and only if we have $\frak{M}^{\infty,\dec}(G)= \frak{M}^{\infty,\cb}(G)$.
\end{thm}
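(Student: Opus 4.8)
The plan is to assemble the statement from three ingredients, two of which are cited in the paper and one of which is the discrete-group identification proved just above. The starting point is that for a discrete group $G$ the first inclusion of \eqref{Inclusions} is already an equality: by Corollary \ref{dec-vs-B(G)-discrete-group} we have $\B(G) = \frak{M}^{\infty,\dec}(G)$ isometrically. Consequently the equality $\frak{M}^{\infty,\dec}(G) = \frak{M}^{\infty,\cb}(G)$ that we wish to characterize is \emph{equivalent} to the equality $\B(G) = \frak{M}^{\infty,\cb}(G)$, and the whole theorem reduces to relating the latter to injectivity of $\VN(G)$.

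For the ``only if'' direction I would argue directly, without passing through $\B(G)$: if $\VN(G)$ is injective then Haagerup's theorem \cite[Theorem 1.6]{Haa} gives $\frak{M}^{\infty,\cb}(G) = \frak{M}^{\infty,\dec}(G)$ isometrically, exactly as already used in the proof of Theorem \ref{Thm-groups-with-bad-multiplier}. This settles one implication with no reference to amenability.

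For the ``if'' direction I would use the reduction above. Assuming $\frak{M}^{\infty,\dec}(G) = \frak{M}^{\infty,\cb}(G)$, Corollary \ref{dec-vs-B(G)-discrete-group} turns this into $\B(G) = \frak{M}^{\infty,\cb}(G)$. By the unpublished result of Ruan \cite[p.~54]{Pis6} \cite[p.~190]{Spr1}, the equality $\B(G) = \frak{M}^{\infty,\cb}(G)$ is a characterization of amenability, so $G$ is amenable. Finally, for a discrete group, amenability implies that $\VN(G)$ is injective (an invariant mean yields a hypertrace, equivalently a norm-one projection onto $\VN(G)$), which gives the desired conclusion.

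There is essentially no genuine obstacle in this argument: all the analytic content is packaged in Corollary \ref{dec-vs-B(G)-discrete-group} (whose proof rests on the groupoid machinery and the matricial property $(\kappa_{\infty,2})$ available for discrete groups) and in the cited theorems of Haagerup and Ruan. The only points requiring a little care are (i) invoking the correct equivalence $\B(G)=\frak{M}^{\infty,\cb}(G) \iff G$ amenable, rather than the superficially similar statement about the reduced $\mathrm{C}^*$-algebra; and (ii) the passage between amenability of the discrete group $G$ and injectivity of $\VN(G)$. For the latter I would deliberately use only the elementary implication ``amenable $\Rightarrow$ injective'' in the ``if'' direction and Haagerup's theorem in the ``only if'' direction, so that Connes' deep converse ``injective $\Rightarrow$ amenable'' is not actually needed in the proof.
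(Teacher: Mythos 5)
Your proof is correct, and its core is the same as the paper's: both rest on the isometric identification $\B(G)=\frak{M}^{\infty,\dec}(G)$ of Corollary \ref{dec-vs-B(G)-discrete-group} combined with Ruan's theorem \cite[p. 54]{Pis6} that $\B(G)=\frak{M}^{\infty,\cb}(G)$ characterizes amenability (and as you note, set-theoretic equality suffices there). The one genuine divergence is the ``only if'' direction. The paper runs a single chain of equivalences, quoting that for a discrete group injectivity of $\VN(G)$ is \emph{equivalent} to amenability of $G$ (\cite[Theorem 3.8.2]{SS}, or Theorem \ref{Th-Lau-Paterson}), so both halves of that equivalence get used. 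You instead prove ``injective $\Rightarrow \frak{M}^{\infty,\dec}(G)=\frak{M}^{\infty,\cb}(G)$'' directly from Haagerup's theorem \cite[Theorem 1.6]{Haa} — exactly the route the paper itself takes in Theorem \ref{Thm-groups-with-bad-multiplier} and Corollary \ref{cor-inner-66} — and consequently only need the easy implication ``amenable $\Rightarrow$ injective''. What your arrangement buys is that the nontrivial converse ``$\VN(G)$ injective $\Rightarrow G$ amenable'' is never invoked, though the gain is partly cosmetic since Haagerup's decomposability theorem is itself a substantial input; what the paper's version buys is brevity and the explicit reading of the statement as yet another characterization of amenability. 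One small caveat on your final step: the cleanest justification of ``amenable $\Rightarrow$ injective'' for discrete $G$ is the averaging argument — integrating $\Ad(\rho_s)$ against an invariant mean gives a norm-one projection of $\B(\L^2(G))$ onto $\rho(G)'=\VN(G)$ — rather than the hypertrace formulation you mention parenthetically, since passing from a hypertrace back to injectivity is itself a nontrivial (Connes-type) equivalence; with that phrasing adjusted, the argument is complete.
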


\begin{proof}
By Corollary \ref{dec-vs-B(G)-discrete-group}, we have an isometric isomorphism $\frak{M}^{\infty,\dec}(G)=\B(G)$. It suffices to use the result stated in \cite[p.~54]{Pis01}, which says that $\B(G) = \frak{M}^{\infty,\cb}(G)$ if and only if the group $G$ is amenable. For a discrete group $G$, the amenability is equivalent to the injectivity of the von Neumann algebra $\VN(G)$ by \cite[Theorem 3.8.2 p.~51]{SiS08} (or Theorem \ref{Th-Lau-Paterson}).
\end{proof}

\section{Inner amenability}
\label{Inner-amenability}
\subsection{Background on inner amenability and amenability}
\label{Sec-prelim-inner}

We warn the reader that different notions of inner amenability coexist in the literature, see \cite[p.~84]{Pat88a} for more information. We say that a locally compact group $G$, equipped with a left Haar measure $\mu_G$, is inner amenable if there exists a state $m$ on the algebra $\L^\infty(G)$ such that
\begin{equation}
\label{inner-mean}
m(\inner_s f)
=m(f)
\end{equation}
for any $s \in G$, where
\begin{equation}
\label{def-conj-functions}
(\inner_sf)(t)
\ov{\mathrm{def}}{=} f\big(s^{-1}ts\big), \quad s,t \in G.
\end{equation}
It is worth noting that by \cite[Proposition 3.2 p.~2527]{CrT17}, a locally compact group $G$ is inner amenable if and only if there exists a state $m$ on the group von Neumann algebra $\VN(G)$ such that
\begin{equation*}
\label{state-G-invariant}
m(\lambda_s^* x\lambda_s)
=m(x),
\quad s \in G, x \in  \VN(G).	
\end{equation*}
Such a state is said to be $G$-invariant. According to \cite[Proposition 3.3 p.~2528]{CrT17}, any closed subgroup $H$ of an inner amenable locally compact group $G$ is inner amenable. If in addition $H$ is normal then the group $G/H$ is also inner amenable by \cite[Proposition 6.2 p.~168]{LaP91}.

\begin{example} \normalfont
\label{Ex-inner-2}
Every amenable locally compact group $G$ is inner amenable. Indeed, by \cite[Theorem 4.19 p.~36]{Pie84} there exists a state $m$ on $\L^\infty(G)$, which is two-sided invariant.
\end{example}

\begin{example} \normalfont
\label{Ex-inner-5}
Following \cite[p.~1273]{Pal01}, we say that a locally compact group $G$ is said to have an invariant neighborhood if there exists a compact neighbourhood $V$ of the identity $e$ in $G$ such that $V$ is stable under all inner automorphisms of $G$, i.e.~$s^{-1}Vs = V$ for all $s \in G$.  Such a group is said to be an IN-group. By \cite[Proposition 12.1.9 p.~1273]{Pal01}, any IN-group $G$ is unimodular. Note that if $\mu_G$ is a Haar measure on an IN-group $G$, it is clear using \cite[(31) and (33) VII.13]{Bou04b} that the state $m \co \L^\infty(G) \to \mathbb{C}$, $f \mapsto \frac{1}{\mu_G(V)}\int_V f$ satisfies the equation \eqref{inner-mean}. Hence an IN-group is inner amenable. 

By \cite[Proposition 6.36 p.~119]{ArK23}, a locally compact group $G$ is pro-discrete if and only if it admits a basis $(X_j)$ of neighborhoods of the identity $e$ consisting of open compact normal subgroups. Consequently, pro-discrete locally compact groups are IN-groups. Moreover, according to  \cite[Proposition 12.1.9 p.~1273]{Pal01}, compact groups, locally compact abelian groups and discrete groups groups are IN-groups. These groups are therefore all inner amenable. In particular, inner amenability is significantly weaker than amenability.
\end{example}


%

\begin{example} \normalfont 
\label{Contre-example}
Recall that a topological group $G$ is of type I \cite[Definition 6.D.1 p.~196 and Proposition 7.C.I p.~219]{BeH20} if for any continuous unitary representation $\pi$ of $G$, the von Neumann algebra $\pi(G)''$ is of type I, hence injective by \cite[Proposition 10.23 p.~144]{Str81}. In particular, by Theorem \ref{Th-Lau-Paterson} a second-countable locally compact group $G$ of type I is inner amenable if and only if it is amenable. We refer to \cite[Theorem 6.E.19 p.~208 and Theorem 6.E.20 p.~209]{BeH20} for an extensive list of locally compact groups of type I, including connected nilpotent  locally compact groups and linear algebraic groups over a local field of characteristic 0.
\end{example}

\begin{example} \normalfont 
\label{Example-almost}
If a locally compact group $G$ is almost connected, i.e.~$G/G_e$ is compact if $G_e$ is the connected component of the identity $e$, then its von Neumann algebra $\VN(G)$ is injective by \cite[p.~228]{Pat88b}. Again, by Theorem \ref{Th-Lau-Paterson} such a group is inner amenable if and only if it is amenable. This result in the connected case was first proved by Losert and Rindler in \cite[Theorem 1 p.~222]{LoR87} and proven again in \cite[Corollary 3.4 p.~161]{LaP91}.
\end{example}

If $A$ and $B$ are two subsets of a set $E$, the notation $A \Delta B\ov{\mathrm{def}}{=}(A-B) \cup (B-A)$ denotes here the symmetric difference of $A$ and $B$. Recall that 
\begin{equation}
\label{Indicator-formula}
|1_A-1_B|
=1_{A \Delta B}.
\end{equation}

We will use the following reformulation of \cite[Lemma 8.6 p.~43]{CPPR15}, which is actually and essentially a variant of a classical trick in amenability theory used in \cite[pp.~364-365]{EiW17},  \cite[pp.~441-442]{BHV08} and \cite[p.~410]{Fre13}. We give the two lines of calculus for the sake of completeness. 

\begin{lemma}
\label{Lemma-CPPR}
Let $G$ be a locally compact group equipped with a left Haar measure $\mu_G$. Let $\epsi >0$ and consider some positive functions $f,g_1,\ldots,g_n$ in the space $\L^1(G)$ satisfying the inequality $\sum_{k=1}^{n} \norm{f-g_k}_{\L^1(G)} < \epsi$ and $\norm{f}_{\L^1(G)}=1$. Then there exists $t > 0$ such that
\begin{equation}
\label{Equa-CPPR}
\sum_{k=1}^{n} \mu_G\big(\{f >t\} \Delta \{g_k >t\} \big) 
< \epsi \mu_G(\{f > t\}) .
\end{equation}
\end{lemma}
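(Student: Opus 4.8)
The plan is to reduce the statement to the classical ``layer-cake'' (distribution-function) identity together with a one-line averaging argument over the threshold $t$. Write $A(t) \ov{\mathrm{def}}{=} \sum_{k=1}^n \mu(\{f>t\} \Delta \{g_k>t\})$ and $B(t) \ov{\mathrm{def}}{=} \mu(\{f>t\})$ for $t>0$; the goal is then to produce a single $t>0$ with $A(t) < \epsi B(t)$, which is exactly \eqref{Equa-CPPR}.

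First I would establish the two distribution-function identities. For any nonnegative $h \in \L^1(G)$, applying Tonelli's theorem to $\int_G \int_0^\infty 1_{\{h>t\}}(x)\,\d t\,\d\mu(x)$ gives the layer-cake formula $\int_0^\infty \mu(\{h>t\})\,\d t = \norm{h}_{\L^1(G)}$; taking $h=f$ yields $\int_0^\infty B(t)\,\d t = \norm{f}_{\L^1(G)} = 1$. For the symmetric differences I would fix $x \in G$ and observe that, since $f,g_k \geq 0$,
$$
\int_0^\infty \big|1_{\{f>t\}}(x) - 1_{\{g_k>t\}}(x)\big|\,\d t = |f(x)-g_k(x)|,
$$
because the integrand equals $1$ exactly for $t$ strictly between $\min(f(x),g_k(x))$ and $\max(f(x),g_k(x))$. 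Rewriting $|1_{\{f>t\}} - 1_{\{g_k>t\}}| = 1_{\{f>t\}\Delta\{g_k>t\}}$ via \eqref{Indicator-formula} and integrating in $x$ by Tonelli gives $\int_0^\infty \mu(\{f>t\}\Delta\{g_k>t\})\,\d t = \norm{f-g_k}_{\L^1(G)}$. Summing over $k$ and invoking the hypothesis,
$$
\int_0^\infty A(t)\,\d t = \sum_{k=1}^n \norm{f-g_k}_{\L^1(G)} < \epsi = \epsi \int_0^\infty B(t)\,\d t.
$$

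The last step is the averaging argument: the integral of $A(t) - \epsi B(t)$ over $(0,\infty)$ is strictly negative, so the measurable set $\{t>0 : A(t) < \epsi B(t)\}$ cannot be Lebesgue-null; choosing any $t$ in it gives \eqref{Equa-CPPR}, and automatically $B(t)>0$ since $A(t)\geq 0$ forces $\epsi B(t) > A(t) \geq 0$. I do not expect a genuine obstacle: the only points needing a little care are the measurability of $t \mapsto A(t),B(t)$ and the Tonelli interchanges, both routine for the nonnegative integrands at hand, while the pointwise identity for the symmetric-difference integral is the one computational fact doing the real work.
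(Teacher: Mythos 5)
Your proof is correct and follows essentially the same route as the paper: both rest on the two layer-cake identities $\int_0^\infty \mu(\{f>t\})\,\d t = \norm{f}_{\L^1(G)}$ and $\int_0^\infty \mu(\{f>t\}\Delta\{g_k>t\})\,\d t = \norm{f-g_k}_{\L^1(G)}$ (which the paper simply cites from \cite[Lemma G.5.2]{BHV}, whereas you reprove them via Tonelli), followed by the same averaging step over $t$. The only difference is cosmetic: you spell out the "conclusion is obvious" step (positive measure of $\{t : A(t)<\epsi B(t)\}$, and $B(t)>0$ there), which the paper leaves implicit.
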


\begin{proof}
For any $s \in G$ and integer $1 \leq k \leq n$, we have by \cite[Lemma G.5.2 p.~441]{BHV08} and \eqref{Indicator-formula} the equalities
\begin{equation}
\label{BHV08}
\norm{f}_{\L^1(G)}
=\int_0^\infty \mu_G(\{f > t\}) \d t
\quad \text{and} \quad
\norm{f - g_k}_{\L^1(G)}
 =\int_0^\infty \mu_G\big(\{f >t\} \Delta \{g_k >t\}\big) \d t.
\end{equation}
We deduce that
\begin{align*}
\MoveEqLeft
\int_0^\infty \sum_{k=1}^n \mu_G\big(\{f >t\} \Delta \{g_k >t\}\big) \d t
=\sum_{k=1}^{n}\int_0^\infty \mu_G\big(\{f >t\} \Delta \{g_k >t\}\big) \d t \\
&\ov{\eqref{BHV08}}{=} \sum_{k=1}^n \norm{ f - g_k }_{\L^1(G)} 
<  \epsi \norm{f}_{\L^1(G)} 
\ov{\eqref{BHV08}}{=} \epsi \int_0^\infty \mu_G(\{f > t\}) \d t .
\end{align*}
The conclusion is obvious.
\end{proof}

\paragraph{Convolution}
If $G$ is a \textsl{unimodular} locally compact group equipped with a Haar measure $\mu_G$,  recall that the convolution product of two functions $f$ and $g$ is given, when it exists, by
\begin{equation}
\label{Convolution-formulas}
(f*g)(s)
\ov{\mathrm{def}}{=} \int_G f(r)g(r^{-1}s) \d\mu_G(r)
=\int_G f(sr^{-1})g(r) \d\mu_G(r).
\end{equation}

\subsection{Some characterizations of inner amenability}
\label{subsec-inner-Folner}


Now, we introduce the following definition which is an <<inner variant>> of the well-known definition of the notion of <<F\o{}lner net>> in amenability theory.

\begin{defi}
\label{def-IF}
A locally compact group $G$ is said to be inner F\o{}lner (in short $G \in \IF$) if for every finite subset $F$ of $G$ there exists a net $(V_j^F)_j$ of measurable subsets of $G$ such that $\mu(V_j^F) \in (0,\infty)$, with the property that for all $s \in F$,
\begin{equation}
\label{Inner-Folner}
\frac{\mu(V_j^F \Delta (s^{-1}V_j^Fs))}{\mu(V_j^F)} 
\xra[j \to \infty]{} 0.
\end{equation}
\end{defi}

Now, we give different characterizations of inner amenability for unimodular locally compact groups. The equivalence between the first and the second point is sketched in \cite[Proposition 1 p.~222]{LoR87}. For the sake of completeness, we give a complete proof. 



\begin{thm}
\label{thm-inner-amenable-Folner}
Let $G$ be a unimodular locally compact group. The following are equivalent.
\begin{enumerate}
\item $G$ is inner amenable.
\item There exists an asymptotically central net $(f_j)$ of functions in the space $\L^1(G)$, i.e.~for any $s \in G$, we have 
\begin{equation}
\label{asymt-central}
\frac{\norm{f_j-\inner_sf_j}_{\L^1(G)}}{\norm{f_j}_{\L^1(G)}}
\xra[j ]{} 0.
\end{equation}

\item $G$ is inner F\o{}lner.

\item There exists a net $(f_j)$ of positive functions in the space $\L^1(G)$ with $\int_G f_j \d\mu = 1$ such that for all $s \in G$, we have $\norm{f_j - \inner_s f_j}_{\L^1(G)} \xra[j]{} 0$.

\item The same property as before, but the $f_j$'s belong in addition to the space $\C_c(G)$ and are positive definite.

\item $G$ is inner F\o{}lner and in addition the sets $V_j^F$ can be chosen to be symmetric, open and containing $e$.
\end{enumerate}
Finally, the net $(V_j^F)_j$ in the previous definition of inner F\o{}lner can be chosen to be a sequence.
\end{thm}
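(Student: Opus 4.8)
The plan is to establish all equivalences by first proving the functional-analytic block $(1)\Leftrightarrow(2)\Leftrightarrow(4)$, then the Følner reformulation $(4)\Leftrightarrow(3)$, and finally obtaining the smoothed conditions $(5)$ and $(6)$ as refinements of $(3)$ together with the trivial reverse implications $(5)\Rightarrow(4)$ and $(6)\Rightarrow(3)$. Throughout I would exploit systematically that on a unimodular group the inner automorphism $\phi_s\co t\mapsto s^{-1}ts$ preserves the Haar measure $\mu$ (it equals $L_{s^{-1}}\circ R_s$, and both left and right translations are measure preserving when $\Delta\equiv1$). Consequently $\inner_s$ is an isometry of every $\L^p(G)$, it commutes with the pointwise absolute value ($|\inner_sf|=\inner_s|f|$), it is multiplicative for convolution ($\inner_s(\xi*\eta)=(\inner_s\xi)*(\inner_s\eta)$) and commutes with the involution $\xi\mapsto\xi^*$; these four facts are the workhorses of every step.

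For $(1)\Rightarrow(2)$ I would use Day's convexity (Namioka) trick: a state $m$ on $\L^\infty(G)$ is a weak$^*$ limit of norm-one positive functions $f_\alpha\in\L^1(G)$ by Goldstine, and testing against $\psi\in\L^\infty(G)$ with $\langle\inner_sf_\alpha-f_\alpha,\psi\rangle=\langle f_\alpha,\inner_{s^{-1}}\psi-\psi\rangle\to m(\inner_{s^{-1}}\psi)-m(\psi)=0$ shows, for any finite $F=\{s_1,\dots,s_n\}$, that $(\inner_{s_k}f_\alpha-f_\alpha)_k\to0$ weakly in the $\ell^1$-sum $\bigoplus_k\L^1(G)$. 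Since $0$ then lies in the weak closure of the convex set of such vectors, it lies in its norm closure, so a convex combination $f$ (still positive, norm one) satisfies $\sum_k\|\inner_{s_k}f-f\|_1<\epsi$; letting $F$ grow and $\epsi\to0$ gives $(2)$. Then $(2)\Rightarrow(4)$ follows by passing to $|f_j|/\|f_j\|_1$, using $\big\||f_j|-\inner_s|f_j|\big\|_1\le\|f_j-\inner_sf_j\|_1$, and $(4)\Rightarrow(1)$ is the cluster-point argument: the states $m_j\co\psi\mapsto\int_G\psi f_j\,\d\mu$ have a weak$^*$ cluster point $m$, and $|m_j(\inner_s\psi)-m_j(\psi)|\le\|\psi\|_\infty\|\inner_{s^{-1}}f_j-f_j\|_1\to0$ forces invariance of $m$.

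For the Følner equivalence, to prove $(4)\Rightarrow(3)$ I fix $F$ and $\epsi$, choose $j$ with $\sum_k\|f_j-\inner_{s_k}f_j\|_1<\epsi$, and apply Lemma \ref{Lemma-CPPR} with $g_k=\inner_{s_k}f_j$: since $\{\inner_sf>t\}=s\{f>t\}s^{-1}$, the level set $V=\{f_j>t\}$ obeys $\sum_k\mu(V\,\Delta\,s_kVs_k^{-1})<\epsi\,\mu(V)$ with $0<\mu(V)<\infty$; applying this to $F^{-1}$ yields exactly \eqref{Inner-Folner}, and by inner regularity of $\mu$ the sets may be shrunk to relatively compact ones of nearly equal measure. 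The converse $(3)\Rightarrow(4)$ is immediate from $f_j=1_{V_j}/\mu(V_j)$, for which $\|f_j-\inner_sf_j\|_1=\mu(V_j\,\Delta\,s^{-1}V_js)/\mu(V_j)$.

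For $(3)\Rightarrow(5)$, with relatively compact inner Følner sets $V_j$ I put $\xi_j=1_{V_j}/\mu(V_j)$ and $f_j=\xi_j*\xi_j^*$: this is continuous, compactly supported, positive definite, pointwise nonnegative (as $\xi_j\ge0$), has $\int_Gf_j\,\d\mu=(\int\xi_j)^2=1$, and the bilinear estimate $\|f_j-\inner_sf_j\|_1\le\|\xi_j-\inner_s\xi_j\|_1\|\xi_j\|_1+\|\inner_s\xi_j\|_1\|\xi_j^*-(\inner_s\xi_j)^*\|_1$ combined with $\|\xi_j-\inner_s\xi_j\|_1=\mu(V_j\,\Delta\,s^{-1}V_js)/\mu(V_j)\to0$ yields the Reiter condition. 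For $(5)\Rightarrow(6)$ I feed these real continuous positive definite $f_j$ (which satisfy $f_j(t^{-1})=f_j(t)$ and attain $\max f_j=f_j(e)>0$ at $e$) into Lemma \ref{Lemma-CPPR} once more, so that each level set $\{f_j>t\}$ is open, symmetric and contains $e$. The trivial inclusions $(5)\Rightarrow(4)$ and $(6)\Rightarrow(3)$ close everything, and in the second countable case a diagonal argument over an exhausting sequence of finite subsets of a countable dense set (using continuity of $s\mapsto\inner_sf_j$) converts each net into a sequence. The one genuinely delicate step I expect to be the main obstacle is $(1)\Rightarrow(2)$: the weak-to-norm upgrade via the Day–Namioka convexity argument is where real care is needed, whereas all remaining steps reduce to Lemma \ref{Lemma-CPPR} or to the measure-preserving, convolution-multiplicative behaviour of $\inner_s$.
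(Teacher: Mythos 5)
Your proof is correct, and it uses exactly the same toolkit as the paper — the Day--Namioka convexity upgrade for $(1)\Rightarrow(2)/(4)$, a weak* cluster point for $(4)\Rightarrow(1)$, Lemma \ref{Lemma-CPPR} to manufacture F\o{}lner sets from level sets, and the convolution square $\xi*\xi^*$ for positive definiteness — but you wire the implication cycle differently. The paper proves $4\Rightarrow5\Rightarrow6\Rightarrow3\Rightarrow2\Rightarrow4$: it first upgrades the $\L^1$ functions to continuous compactly supported positive definite ones (via an $\epsi$-approximation in $\C_c(G)$ followed by $g_j=f_j*\check f_j$) and only then applies Lemma \ref{Lemma-CPPR}, so that the level sets are automatically open, symmetric, relatively compact and contain $e$, yielding $(6)$ and hence $(3)$ in one pass. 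You instead apply Lemma \ref{Lemma-CPPR} twice: once directly to arbitrary positive normalized $\L^1$ functions to get $(4)\Rightarrow(3)$ (using $\{\inner_s f>t\}=s\{f>t\}s^{-1}$ and replacing $F$ by $F^{-1}$ to fix the direction of conjugation, which is legitimate since Definition \ref{def-IF} only asks for measurable sets of finite positive measure), and once more in $(5)\Rightarrow(6)$ as in the paper. Your $(3)\Rightarrow(5)$ then convolves normalized indicators $\xi_j=1_{V_j}/\mu(V_j)$ directly, getting continuity from $\L^2*\L^2\subseteq\C(G)$ and the same factor-$2$ bilinear estimate the paper derives by hand; this skips the paper's $\C_c$-approximation step but obliges you to invoke inner regularity to shrink the $V_j$ to relatively compact sets so that $\xi_j*\xi_j^*$ has compact support — a point you correctly flag, though it belongs in $(3)\Rightarrow(5)$ rather than in $(4)\Rightarrow(3)$. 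Net effect: your route is shorter on the approximation side and makes the $\L^1$-level use of Lemma \ref{Lemma-CPPR} explicit, while the paper's route keeps all level-set arguments confined to continuous functions, where openness and relative compactness of $\{f_j>t\}$ come for free. Your identification of $(1)\Rightarrow(2)$ as the delicate step matches the paper, which spells out the Mazur/convex-combination replacement argument in full for precisely that reason.
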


\begin{proof}
1. $\Longrightarrow$ 4.: 
Let $m \in \L^\infty(G)^*$ be an inner invariant mean. By \cite[Proposition 3.3 p.~25]{Pie84} (see also \cite[Lemma 10.16 p.~366]{EiW17}), we can  approximate $m$ in the weak* topology by a net $(f_j)$ of functions in $\L^1(G)$ with $f_j \geq 0$ and $\norm{f_j}_{\L^1(G)}=1$. For any $s \in G$ and any $g \in \L^\infty(G)$, we have
\begin{align*}
\MoveEqLeft           
\la \inner_s(f_j), g \ra_{\L^1(G),\L^\infty(G)}
= \la f_j, \inner_{s^{-1}}(g) \ra_{\L^1(G),\L^\infty(G)}
\xra[j \to \infty]{} \la m, \inner_{s^{-1}}(g) \ra
=\la m, g \ra.
\end{align*} 
and $\la f_j, g \ra_{\L^1(G),\L^\infty(G)} \to \la m, g \ra_{\L^\infty(G)^*,\L^\infty(G)}$. With an $\frac{\epsi}{2}$-argument, it follows that for any $s \in G$ we have $\w-\lim_j (\inner_sf_j-f_j)=0$. 

Since for convex sets the weak closure coincides with the norm closure \cite[Theorem 2.5.16 p.~216]{Meg98}, we can replace $f_j$ by some convex combinations to get $\lim_j \norm{\inner_s(f_j) -f_j}_{\L^1(G)}=0$.
This replacement can be seen in the following way.
Let $F \ov{\mathrm{def}}{=} \{ s_1, \ldots, s_n\}$ be a finite set of $G$.
According to the above, $(0,\ldots,0)$ belongs to the weak-closure of the convex hull of $\{(\inner_{s_1}(f_j) - f_j, \inner_{s_2}(f_j) - f_j, \ldots, \inner_{s_n}(f_j) - f_j) : j \}$, hence to the $\L^1(G)^n$ norm closure of this convex hull. 
Thus there exists a sequence $(g_k)_k$ in this convex hull converging to $(0,\ldots,0)$ in norm.
For any $k \in \N$, we can write 
\begin{align*}
\MoveEqLeft
g_k 
=\sum_{\ell=1}^L \lambda_\ell \big(\inner_{s_1}(f_{j_\ell}) - f_{j_\ell},\ldots, \inner_{s_n}(f_{j_\ell}) - f_{j_\ell}\big) \\
&= \bigg(\inner_{s_1}\bigg(\sum_{\ell=1}^L \lambda_\ell f_{j_\ell}\bigg) - \sum_{\ell=1}^L \lambda_\ell f_{j_\ell},\ldots, \inner_{s_n}\bigg(\sum_{\ell=1}^L \lambda_\ell f_{j_\ell}\bigg) - \sum_{\ell=1}^L \lambda_\ell f_{j_\ell}\bigg) \\
&= \big(\inner_{s_1}(h_k) -  h_k, \ldots, \inner_{s_n}( h_k) - h_k\big),
\end{align*}
where $\lambda_\ell \geq 0$, $\sum_{\ell=1}^L \lambda_\ell = 1$ and where $h_k \ov{\mathrm{def}}{=} \sum_{\ell=1}^L \lambda_\ell f_{j_\ell}$ still is a positive normalized element in $\L^1(G)$.
We can suppose that $\norm{\inner_{s}(h_k) - h_k}_1 \leq \frac{1}{k}$ for any $s \in F$.
Now write $h_k = h_{k,F}$, let $F$ vary in the set of finite subsets of $G$ directed by inclusion, so that $(h_{k,F})_{k,F}$ becomes a net in $\L^1(G)$ such that $\norm{\inner_s(h_{k,F}) - h_{k,F}}_1 \to 0$ as $(k,F) \to \infty$ for any $s \in G$.

4. $\Longrightarrow$ 1.: Note that we have an isometric inclusion $\L^1(G) \subseteq \L^\infty(G)^*$. Consider a cluster point $m \in \L^\infty(G)^*$ of this net for the weak* topology which is positive and satisfies clearly $m(1)=1$. For any $f \in \L^\infty(G)$ and any $s \in G$, we have
\begin{align*}
\MoveEqLeft           
\left|\la f_j, f \ra_{\L^1(G),\L^\infty(G)}-\big\la f_j,\inner_s f\big\ra_{\L^1(G),\L^\infty(G)}\right|
=\left|\la f_j, f \ra-\big\la \inner_{s^{-1}} f_j,f \big\ra\right|
=\left|\big\la f_j-\inner_{s^{-1}} f_j, f \big\ra \right| \\
&\leq \norm{f_j-\inner_{s^{-1}} f_j}_{\L^1(G)} \norm{f}_{\L^\infty(G)}\xra[j]{} 0. 
\end{align*}
With a $\frac{\epsi}{3}$-argument we conclude that \eqref{inner-mean} is satisfied.

3. $\Longrightarrow$ 2.: Let $F$ be a finite subset of $G$. According to the assumption, there exists a subset $V_j^F$ such that $\frac{\mu(V_j^F \Delta (s^{-1}V_j^Fs))}{\mu(V_j^F)} \leq \frac{1}{j}$ for any $s \in F$.  Putting $f_j^F \ov{\mathrm{def}}{=} 1_{V_j^F}$ and using \eqref{Indicator-formula}, we obtain from Definition \ref{def-IF} that $\norm{f_j^F - \inner_s f_j^F}_1 / \norm{f_j^F}_1 \leq \frac{1}{j}$ for any $s \in F$. Directing the subsets $F$ by inclusion and the $j$ in the usual manner, we obtain a net of positive functions $f_j^F$ in $\L^1(G)$ as in \eqref{asymt-central}.

\noindent
2. $\Longrightarrow$ 4.: Using a normalization, it suffices to see that by the elementary inequality $\int_G \big||f|-|g|\big| \leq \int_G |f-g|$, the $f_j$'s in \eqref{asymt-central} can be chosen positive.

\noindent
4. $\Longrightarrow$ 5.: Let $(f_j)_j$ be the net as in 4. For $\epsi > 0$, choose some $f_{j,\epsi} \in \C_c(G)$ such that $f_{j,\epsi} \geq 0$, and $\norm{f_{j,\epsi} - f_j}_1 < \epsi$. Since the map $\inner_s \co \L^1(G) \to \L^1(G)$ is isometric,  for any $s \in G$ we have 
\[ 
\norm{f_{j,\epsi} - \inner_s f_{j,\epsi}}_1 
\leq \norm{f_{j,\epsi}-f_j}_1 + \norm{f_j - \inner_s f_j}_1 + \norm{\inner_s f_j - \inner_s f_{j,\epsi}}_1 
\leq \epsi + \norm{f_j - \inner_s f_j}_1 + \epsi. 
\]
We can suppose that $\int_G f_{j,\epsi} \d\mu = 1$. Replacing the index $j$ by $(j,\epsi)$ and equipping it with the suitable order, we obtain a net $(f_{j,\epsi})_{j,\epsi}$ of positive normalized continuous functions with compact support with the convergence property from 3. We may and do thus assume now that the net $(f_j)_j$ in the third point consists of continuous functions with compact support. 

For any $j$, put now $g_j \ov{\mathrm{def}}{=} f_j \ast \check{f}_j$ where $\check{f}_j (s) \ov{\mathrm{def}}{=} f_j(s^{-1})$. For any $ s \in G$ we have
\begin{equation}
\label{Def-de-g}
g_j(s)
=(f_j \ast \check{f}_j)(s)
\ov{\eqref{Convolution-formulas}}{=} \int_G f_j(st^{-1})\check{f}_j(t) \d \mu(t)
=\int_G f_j(sr)f_j(r) \d \mu(r)
\geq 0.
\end{equation}
Then for any $j$
\begin{align*}
\MoveEqLeft
\norm{g_j}_1 
=\int_G g_j(s) \d \mu(s)
\ov{\eqref{Def-de-g}}{=} 
\int_G \int_G f_j(sr) f_j(r) \d\mu(r) \d\mu(s)  
=\int_G \bigg(\int_G f_j(sr) \d\mu(s)\bigg) f_j(r) \d\mu(r) \\
&=\int_G \bigg(\int_G f_j(u) \d\mu(u)\bigg) f_j(r) \d\mu(r)
= \norm{f_j}_1^2 
= 1.
\end{align*} 
Moreover, for any $t \in G$, we have
\begin{equation}
\label{divers-33455}
(\inner_t g_j)(s) 
\ov{\eqref{def-conj-functions}}{=} g_j(t^{-1}st)
\ov{\eqref{Def-de-g}}{=} \int_G f_j(t^{-1}st r) f_j(r) \d\mu(r) 
= \int_G f_j(t^{-1}srt) f_j(t^{-1}rt) \d\mu(r).
\end{equation}
Thus we obtain
\begin{align*}
\MoveEqLeft
\norm{g_j - \inner_t g_j}_1 
=\int_G |g_j(s) - (\inner_t g_j)(s)| \d\mu(s) \\
&\ov{\eqref{Def-de-g} \eqref{divers-33455}}{=} \int_G \left| \int_G f_j(sr) f_j(r) - f_j(t^{-1}srt) f_j(t^{-1}rt) \d\mu(r) \right| \d\mu(s) \\
& \leq \int_G \left| \int_G f_j(sr) [f_j(r) - f_j(t^{-1}rt) ] \d\mu(r) \right| 
+ \left| \int_G [ f_j(sr) - f_j(t^{-1}srt) ] f_j(t^{-1}rt) \d\mu(r) \right| \d\mu(s) \\
&\leq \int_{G \times G} f_j(sr) |f_j(r) - f_j(t^{-1}rt)| \d\mu(r)\d\mu(s) \\
&+  \int_{G \times G} | f_j(sr) - f_j(t^{-1}srt)|f_j(t^{-1}rt) \d\mu(r) \d\mu(s) \\
&\leq \norm{f_j}_1 \norm{f_j - \inner_t f_j}_1 + \norm{f_j - \inner_t f_j}_1 \norm{\inner_t f_j}_1 = 2 \norm{f_j - \inner_t f_j}_1.
\end{align*}
This shows that the $g_j$'s have the same normalisation and convergence property than the $f_j$'s.
Moreover, by \cite[p.~281]{HeR70} the $g_j$'s are continuous positive definite functions with compact support.

\noindent
5. $\Longrightarrow$ 6.: Let $F = \{ s_1, \ldots, s_n \}$ be a finite subset of $G$ and $\epsi > 0$. According to the fifth point and \eqref{asymt-central}, choose some positive definite functions $f_j \in \C_c(G)$ such that $\norm{f_j - \inner_s f_j}_1 / \norm{f_j}_1 < \epsi / \card F$ for all $s \in F$. Using Lemma \ref{Lemma-CPPR} with $n = \card F$, $f = f_j$ and $g_k = \inner_{s_k^{-1}} f_j$ and the subset $V \ov{\mathrm{def}}{=} \{  f_j> t \}$ of $G$, we deduce that for some suitable $t > 0$ and $s \in F$,
\begin{align*}
\MoveEqLeft
\sum_{s \in F} \mu\big(V \Delta(s^{-1} V s)\big) 
= \sum_{s \in F} \mu\big(\{f_j > t\} \Delta \{\inner_{s^{-1}} f_j > t\}\big) 
\ov{\eqref{Equa-CPPR}}{<} \epsi \mu(\{f_j > t\}) 
= \epsi \mu(V).
\end{align*}
Therefore, the group $G$ is inner F\o{}lner. Moreover, since $f_j$ is continuous, $V$ is an open subset of $G$. Furthermore, since the function $f_j$ is positive definite, we have $\norm{f_j}_\infty = f_j(e_G)$ by \cite[p.~23]{KaL18}. We deduce that $e_G$ belongs to $V$ since otherwise we would have $V = \emptyset$ and the previous strict inequality could not hold. Finally, \cite[Proposition 1.4.16 (ii) p.~22]{KaL18}, we have $f_j=\check{f_j}$ since $f_j \geq 0$. We conclude that $V$ is symmetric. 

\noindent

6. $\Longrightarrow$ 3.: trivial. 

We turn to the last sentence of the statement.
So we assume that $G$ is an inner F\o{}lner group, such that for any finite subset $F$ of $G$ there exists a net $(V_\alpha^F)_\alpha$ of measurable subsets of $G$ such that $\mu(V_\alpha^F) \in (0,\infty)$, with the property that for all $s \in F$,
\begin{equation}
\label{equ-comment-11122024}
\frac{\mu(V_\alpha^F \Delta (s^{-1}V_\alpha^Fs))}{\mu(V_\alpha^F)} 
\xra[\alpha \to \infty]{} 0.
\end{equation}
We will construct a sequence $(W_j^F)_j$, indexed by $j \in \N$, that satisfies the same convergence property \eqref{equ-comment-11122024} as the $(V_\alpha^F)_\alpha$.
Start by putting $\epsi = 1$.
By \eqref{equ-comment-11122024}, for all $s \in F$, there exists some $\alpha(1,s)$ such that if $\alpha \geq \alpha(1,s)$, then
\[ 
\frac{\mu(V_\alpha^F \Delta (s^{-1}V_\alpha^Fs))}{\mu(V_\alpha^F)} 
\leq 1 .
\]
Choose some $\alpha(1) \geq \alpha(1,s)$ for all $s \in F$ (directed set property) and put $W_1^F \ov{\mathrm{def}}{=} V_{\alpha(1)}^F$. Now, let $\epsi = \frac12$. Again by \eqref{equ-comment-11122024}, for all $s \in F$, there exists some $\alpha(\frac12,s)$ such that if $\alpha \geq \alpha(\frac12,s)$, then
\[ 
\frac{\mu(V_\alpha^F \Delta (s^{-1}V_\alpha^Fs))}{\mu(V_\alpha^F)} 
\leq \frac12 .
\]
Choose some $\alpha(\frac12) \geq \alpha(\frac12,s)$ for all $s \in F$ and put $W_2^F \ov{\mathrm{def}}{=} V_{\alpha(\frac12)}^F$. Continue with $\epsi = \frac14,\frac18,\ldots$ and obtain a sequence of subsets $W_j^F \ov{\mathrm{def}}{=} V_{\alpha(\frac{1}{2^{j-1}})}^F$ such that for all $s \in F$ we have
\[ 
\frac{\mu(W_j^F \Delta (s^{-1}W_j^Fs))}{\mu(W_j^F)} 
= \frac{\mu(V_{\alpha(\frac{1}{2^{j-1}})}^F \Delta (s^{-1}V_{\alpha(\frac{1}{2^{j-1}})}^Fs))}{\mu(V_{\alpha(\frac{1}{2^{j-1}})}^F)} 
\leq \frac{1}{2^{j-1}}.
\]
For any $s \in F$, we infer that
\[ 
\frac{\mu(W_j^F \Delta (s^{-1}W_j^Fs))}{\mu(W_j^F)} 
\xra[j \to \infty]{} 0. 
\]
\end{proof}

\section{Projections on the space of completely bounded Fourier multipliers}
\label{Sec-complementation}
\subsection{Preliminaries}
\label{Sec-prel-complet}

\paragraph{Hilbert-Schmidt operators} Let $\Omega$ be a $\sigma$-finite measure space. We will use the space $S^\infty_\Omega \ov{\mathrm{def}}{=} S^\infty(\L^2(\Omega))$ of compact operators, its dual $S^1_\Omega$ and the space $\cal{B}(\L^2(\Omega))$ of bounded operators on the complex Hilbert space $\L^2(\Omega)$. If $f \in \L^2(\Omega \times \Omega)$, we denote the associated Hilbert-Schmidt operator by
\begin{equation}
\label{Def-de-Kf}
\begin{array}{cccc}
  K_f  \co &  \L^2(\Omega)   &  \longrightarrow   & \L^2(\Omega)   \\
    &   \xi  &  \longmapsto       &  \int_{X} f(\cdot,y)\xi(y) \d y  \\
\end{array}.	
\end{equation}
Using the notation $\check{f}(x,y) \ov{\mathrm{def}}{=} f(y,x)$, we have $(K_f)^*=K_{\check{\ovl{f}}}$. Note that the linear map $\L^2(\Omega \times \Omega) \to S^2_\Omega$, $f \mapsto K_f$ is an isometry from the Hilbert space $\L^2(\Omega \times \Omega)$ onto the Hilbert space $S^2_\Omega$ of Hilbert-Schmidt operators acting on the Hilbert space $\L^2(\Omega)$. This means that
\begin{equation}
\label{dual-trace}
\tr(K_f K_g)
=\int_{\Omega \times \Omega} f \check{g}, \quad f,g \in \L^2(\Omega \times \Omega).
\end{equation}

\paragraph{Schur multipliers acting on $S^p_\Omega$}
Suppose that $1 \leq p \leq \infty$. We say that a measurable function $\varphi \co \Omega \times \Omega \to \mathbb{C}$ induces a bounded Schur multiplier on the Schatten class $S^p_\Omega \ov{\mathrm{def}}{=} S^p(\L^2(\Omega))$  if for any $f \in \L^2(\Omega \times \Omega)$ satisfying $K_f \in S^p_\Omega$ we have $K_{\varphi f} \in S^p_\Omega$ and if the map $S^2_\Omega \cap S^p_\Omega \to S^p_\Omega$, $K_f \mapsto K_{\varphi f}$ extends to a bounded map $M_\varphi$ from $S^p_\Omega$ into $S^p_\Omega$ called the Schur multiplier associated with $\varphi$. It is well-known \cite[Remark 1.4 p.~77]{LaS11} that in this case $\varphi \in \L^\infty(\Omega \times \Omega)$ and that
\begin{equation}
\label{ine-infty}
\norm{\varphi}_{\L^\infty(\Omega \times \Omega)} 
\leq \norm{M_\varphi}_{S^p_\Omega \to S^p_\Omega}.
\end{equation}
We denote by $\mathfrak{M}_{\Omega}^{p}$ the space of bounded Schur multipliers on $S^p_\Omega$ and by $\mathfrak{M}_{\Omega}^{p,\cb}$ the subspace of completely bounded ones.

\paragraph{Schur multipliers acting on $\cal{B}(\L^2(\Omega))$}
 We say that a function $\varphi \in \L^\infty(\Omega \times \Omega)$ induces a Schur multiplier on $\cal{B}(\L^2(\Omega))$ if the map $S^2_\Omega \mapsto \cal{B}(\L^2(\Omega))$, $K_{f} \mapsto K_{\varphi f}$ induces a bounded operator from $S^\infty_\Omega$ into $\cal{B}(\L^2(\Omega))$. In this case, the operator $S^\infty_\Omega \mapsto \cal{B}(\L^2(\Omega))$, $K_{f}\mapsto K_{\varphi f}$ admits by \cite[Lemma A.2.2 p.~360]{BlM04} a unique weak* extension $M_\varphi \co \cal{B}(\L^2(\Omega)) \to \cal{B}(\L^2(\Omega))$ called the Schur multiplier associated with $\varphi$. It is known that $M_\varphi$ induces a bounded map $M_\varphi \co S^p_\Omega \to S^p_\Omega$ for any $1 \leq p \leq \infty$. We refer to the surveys \cite{ToT10} and \cite{Tod15} for more information. See also the papers \cite{Arh24} and \cite{Spr04}.

\begin{example} \normalfont
If the set $\Omega=\{1,\ldots,n\}$ is equipped with the counting measure, we can identify the space $\cal{B}(\L^2(\Omega))$ with the matrix algebra $\M_n$. Then each operator $K_{f}$ identifies to the matrix $[f(i,j)]$. A Schur multiplier is given by a map $M_\varphi \co \M_n \to \M_n$, $[f(i,j)] \mapsto [\varphi(i,j)f(i,j)]$. 
\end{example}

By \cite[Proposition 4.3]{Arh24}, the map $S^2_\Omega \to S^2_\Omega$, $K_f \mapsto K_{\check{f}}$ extends to an involutive normal $*$-antiautomorphism $R \co \cal{B}(\L^2(\Omega)) \to \cal{B}(\L^2(\Omega))$. We introduce the following duality bracket
\begin{equation}
\label{Duality-bracket}
\langle z,y \rangle_{\cal{B}(\L^2(\Omega)), S^1_\Omega}
\ov{\mathrm{def}}{=} \tr(R(z)y), \quad z \in \cal{B}(\L^2(\Omega)), y \in S^1_\Omega,
\end{equation}
which is more suitable than the bracket $\langle z,y \rangle= \tr(zy)$ since we have
\begin{equation}
\label{auto-adjoint}
\big\langle M_{\varphi}(z),y \big\rangle_{\cal{B}(\L^2(\Omega)), S^1_\Omega}
=\big\langle z,M_{\varphi}(y) \big\rangle_{\cal{B}(\L^2(\Omega)), S^1_\Omega}, \quad z \in \cal{B}(\L^2(\Omega)), y \in S^1_\Omega
\end{equation}
for any Schur multiplier $M_\varphi$ and since the operator space duality requires taking the opposite structure into account.


\paragraph{Herz-Schur multipliers}
Let $G$ be a (second-countable) unimodular locally compact group. Following \cite[p.~179]{Spr04}, a bounded Schur multiplier $M_\varphi \co \cal{B}(\L^2(G)) \to \cal{B}(\L^2(G))$ is a Herz-Schur multiplier if for any $r \in G$ we have $\varphi(sr,t)=\varphi(s,tr^{-1})$ for marginally almost all $(s,t)$ in $G \times G$. We define similarly the notion of Herz-Schur multiplier on $S^p_G$. We denote by $\frak{M}^{p,\cb,\HS}_G$ the subspace of $\frak{M}^{p,\cb}_G$ of completely bounded Herz-Schur multipliers. We define similarly $\frak{M}^{p,\HS}_G$. If $\varphi \co G \to \mathbb{C}$, we introduce the function $\varphi^\HS \co G \times G \to \mathbb{C}$, $(s,t) \mapsto \varphi(st^{-1})$. By \cite{BoF84} and \cite[Theorem 5.3 p.~181]{Spr04}, the linear map $\frak{M}^{\infty,\cb}(G) \to \frak{M}^{\infty,\cb,\HS}_G=\frak{M}^{\infty,\HS}_G$, $M_\varphi \mapsto M_{\varphi^\HS}$ is a surjective isometry. We let $M_{\varphi}^{\HS} \ov{\mathrm{def}}{=} M_{\varphi^\HS}$.




\paragraph{Plancherel weights} Let $G$ be a locally compact group. A function $g \in \L^2(G)$ is called left bounded \cite[Definition 2.1]{Haa78b} if the convolution operator $\lambda(g) \co \C_c(G) \to \C_c(G)$, $f\mapsto g*f$ induces a bounded operator on the Hilbert space $\L^2(G)$. The Plancherel weight $\tau_G \co \VN(G)^+\to [0,\infty]$ is\footnote{\thefootnote. This is the natural weight associated with the left Hilbert algebra $\C_c(G)$.} defined by the formula
$$
\tau_G(x)
= \begin{cases}
\norm{g}^2_{\L^2(G)} & \text{if }x^{\frac{1}{2}}=\lambda(g) \text{ for some left bounded function } g \in \L^2(G)\\
+\infty & \text{otherwise}
\end{cases}.
$$

By \cite[Proposition 2.9 p.~129]{Haa78b} (see also \cite[Theorem 7.2.7 p.~236]{Ped79}), the canonical left ideal $\mathfrak{n}_{\tau_G}=\big\{x \in \VN(G)\ : \  \tau_G(x^*x) <\infty\big\}$ is given by
$$
\mathfrak{n}_{\tau_G}
=\big\{\lambda(g)\ :\ g \in \L^2(G)\text{ is left bounded}\big\}.
$$
Recall that $\mathfrak{m}_{\tau_G}^+$ denotes the set $\big\{x \in \VN(G)^+ : \tau_G(x)<\infty\big\}$ and that $\mathfrak{m}_{\tau_G}$ is the complex linear span of $\mathfrak{m}_{\tau_G}^+$, which is a two-sided ideal of the group von Neumann algebre $\VN(G)$. By \cite[Proposition 2.9 p.~129]{Haa78b} and \cite[Proposition p.~280]{Str81}, we have 
$$
\mathfrak{m}_{\tau_G}^+
=\big\{\lambda(g) : g \in \L^2(G) \text{ continuous and left bounded}, \ \lambda(g)\geq 0\big\}.
$$

By \cite[Proposition 7.2.8 p.~237]{Ped79},
the Plancherel weight $\tau_G$ on the von Neumann algebra $\VN(G)$ is tracial if and only if the locally compact group $G$ is unimodular, which means that the left Haar measure of $G$ and the right Haar measure of $G$ coincide. Now, in the sequel, we suppose that the locally compact group $G$ is unimodular.

We will use the involution $f^*(t) \ov{\mathrm{def}}{=} \ovl{f(t^{-1})}$. By \cite[Theorem 4 p.~530]{Kun58}, if the functions $f,g \in \L^2(G)$ are left bounded then $f*g$ and $f^*$ are left bounded and we have 
\begin{equation}
\label{composition-et-lambda}
\lambda(f)\lambda(g)
=\lambda(f*g) 
\quad \text{and} \quad 
\lambda(f)^*=\lambda(f^*).
\end{equation}

If $f,g \in \L^2(G)$ it is well-known \cite[VIII pp.~39-40]{Bou04b} that the function $f*g$ is continuous and that we have $(f*g)(e)=(g*f)(e)=\int_G \check{g} f \d\mu_G$, where $e$ denotes the identity element of $G$ and where $\check{g}(s) \overset{\textrm{def}}= g(s^{-1})$. By \cite[(4) p.~282]{StZ75}, if $f,g \in \L^2(G)$ are left bounded, the operator $\lambda(g)^*\lambda(f)$ belongs to $\mathfrak{m}_{\tau_G}$ and we have the fundamental <<noncommutative Plancherel formula>>
\begin{equation}
\label{Formule-Plancherel}
\tau_G\big(\lambda(g)^*\lambda(f)\big)
=\langle g,f\rangle_{\L^2(G)},
\quad \text{which gives} \quad 
\tau_G\big(\lambda(g)\lambda(f)\big)
=\int_G \check{g} f \d\mu_G
=(g*f)(e).
\end{equation}
In particular, this formula can be used with any functions belonging to the space $\L^1(G) \cap \L^2(G)$.

If we introduce the subset $
\C_e(G)
\ov{\mathrm{def}}{=} \Span\big\{g^* * f : g,f \in \L^2(G)\text{ left bounded}\big\}
$ 
of the space $\C(G)$ considered in \cite[p.~238]{Ped79}, then we have
\begin{equation}
\label{Def-mtauG}
\mathfrak{m}_{\tau_G}
=\lambda(\C_e(G)).
\end{equation}
In this context, $\tau_G$ can be interpreted as the functional that evaluates functions of $\C_e(G)$ at the identity element $e_G$. While the formula $\tau_G(\lambda(h)) = h(e)$ appears meaningful for every function $h$ in $\C_c(G)$, we caution the reader that, in general, it is not true that $\lambda(\C_c(G)) \subset \mathfrak{m}_{\tau_G}$. Unfortunately, this misconception is frequently encountered in the literature.

\paragraph{Noncommutative $\L^p$-spaces}
In this paper, we focus on noncommutative $\L^{p}$-spaces associated to semifinite von Neumann algebras. Let $\cal{M}$ be a semifinite von Neumann algebra equipped with a normal semifinite faithful trace $\tau$. Let $\cal{S}^{+}$ be the set of all $x \in \cal{M}_{+}$ such that $\tau(\supp(x))<\infty$, where $\supp(x)$ denotes the support of $x$. Let $\cal{S}$ be the linear span of $\cal{S}^{+}$, then $\cal{S}$ is weak* dense $*$-subalgebra of $\cal{M}$. 

Suppose that $1 \leq p < \infty$. For any $x \in \cal{S}$, the operator $\vert x\vert^p$
belongs to $\cal{S}_+$ and we set
\begin{equation}
\label{Def-norm-Lp}
\norm{x}_{\L^p(\cal{M})}
\ov{\mathrm{def}}{=} \bigl(\tau(\vert x\vert^p)\bigr)^{\frac{1}{p}}.
\end{equation}
Here $\vert x \vert \ov{\mathrm{def}}{=}(x^*x)^{\frac{1}{2}}$ denotes the modulus of $x$. It turns out that $\norm{\cdot}_{\L^p(\cal{M})}$ is a norm on $\cal{S}$. By definition, the noncommutative $\L^p$-space $\L^p(\cal{M})$ associated with $(\cal{M},\tau)$ is the completion of $(\cal{S},\norm{\cdot}_{\L^p(\cal{M})})$. For convenience, we also set $\L^{\infty}(\cal{M}) \ov{\mathrm{def}}{=} \cal{M}$ equipped with its operator norm. Note that by definition, $\L^p(\cal{M}) \cap \cal{M}$ is dense in $\L^p(\cal{M})$ for any $1 \leq p < \infty$. See \cite{PiX03} for more information on noncommutative $\L^{p}$-spaces.


Furthermore, the trace $\tau$ uniquely extends to a bounded linear functional on the Banach space $\L^1(\cal{M})$, still denoted by $\tau$. Actually, we have 
\begin{equation}
\label{trace-continuity}
\vert\tau(x)\vert 
\leq \norm{x}_{\L^1(\cal{M})}, 
\quad x \in \L^1(\cal{M}).
\end{equation}
Recall the noncommutative H\"older's inequality. If $1 \leq p,q,r \leq \infty$ satisfy $\frac{1}{r}=\frac{1}{p}+\frac{1}{q}$ then
\begin{equation}
\label{Holder}
\norm{xy} _{\L^r(\cal{M})}
\leq \norm{x}_{\L^p(\cal{M})} \norm{y}_{\L^q(\cal{M})},\qquad x\in \L^p(\cal{M}), y \in \L^q(\cal{M}).
\end{equation}
For any $1 \leq p < \infty$, let $p^* \ov{\mathrm{def}}{=} \frac{p}{p-1}$ be the conjugate number of $p$. Applying \eqref{Holder} with $q=p^*$ and $r=1$ together with \eqref{trace-continuity}, we obtain a linear map $\L^{p^*}(\cal{M}) \to (\L^p(\cal{M}))^*$, $y \mapsto \tau(xy)$, which induces an isometric isomorphism
\begin{equation}
(\L^p(\cal{M}))^* 
=\L^{p^*}(\cal{M}),\qquad 1 \leq p <\infty,\quad \frac{1}{p}
+\frac{1}{p^*}
=1.
\end{equation}
In particular, we may identify the Banach space $\L^1(\cal{M})$ with the unique predual $\cal{M}_*$ of the von Neumann algebra $\cal{M}$. 


\paragraph{Operator theory} Suppose that $1 \leq p < \infty$. Let $T \co \L^p(\cal{M}) \to \L^p(\cal{M})$ be any bounded operator. We will denote by $T^{*}$ the adjoint of $T$ defined by
$$
\tau(T(x)y) 
=\tau(xT^*(y)),\qquad x\in \L^p(\cal{M}), y\in \L^{p^*}(\cal{M}).
$$
For any $1 \leq p \leq \infty$ and any $T \co \L^p(\cal{M}) \to \L^p(\cal{M})$, we can consider the map $T^{\circ} \co \L^p(\cal{M}) \to \L^p(\cal{M})$ defined by
\begin{equation}
\label{2circ}
T^\circ(x) 
\ov{\mathrm{def}}{=}  T(x^{*})^{*},\qquad x \in \L^p(\cal{M}).
\end{equation}
If $p=2$ and if we denote by $T^{\dag} \co \L^2(\cal{M}) \to \L^2(\cal{M})$ the adjoint of $T\co \L^2(\cal{M}) \to \L^2(\cal{M})$ in the usual sense of Hilbertian operator theory, that is
$$
\tau\bigl(T(x)y^{*}\bigr) 
=\tau\bigl(x(T^{\dag}(y))^{*}\bigr),\qquad x, y \in \L^2(\cal{M}),
$$
we see that
\begin{equation}
\label{2dual4}
T^{\dag} 
= T^{*\circ}.
\end{equation}

\subsection{Overview of the method}
\label{Sec-approach}

Suppose that $1 \leq p \leq \infty$ and let $G$ be a locally compact group. In this section, we present an approach for obtaining some bounded projections $P_G^p \co \CB(\L^p(\VN(G))) \to\CB(\L^p(\VN(G)))$ onto the subspace $\mathfrak{M}^{p,\cb}(G)$ of completely bounded Fourier multipliers on $\L^p(\VN(G))$, beyond the case of discrete groups, for a suitable locally compact group. The methods are different from the ones of \cite{ArK23} and complement the results of this paper. If $G$ is a locally compact group, we will use the fundamental unitary $W \co \L^2 (G \times G) \to \L^2(G \times G)$ in $\cal{B}(\L^2(G)) \otvn \VN(G)$ and its inverse $W^{-1}$ defined in \cite[Example 2.2.10 p.~26]{Vae01} (see also \cite[Remark 5.16 p.~150]{Kus05}) by
\begin{equation}
\label{Def-fund-unitary}
(W\xi)(s,t)
\ov{\mathrm{def}}{=}  \xi(s,s^{-1}t), \quad (W^{-1}\xi)(s,t)= \xi(s,st),
\quad s,t \in G, \xi \in \L^2(G \times G).
\end{equation}
Before going into the details, let us shortly present the roadmap of the proof of results of Section \ref{Sec-Th-complementation}.

Suppose that the group $G$ is discrete and recall the well-known construction. Consider the coproduct $\Delta \co \VN(G) \to \VN(G) \otvn \VN(G)$, $\lambda_s \mapsto \lambda_s \ot \lambda_s$. This \textit{trace preserving} normal unital injective $*$-homomorphism extends to a completely positive isometric map $\Delta_p \co \L^p(\VN(G)) \to \L^p(\VN(G) \otvn \VN(G))$ for any $1 \leq p \leq \infty$. With the adjoint $(\Delta_{p^*})^* \co \L^p(\VN(G) \otvn \VN(G)) \to \L^p(\VN(G))$, the map $P_G^p \co \CB(\L^p(\VN(G))) \to \CB(\VN(G))$ defined by
\begin{equation}
\label{Projection-discrete-case}
P_G^p(T)
=(\Delta_{p^*})^* (\Id_{\L^p(\VN(G))} \ot T)\Delta_p, \quad T \in \CB(\L^p(\VN(G))) 
\end{equation}
is a contractive projection from the Banach space $\CB(\L^p(\VN(G)))$ onto the subspace $\mathfrak{M}^{p,\cb}(G)$ of completely bounded Fourier multipliers acting on $\L^p(\VN(G))$ which preserves the complete positivity (in the case $p=\infty$ replace $\CB(\L^p(\VN(G)))$ by the space $\CB_{\w^*}(\VN(G))$).

By \cite[p.~26]{Vae01} and \cite[p.~267]{Str74}, we can factorize\footnote{\thefootnote. Indeed, this factorization is the definition of the coproduct.} the coproduct as
$$
\Delta(x)
=W(x \ot 1)W^{-1}, \quad x \in \VN(G).
$$
If $u,v \in \VN(G)$, we can therefore rewrite the formula \eqref{Projection-discrete-case} as 
$$
\big\langle P_G^p(T)u,v \big\rangle_{\L^{p}(\VN(G)),\L^{p^*}(\VN(G))}
=\big\langle (\Id \ot T)\Delta_p(u),\Delta_{p^*}(v) \big\rangle_{\L^p,\L^{p^*}}
$$ 
and finally 
\begin{equation}
\label{Magic-equa-1}
\big\langle P_G^p(T)u,v \big\rangle
=\big\langle (\Id \ot T)(W (u \ot 1)W^{-1}),W (v\ot 1)W^{-1} \big\rangle_{\L^p,\L^{p^*}}. 
\end{equation}

Now, if $G$ is a (second-countable unimodular) locally compact group and if $T \co \L^p(\VN(G)) \to \L^p(\VN(G))$ is again a completely bounded map, we wish to replace one or both units 1 of the formula \eqref{Magic-equa-1} by suitable sequences $(x_j)$ and $(y_j)$ of elements which approximate 1 in some sense. Actually, we start by replacing in \eqref{Magic-equa-1} the elements 1 by elements $x,y \in \L^1(\VN(G)) \cap \VN(G)$ and $u,v$ by elements $u \in S^p_G$ and $v \in S^{p^*}_G$. We will show that there exists a completely bounded Schur multiplier $M_{x,y,T} \co S^p_G \to S^p_G$ (replace the Schatten class $S^p_G$ by the von Neumann algebra $\cal{B}(\L^2(G))$ if $p=\infty$) such that
\begin{equation}
\label{MxyT}
\big\langle M_{x,y,T}(u),v\big\rangle_{S^p_G, S^{p^*}_G}
=\big\langle (\Id \ot T)(W (u \ot x)W^{-1}),W (v \ot y)W^{-1} \big\rangle_{S^p_G(\L^p(\VN(G))),S^{p^*}_G(\L^{p^*}(\VN(G)))}
\end{equation}
for any suitable elements $u \in S^p_G$ and $v \in S^{p^*}_G$. Note that $x \in \L^p(\VN(G))$, $y \in \L^{p^*}(\VN(G))$ and that $W,W^{-1} \in \cal{B}(\L^2(G)) \otvn \VN(G)$. Moreover, we will compute the symbol $\varphi_{x,y,T}$, belonging to $\L^\infty(G \times G)$, of the Schur multiplier $M_{x,y,T}$ and we will get
\begin{equation}
\label{symbol-phixyT}
\varphi_{x,y,T}(s,t) 
=\tau_G\big(\lambda_ty \lambda_{s^{-1}} T(\lambda_s x \lambda_{t^{-1}}) \big) \quad s,t \in G.
\end{equation}
In the particular case of \textit{finite} groups, these assertions are straightforward and we refer to the end of this section for a short proof of \eqref{MxyT} and \eqref{symbol-phixyT}.

For the case of a locally compact group, this step unfortunately uses a painful approximation procedure described in Section \ref{Mappings} relying on a sequence $(M_{\phi_n})$ of completely bounded Fourier multipliers $M_{\phi_n} \co \L^p(\VN(G)) \to \L^2(\VN(G))$ which allows us to consider the completely bounded maps $M_{\phi_n}T \co \L^p(\VN(G)) \to \L^2(\VN(G))$ in order to reduce the problem to the level $p=2$.

We therefore obtain a map $P_{x,y} \co \CB(\L^p(\VN(G))) \to \CB(S^p_G)$, $T \mapsto M_{x,y,T}$ and it is easy to check that this map preserves the complete positivity. Introducing suitable sequences $(x_j)$ and $(y_j)$ of elements in $\L^1(\VN(G)) \cap \VN(G)$ which approximate the element 1 we obtain a sequence $(P_j)$ of linear maps $P_j \ov{\mathrm{def}}{=} P_{x_j,y_j} \co \CB(\L^p(\VN(G))) \to \CB(S^p_G)$. One of the difficulties in this area is to construct suitable sequences with the chosen assumptions on the group $G$.


Essentially, in the sequel we capture a cluster point of the bounded family $(P_j)$ and we obtain a bounded map $P^{(1)} \co \CB(\L^p(\VN(G))) \to \CB(S^p_G)$. Each map $P^{(1)}(T)$ is a completely bounded Schur multiplier.


\paragraph{Case where $G$ is inner amenable and $p=\infty$}
With a \textit{suitable} choice of the sequences $(x_j)$ and $(y_j)$ provided by the inner amenability of $G$, the map $P^{(1)} \co \CB_{\w^*}(\VN(G)) \to \CB(\cal{B}(\L^2(G))$ is \textit{contractive} and the Schur multiplier $P^{(1)}(T) \co \cal{B}(\L^2(G)) \to \cal{B}(\L^2(G))$ is a \textit{Herz-Schur} multiplier for all weak* continuous completely bounded maps $T \co \VN(G) \to \VN(G)$. So, we can see the linear map $P^{(1)}$ as a map $P^{(1)} \co \CB_{\w^*}(\VN(G)) \to \mathfrak{M}^{\infty,\HS}_G=\mathfrak{M}^{\infty,\cb,\HS}_G$. Now, it suffices to identify (completely) bounded Herz-Schur multipliers acting on the space $\cal{B}(\L^2(G))$ isometrically with completely bounded Fourier multipliers acting on $\VN(G)$, while preserving the complete positivity. This step is well-known \cite{BoF84} and true for any locally compact group $G$ \textit{without} amenability assumption. Denoting $I \co \mathfrak{M}^{\infty,\HS}_G \to \CB(\VN(G))$ the associated isometry with range $\mathfrak{M}^{\infty,\cb}(G)$, the final contractive projection will be $P_G^\infty \ov{\mathrm{def}}{=} I \circ P^{(1)}$.  

Indeed, in the case where $T = M_\phi \co \VN(G) \to \VN(G)$ is a Fourier multiplier we will prove that the symbol $\phi_{j,T}$ of the Schur multiplier $P_{j}(M_\phi) \co \cal{B}(\L^2(G)) \to \cal{B}(\L^2(G))$ is equal to the symbol $\phi^\HS \co (s,t) \mapsto \phi(st^{-1})$ for any $j$. By passing the limit, $P^{(1)}(M_\phi) = M_{\phi^\HS}$ and finally
$$
P_G^\infty(M_\phi)
=I \circ P^{(1)}(M_\phi)
= I\big(M_{\phi^\HS}\big) 
= M_\phi.
$$
So we obtain the property $(\kappa_\infty)$ of Definition \ref{Defi-tilde-kappa} for these groups with constant $\kappa_\infty(G)=1$.

\paragraph{Case where $G$ is finite-dimensional and amenable and simultaneous cases $p=1$ and $p=\infty$}
In the case where the group $G$ is in addition finite-dimensional and amenable, replacing the sequences $(x_j)$ and $(y_j)$ of the proof of the last case by new ones, we obtain linear maps $P_p^{(1)} \co \CB(\L^p(\VN(G))) \to \CB(S^p_G)$ for $p=1$ and $p=\infty$ (replace $\CB(\L^\infty(\VN(G)))$ by $\CB_{\w^*}(\VN(G))$ here and in the sequel), that we see as maps $P_p^{(1)} \co \CB(\L^p(\VN(G))) \to \mathfrak{M}^{p,\cb}_G$. The cost of this replacement of sequences is the non-contractivity of $P_p^{(1)}$ but we obtain the compatibility of the maps $P_\infty^{(1)}(T)$ and $P_1^{(1)}(T)$. For the construction of the sequences $(x_j)$ and $(y_j)$, our approach relies on the structure of locally compact groups from the solution to Hilbert's fifth problem which makes appear connected Lie groups in this context and the use of Carnot-Carath\'eodory metrics on connected Lie groups.

Now, we construct and use a contractive map $Q \co \mathfrak{M}^{p,\cb}_G \to \mathfrak{M}^{p,\cb,\HS}_G$ from the space $\mathfrak{M}^{p,\cb}_G$ of Schur multipliers onto the subspace of Herz-Schur multipliers which preserves the complete positivity and the Herz-Schur multipliers\footnote{\thefootnote. We can see $Q$ as a contractive projection $Q \co \mathfrak{M}^{p,\cb}_G \to \mathfrak{M}^{p,\cb}_G$ onto the subspace $\mathfrak{M}^{p,\cb,\HS}_G$ of completely bounded Herz-Schur multipliers.}. In this essentially folklore step, we need the amenability of the group $G$ in sharp contrast with our previous work \cite{ArK23}. Then put $P^{(2)}_p \ov{\mathrm{def}}{=} Q \circ P_p^{(1)} \co \CB(\L^p(\VN(G))) \to \mathfrak{M}^{p,\cb,\HS}_G$.

At present, it suffices with \cite{CaS15} to identify completely bounded Herz-Schur multipliers isometrically with completely bounded Fourier multipliers, preserving the complete positivity.  Denoting $I \co \mathfrak{M}^{p,\cb,\HS}_G \to \CB(\L^p(\VN(G)))$ the associated isometry\footnote{\thefootnote. Actually, is it showed in \cite{CaS15} that the map $I$ is a contraction (when $G$ is amenable), which is an isometry on a large subspace.} with range $\mathfrak{M}^{p,\cb}(G)$, the final contractive projection will be 
$$
P_G^p 
\ov{\mathrm{def}}{=} I \circ P^{(2)}_p 
= I \circ Q \circ P^{(1)}_p.
$$

In the case where $T = M_\phi$ is a Fourier multiplier, we will prove that the symbol $\phi_{j,T}$, element in $\L^\infty(G \times G)$, of the completely bounded Schur multiplier $P_{j}(T)$ converges to the symbol $\phi^\HS \co (s,t) \mapsto \phi(st^{-1})$ for the weak* topology of of the dual Banach space $\L^\infty(G \times G)$. From this, we deduce that the limit $P^{(1)}_p(M_\phi)$ of the sequence $(P_j(T))$ also admits the symbol $\phi^\HS$. We conclude that
$$
P_G^p(M_\phi)
=I \circ Q \circ P^{(1)}_p(M_\phi)
=I \circ Q\big(M_{\phi^\HS}\big)
=I \big(M_{\phi^\HS}\big)
=M_\phi.
$$
We conclude that we obtain the property $(\kappa)$ of Definition \ref{Defi-complementation-G} for these groups. For totally disconnected groups, the method gives the sharp result $\kappa(G)=1$.

\paragraph{Case where $G$ is amenable and $1 < p < \infty$ with $\frac{p}{p^*}$ being  rational.}
In the case where the group $G$ is amenable, using some sequences $(x_j)$ and $(y_j)$, we obtain a \textit{contractive} linear map $P_p^{(1)} \co \CB(\L^p(\VN(G))) \to \CB(S^p_G)$ which is better than the \textit{boundedness} of the previous case, but only for \textit{one} value of $p$. The method is similar to the previous case but we use \cite{CaS15} (see also \cite{NeR11}) instead of \cite{BoF84} to identify completely bounded Herz-Schur multipliers isometrically with completely bounded Fourier multipliers (which require the amenability of $G$ once again). 



\paragraph{Particular case of finite groups: proof of \eqref{MxyT} and \eqref{symbol-phixyT}}
If the group $G$ is finite and if $(e_i)$ is an orthonormal basis of the Hilbert space $\ell^2_G$ then \eqref{Def-fund-unitary} translates to
\begin{equation}
\label{W-discret}
W(e_t \ot e_r)
=e_t \ot e_{tr}, \quad 
W^{-1}(e_t \ot e_r)
=e_t \ot e_{t^{-1}r}, \quad t,r \in G.
\end{equation}
For any $i,j,s,t,u \in G$, we have
\begin{align*}
\MoveEqLeft
W (e_{st} \ot \lambda_u)W^{-1}(e_i \ot e_j)            
\ov{\eqref{W-discret}}{=} W (e_{st} \ot \lambda_u)(e_i \ot e_{i^{-1}j}) 
=W\big(e_{st}e_i \ot \lambda_u(e_{i^{-1}j})\big) \\
&= \delta_{t=i}W\big(e_s \ot e_{ui^{-1}j})\big) 
\ov{\eqref{W-discret}}{=} \delta_{t=i} e_s \ot e_{sut^{-1}j}.
\end{align*} 
Hence in $\cal{B}(\ell^2(G)) \otvn \VN(G)$, we have
\begin{equation}
\label{calcul-890}
W (e_{st} \ot \lambda_u)W^{-1}
= e_{st} \ot \lambda_{sut^{-1}}.
\end{equation}
We deduce that 
\begin{equation}
\label{Equa-456}
(\Id \ot T)\big(W (e_{st} \ot \lambda_u)W^{-1}\big)            
=e_{st} \ot T(\lambda_{sut^{-1}}).
\end{equation}
We infer that
\begin{align*}
\MoveEqLeft
(\tr \ot \tau_G)\big[(\Id \ot T)\big(W (e_{st} \ot \lambda_u)W^{-1}\big)  (W (e_{ij} \ot \lambda_{r}W^{-1}) \big] \\
&\ov{\eqref{Equa-456}\eqref{calcul-890}}{=} (\tr \ot \tau_G)\big[(e_{st} \ot T(\lambda_{sut^{-1}}))  (e_{ij} \ot \lambda_{irj^{-1}})  \big]  \\
&=\tr(e_{st}e_{ij}) \tau_G\big(T(\lambda_{sut^{-1}})\lambda_{irj^{-1}}  \big)
=\delta_{t=i}\delta_{s=j}\tau_G\big(\lambda_{i}\lambda_{r}\lambda_{j^{-1}}T(\lambda_{sut^{-1}})  \big).
\end{align*} 
By linearity, we deduce on the one hand for any $x \in \L^p(\VN(G))$ and any $y \in \L^{p^*}(\VN(G))$
$$
(\tr \ot \tau_G)\big[(\Id \ot T)\big(W (e_{st} \ot x)W^{-1}\big)  (W (e_{ij} \ot y)W^{-1}) \big]
=\delta_{t=i}\delta_{s=j}\tau_G\big(\lambda_{i}y\lambda_{j^{-1}}T(\lambda_{s} x \lambda_{t^{-1}})   \big)
$$
On the other hand, if we consider the Schur multiplier $M_{x,y,T} \co S^p_G \to S^p_G$ with symbol \eqref{symbol-phixyT}, we have
\begin{align*}
\MoveEqLeft
\big\langle M_{x,y,T}(e_{st} ),e_{ij}\big\rangle_{S^p_G, S^{p^*}_G}            
\ov{\eqref{symbol-phixyT}}{=} \delta_{t=i}\delta_{s=j} \tau_G\big(\lambda_ty \lambda_{s^{-1}} T(\lambda_s x \lambda_{t^{-1}}) \big).
\end{align*}

\begin{remark} \normalfont
Note that with $x=y=1$, the Schur multiplier $M_{x,y,T}$ is a Herz-Schur multiplier. See Section \ref{Sec-Herz-Schur} for a generalization of this crucial observation.
\end{remark}

%

\subsection{Step 1: construction of the maps $P_j(T)$}
\label{Mappings}

In this section, we establish \eqref{MxyT} and \eqref{symbol-phixyT}. We caution the reader that while this part is technically involved, the underlying idea is quite simple. Specifically, we reduce the computation to the case $p = 2$, where Parseval's identity can be applied.

Let $G$ be a unimodular locally compact group. We denote by $\tr_G$ and $\tau_G$ the canonical traces $\tr_G$ of the von Neumann algebras $\cal{B}(\L^2(G))$ and $\VN(G)$. Suppose that $G$ is second-countable and fix an orthonormal basis $(e_i)$ of the Hilbert space $\L^2(G)$ such that each function $e_i$ is continuous with compact support\footnote{\thefootnote. To demonstrate the existence of such an orthonormal basis, consider a sequence of continuous functions with compact support that is dense in $\L^2(G)$, and apply the Gram-Schmidt procedure.}. Note that by \cite[p.~40]{BlM04} we have a canonical identification $\cal{B}(\L^2(G)) \otvn \VN(G)=\M_\infty(\VN(G))$. That means that an element $X$ belonging to the von Neumann tensor product $\cal{B}(\L^2(G)) \otvn \VN(G)$ identifies to a matrix $[x_{ij}]$ with entries in the von Neumann algebra $\VN(G)$. 
For any $h \in \L^2(G)$ and any integer $k$, note that in $\L^2(G)$
\begin{equation}
\label{eval}
\big(x_{kk}(h)\big)(w)
=\int_G \big(X(e_k \ot h)\big)(s,w) \ovl{e_k(s)} \d \mu_G(s), \quad h \in \L^2(G), \text{ a.e. }w \in G.
\end{equation}
If $1 \leq p <\infty$, we have by \cite{Pis98} a similar isometry $\L^p(\cal{B}(\L^2(G)) \otvn \VN(G))=S^p_G(\L^p(\VN(G)))$. Moreover, if $X$ belongs to the intersection $\L^1(\cal{B}(\L^2(G)) \otvn \VN(G)) \cap \big[\cal{B}(\L^2(G)) \otvn \VN(G)\big]$ we have $x_{kk} \in \L^1(\VN(G)) \cap \VN(G)$ for any integer $k$ and 
\begin{equation}
\label{trace-Xn}
(\tr_G \ot \tau_G)(X) 
= \sum_{k=1}^{\infty} \tau_G(x_{kk}).
\end{equation}
In the next result, we use the operator $W$ in $\cal{B}(\L^2(G)) \otvn \VN(G)$ and its inverse from \eqref{Def-fund-unitary}.

\begin{lemma}
\label{lem-referee-proof-step-1-calcul-du-symbole-coefficients-L2}
Let $G$ be a second-countable unimodular locally compact group.
\begin{enumerate}
\item
Let $\phi \in \L^2(G \times G)$ such that $K_\phi$ belongs to $S^1_G$ and  $x \in \L^1(\VN(G)) \cap \VN(G)$. Then $W(K_\phi \ot x)W^{-1}$ belongs to $\L^1(\cal{B}(\L^2(G)) \otvn \VN(G))$ and to $\cal{B}(\L^2(G)) \otvn \VN(G)$. 

\item If $f$ belongs to the space $\C_c(G)$ and if $g$ belongs to the space $\C_c(G)*\C_c(G)$, we have for any integers $i,j$
\begin{align}
\label{equ-referee-proof-step-1-calcul-du-symbole-coefficients-L2}
\MoveEqLeft
(\tr_G \ot \tau_G)\big[W( K_\phi \ot \lambda(g) )W^{-1} \cdot ( e_{ij}^* \ot \lambda(f))\big] \\
& =\int_G \int_G \phi(s,t)  \tau_G\big[\lambda_s \lambda(g) \lambda_{t^{-1}} \lambda(f)\big]  \ovl{e_i(s)}e_j(t)  \d \mu_G(s)\d \mu_G(t). \nonumber
\end{align}
\end{enumerate}
\end{lemma}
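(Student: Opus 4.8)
The plan is to handle the two assertions separately, the first being a soft membership statement and the second the computational core.

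\textbf{Part (1).} Since a trace class operator is bounded, $K_\phi\in S^1_G\cap\B(\L^2(G))$, while $x\in\L^1(\VN(G))\cap\VN(G)$ by hypothesis. I would first note that the elementary tensor $K_\phi\ot x$ belongs simultaneously to $\B(\L^2(G))\otvn\VN(G)$ and to $\L^1(\B(\L^2(G))\otvn\VN(G))$, with $\norm{K_\phi\ot x}_1=\norm{K_\phi}_1\norm{x}_1$. As $W$ and $W^{-1}$ are unitaries of the von Neumann algebra $\B(\L^2(G))\otvn\VN(G)$, the conjugation $z\mapsto WzW^{-1}$ leaves this algebra invariant and restricts to an isometry of its predual $\L^1(\B(\L^2(G))\otvn\VN(G))$; applying it to $K_\phi\ot x$ gives the claim.

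\textbf{Part (2): reduction to a single coefficient.} Set $A:=W(K_\phi\ot\lambda(g))W^{-1}$ and $B:=e_{ij}^*\ot\lambda(f)=e_{ji}\ot\lambda(f)$. By Part (1) and the identification $\B(\L^2(G))\otvn\VN(G)=\M_\infty(\VN(G))$, the coefficients $A_{kl}$ of $A$ lie in $\VN(G)\cap\L^1(\VN(G))$, and $AB\in\L^1\cap\B$. The only nonzero entry of $e_{ij}^*$ sits in position $(j,i)$, so the matrix product collapses to $(AB)_{kk}=\delta_{k=i}\,A_{ij}\lambda(f)$, and the trace formula \eqref{trace-Xn} reduces the left-hand side to the single term $(\tr_G\ot\tau_G)(AB)=\tau_G(A_{ij}\lambda(f))$.

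\textbf{Part (2): computing the coefficient.} I would then compute $A_{ij}$ explicitly. Applying $W^{-1}$, then $K_\phi\ot\lambda(g)$, then $W$ to a vector $e_j\ot h$ via \eqref{Def-fund-unitary} and the convolution identity \eqref{Convolution-formulas} yields
\[
\big(A(e_j\ot h)\big)(s,t)=\int_G\int_G\phi(s,s')\,g(r)\,e_j(s')\,h\big(s'r^{-1}s^{-1}t\big)\,\d\mu_G(r)\,\d\mu_G(s').
\]
Reading off the coefficient through \eqref{eval} and recognizing that, for fixed $s,s'$, the inner integral $\int_G g(r)h(s'r^{-1}s^{-1}\cdot)\,\d\mu_G(r)$ is precisely $\lambda_s\lambda(g)\lambda_{s'^{-1}}$ acting on $h$, I obtain the weak integral representation
\[
A_{ij}=\int_G\int_G\phi(s,t)\,\ovl{e_i(s)}\,e_j(t)\,\lambda_s\lambda(g)\lambda_{t^{-1}}\,\d\mu_G(s)\,\d\mu_G(t).
\]
Pairing with $\lambda(f)$ and moving $\tau_G$ inside then gives exactly \eqref{equ-referee-proof-step-1-calcul-du-symbole-coefficients-L2}.

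\textbf{Main obstacle.} The formal manipulations are routine; the real difficulty, as the authors warn, is justifying the repeated use of Fubini and, above all, the interchange of $\tau_G$ with the double integral. This is where the hypotheses are spent: $g\in\C_c(G)*\C_c(G)$ forces $\lambda(g)\in\mathfrak{m}_{\tau_G}$ (writing $a*b=c^**b$ with $c=\check{\ovl a}$ exhibits $g$ as an element of $\C_e(G)$), so $\lambda_s\lambda(g)\lambda_{t^{-1}}\lambda(f)$ remains in the domain of $\tau_G$ and the right-hand integrand is well defined; $f\in\C_c(G)$ keeps $\lambda(f)$ bounded; and $K_\phi\in S^1_G$ with $\phi\in\L^2(G\times G)$, together with the continuity and compact support of the $e_i$, supply the integrability for Fubini. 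To make the $\tau_G$/integral interchange rigorous I would descend to the Hilbert--Schmidt level suggested in the text, where $\tau_G(A_{ij}\lambda(f))$ becomes a genuine $\L^2(\VN(G))$ pairing and the Plancherel (Parseval) identity \eqref{Formule-Plancherel} turns the operatorial trace into the stated integral over $G\times G$; boundedness of $\tau_G$ on the relevant trace ideal then legitimizes passing it through the integral sign.
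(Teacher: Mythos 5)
Your proposal is correct and follows essentially the same route as the paper's proof: both use \eqref{trace-Xn} to reduce to the single surviving diagonal term $k=i$, compute the relevant matrix coefficient by applying $W^{-1}$, $K_\phi \ot \lambda(g)$ and $W$ to $e_j \ot h$ via \eqref{Def-fund-unitary}, and conclude with the Plancherel formula \eqref{Formule-Plancherel}. The only organizational difference is that the paper absorbs $\lambda(f)$ into the vector (computing $X(e_i\ot h)=A(e_j\ot\lambda(f)h)$), identifies the diagonal entry as $\lambda(k)$ for an explicit scalar kernel $k$, and evaluates $\tau_G(\lambda(k))=k(e)$, which avoids the operator-valued weak integral and the $\tau_G$/integral interchange that you instead justify (correctly) by viewing $A_{ij}$ as a Bochner integral in $\L^2(\VN(G))$ paired against $\lambda(f)$.
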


\begin{proof}
1. The element $K_\phi \ot x$ belongs to $S^1_G \ot [ \L^1(\VN(G)) \cap \VN(G) ]$, hence to the space $\L^1(\cal{B}(\L^2(G)) \otvn \VN(G)) \cap \big[\cal{B}(\L^2(G)) \otvn \VN(G)\big]$. Then the claim follows since $W$ and $W^{-1}$ belong to the space $\cal{B}(\L^2(G)) \otvn \VN(G)$ and since $\L^1(\cal{B}(\L^2(G)) \otvn \VN(G)) \cap \big[\cal{B}(\L^2(G)) \otvn \VN(G)\big]$ is an ideal of the von Neumann algebra $\cal{B}(\L^2(G)) \otvn \VN(G)$.

2. By the first part, observe that the element $X \ov{\mathrm{def}}{=} W( K_\phi \ot \lambda(g) )  W^{-1} \cdot \big(e_{ij}^* \ot \lambda(f)\big)$ belongs to the intersection $\L^1(\cal{B}(\L^2(G)) \otvn \VN(G)) \cap \big[\cal{B}(\L^2(G)) \otvn \VN(G)\big]$. According to \eqref{trace-Xn}, we have 
$$
(\tr_G \ot \tau_G)\big[W( K_\phi \ot \lambda(g) )W^{-1} (e_{ij}^* \ot \lambda(f))\big] 
\ov{\eqref{trace-Xn}}{=} \sum_{k=1}^{\infty} \tau_G(x_{kk})
$$ 
Now, we want to compute $\tau_G(x_{kk})$ with \eqref{eval}. If $k \neq i$, we have 
$$
X(e_k \ot h) 
= W(K_\phi \ot \lambda(g))W^{-1}\big(e_{ji} \ot \lambda(f)\big)(e_k \ot h) 
= 0.
$$ 
Hence $x_{kk}=0$ in this case and therefore $\tau_G(x_{kk})=0$. Thus, we only need to consider $k = i$ in the sequel. Then replacing $r$ by $tv^{-1}s^{-1}r$ in the last equation for $h \in \C_c(G)$
\begin{align}
\MoveEqLeft
\label{Eq-10987}
\big(X(e_i \ot h)\big)(s,w) 
= \big(W(K_\phi \ot \lambda(g))W^{-1}(e_{ji} \ot \lambda(f) )(e_i \ot h)\big)(s,w) \\
&=\big(W(K_\phi \ot \lambda(g))W^{-1}(e_j \ot \lambda(f)(h))\big)(s,w) \nonumber \\
&\ov{\eqref{Def-fund-unitary}}{=} \big((K_\phi \ot \lambda(g))W^{-1}(e_j \ot \lambda(f)h)\big)(s,s^{-1}w) \nonumber\\
& \ov{\eqref{Def-de-Kf} \eqref{Convolution-formulas} }{=} \int_G \int_G \phi(s,t) g(v) W^{-1}(e_j \ot \lambda(f)h)(t,v^{-1}s^{-1}w) \d \mu_G(t) \d \mu_G(v) \nonumber\\
& \ov{\eqref{Def-fund-unitary}}{=} \int_G \int_G \phi(s,t) g(v)(e_j \ot \lambda(f)h)(t,tv^{-1}s^{-1}w) \d \mu_G(t) \d \mu_G(v) \nonumber\\
& \ov{\eqref{Convolution-formulas}}{=} \int_G \int_G \int_G \phi(s,t)e_j(t) g(v) f(r)  h(r^{-1} t v^{-1}s^{-1}w) \d \mu_G(t) \d \mu_G(v) \d \mu_G(r)\nonumber\\
&=\int_G \int_G \int_G \phi(s,t) g(v) f(tv^{-1}s^{-1}r)  h(r^{-1}w) e_j(t)\d \mu_G(t) \d \mu_G(v) \d \mu_G(r). \nonumber
\end{align}
Hence for almost all $t \in G$
\begin{align*}
\MoveEqLeft
\big(x_{ii}(h)\big)(w) 
\ov{\eqref{eval}}{=}\int_G \big(X(e_i \ot h)\big)(s,w) \ovl{e_i(s)} \d \mu_G(s) \\
&\ov{\eqref{Eq-10987}}{=} \int_G \int_G \int_G \int_G \phi(s,t) g(v) f(tv^{-1}s^{-1}r)  h(r^{-1}w) \ovl{e_i(s)}  e_j(t)\d \mu_G(t) \d \mu_G(v) \d \mu_G(r) \d \mu_G(s).
\end{align*}
So we obtain
\[ 
x_{ii} 
=  \int_G \int_G \int_G \int_G \phi(s,t) g(v) f(tv^{-1}s^{-1}r)  \ovl{e_i(s)}e_j(t) \lambda_{r^{-1}}  \d \mu_G(t) \d \mu_G(v) \d \mu_G(r) \d \mu_G(s) . 
\]
which identifies to the convolution operator $\lambda(k)$ where $k$ is the function defined by
\begin{equation}
\label{Function-k}
k(r)
\ov{\mathrm{def}}{=} \int_G \int_G \int_G \phi(s,t) g(v) f(tv^{-1}s^{-1}r) \ovl{e_i(s)} e_j(t) \d \mu_G(t) \d \mu_G(v) \d \mu_G(s).
\end{equation} 

We can easily evaluate the trace of the diagonal entry $x_{ii}$ which is an element in the space $\L^1(\VN(G)) \cap \VN(G)$. Indeed, replacing $v$ by $s^{-1}v$ in the second equality, we have
\begin{align*}
\MoveEqLeft
\tau_G(x_{ii}) 
=k(e) 
\ov{\eqref{Function-k}}{=}  \int_G \int_G \int_G \phi(s,t) g(v) f(tv^{-1}s^{-1}) \ovl{e_i(s)}e_j(t)   \d \mu_G(t) \d \mu_G(v) \d \mu_G(s) \\
&= \int_G \int_G \int_G \phi(s,t) g(s^{-1}v) f(tv^{-1}) \ovl{e_i(s)} e_j(t)  \d \mu_G(t) \d \mu_G(v) \d \mu_G(s) \\
&=\int_G \int_G \phi(s,t) \bigg(\int_G g(s^{-1}v) f(tv^{-1}) \d \mu_G(v)\bigg) \ovl{e_i(s)} e_j(t)  \d \mu_G(t) \d \mu_G(s)\\
&\ov{\eqref{Formule-Plancherel}}{=}  \int_G \int_G \phi(s,t) \tau_G\big[\lambda_s \lambda(g) \lambda_{t^{-1}} \lambda(f)\big] \ovl{e_i(s)} e_j(t)  \d \mu_G(t) \d \mu_G(s).
 \end{align*}
\end{proof}

In a similar way to Lemma \ref{lem-referee-proof-step-1-calcul-du-symbole-coefficients-L2}, we have the following result.

\begin{lemma}
\label{lem-referee-proof-step-1-calcul-du-symbole-coefficients-L2-petite-extension}
Let $G$ be a second-countable unimodular locally compact group. Let $\phi \in \L^2(G \times G)$ such that $K_\phi \in S^1_G$ and $x \in \L^1(\VN(G)) \cap \VN(G)$. Suppose that $1 \leq p \leq 2$. Let $T \co \L^p(\VN(G)) \to \L^2(\VN(G))$ be a completely bounded operator. Then $(\Id \ot T)(W(K_\phi \ot x)W^{-1})$ belongs to $\L^2(\cal{B}(\L^2(G)) \otvn \VN(G))$. If $g \in \C_c(G) * \C_c(G)$ we have for any integers $i,j$ and any $f \in \C_c(G)$
\begin{align}
\MoveEqLeft
\label{Lp-L2}
(\tr_G \ot \tau_G)\big[(\Id \ot T)(W( K_\phi \ot \lambda(g) )W^{-1}) \cdot (e_{ij}^* \ot \lambda(f) )\big] \\
& =\int_G \int_G \phi(s,t) \tau_G\big[\lambda_s \lambda(g) \lambda_{t^{-1}} T^*(\lambda(f))\big] \ovl{e_i(s)}e_j(t)   \d \mu_G(s) \d \mu_G(t). \nonumber
\end{align}
\end{lemma}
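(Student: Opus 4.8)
The plan is to follow the proof of Lemma~\ref{lem-referee-proof-step-1-calcul-du-symbole-coefficients-L2} almost verbatim, the only genuinely new step being that the operator $T$ sitting on the amplified leg has to be transferred onto its adjoint $T^*$ acting on $\lambda(f)$; everything else is function-space bookkeeping.

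\textbf{Membership.} By the first assertion of Lemma~\ref{lem-referee-proof-step-1-calcul-du-symbole-coefficients-L2}, the element $W(K_\phi \ot x)W^{-1}$ lies in $\L^1(\B(\L^2(G)) \otvn \VN(G)) \cap [\B(\L^2(G)) \otvn \VN(G)]$, hence in $\L^p(\B(\L^2(G)) \otvn \VN(G)) = S^p_G(\L^p(\VN(G)))$ by interpolation. Since $T \co \L^p(\VN(G)) \to \L^2(\VN(G))$ is completely bounded, amplifying it on the inner leg maps this element into $S^p_G(\L^2(\VN(G)))$, and the contractive inclusion $S^p_G(\L^2(\VN(G))) \subseteq S^2_G(\L^2(\VN(G))) = \L^2(\B(\L^2(G)) \otvn \VN(G))$ (the first-leg index obeys $p \le 2$) gives the asserted membership.

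\textbf{Transfer of $T$ to $T^*$.} Put $Z \ov{\mathrm{def}}{=} W( K_\phi \ot \lambda(g) )W^{-1}$, and recall $\lambda(g) \in \mathfrak{m}_{\tau_G} \subseteq \L^p(\VN(G))$ for $g \in \C_c(G)*\C_c(G)$. For $f \in \C_c(G)$ we have $\lambda(f) \in \L^2(\VN(G))$ by \eqref{Formule-Plancherel}, so $T^*(\lambda(f)) \in \L^{p^*}(\VN(G))$ is defined, where $T^* \co \L^2(\VN(G)) \to \L^{p^*}(\VN(G))$ is the adjoint determined by $\tau_G(T(a)b) = \tau_G(a\,T^*(b))$. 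Applying this relation slicewise on the $\VN(G)$-factor (and Hölder, since $Z \in \L^p$ while $e_{ij}^* \ot T^*(\lambda(f)) \in \L^{p^*}$ because $e_{ij}^*$ is of finite rank) yields
$$
(\tr_G \ot \tau_G)\big[(\Id \ot T)(Z) \cdot (e_{ij}^* \ot \lambda(f))\big] = (\tr_G \ot \tau_G)\big[Z \cdot (e_{ij}^* \ot T^*(\lambda(f)))\big].
$$

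\textbf{Reduction to the previous lemma and the main obstacle.} The right-hand side has exactly the shape of \eqref{equ-referee-proof-step-1-calcul-du-symbole-coefficients-L2}, except that $\lambda(f)$ is replaced by the general element $y \ov{\mathrm{def}}{=} T^*(\lambda(f)) \in \L^{p^*}(\VN(G))$. I would therefore first upgrade \eqref{equ-referee-proof-step-1-calcul-du-symbole-coefficients-L2} to
$$
(\tr_G \ot \tau_G)\big[Z \cdot (e_{ij}^* \ot y)\big] = \int_G \int_G \phi(s,t)\, \tau_G\big[\lambda_s \lambda(g) \lambda_{t^{-1}} y\big]\, \ovl{e_i(s)} e_j(t)\, \d\mu_G(s)\,\d\mu_G(t)
$$
for every $y \in \L^{p^*}(\VN(G))$, by a density argument. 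Both sides are continuous linear functionals of $y$ (norm continuous for $1 < p \le 2$, weak* continuous for $p=1$): the left side because $Z \in \L^p(\B(\L^2(G)) \otvn \VN(G))$, the right side because $\norm{\lambda_s \lambda(g) \lambda_{t^{-1}}}_{\L^p(\VN(G))} = \norm{\lambda(g)}_{\L^p(\VN(G))}$ is independent of $s,t$ while $\int_G \int_G |\phi(s,t)|\,|e_i(s)|\,|e_j(t)|\, \d\mu_G(s)\,\d\mu_G(t) \le \norm{\phi}_{\L^2(G\times G)}$ by Cauchy--Schwarz. By Lemma~\ref{lem-referee-proof-step-1-calcul-du-symbole-coefficients-L2} these functionals agree on $\{\lambda(f) : f \in \C_c(G)*\C_c(G)\}$, whose linear span is (weak*-)dense in $\L^{p^*}(\VN(G))$ since $\lambda(\C_c(G)*\C_c(G)) \subseteq \mathfrak{m}_{\tau_G}$; hence they coincide for all $y$, in particular for $y = T^*(\lambda(f))$, and chaining this with the previous step gives \eqref{Lp-L2}. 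The algebra being a formality once the adjoint relation is used, I expect the main obstacle to be precisely this uniform-in-$p$ bookkeeping, above all the endpoint $p=1$ (so $p^*=\infty$), where norm density must be replaced by weak* density in $\VN(G)$ and both functionals must be checked to be weak* continuous; the range $1<p<2$ is then routine.
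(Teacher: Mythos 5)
Your proposal is correct and follows essentially the same route as the paper's own proof: membership via $\L^1\cap\L^\infty\subseteq\L^p$ and the complete boundedness of $T$ (with $S^p_G(\L^2(\VN(G)))\subseteq S^2_G(\L^2(\VN(G)))$ for $p\le 2$), transfer of $T$ to $T^*$ through the trace duality, and extension of \eqref{equ-referee-proof-step-1-calcul-du-symbole-coefficients-L2} to generic elements of $\L^{p^*}(\VN(G))$ by density and continuity of both sides. Your explicit treatment of the endpoint $p=1$ (weak* density of $\lambda(\C_c(G)*\C_c(G))$ in $\VN(G)$ and weak* continuity of both functionals) is a welcome precision that the paper leaves implicit.
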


\begin{proof}
According to Lemma \ref{lem-referee-proof-step-1-calcul-du-symbole-coefficients-L2}, the element $W (K_\phi \ot \lambda(g)) W^{-1}$ belongs to the space $\L^1(\cal{B}(\L^2(G)) \otvn \VN(G)) \cap \big[\cal{B}(\L^2(G)) \otvn \VN(G)\big]$, hence to the Banach space 
$$
\L^p(\cal{B}(\L^2(G)) \otvn \VN(G)) 
= S^p_G(\L^p(\VN(G))).
$$ 
By the complete boundedness of $T \co \L^p(\VN(G)) \to \L^2(\VN(G))$ and \cite[Lemma 1.7 p.~23]{Pis98}, we infer that the element $(\Id_{S^p_G} \ot T)(W (K_\phi \ot x)W^{-1})$ belongs to the space $S^p_G(\L^2(\VN(G)))$. Since $p \leq 2$, it belongs to the Banach space $S^2_G(\L^2(\VN(G))) = \L^2(\cal{B}(\L^2(G))\otvn \VN(G))$. We have immediately 
\begin{align*}
\MoveEqLeft
(\tr_G \ot \tau_G)\big[(\Id \ot T)(W( K_\phi \ot \lambda(g) )W^{-1}) \cdot (e_{ij}^* \ot \lambda(f))\big] \\
&= (\tr_G \ot \tau_G)\big[(W(K_\phi \ot \lambda(g))W^{-1}) \cdot (e_{ij}^* \ot T^* \lambda(f))\big].
\end{align*}
Now, it suffices to show that \eqref{equ-referee-proof-step-1-calcul-du-symbole-coefficients-L2} holds for generic elements in $\L ^{p^*}(\VN(G))$ instead of $\lambda(f)$. This is indeed the case by density since both sides of \eqref{equ-referee-proof-step-1-calcul-du-symbole-coefficients-L2} are continuous as functions in $\lambda(f) \in \L^{p^*}(\VN(G))$.
\end{proof}


\begin{lemma}
\label{lem-referee-proof-step-1-cacul-du-symbol-approximation}
Let $G$ be a secound countable unimodular locally compact group. Suppose that $1 \leq p \leq 2$. Let $T \co \L^p(\VN(G)) \to \L^p(\VN(G))$ be a completely bounded map. There exists a sequence $(M_{\phi_n})$ of bounded Fourier multipliers $M_{\phi_n} \co \VN(G) \to \VN(G)$ such that $\phi_n \in \C_c(G)$, $\norm{\phi_n}_{\infty} \leq 1$, $M_{\phi_n} \co \L^p(\VN(G)) \to \L^2(\VN(G))$ is completely bounded, satisfying for any $g \in \C_c(G)$, any $\phi \in \C_c(G \times G)$ such that $K_\phi \in S^1_G$ and any sufficiently large $n$
$$
M_{\phi_n}(\lambda(g)) 
= \lambda(g) 
\quad \text{and} \quad
(\Id \ot M_{\phi_n})(W (K_\phi \ot \lambda(g)) W^{-1}) 
= W (K_\phi \ot \lambda(g) ) W^{-1}.
$$ 
\end{lemma}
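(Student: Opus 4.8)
The plan is to build the symbols $\phi_n$ explicitly as normalized convolutions of indicator functions exhausting $G$, to obtain the completely bounded smoothing $\L^p\to\L^2$ by an interpolation--duality argument, and to dispatch the two pointwise identities by a support (localization) analysis of the element $W(K_\phi\ot\lambda(g))W^{-1}$.

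First I would use that a second countable group is $\sigma$-compact: fix an increasing sequence of compact sets $K_1\subseteq K_2\subseteq\cdots$ with $\bigcup_n K_n=G$ and a relatively compact symmetric open neighbourhood $W$ of $e$, and set
$$
\phi_n
\ov{\mathrm{def}}{=} \frac{1}{\mu_G(W)}\, 1_{K_nW} * \check{1}_W .
$$
Then $\phi_n\in\C_c(G)$ with $\supp\phi_n\subseteq K_nWW^{-1}$, and since $\phi_n(s)=\frac{1}{\mu_G(W)}\mu_G\big((K_nW)\cap sW\big)$, left-invariance of $\mu_G$ gives $0\le\phi_n\le1$ and $\phi_n\equiv1$ on $K_n$ (hence $\norm{\phi_n}_\infty\le1$). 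Moreover $\phi_n(s)=\frac{1}{\mu_G(W)}\langle\lambda_s 1_W,1_{K_nW}\rangle_{\L^2(G)}$ is a coefficient of the left regular representation, so $\phi_n\in A(G)$ and $M_{\phi_n}\co\VN(G)\to\VN(G)$ is a (completely) bounded Fourier multiplier.

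For the smoothing, I would argue that, since the trace-dual adjoint of a Fourier multiplier is again a Fourier multiplier with the reflected symbol and complete boundedness is stable under operator space duality, it suffices to show $M_\psi\co\L^2(\VN(G))\to\L^{p^*}(\VN(G))$ is completely bounded for a generic ``bump'' $\psi$ (taking $\psi=\check{\phi}_n$, which shares all the listed properties). Using the Plancherel identification $\L^2(\VN(G))\cong\L^2(G)$ and the elementary estimate $\norm{\lambda(\psi f)}_{\VN(G)}\le\norm{\psi f}_{\L^1(G)}\le\norm{\psi}_{\L^2(G)}\norm{f}_{\L^2(G)}$, the map $M_\psi\co\L^2(\VN(G))\to\VN(G)$ is bounded; the same Cauchy--Schwarz estimate applied to matrix amplifications, together with the complete contractivity of $\lambda\co\L^1(G)\to\VN(G)$, upgrades this to complete boundedness. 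Since also $M_\psi\co\L^2(\VN(G))\to\L^2(\VN(G))$ is completely bounded (as $\psi\in A(G)\subseteq M_{\cb}A(G)$), complex interpolation of the target between $\L^2(\VN(G))$ and $\VN(G)=\L^\infty(\VN(G))$ yields $M_\psi\co\L^2(\VN(G))\to\L^{p^*}(\VN(G))$ completely bounded for every $p^*\in[2,\infty]$; by duality $M_{\phi_n}\co\L^p(\VN(G))\to\L^2(\VN(G))$ is completely bounded for $1\le p\le2$.

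Finally I would verify the two identities by localization. The first is immediate: for $g\in\C_c(G)$ and $n$ so large that $\supp g\subseteq K_n$ one has $\phi_n g=g$, whence $M_{\phi_n}(\lambda(g))=\lambda(\phi_n g)=\lambda(g)$. For the second I would compute $W(K_\phi\ot\lambda(g))W^{-1}$ exactly as in the proof of Lemma \ref{lem-referee-proof-step-1-calcul-du-symbole-coefficients-L2} from \eqref{Def-fund-unitary}: using unimodularity, its $(a,c)$ ``block'' is $\phi(a,c)\lambda(h_{a,c})$ with $h_{a,c}(r)=g(a^{-1}rc)$, so every convolution function that occurs is supported in the fixed compact set $L\ov{\mathrm{def}}{=}\ovl{\pi_1(\supp\phi)\cdot\supp g\cdot\pi_2(\supp\phi)^{-1}}$. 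Choosing $n$ with $L\subseteq K_n$, so that $\phi_n\equiv1$ on $L$, the multiplier $\Id\ot M_{\phi_n}$ replaces each $h_{a,c}$ by $\phi_n h_{a,c}=h_{a,c}$ and therefore fixes $W(K_\phi\ot\lambda(g))W^{-1}$. I expect the main obstacle to be the complete boundedness of the endpoint map $M_\psi\co\L^2(\VN(G))\to\VN(G)$ --- the only genuinely analytic point --- and, secondarily, making the block/support computation for $W(K_\phi\ot\lambda(g))W^{-1}$ rigorous at the level of $\B(\L^2(G))\otvn\VN(G)$ so as to justify that its Fourier support lies in $L$.
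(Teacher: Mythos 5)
Your proposal is correct and follows the same skeleton as the paper's proof: an exhausting sequence of compacts $K_n$, cutoff symbols $\phi_n \in \C_c(G)$ with $0\le\phi_n\le1$ and $\phi_n\equiv1$ on $K_n$ inducing completely bounded Fourier multipliers, an $\L^2$-endpoint estimate coming from $\phi_n\in\L^2(G)$ upgraded by interpolation and duality to complete boundedness of $M_{\phi_n}\co\L^p(\VN(G))\to\L^2(\VN(G))$, and a localization of the Fourier support of $W(K_\phi\ot\lambda(g))W^{-1}$ inside the fixed compact $L=L_1\cdot\supp g\cdot L_2^{-1}$, with $n$ chosen so large that the cutoff acts as the identity there. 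Where you differ is only in how the two analytic ingredients are sourced, and both substitutes are sound: you construct $\phi_n=\mu_G(W_0)^{-1}\,1_{K_nW_0}*\check{1}_{W_0}$ explicitly as a coefficient of the regular representation (hence an element of the Fourier algebra $\mathrm{A}(G)$, giving the completely bounded multiplier directly), where the paper quotes \cite[Proposition 2.3.2]{KaL1}; and you prove the endpoint $M_{\check{\phi}_n}\co\L^2(\VN(G))\to\VN(G)$ completely bounded by a matrix Cauchy--Schwarz argument, where the paper quotes \cite[Remark 2.4]{GJP17}. Your route (interpolate the target between $\L^2$ and $\L^\infty$, then dualize to get $\L^p\to\L^2$) is the mirror image of the paper's (dualize the endpoint to $\L^1\to\L^2$, then interpolate the source with $\L^2\to\L^2$); the two are equivalent.

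Two small repairs are needed. First, rename your auxiliary neighbourhood: you call it $W$, which collides with the fundamental unitary of \eqref{Def-fund-unitary} appearing in the very identity you must prove. Second, your continuous-index ``block'' computation is, as you yourself note, only heuristic; the rigorous implementation is precisely the pairing of Lemmas \ref{lem-referee-proof-step-1-calcul-du-symbole-coefficients-L2} and \ref{lem-referee-proof-step-1-calcul-du-symbole-coefficients-L2-petite-extension} against the total family $e_{ij}^*\ot\lambda(f)$, and there the trace adjoint $M_{\phi_n}^*=M_{\check{\phi}_n}$ enters, so the cutoff must equal $1$ on $L^{-1}$ as well as on $L$. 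The paper arranges this by taking the $K_n$ symmetric, whereas your $\phi_n$ is not symmetric in general; the fix is harmless, since $L\cup L^{-1}$ is compact and hence contained in $K_n$ for large $n$ --- provided you strengthen your exhaustion to $K_n\subseteq\inner K_{n+1}$ (your bare condition $\bigcup_n K_n=G$ does not by itself guarantee that every compact set is eventually swallowed). With these adjustments your argument is a complete proof.
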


\begin{proof}
Since the group $G$ is second-countable, we can consider a sequence $(K_n)$ of \textit{symmetric} compacts in $G$ such that for any compact $K$ of $G$, one has $K \subseteq K_n$ for sufficiently large enough $n$. By \cite[Proposition 2.3.2 p.~50]{KaL18}, for any $n$ there exists a function $\phi_n \co G \to \mathbb{C}$ which is a finite linear combination of continuous positive definite functions with compact support with
$0 \leq \phi_n \leq 1$ and $\phi_n(s) = 1$ for any $s \in K_n$. By essentially \cite[Proposition 5.4.9 p.~184]{KaL18} each function induces a completely bounded Fourier multiplier $M_{\phi_n} \co \VN(G) \to \VN(G)$. Furthermore, since $\phi_n \in \L^2(G)$ each map $M_{\phi_n} \co \L^1(\VN(G)) \to \L^2(\VN(G))$ is completely bounded by \cite[Remark 2.4 p.~899]{GJP17} and duality. Now, it suffices to interpolate with $M_{\phi_n} \co \L^2(\VN(G)) \to \L^2(\VN(G))$ to obtain a completely bounded Fourier multiplier $M_{\phi_n} \co \L^p(\VN(G)) \to \L^2(\VN(G))$.

Now, let $g \in \C_c(G)$ and $\phi \in \C_c(G \times G)$ such that $K_\phi \in S^1_G$.
Consider the compact $K \ov{\mathrm{def}}{=} \supp g$ and some compacts $L_1,L_2$ of $G$ such that $\supp \phi \subseteq L_1 \times L_2$ and let $L \ov{\mathrm{def}}{=} L_1 \cdot K \cdot L_2^{-1}$, which is also compact. 
Then for any sufficiently large enough $n$ such that $K \subseteq K_n$,
\[ 
M_{\phi_n}(\lambda(g)) 
= \lambda(\phi_n g) 
= \lambda(g).
\]
Moreover, consider some sufficiently large enough $n$ such that $L \subseteq K_n=\check{K}_n$. For any $s \in L_1$ and any $t \in L_2$, the element $\lambda_s \lambda(g) \lambda_{t^{-1}}$ has its Fourier support in $L_1 \cdot K \cdot L_2^{-1} = L$. Thus, $M_{\check{\phi}_n}(\lambda_s \lambda(g) \lambda_{t^{-1}}) = \lambda_s \lambda(g) \lambda_{t^{-1}}$. Then for any integers $i,j$ and any function $f \in \C_c(G)*\C_c(G)$ we have $M_{\phi_n}^*=M_{\check{\phi}_n}$. Hence
\begin{align*}
\MoveEqLeft
(\tr_G \ot \tau_G)\big[(\Id \ot M_{\phi_n})(W( K_\phi \ot \lambda(g) )W^{-1}) \cdot (e_{ij}^* \ot \lambda(f))\big]  \\
& \ov{\eqref{Lp-L2}}{=} \int_G \int_G \phi(s,t) \tau_G\big[\lambda_s \lambda(g) \lambda_{t^{-1}} M_{\phi_n}^*(\lambda(f))\big] \ovl{e_i(s)}e_j(t) \d \mu_G(s) \d \mu_G(t).  \\
& = \int_{L_1} \int_{L_2} \phi(s,t) \tau_G\big[ M_{\check{\phi}_n}(\lambda_s \lambda(g) \lambda_{t^{-1}}) \lambda(f) \big] \ovl{e_i(s)}  e_j(t) \d \mu_G(s)\d \mu_G(t)  \\
& = \int_{L_1} \int_{L_2} \phi(s,t) \tau_G\big[ \lambda_s \lambda(g) \lambda_{t^{-1}} \lambda(f) \big] \ovl{e_i(s)} e_j(t) \d \mu_G(s) \d \mu_G(t)  \\
& \ov{\eqref{equ-referee-proof-step-1-calcul-du-symbole-coefficients-L2}}{=} (\tr_G \ot \tau_G)\big[W( K_\phi \ot \lambda(g)  )W^{-1} \cdot(e_{ij}^* \ot \lambda(f))\big].
\end{align*}
By density of the $e_{ij}^* \ot \lambda(f)$'s, we infer that 
$$
(\Id \ot M_{\phi_n})(W( K_\phi \ot \lambda(g) )W^{-1}) 
= W( K_\phi \ot \lambda(g) )W^{-1}.
$$
\end{proof}

Recall that $(e_n)$ is an orthonormal basis of the Hilbert space $\L^2(G)$ such that each function $e_n$ is continuous with compact support. So the family $(\lambda(e_k))$ is an orthonormal basis of $\L^2(\VN(G))$ and $(e_{ij} \ot \lambda(e_k))_{i,j,k}$ is an orthonormal basis of the Hilbert space $\L^2(\cal{B}(\L^2(G)) \otvn \VN(G))$.

\begin{prop}
\label{prop-referee-proof-step-1-calcul-du-symbol-avec-coefficients-L2}
Let $G$ be a second-countable unimodular locally compact group. Suppose that $1 \leq p \leq \infty$.
Let $T \co \L^p(\VN(G)) \to \L^p(\VN(G))$ be a completely bounded operator (normal if $p = \infty$). Let $\phi,\psi \in \L^2(G \times G)$ such that $K_\phi,K_\psi \in S^1_G$, and $x,y \in \L^1(\VN(G)) \cap \VN(G)$. With the symbol
\begin{equation}
\label{Def-symbol-varphi-1}
\varphi_{x,y,T}(s,t) 
\ov{\mathrm{def}}{=} \tau_G\big(\lambda_ty \lambda_{s^{-1}} T(\lambda_s x \lambda_{t^{-1}}) \big).
\end{equation}
we have
\begin{equation}
\label{MxyT-bis}
\big\langle (\Id \ot T)(W (K_\phi \ot x)W^{-1}),W (K_\psi \ot y)W^{-1} \big\rangle_{S^p_G(\L^p(\VN(G))),S^{p^*}_G(\L^{p^*}(\VN(G)))}
=\big\langle M_{\varphi_{x,y,T}}(K_\phi),K_\psi\big\rangle_{S^p_G, S^{p^*}_G}.
\end{equation}
Finally, if $p = \infty$, the same holds for any $x \in \VN(G)$ and if $p = 1$, the same holds for any $y \in \VN(G)$.
\end{prop}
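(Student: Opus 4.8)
The plan is to verify \eqref{MxyT-bis} first on a total set of data, then extend by continuity, reducing everything to the building blocks already established in Lemmas \ref{lem-referee-proof-step-1-calcul-du-symbole-coefficients-L2-petite-extension} and \ref{lem-referee-proof-step-1-cacul-du-symbol-approximation}. I would first treat the range $1 \leq p \leq 2$ and deduce $2 < p \leq \infty$ afterwards by duality. By homogeneity and by the separate continuity of both sides of \eqref{MxyT-bis} in $u \in S^p_G$, $v \in S^{p^*}_G$ and in the pair $(x,y)$, it suffices to prove the identity when $\phi,\psi \in \C_c(G \times G)$ with $K_\phi,K_\psi \in S^1_G$ and $x = \lambda(g)$, $y = \lambda(f)$ with $g \in \C_c(G)\ast\C_c(G)$ and $f \in \C_c(G)$, since these elements are total in the relevant spaces.

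For $1 \leq p \leq 2$ I would insert the approximating Fourier multipliers $M_{\phi_n}$ of Lemma \ref{lem-referee-proof-step-1-cacul-du-symbol-approximation}, taking each $\phi_n$ symmetric so that $M_{\phi_n}^\ast = M_{\check\phi_n}$ again maps $\L^p(\VN(G))$ into $\L^2(\VN(G))$ completely boundedly. For $n$ large enough the lemma guarantees that $(\Id \ot M_{\phi_n})$ fixes both $W(K_\phi \ot x)W^{-1}$ and $W(K_\psi \ot y)W^{-1}$; moving one copy of $M_{\phi_n}$ across the duality bracket then gives
\begin{equation}
\big\langle (\Id \ot T)(W(u \ot x)W^{-1}), W(v \ot y)W^{-1} \big\rangle
= \big\langle (\Id \ot M_{\phi_n} T)(W(u \ot x)W^{-1}), W(v \ot y)W^{-1} \big\rangle ,
\end{equation}
where now $M_{\phi_n} T \co \L^p(\VN(G)) \to \L^2(\VN(G))$ is completely bounded, so the right-hand side falls within the scope of Lemma \ref{lem-referee-proof-step-1-calcul-du-symbole-coefficients-L2-petite-extension}. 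Expanding $W(v \ot y)W^{-1}$ along the orthonormal basis $(e_{ij} \ot \lambda(e_k))$ of $\L^2(\B(\L^2(G)) \otvn \VN(G))$ and applying the building-block identity \eqref{Lp-L2} term by term, I would use that for large $n$ the adjoint multiplier acts as the identity on the Fourier supports appearing (exactly as in the proof of Lemma \ref{lem-referee-proof-step-1-cacul-du-symbol-approximation}, via $M_{\phi_n}(\lambda(f)) = \lambda(f)$), so that $(M_{\phi_n}T)^\ast(\lambda(f)) = T^\ast(\lambda(f))$ and the resulting expression becomes independent of $n$.

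The crux is to recognize the resulting double integral as the Schur-multiplier pairing. Using the trace-adjoint relation $\tau_G(T^\ast(a)b) = \tau_G(a\,T(b))$ together with the traciality of $\tau_G$, the coefficient produced by \eqref{Lp-L2} rearranges into $\tau_G\big(\lambda_t y \lambda_{s^{-1}} T(\lambda_s x \lambda_{t^{-1}})\big)$, which is precisely the symbol $\varphi_{x,y,T}$ of \eqref{Def-symbol-varphi-1}; summing the contributions against the coefficients of $K_\psi$ and invoking \eqref{dual-trace} reconstitutes $\langle M_{x,y,T}(K_\phi), K_\psi\rangle$. This identification, together with the justification of the interchange of the sum over the basis with the integrals, is the main obstacle and the technical heart of the argument.

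Finally, for $2 < p \leq \infty$ I would dualize: since $(\Id \ot T)^\ast = \Id \ot T^\ast$ with $T^\ast \co \L^{p^*}(\VN(G)) \to \L^{p^*}(\VN(G))$ completely bounded and $p^* \in [1,2)$, the left-hand side of \eqref{MxyT-bis} equals $\big\langle W(u \ot x)W^{-1}, (\Id \ot T^\ast)(W(v \ot y)W^{-1})\big\rangle$, which by the case already proved equals $\langle M_{y,x,T^\ast}(v), u\rangle$. The symbol identity $\varphi_{y,x,T^\ast}(t,s) = \varphi_{x,y,T}(s,t)$ (again a consequence of traciality) combined with the transpose symmetry \eqref{auto-adjoint} of Schur multipliers then yields \eqref{MxyT-bis}. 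The two endpoint relaxations, namely $x \in \VN(G)$ when $p = \infty$ and $y \in \VN(G)$ when $p = 1$, follow by weak* density of $\L^1(\VN(G)) \cap \VN(G)$ and the weak* continuity in the remaining variable of all the maps involved.
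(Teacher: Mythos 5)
Your proposal is correct and follows essentially the same route as the paper's own proof: the duality reduction to $1 \leq p \leq 2$, the approximation by the Fourier multipliers $M_{\phi_n}$ of Lemma \ref{lem-referee-proof-step-1-cacul-du-symbol-approximation} to reduce to a completely bounded map into $\L^2(\VN(G))$, the Parseval expansion along the orthonormal basis $(e_{ij} \ot \lambda(e_k))$ via the coefficient formulas \eqref{equ-referee-proof-step-1-calcul-du-symbole-coefficients-L2} and \eqref{Lp-L2}, and the identification of the symbol \eqref{Def-symbol-varphi-1}. The only difference is organizational: the paper first proves the identity when $T$ maps completely boundedly into $\L^2(\VN(G))$ and then applies it to $M_{\phi_n}T$, passing to the limit in $n$ by using that the symbol $\varphi_n$ agrees with $\varphi_{x,y,T}$ on compacts for $n$ large, whereas you insert $M_{\phi_n}$ at the outset and claim term-by-term $n$-independence in the basis expansion --- a claim that, as literally stated, needs the caveat that the required $n$ depends on the basis index $k$, so it should be phrased (as in the paper) at the level of the symbol restricted to the compact supports of $\phi$ and $\psi$.
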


\begin{proof}
Note first that by a simple duality argument, we can suppose that $1 \leq p \leq 2$ and that the functions $\phi$ and $\psi$ belong to the space $\C_c(G \times G)$. 

We start with the case where the operator $T$ also induces a completely bounded map $T \co \L^p(\VN(G)) \to \L^2(\VN(G))$. Then by Lemma \ref{lem-referee-proof-step-1-calcul-du-symbole-coefficients-L2} and Lemma \ref{lem-referee-proof-step-1-calcul-du-symbole-coefficients-L2-petite-extension}, the elements $W (K_\psi \ot y)W^{-1}$ and $(\Id \ot T)(W (K_\phi \ot x)W^{-1})$ belong to the Hilbert space $\L^2(\cal{B}(\L^2(G)) \otvn \VN(G))$. So the left-hand side of \eqref{equ-prop-referee-proof-step-1-calcul-du-symbol-avec-coefficients-L2} is well-defined, and can be calculated with Parseval's formula and the orthonormal basis $(e_{ij} \ot \lambda(e_k))_{i,j,k}$. With Lemma \ref{lem-referee-proof-step-1-calcul-du-symbole-coefficients-L2-petite-extension}, we get
\begin{align*}
\MoveEqLeft
\big\langle (\Id \ot T)(W (K_\phi \ot x)W^{-1}),W (K_\psi \ot y)W^{-1} \big\rangle_{S^2_G(\L^2(\VN(G)))} \\
& = \sum_{i,j,k} (\tr_G \ot \tau_G)\big[(\Id \ot T)(W(K_\phi \ot x)W^{-1}) (e_{ij}^* \ot \lambda(e_k^*))\big] \\
&\times \ovl{(\tr_G \ot \tau_G)\big[W(K_\psi \ot y)W^{-1} (e_{ij}^* \ot \lambda(e_k^*))\big]} \\
&\ov{\eqref{Lp-L2}\eqref{equ-referee-proof-step-1-calcul-du-symbole-coefficients-L2} }{=} \sum_{i,j,k} \int_G \int_G \phi(s,t) \tau_G\big[ \lambda_s x \lambda_{t^{-1}}T^*(\lambda(e_k^*)) \big] \ovl{e_i(s)} e_j(t)  \d \mu_G(s) \d \mu_G(t) \\
& \times \ovl{\int_G \int_G \psi(s,t) \tau_G\big[\lambda_{s} y \lambda_{t^{-1}} \lambda(e_k^*) \big]  \ovl{e_i(s)} e_j(t)  \d \mu_G(s)\d \mu_G(t)}.
\end{align*}
Since the functions $(s,t) \mapsto \tau_G\big[ \lambda_s x \lambda_{t^{-1}}T^*(\lambda(e_k^*)) \big] \phi(s,t)$ and $(s,t) \mapsto \tau_G\big[\lambda_{s} y \lambda_{t^{-1}} \lambda(e_k^*) \big] \psi(s,t)$ belong to the Hilbert space $\L^2(G \times G)$, we can use the orthonormal basis $(\ovl{e_i} \ot e_j)_{i,j}$ of the space $\L^2(G \times G)$ and reduce in the previous expression the sum over $i,j$ and the integral over $s,t$, which then becomes
\[
\sum_k \int_G \int_G \phi(s,t) \tau_G\big[ \lambda_s x \lambda_{t^{-1}}T^*(\lambda(e_k^*)) \big] \ovl{\psi(s,t) \tau_G\big[\lambda_{s} y \lambda_{t^{-1}} \lambda(e_k^*) \big] } \d \mu_G(s) \d\mu_G(t).
\]
Recall that $T^{*\circ} \ov{\eqref{2dual4}}{=} T^{\dag}$ where $T^{\dag}$ is the hilbertian adjoint. We fix $s,t \in G$ and calculate using Parseval's identity in the fourth equality
\begin{align*}
\MoveEqLeft
\sum_k \tau_G\big[ \lambda_s x \lambda_{t^{-1}}T^*(\lambda(e_k^*)) \big] \ovl{\tau_G\big[\lambda_{s} y \lambda_{t^{-1}} \lambda(e_k^*) \big]}  
= \sum_k \big\langle \lambda_s x \lambda_{t^{-1}},(T^*(\lambda(e_k)^*))^* \big\rangle_{\L^2} \ovl{\big\langle  \lambda_{s} y \lambda_{t^{-1}}, \lambda(e_k)\big\rangle}_{\L^2} \\
&=\sum_k  \big\langle \lambda_s x \lambda_{t^{-1}},T^{\dag}(\lambda(e_k)) \big\rangle_{\L^2} 
  \ovl{\big\langle  \lambda_{s} y \lambda_{t^{-1}}, \lambda(e_k)\big\rangle_{\L^2}} 
	=\sum_k  \big\langle T(\lambda_s x \lambda_{t^{-1}}),\lambda(e_k) \big\rangle \
	\ovl{\big\langle  \lambda_{s} y \lambda_{t^{-1}}, \lambda(e_k)\big\rangle} \\
	&=\big\langle T(\lambda_s x \lambda_{t^{-1}}), \lambda_s y \lambda_{t^{-1}} \big\rangle_{\L^2(\VN(G))} 
	=\tau_G\big(\lambda_t y^* \lambda_{s^{-1}} T(\lambda_s x \lambda_{t^{-1}})\big).
\end{align*}
Note that $\norm{T(\lambda_s x \lambda_{t^{-1}})}_2 \leq C$ and $\norm{\lambda_s y \lambda_{t^{-1}}}_2 \leq C$ justifies that we can integrate over $s$ and $t$. Consequently, we obtain
\begin{align*}
\MoveEqLeft
\big\langle (\Id \ot T)(W (K_\phi \ot x)W^{-1}),W (K_\psi \ot y)W^{-1} \big\rangle_{S^2_G(\L^2(\VN(G))} \\
&=\int_{G \times G} \tau_G\big(\lambda_t y^* \lambda_{s^{-1}} T(\lambda_s x \lambda_{t^{-1}})\big)\phi(s,t) \ovl{\psi(s,t)} \d \mu_G(s) \d\mu_G(t) \\
&\ov{\eqref{dual-trace}}{=} \tr(K_{\varphi_{x,y,T}\phi} K_\psi^*) 
=\big\langle M_{\varphi_{x,y,T}}(K_\phi), K_\psi \big\rangle_{S^2_G}.   
\end{align*}   
Thus we have shown the formula
\begin{align}
\MoveEqLeft
\label{equ-prop-referee-proof-step-1-calcul-du-symbol-avec-coefficients-L2}
\big\langle (\Id \ot T)(W (K_\phi \ot x)W^{-1}),W (K_\psi \ot y)W^{-1} \big\rangle_{S^2_G(\L^2(\VN(G)))} 
=\big\langle M_{\varphi_{x,y,T}}(K_\phi), K_\psi \big\rangle_{S^2_G},
\end{align}
under the additional assumption that the linear map $T$ defines a completely bounded operator $\L^p(\VN(G)) \to \L^2(\VN(G))$. Replacing both brackets, this formula translates to \eqref{MxyT-bis}.

For the general case, we use the first part of Lemma \ref{lem-referee-proof-step-1-cacul-du-symbol-approximation} with the approximation sequence $(M_{\phi_n})$ of Fourier multipliers $M_{\phi_n} \co \VN(G) \to \VN(G)$. By a density argument, we can assume that $y \in \lambda(\C_c(G))$.
According to Lemma \ref{lem-referee-proof-step-1-cacul-du-symbol-approximation}, the composition $M_{\phi_n} T \co \L^p(\VN(G)) \to \L^2(\VN(G))$ is completely bounded. So the first part of the proof applies to this operator.
We obtain 
\[ 
\big\langle (\Id \ot M_{\phi_n}T)(W(K_\phi \ot x)W^{-1}),W(K_\psi \ot y)W^{-1} \big\rangle 
=\big\langle M_{\varphi_n}(K_\phi), K_\psi \big\rangle_{S^2_G},
\]
with $\varphi_n(s,t) \ov{\mathrm{def}}{=} \tau_G\big(\lambda_ty^* \lambda_{s^{-1}} M_{\phi_n}T(\lambda_s x \lambda_{t^{-1}}) \big)$. By the approximation from Lemma \ref{lem-referee-proof-step-1-cacul-du-symbol-approximation}, we have
\begin{align*}
\MoveEqLeft
\big\langle (\Id \ot M_{\phi_n}T)(W(K_\phi \ot x)W^{-1}),W(K_\psi \ot y)W^{-1} \big\rangle \\
&= \big\langle (\Id \ot T)(W(K_\phi \ot x)W^{-1}, (\Id \ot M_{\phi_n}^*) (W(K_\psi \ot y)W^{-1} \big\rangle \\
& \xra[n \to \infty]{}
\big\langle (\Id \ot T)(W(K_\phi \ot x)W^{-1}),W(K_\psi \ot y)W^{-1} \big\rangle.
\end{align*}
Again with Lemma \ref{lem-referee-proof-step-1-cacul-du-symbol-approximation}, we have
\begin{align*}
\MoveEqLeft
\varphi_n(s,t) 
=\big\langle M_{\phi_n} T(\lambda_s x \lambda_{t^{-1}}) , \lambda_s y \lambda_{t^{-1}} \big\rangle \\
& = \big\langle T(\lambda_s x \lambda_{t^{-1}}) , M_{\phi_n}^*(\lambda_s y \lambda_{t^{-1}}) \big\rangle
\xra[n \to \infty]{}
\big\langle T(\lambda_s x \lambda_{t^{-1}}) , \lambda_s y \lambda_{t^{-1}} \big\rangle 
\ov{\eqref{Def-symbol-varphi-1}}{=} \varphi_{x,y,T}(s,t),
\end{align*}
pointwise in $s,t \in G$. Even stronger, if $s,t$ vary in given compacts, $\varphi_n(s,t) = \varphi(s,t)$ for $n$ sufficiently large. Since the functions $\phi$ and $\psi$ are assumed to belong to the space $\C_c(G\times G)$, we obtain then $\langle M_{\varphi_n}(K_\phi), K_\psi \rangle = \langle M_\varphi(K_\phi), K_\psi \rangle$ if $n$ sufficiently large. In summary, we have established the formula
\[ 
\big\langle (\Id \ot T)(W(K_\phi \ot x)W^{-1}),W(K_\psi \ot y)W^{-1} \big\rangle 
= \langle M_{\varphi_{x,y,T}}(K_\phi), K_\psi \rangle.
\]
\end{proof}

\begin{lemma}
\label{Lemma-estimation-cb}
Suppose that $1 \leq p \leq \infty$. Let $T \co \L^p(\VN(G)) \to \L^p(\VN(G))$ be a completely bounded operator (weak* continuous if $p = \infty$). For any elements $x$ and $y$ in the space $\L^1(\VN(G)) \cap \VN(G)$, we have the estimate
\begin{equation}
\label{div-987}
\norm{M_{\varphi_{x,y,T}}}_{\cb,S^p_G \to S^p_G}
\leq\norm{T}_{\cb, \L^p(\VN(G)) \to \L^p(\VN(G))} \norm{x}_{\L^p(\VN(G))} \norm{y}_{\L^{p^*}(\VN(G))}, 
\end{equation}
with the usual convention if $p=1$ or $p=\infty$.
If $p=\infty$ (resp. $p = 1$), we can also take $x \in \VN(G)$ (resp. $y \in \VN(G)$).
Moreover, if the the linear map $T$ is completely positive then the Schur multiplier $M_{\varphi_{x,y,T}}$ is also completely positive.
\end{lemma}

\begin{proof}
By \cite[Definition 2.1]{Pis95}, the duality \cite[Theorem 4.7 p.~49]{Pis98} and Plancherel formula \eqref{Formule-Plancherel}, we have according to Proposition \ref{prop-referee-proof-step-1-calcul-du-symbol-avec-coefficients-L2},
\begin{align*}
\MoveEqLeft
\norm{M_{\varphi_{x,y,T}}}_{\cb,S^p_G \to S^p_G} \\
&\leq \sup \left \{ \left| \sum_{ij} \big\langle (\Id \ot T)(W(K_{\phi_{ij}} \ot x)W^{-1}), W (K_{\psi_{ij}} \ot y) W^{-1} \big\rangle \right| \right. :\norm{[K_{\phi_{ij}}]}_p,\\
& \bigg. \norm{[K_{\psi_{ij}}]}_{p^*} \leq 1 \Bigg\} \\
& \leq \norm{\Id \ot \Id \ot T}_{\cal{B}(S^p(S^p_G(\L^p(\VN(G)))))} \norm{W}_\infty \norm{x}_p\norm{W^{-1}}_\infty \norm{W}_\infty \norm{y}_{p^*} \norm{W^{-1}}_\infty \\
& \leq \norm{T}_{\cb, \L^p(\VN(G)) \to \L^p(\VN(G))} \norm{x}_{\L^p(\VN(G))} \norm{y}_{\L^{p^*}(\VN(G))}.
\end{align*}
If the linear map $T \co \L^p(\VN(G)) \to \L^p(\VN(G))$ is completely positive (and weak* continuous if $p = \infty$), then for any positive elements $[K_{\phi_{ij}}]$ and $[K_{\psi_{ij}}]$ we have
\[ 
\sum_{ij} \big\langle M_{x,y,T} K_{\phi_{ij}}, K_{\psi_{ij}} \big\rangle 
= \sum_{ij} \big\langle (\Id \ot T)(W(K_{\phi_{ij}} \ot x)W^{-1}), W (K_{\psi_{ij}} \ot y) W^{-1} \big\rangle 
\geq 0 .
\]
Indeed, the map $\Id \ot \Id \ot T$ preserves the positivity and $[W(K_{\phi_{ij}} \ot x)W^{-1}]$ and $[W (K_{\psi_{ij}} \ot y) W^{-1}]$ are positive. We infer by \cite[Lemma 2.6 p.~13]{ArK23} that $[M_{\varphi_{x,y,T}} K_{\phi_{ij}}]$ is positive, hence the map $M_{\varphi_{x,y,T}}$ is completely positive.
\end{proof}

\subsection{Step 2: the symbol of $P_j(T)$ is Herz-Schur when $G$ is inner amenable}
\label{Sec-Herz-Schur}

In the following result, we show that if the group $G$ is inner amenable, we are able to make appear Herz-Schur multipliers. Recall that $\varphi_{x,y,T}$ is defined in \eqref{Def-symbol-varphi-1}.

\begin{lemma}
\label{lem-SAIN-Herz-Schur}
Let $G$ be a second-countable unimodular inner amenable locally compact group. Let $F$ be a finite subset of $G$ and let $(V_j^F)_j$ be a sequence of subsets of $G$ satisfying the last point of Theorem \ref{thm-inner-amenable-Folner}. Consider a weak* continuous completely bounded map $T \co \VN(G) \to \VN(G)$. With the notation \eqref{Def-symbol-varphi-1}, we let 
\begin{equation}
\label{Def-ds-inner}
y_j^F 
\ov{\mathrm{def}}{=} c_j^F |\lambda(1_{V_j^F})|^2
\quad \text{and} \quad
\phi_{j,T}^F 
\ov{\mathrm{def}}{=} \varphi_{1,y_j^F,T},
\end{equation}
where $c_j^F > 0$ is the normalisation to have $\norm{y_j^F}_{\L^1(\VN(G))} = 1$. Then any weak* cluster point $\phi_T^F$ of the sequence $(\phi_{j,T}^F)_j$ satisfies 
\[ 
\phi_{T}^F(sr,tr) 
= \phi_{T}^F(s,t), \quad s,t \in G, \: r \in F. 
\]
Moreover, any weak* cluster point of such $(\phi_T^F)_F$, where the finite subsets $F$ of $G$ are directed by inclusion, is also a Herz-Schur symbol.
\end{lemma}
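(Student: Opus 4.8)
The plan is to work directly with the symbol formula \eqref{Def-symbol-varphi-1}, which for $x=1$ reads
\[
\phi_{j,T}^F(s,t) = \varphi_{1,y_j^F,T}(s,t) = \tau_G\big(\lambda_t y_j^F \lambda_{s^{-1}} T(\lambda_{st^{-1}})\big), \quad s,t\in G,
\]
and to track how it behaves under the right diagonal translation $(s,t)\mapsto(sr,tr)$. Since $\lambda_{sr}\lambda_{(tr)^{-1}}=\lambda_{st^{-1}}$, the argument of $T$ is unchanged, and writing $\lambda_{tr}=\lambda_t\lambda_r$ and $\lambda_{(sr)^{-1}}=\lambda_{r^{-1}}\lambda_{s^{-1}}$ gives the identity
\[
\phi_{j,T}^F(sr,tr)-\phi_{j,T}^F(s,t) = \tau_G\big(\lambda_t\big(\lambda_r y_j^F \lambda_{r^{-1}} - y_j^F\big)\lambda_{s^{-1}} T(\lambda_{st^{-1}})\big).
\]
Estimating the trace by the $\L^1(\VN(G))$-norm of the bracketed factor (the two $\lambda$'s are unitaries and $\norm{T(\lambda_{st^{-1}})}_\infty\le\norm{T}_{\cb}$) I obtain the \emph{uniform} bound $|\phi_{j,T}^F(sr,tr)-\phi_{j,T}^F(s,t)|\le \norm{T}_{\cb}\,\norm{\lambda_r y_j^F\lambda_{r^{-1}}-y_j^F}_{\L^1(\VN(G))}$, valid for all $s,t$. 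So the whole statement reduces to showing that this conjugation defect tends to $0$ for $r\in F$.

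Next I would exploit the precise shape of $y_j^F$. Set $a_j\ov{\mathrm{def}}{=}\lambda(1_{V_j^F})$, which is self-adjoint because $V_j^F$ is symmetric, so that $y_j^F = c_j^F a_j^2$; conjugating, $\lambda_r y_j^F\lambda_{r^{-1}}=c_j^F b_j^2$ with $b_j\ov{\mathrm{def}}{=}\lambda_r a_j\lambda_{r^{-1}}=\lambda(\inner_r 1_{V_j^F})=\lambda(1_{rV_j^F r^{-1}})$ (unimodularity identifies conjugation on $\VN(G)$ with $\inner_r$ on the symbol). Taking a square is exactly what lets me trade the $\L^1$-norm for a Hilbert--Schmidt one: from $b_j^2-a_j^2=b_j(b_j-a_j)+(b_j-a_j)a_j$ and H\"older ($\L^1=\L^2\cdot\L^2$) I get $\norm{b_j^2-a_j^2}_1\le(\norm{a_j}_2+\norm{b_j}_2)\norm{b_j-a_j}_2$, and by Plancherel \eqref{Formule-Plancherel} together with \eqref{Indicator-formula} these $\L^2$-norms are $\norm{a_j}_2=\norm{b_j}_2=\mu(V_j^F)^{1/2}$ and $\norm{b_j-a_j}_2=\mu(V_j^F\Delta rV_j^F r^{-1})^{1/2}$. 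Since $c_j^F=\mu(V_j^F)^{-1}$, this yields
\[
\norm{\lambda_r y_j^F\lambda_{r^{-1}}-y_j^F}_{\L^1(\VN(G))}\le 2\bigg(\frac{\mu\big(V_j^F\Delta rV_j^F r^{-1}\big)}{\mu(V_j^F)}\bigg)^{1/2}\xra[j]{}0,
\]
the convergence being the inner F\o{}lner property of Theorem \ref{thm-inner-amenable-Folner} applied to $r^{-1}$; I would therefore first replace $F$ by $F\cup F^{-1}$, harmlessly, so that this holds for every $r\in F$.

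Combining the two displays shows $\phi_{j,T}^F(sr,tr)-\phi_{j,T}^F(s,t)\to 0$ in the norm of $\L^\infty(G\times G)$, hence weak$^*$. Because right diagonal translation induces a weak$^*$-continuous isometry $D_r$ of $\L^\infty(G\times G)=\L^1(G\times G)^*$ (unimodularity makes $(s,t)\mapsto(sr,tr)$ measure-preserving), and the symbols are uniformly bounded by Lemma \ref{Lemma-estimation-cb} so that cluster points exist, \emph{any} weak$^*$ cluster point $\phi_T^F$ of $(\phi_{j,T}^F)_j$ satisfies $D_r\phi_T^F=\phi_T^F$, i.e. $\phi_T^F(sr,tr)=\phi_T^F(s,t)$ for $r\in F$; this is the first assertion. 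For the second, I pass to a cluster point $\phi_T$ of the net $(\phi_T^F)_F$: fixing $r\in G$, the identity $D_r\phi_T^F=\phi_T^F$ holds for all $F\ni r$, hence cofinally along the inclusion ordering, so weak$^*$-continuity of $D_r$ forces $D_r\phi_T=\phi_T$ for every $r\in G$. Full right diagonal invariance then gives the Herz--Schur form via the change of variables $(s,t)\mapsto(st^{-1},t)$: setting $\Psi(u,t)\ov{\mathrm{def}}{=}\phi_T(ut,t)$, invariance becomes $\Psi(u,tr)=\Psi(u,t)$ for all $r$, so $\Psi(u,\cdot)$ is right-invariant and hence a.e. constant in $t$, whence $\phi_T(s,t)=\psi(st^{-1})$ for $\psi(u)\ov{\mathrm{def}}{=}\Psi(u,e)$.

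The step I expect to be the main obstacle is the $\L^1(\VN(G))$-control of the conjugation defect $\lambda_r y_j^F\lambda_{r^{-1}}-y_j^F$: a crude estimate would only give boundedness, not smallness, and it is precisely the choice of $y_j^F$ as a square $c_j^F\,|\lambda(1_{V_j^F})|^2$ that converts the intractable $\L^1$-norm of a difference of non-commuting operators into the Hilbert--Schmidt quantity $\mu(V_j^F\Delta rV_j^F r^{-1})^{1/2}$ that the inner F\o{}lner condition controls.
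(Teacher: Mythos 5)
Your proposal is correct, and its skeleton coincides with the paper's: the same algebraic identity $\phi_{j,T}^F(sr,tr)-\phi_{j,T}^F(s,t)=\tau_G\big(\lambda_t(\lambda_r y_j^F\lambda_{r^{-1}}-y_j^F)\lambda_{s^{-1}}T(\lambda_{st^{-1}})\big)$, the same reduction to $\norm{\lambda_r y_j^F\lambda_{r^{-1}}-y_j^F}_1\to 0$, and the same two-stage cluster point argument. The differences are in execution, and both are in your favor. First, via trace--H\"older you get the bound $|\phi_{j,T}^F(sr,tr)-\phi_{j,T}^F(s,t)|\leq \norm{T}\,\norm{\lambda_r y_j^F\lambda_{r^{-1}}-y_j^F}_1$ \emph{uniformly} in $(s,t)$, so the defect tends to $0$ in the norm of $\L^\infty(G\times G)$; the paper only establishes pointwise convergence and then invokes dominated convergence to obtain the weak* convergence, so your route is shorter and gives a stronger mode of convergence. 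Second, for the key $\L^1$ estimate you factor $b_j^2-a_j^2=b_j(b_j-a_j)+(b_j-a_j)a_j$ and apply noncommutative H\"older together with Plancherel, whereas the paper reaches the identical bound $2\big[\mu(V_j^F\Delta rV_j^Fr^{-1})/\mu(V_j^F)\big]^{1/2}$ by a function-level computation, splitting $1_{V_j^F}\ast 1_{V_j^F}(r^{-1}(\cdot)r)-1_{V_j^F}\ast 1_{V_j^F}$ into two convolution terms and changing variables; your operator-level decomposition is the cleaner way to see why the square $|\lambda(1_{V_j^F})|^2$ is the right choice, exactly as you observe in your final comment. One remark: your replacement of $F$ by $F\cup F^{-1}$ is unnecessary, and strictly speaking it alters the hypothesis since the sets $V_j^F$ are given for the fixed $F$; but no enlargement is needed, because by unimodularity conjugation by $r^{-1}$ preserves $\mu$, so $\mu\big(V_j^F\Delta rV_j^Fr^{-1}\big)=\mu\big(V_j^F\Delta r^{-1}V_j^Fr\big)$ and the F\o{}lner condition \eqref{Inner-Folner} with $s=r\in F$ already controls the quantity you need. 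Finally, you prove slightly more than the paper at the end: the explicit passage from full right-diagonal invariance to the Herz--Schur form $\psi(st^{-1})$ via $\Psi(u,t)=\phi_T(ut,t)$, a step the paper leaves implicit; just note that ``invariant under every right translation in $\L^\infty$ implies a.e.\ constant'' requires the standard averaging/approximate-identity argument, since each invariance holds only almost everywhere.
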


\begin{proof}
For any $s,t \in G$ and any $r \in F$, we have 
\begin{align*}
\MoveEqLeft
\phi_{j,T}^F(sr,tr) - \phi_{j,T}^F(s,t) 
\ov{\eqref{Def-ds-inner}}{=} \varphi_{1,y_j^F,T}(sr,tr)-\varphi_{1,y_j^F,T}(s,t) \\
&\ov{\eqref{Def-symbol-varphi-1}}{=}\tau_G\big(\lambda_{tr}y_j^F \lambda_{(sr)^{-1}} T(\lambda_{sr}  \lambda_{(tr)^{-1}}) \big)-\tau_G\big(\lambda_{t}y_j^F \lambda_{s^{-1}} T(\lambda_s  \lambda_{t^{-1}}) \big)\\
&=\tau_G\big(\lambda_t \lambda_r y_j^F \lambda_{r^{-1}} \lambda_{s^{-1}} T(\lambda_{st^{-1}})\big) - \tau_G\big(\lambda_t y_j^F \lambda_{s^{-1}} T(\lambda_{st^{-1}})\big) \\
&= \tau_G\left(\lambda_t (\lambda_r y_j^F \lambda_{r^{-1}} - y_j^F) \lambda_{s^{-1}} T(\lambda_{st^{-1}}) \right).
\end{align*}
If we can show that
\begin{equation}
\label{equ-1-proof-lemma-SAIN-Herz-Schur}
\norm{\lambda_r y_j^F \lambda_{r^{-1}} - y_j^F}_{\L^1(\VN(G))}
\xra[j \to \infty]{} 0,
\end{equation}
then we will obtain the pointwise convergence $\phi_{j,T}^F(sr,tr) - \phi_{j,T}^F(s,t) \to 0$ as $j \to \infty$, for fixed $s,t \in G$ and $r \in F$.
Since $\phi_{j,T}^F(sr,tr) - \phi_{j,T}^F(s,t)$ is uniformly bounded in the Banach space $\L^\infty(G \times G)$, by dominated convergence, it follows that this sequence converges for the weak* topology to $0$ in the space $\L^\infty(G \times G)$. Thus, if $\phi_T^F$ is a cluster point of $(\phi_{j,T}^F)_{j}$, it is easy to check by a $\frac{\epsi}{3}$-argument, 
using the weak* continuity of translations on $\L^\infty$, that $\phi_T^F(sr,tr) = \phi_T^F(s,t)$ for any $s,t \in G$ and $r \in F$. It remains to show \eqref{equ-1-proof-lemma-SAIN-Herz-Schur}.

First, for any $j$ we have 
\begin{equation}
\label{equal-cjF}
(c_j^F)^{-1} 
= \bnorm{|\lambda(1_{V_j^F})|^2}_{1} 
= \bnorm{\lambda(1_{V_j^F})}_{\L^2(\VN(G))}^2 
= \bnorm{1_{V_j^F}}_{\L^2(G)}^2 
= \mu\big(V_j^F\big).
\end{equation} 
Now, observe by unimodularity in the second equality
\begin{align*}
\MoveEqLeft
\bnorm{\lambda_r |\lambda(1_{V_j^F})|^2 \lambda_{r^{-1}} - |\lambda(1_{V_j^F})|^2 }_1 
\ov{\eqref{composition-et-lambda}}{=} \bnorm{\lambda_r \lambda\big(1_{V_j^F} \ast 1_{V_j^F}\big) \lambda_{r^{-1}} - \lambda\big(1_{V_j^F} \ast 1_{V_j^F}\big) }_1 \\
&=\bnorm{\lambda\big( 1_{V_j^F} \ast 1_{V_j^F}(r^{-1} (\cdot) r) - 1_{V_j^F} \ast 1_{V_j^F} \big)}_1 \\
&\leq \bnorm{\lambda\big( 1_{V_j^F} \ast 1_{V_j^F}(r^{-1} (\cdot) r)-1_{V_j^F} \ast 1_{rV_j^Fr^{-1}}+1_{V_j^F} \ast 1_{rV_j^Fr^{-1}}- 1_{V_j^F} \ast 1_{V_j^F} \big)}_1 \\
&\leq  \bnorm{\lambda\big(1_{V_j^F} \ast 1_{rV_j^Fr^{-1}} - 1_{V_j^F} \ast 1_{V_j^F}(r^{-1}(\cdot)r)\big)}_1+\bnorm{\lambda\big(1_{V_j^F} \ast (1_{rV_j^Fr^{-1}} - 1_{V_j^F})\big)}_1.
\end{align*}
We estimate the second summand with unimodularity by
\begin{align*}
\MoveEqLeft
\bnorm{1_{V_j^F}}_{\L^2(G)} \bnorm{1_{rV_j^Fr^{-1}} - 1_{V_j^F}}_{\L^2(G)} 
\ov{\eqref{Indicator-formula}}{=} \mu\big(V_j^F\big)^{\frac12} \mu\big(rV_j^Fr^{-1} \Delta V_j^F\big)^{\frac12}.             
\end{align*}
Now, we manipulate the first summand. By remplacing $t$ by $tr^{-1}$ and $t$ by $r^{-1}t$ in the fourth equality, we obtain
\begin{align}
\MoveEqLeft
\label{Infinite-34}
1_{V_j^F} \ast 1_{rV_j^Fr^{-1}}(s) - 1_{V_j^F} \ast 1_{V_j^F}(r^{-1}sr) \\
&\ov{\eqref{Convolution-formulas}}{=} \int_G 1_{V_j^F}(t) 1_{rV_j^Fr^{-1}}(t^{-1}s) - 1_{V_j^F}(t) 1_{V_j^F}(t^{-1}r^{-1}sr) \d\mu_G(t) \nonumber\\
&=\int_G 1_{V_j^F}(t) 1_{V_j^Fr^{-1}}(r^{-1}t^{-1}s)\d \mu_G(t) -\int_G 1_{V_j^F}(t)  1_{V_j^Fr^{-1}}(t^{-1}r^{-1}s) \d \mu_G(t) \nonumber \\
&= \int_G 1_{V_j^F}(tr^{-1}) 1_{V_j^Fr^{-1}}(t^{-1}s) \d\mu_G(t)-\int_G  1_{V_j^F}(r^{-1}t) 1_{V_j^Fr^{-1}}(t^{-1}s) \d\mu_G(t) \nonumber\\
&= \int_G \big(1_{V_j^Fr} - 1_{r V_j^F}\big)(t) 1_{V_j^Fr^{-1}}(t^{-1}s) \d\mu_G(t) \nonumber
\ov{\eqref{Convolution-formulas}}{=} \big(1_{V_j^Fr} - 1_{r V_j^F}\big) \ast 1_{V_j^F r^{-1}}(s).\nonumber
\end{align}
Using the invariance of $\mu_G$, we therefore obtain the following estimate for the first summand:
\begin{align*}
\MoveEqLeft
\bnorm{\lambda\big(1_{V_j^F} \ast 1_{rV_j^Fr^{-1}} - 1_{V_j^F} \ast 1_{V_j^F}(r^{-1}(\cdot)r)\big)}_1 
\ov{\eqref{Infinite-34}}{\leq} \bnorm{1_{V_j^Fr} - 1_{rV_j^F}}_{\L^2(G)} \bnorm{1_{V_j^F r^{-1}}}_{\L^2(G)} \\
& \ov{\eqref{Indicator-formula}}{=}  \mu\big(V_j^Fr \Delta r V_j^F\big)^{\frac12} \mu\big(V_j^Fr^{-1}\big)^{\frac12} 
= \mu\big(V_j^F \Delta r V_j^F r^{-1}\big)^{\frac12} \mu\big(V_j^F\big)^{\frac12}. 
\end{align*}
Combining the two estimates, we obtain 
\begin{align*}
\MoveEqLeft
\norm{\lambda_r y_j \lambda_r^{-1} - y_j}_1 
\leq 2 c_j^F \mu\big(V_j^F \Delta r V_j^F r^{-1}\big)^{\frac12} \mu\big(V_j^F\big)^{\frac12} \\
&\ov{\eqref{equal-cjF}}{=} 2 \left[ \frac{\mu(V_j^F \Delta r V_j^F r^{-1})}{\mu(V_j^F)} \right]^{\frac12} 
\xra[j]{\eqref{Inner-Folner}} 0,
\end{align*}
according to the inner amenability assumption.

Now, for any finite subset $F$ of the group $G$, we fix a weak* cluster point $\phi_T^F$ of the net $(\phi_{j,T}^F)_j$. Let $\phi_T$ be a weak* cluster point of $(\phi_T^F)_F$. Then for any function $f \in \L^1(G \times G)$ and any $r \in G$, the function $f(\cdot\, r^{-1},\cdot\, r^{-1})$ belongs to the space $\L^1(G \times G)$. Moreover, using unimodularity in the first and third steps and the first part of the proof valid for $F$ containing $\{r\}$ in the forth step, we obtain
\begin{align*}
\MoveEqLeft
\big\langle \phi_T(\cdot\, r, \cdot\, r) , f \big\rangle_{\L^\infty(G \times G),\L^1(G \times G)} 
= \big\langle \phi_T, f(\cdot\, r^{-1}, \cdot\, r^{-1}) \big\rangle_{\L^\infty,\L^1}
= \lim_{F \to \infty} \big\langle \phi_T^F, f(\cdot\, r^{-1}, \cdot\, r^{-1}) \big\rangle_{\L^\infty,\L^1} \\
& = \lim_{F \to \infty} \big\langle \phi_T^F(\cdot\, r, \cdot\, r), f \big\rangle_{\L^\infty,\L^1} 
= \lim_{F \to \infty} \big\langle \phi_T^F, f \big\rangle_{\L^\infty,\L^1} 
= \langle \phi_T, f \rangle_{\L^\infty(G \times G),\L^1(G \times G)}.
\end{align*}
We deduce that the function $\phi_T$ is a Herz-Schur symbol.
\end{proof}

\subsection{Step 2: the symbol of $P_j(T)$ for a Fourier multiplier $T$ if $p=\infty$ or $p=1$}
\label{Section-p=1-p-infty}

We start with the case $p=\infty$. Let $T = M_\phi \co \VN(G) \to \VN(G)$ be a completely bounded Fourier multiplier. If $x \in \VN(G)$ and $y \in \L^1(\VN(G)) \cap \VN(G)$, recall that the symbol $\varphi_{x,y,T}$ is defined in \eqref{Def-symbol-varphi-1}. 

\begin{lemma}
\label{lemma-symbol-step-1-p=infty}
Let $G$ be a second-countable unimodular locally compact group. Consider a completely bounded Fourier multiplier $T = M_\phi \co \VN(G) \to \VN(G)$. Let $y$ be a positive element in the space $\L^1(\VN(G)) \cap \VN(G)$ such that $\tau_G(y) = 1$. We have
\begin{equation}
\label{}
\varphi_{1,y,T}(s,t)
=\phi(st^{-1}), \quad s,t \in G.
\end{equation}
\end{lemma}

\begin{proof}
For any $s,t \in G$, we have
\begin{align*}
\MoveEqLeft
\varphi_{1,y,T}(s,t)
\ov{\eqref{Def-symbol-varphi-1}}{=} \tau_G(y \lambda_{s^{-1}} M_\phi(\lambda_s \lambda_{t^{-1}}) \lambda_t)            
=\phi(st^{-1})\tau_G(\lambda_ty \lambda_{s^{-1}} \lambda_s \lambda_{t^{-1}} ) \\
&=\phi(st^{-1})\tau_G(y)
=\phi(st^{-1}).
\end{align*} 
\end{proof}

\begin{example} \normalfont
\label{example-p=infy}
Let $g$ be a continuous functions with compact support on $G$ with $\norm{g}_{\L^2(G)} = 1$. With $y \ov{\mathrm{def}}{=} \lambda(g^**g)$, the assumptions of Lemma \ref{lemma-symbol-step-1-p=infty} are satisfied by \eqref{composition-et-lambda} and since $
\tau_G(y)
\ov{\eqref{composition-et-lambda}}{=} \tau_G(\lambda(g)^*\lambda(g)) 
\ov{\eqref{Formule-Plancherel}}{=} \norm{g}_{\L^2(G)}^2
=1$.
\end{example}

We continue with the case $p=1$. We can prove the following similar result. 

\begin{lemma}
\label{lemma-symbol-step-1-p=1}
Let $G$ be a second-countable unimodular locally compact group. Consider a completely bounded Fourier multiplier $T = M_\phi \co \L^1(\VN(G)) \to \L^1(\VN(G))$. Let $x$ be a positive element in the space $\L^1(\VN(G)) \cap \VN(G)$ such that $\tau_G(x) = 1$. We have
\begin{equation}
\label{}
\varphi_{x,1,T}(s,t)
=\phi(st^{-1}), \quad s,t \in G.
\end{equation}
\end{lemma}

\begin{proof}
For any $s,t \in G$, we have
\begin{align*}
\MoveEqLeft
\varphi_{x,1,T}(s,t)
\ov{\eqref{Def-symbol-varphi-1}}{=} \tau_G\big(\lambda_t\lambda_{s^{-1}} M_\phi(\lambda_s x\lambda_{t^{-1}}) \big) 
=\tau_G\big(\lambda_t\lambda_{s^{-1}} M_\phi(\lambda_s x\lambda_{t^{-1}}) \big) \\
&=\tau_G\big( M_{\check{\phi}}(\lambda_{ts^{-1}}) \lambda_s x\lambda_{t^{-1}}) \big)    
=\phi(st^{-1})\tau_G\big(\lambda_{ts^{-1}} \lambda_s x\lambda_{t^{-1}}) \big)
=\phi(st^{-1})\tau_G(x)
=\phi(st^{-1}).
\end{align*} 
\end{proof}


%
%

\subsection{Step 2: convergence of the symbols for a multiplier $T$ with arbitrary symbol}
\label{Sec-convergence-continuous}

We show that for a suitable choice of sequences of functions, we obtain the convergence of symbols to the desired Herz-Schur symbol.

\begin{prop}
\label{th-convergence}
Let $G$ be a second-countable unimodular locally compact group. Suppose that $1 \leq p \leq \infty$. Consider some completely bounded Fourier multiplier $T = M_\phi \co \L^p(\VN(G)) \to \L^p(\VN(G))$.
Let $(f_j)$ and $(g_j)$ be nets of positive functions with compact support belonging to the space $\C_e(G)$ such that if $x_j \ov{\mathrm{def}}{=} \lambda(f_j)$, $y_j \ov{\mathrm{def}}{=} \lambda(g_j)$ we have
\begin{itemize}
\item $\norm{x_j}_{\L^p(\VN(G))} \norm{y_j}_{\L^{p^*}(\VN(G))} \leq C$ for all $j$ for some positive constant $C$,

\item $\tau_G(x_j y_j) = 1$ for all $j$,

\item $\supp f_j \to \{e\}$ or $\supp g_j \to \{e\}$. 
\end{itemize}
Moreover, let
\begin{equation}
\label{def-symbol-phi-alpha}
\phi_{j,T}(s,t) 
\ov{\mathrm{def}}{=} \varphi_{x_j,y_j,T}(s,t) 
\ov{\eqref{Def-symbol-varphi-1}}{=} \tau_G \big(\lambda_ty_j \lambda_{s^{-1}} T(\lambda_s x_j \lambda_{t^{-1}}) \big), \quad s,t \in G.
\end{equation}
Then the sequence $(\phi_{j,T})_j$ of elements in the space $\L^\infty(G \times G)$ converges for the weak* topology to the function $\phi^\HS \co (s,t) \mapsto \phi(st^{-1})$.
\end{prop}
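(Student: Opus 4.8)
The plan is to compute the symbol $\phi_{j,T}$ in closed form and then recognise the convergence as a statement about an approximate identity on $G$. Since $T=M_\phi$ and $x_j=\lambda(f_j)$, $y_j=\lambda(g_j)$ with $f_j,g_j \in \C_e(G)$ positive and compactly supported, I would first rewrite the operators in \eqref{def-symbol-phi-alpha} through the convolution calculus. Using unimodularity and the change of variable $r'=srt^{-1}$ one gets $\lambda_s \lambda(f_j)\lambda_{t^{-1}}=\lambda(h_{s,t})$ with $h_{s,t}(r)=f_j(s^{-1}rt)$, so that $T(\lambda_s x_j \lambda_{t^{-1}})=M_\phi(\lambda(h_{s,t}))=\lambda(\phi\, h_{s,t})$, and likewise $\lambda_t y_j \lambda_{s^{-1}}=\lambda(m_{s,t})$ with $m_{s,t}(r)=g_j(t^{-1}rs)$. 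All these functions are compactly supported and left bounded (the operators are bounded, being products of $\lambda(f_j),\lambda(g_j)\in\mathfrak{m}_{\tau_G}$ with unitaries, respectively the image under the bounded multiplier $M_\phi$), so $\lambda(m_{s,t})\lambda(\phi\, h_{s,t})\in\lambda(\C_e(G))=\mathfrak{m}_{\tau_G}$ and the Plancherel formula \eqref{Formule-Plancherel} applies. After the substitution $u=s^{-1}rt$ and unimodularity this yields
\[
\phi_{j,T}(s,t)=\int_G \phi\big(sut^{-1}\big)\, w_j(u)\, \d\mu_G(u), \qquad w_j \ov{\mathrm{def}}{=} f_j\, \check{g_j}.
\]

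Next I would verify that $(w_j)$ is an approximate identity at $e$. Each $w_j=f_j\check{g_j}$ is positive (product of the positive $f_j$ and $\check{g_j}$), and by \eqref{Formule-Plancherel} the normalisation $\tau_G(x_jy_j)=\tau_G(\lambda(f_j)\lambda(g_j))=\int_G \check{f_j}\,g_j\,\d\mu_G=\int_G f_j\,\check{g_j}\,\d\mu_G$ gives $\int_G w_j\,\d\mu_G=1$. Finally $\supp w_j\subseteq \supp f_j \cap (\supp g_j)^{-1}$, so the hypothesis that $\supp f_j\to\{e\}$ or $\supp g_j\to\{e\}$ forces $\supp w_j\to\{e\}$; thus for every neighbourhood $V$ of $e$ one has $\supp w_j\subseteq V$ for all large $j$, with $w_j\geq 0$ and total mass $1$.

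For the weak* convergence it then suffices to test against an arbitrary $F\in\L^1(G\times G)$. Since $|\phi|\leq\norm{\phi}_\infty$ and $\int_G w_j=1$, Fubini is justified and gives
\[
\int_{G\times G}\phi_{j,T}(s,t)\,F(s,t)\,\d\mu_G(s)\,\d\mu_G(t)=\int_G w_j(u)\,\Psi(u)\,\d\mu_G(u),
\]
where $\Psi(u)\ov{\mathrm{def}}{=}\int_{G\times G}\phi(sut^{-1})\,F(s,t)\,\d\mu_G(s)\,\d\mu_G(t)$. The substitution $s\mapsto su^{-1}$ rewrites $\Psi(u)=\int_{G\times G}\phi(st^{-1})\,F(su^{-1},t)\,\d\mu_G(s)\,\d\mu_G(t)$, so $\Psi$ is bounded by $\norm{\phi}_\infty\norm{F}_1$ and, by continuity of right translation on $\L^1(G\times G)$, continuous at $u=e$ with $\Psi(e)=\int_{G\times G}\phi(st^{-1})\,F(s,t)\,\d\mu_G(s)\,\d\mu_G(t)$. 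As $(w_j)$ concentrates at $e$, the approximate-identity estimate $\big|\int_G w_j\Psi-\Psi(e)\big|\leq\int_G w_j(u)\,|\Psi(u)-\Psi(e)|\,\d\mu_G(u)\to 0$ is exactly $\langle\phi_{j,T},F\rangle\to\langle\phi^\HS,F\rangle$.

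The delicate point is the first step: justifying the trace computation in the non-discrete setting, where (as warned after \eqref{Def-mtauG}) one does not have $\lambda(\C_c(G))\subseteq\mathfrak{m}_{\tau_G}$; one must check that the products land in $\mathfrak{m}_{\tau_G}=\lambda(\C_e(G))$ and are left bounded so that \eqref{Formule-Plancherel} applies, and that the manipulation is independent of $p$ (which it is, since $x_j,y_j\in\L^1(\VN(G))\cap\VN(G)$ lie in every $\L^p$). Once the closed formula is established the rest is the soft approximate-identity argument above; note in particular that only boundedness of $\phi$, and not its continuity, is needed for the weak* statement.
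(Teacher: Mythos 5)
Your proof is correct and its core is the same as the paper's: you derive the identical closed formula $\phi_{j,T}(s,t)=\int_G \phi(sut^{-1})\,f_j(u)\,\check{g}_j(u)\,\d\mu_G(u)$ via the Plancherel formula \eqref{Formule-Plancherel} (this is exactly the computation \eqref{Calcul-symbole}), you verify the same normalisation $\int_G f_j\check{g}_j\,\d\mu_G=\tau_G(x_jy_j)=1$, and your key manoeuvre — substituting $s\mapsto su^{-1}$ to shift the translation from $\phi$ onto the test function — is precisely the paper's trick. The only genuine divergence is the endgame: the paper first establishes that the sequence $(\phi_{j,T})_j$ is uniformly bounded in $\L^\infty(G\times G)$, via the cb-norm estimate of Lemma \ref{Lemma-estimation-cb} combined with \cite[Remark 1.4]{LaS}, and then tests only against $h\in\C_c(G\times G)$, using uniform continuity of $h$ on compacts; you instead test directly against every $F\in\L^1(G\times G)$, replacing uniform continuity of $h$ by $\L^1$-continuity of right translations and treating $(w_j)$ as an approximate identity. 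Your variant is slightly more economical (no appeal to \cite{LaS}, no density/boundedness bookkeeping), but it silently uses that $\phi\in\L^\infty(G)$ — both for the Fubini justification and for the modulus-of-continuity bound $|\Psi(u)-\Psi(e)|\leq \norm{\phi}_\infty \norm{F(\cdot\,u^{-1},\cdot)-F}_{\L^1}$ — whereas the paper's route only needs local integrability of $\phi$ plus the uniform bound on the symbols. This assumption is true in the stated generality (for $p=\infty$ it is \cite{DCH}; for general $p$, the trace-duality adjoint of $M_\phi$ is $M_{\check{\phi}}=RM_\phi R$ with $R$ the antipode, so $M_\phi$ is also bounded on $\L^{p^*}(\VN(G))$, and interpolation gives boundedness on $\L^2(\VN(G))\cong\L^2(G)$, where the multiplier acts as pointwise multiplication, forcing $\phi\in\L^\infty(G)$), but you should add this one-line justification, since it is the one ingredient your argument needs beyond the hypotheses as literally written. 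Your care about the trace computation is well placed and correctly resolved: the relevant products are of the form $\lambda(m)\lambda(\phi h)$ with $m,\phi h\in\L^1(G)\cap\L^2(G)$ left bounded, so \eqref{Formule-Plancherel} applies and the issue that $\lambda(\C_c(G))\not\subseteq\mathfrak{m}_{\tau_G}$ is avoided, exactly as in the paper.
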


\begin{proof}
For any $j$  and almost all $s,t \in G$, we have using a change of  variables in the last equality
\begin{align}
\MoveEqLeft
\label{Calcul-symbole}
\phi_{j,T}(s,t) 
\ov{\eqref{def-symbol-phi-alpha}}{=} \tau_G \big(y_j \lambda_{s^{-1}} T(\lambda_s x_j \lambda_{t^{-1}}) \lambda_t\big)            
=\tau_G \big(\lambda_t\lambda(g_j) \lambda_{s^{-1}} T(\lambda_s \lambda(f_j) \lambda_{t^{-1}}) \big)\\
&=\tau_G \big(\lambda(g_j(t^{-1}\cdot s))  M_\phi(\lambda(f_j(s^{-1}\cdot t) \big) 
=\tau_G \big(\lambda(g_j(t^{-1}\cdot s)) \lambda\big(\phi f_j(s^{-1}\cdot t) \big) \nonumber\\
&\ov{\eqref{Formule-Plancherel}}{=} \int_G g_j(t^{-1}u^{-1} s)) \phi(u) f_j(s^{-1}u t) \d\mu_G(u) 
=\int_G  \phi(sut^{-1}) g_j(u^{-1})f_j(u) \d\mu_G(u). \nonumber
\end{align} 
By Lemma \ref{Lemma-estimation-cb} and \eqref{symbol-phixyT}, we deduce that $\bnorm{M_{\phi_{j,T}}}_{\cb,S^p_G \to S^p_G} \leq C$ for any $j$. Using the inequality \eqref{ine-infty}, we see that the net $(\phi_{j,T})_j$ of functions is uniformly bounded in the Banach space $\L^\infty(G \times G)$.
Thus to check the claimed weak* convergence, it suffices by \cite[Proposition 1.21 p.~8]{Dou98} to test against a function $h \in \C_c(G \times G)$.
We suppose that $\supp f_j \to \{e\}$. Since $\int_G \check{g}_jf_j \d \mu_G
\ov{\eqref{Formule-Plancherel}}{=} \tau\big(\lambda(g_j)\lambda(f_j)\big)
=\tau(x_jy_j)=1$,
we have, using unimodularity in a change of variables, and with the notation $K_j = \supp h \cup \{(s,t):\: \exists \: u \in \supp f_j:\: (su^{-1},t) \in \supp h\}$
\begin{align*}
\MoveEqLeft
\left|\int_{G \times G}  \big(\phi_{j,T}(s,t) -\phi^\HS(s,t)\big) h(s,t) \d\mu_G(s)\d\mu_G(t)\right| \\
& \ov{\eqref{Calcul-symbole}}{=} \left|\int_G \int_G \int_G \big(\phi(sut^{-1}) -\phi(st^{-1})\big)\check{g}_j(u)f_j(u) h(s,t) \d\mu_G(u)\d\mu_G(s)\d\mu_G(t)\right|          
\\
&=\left|\int_G \int_G \int_G \phi(st^{-1}) \check{g}_j(u)f_j(u) \big( h(su^{-1},t) - h(s,t) \big)  \d\mu_G(u) \d\mu_G(s) \d\mu_G(t)  \right|  \\
&\leq \sup \left\{ |h(su^{-1},t) - h(s,t)| : \:u \in \supp f_j,(s,t) \in \supp h, (su^{-1},t) \in \supp h \right\} \cdot \\
& \cdot \int_{K_j} \int_G  |\phi(st^{-1})| f_j(u)\check{g}_j(u)\d\mu_G(u)\d\mu_G(s)\d\mu_G(t) \\
& =  \sup \left\{ |h(su^{-1},t) - h(s,t)| : \:u \in \supp f_j,(s,t) \in \supp h, (su^{-1},t) \in \supp h \right\} \cdot \\
& \cdot \int_{K_j} |\phi(st^{-1})| \d\mu_G(s)\d\mu_G(t)
 \xra[j]{} 0,
\end{align*} 
since $K_j$ is contained in a fixed compact, so that the last integral is uniformly bounded in $j$, and $h$ was supposed to be continuous.
We can use a similar reasoning if $\supp g_j \to \{e\}$.
\end{proof}

\begin{example} \normalfont
\label{Essai}
Let $G$ be a second-countable unimodular locally compact group. Consider some value $p \in (1,\infty)$ and assume that $\frac{p}{p^*}$ is rational. That is, $p = \frac{p}{p^*} + 1$ is rational, which implies that both $\frac{1}{p}$ and $\frac{1}{p^*}$ are also rational. Therefore, there exist integers $l,m,n \geq 1$ such that $\frac{1}{p}=\frac{m}{n}$ and $\frac{1}{p^*}=\frac{l}{n}$. Consequently, $\frac{n}{p}=m$ and $\frac{n}{p^*}=l$ are integers. Consider a sequence $(k_j)$ of positive functions belonging to the space $\C_c(G)$ with $\supp k_j \to \{e\}$. For each integer $j$, we define the function $h_j \ov{\mathrm{def}}{=} k_j^* \ast k_j$. We can suppose that $\norm{\lambda(h_j)}_{\L^{n}(\VN(G))} = 1$. We let 
\begin{equation}
\label{xj-yj}
x_j \ov{\mathrm{def}}{=} (\lambda(h_j))^m
\quad \text{and} \quad 
y_j \ov{\mathrm{def}}{=} (\lambda(h_j))^l.
\end{equation}
Note that by \eqref{composition-et-lambda} these elements belong to $\mathfrak{m}_{\tau_G}$, as defined in \eqref{Def-mtauG}. Then the sequences $(x_j)$ and $(y_j)$ satisfy the assumptions of Proposition \ref{th-convergence}. Indeed, the $x_j$ and $y_j$ are positive and we have
$$
\norm{x_j}_p 
\ov{\eqref{xj-yj}}{=}\bnorm{(\lambda(h_j))^m}_p
= \bnorm{\lambda(h_j)^{\frac{n}{p}}}_p 
=\norm{\lambda(h_j)}_{n}^{\frac{n}{p}} 
= 1
$$
and similarly 
$$
\norm{y_j}_{p^*} 
\ov{\eqref{xj-yj}}{=} \bnorm{(\lambda(h_j))^{l}}_{p^*}
= \bnorm{\lambda(h_j)^{\frac{n}{p^*}}}_{p^*}
=\norm{\lambda(h_j)}_{n}^{\frac{n}{p^*}}
= 1.
$$
Finally, we observe that
$$
\tau_G(x_j y_j) 
= \tau_G\big( \lambda(h_j)^{m}\lambda(h_j)^{l}\big)
=\tau_G\big( \lambda(h_j)^{n}\big)
=\norm{\lambda(h_j)}_{n}^{n}
= 1.
$$
Note that these sequences depend on $p$.
\end{example}

\subsection{Step 2: the case of totally disconnected and finite-dimensional groups}
\label{Sec-finite-dim}

In order to achieve a complementation that ensures the compatibility of the resulting projection $P^p_G$ for different values of $p$, one needs to select different sequences than those defined in Example \ref{Essai}. This will be achieved in Corollary \ref{Cor-38} in the case where the locally compact group $G$ is finite-dimensional. 

\paragraph{Dimensions of topological spaces} Recall that three notions of dimension of a \textit{suitable} topological space $X$ exist: the small inductive dimension, the large inductive dimension and the covering dimension. These dimensions are defined, for example, in \cite[Chapter 7]{Eng89}. Recall the definition of small inductive dimension. Let $X$ be a regular topological space. We say that $\ind X = -1$ if $X$ is empty. If $n$ is a natural number, then we say that $\ind X \leq n$ if for every point $x \in X$ and every neighborhood $V$ of $x$ in $X$ there exists a open set $U$ included in $V$ such that $x \in U$ and such that the boundary $\partial U$ satisfies $\ind \partial U \leq n-1$. We say that $\ind X=n$  if $\ind X \leq n$ and $\ind X \leq n-1$ does not hold. Finally, we say that $\ind X=\infty$ if the inequality $\ind X \leq n$ does not hold for any integer $n$. If two regular topological spaces $X$ and $Y$ are homeomorphic then $\ind X=\ind Y$. We refer to the book \cite{Eng89} for more information. 

By \cite[Theorem 7.3.3, p.~404]{Eng89}, these notions coincide when $X$ is metrizable and separable. Note that a second-countable locally compact group $G$ satisfies this property\footnote{\thefootnote. Such a group is metrizable by \cite[Theorem 2.B.2 p.~20]{CoH16} and second-countable topological spaces are separable by \cite[Corollary 1.3.8 p.~25]{Eng89}. See also \cite[Theorem 2.A.10 p.~15]{CoH16}, which presents a characterization of locally compact spaces which are second-countable.}. Indeed, Arhangel'skii and Pasynkov showed in \cite{Arh60} and \cite{Pas60} that these notions coincide for an \textit{arbitrary} locally compact group $G$. We refer to the survey \cite[p.~205]{ArM18} for more information.

\begin{example} \normalfont
\label{0-dim-space}
According to \cite[p.~360]{Eng89}, a topological space $X$ is called zero-dimensional if it is a non-empty $T_1$-space with a basis of open-and-closed subsets. If $X$ is locally compact and paracompact, it is equivalent, by \cite[Theorem 6.2.10, p.~362]{Eng89}, to say that $X$ is totally disconnected\footnote{\thefootnote. In \cite[p.~360]{Eng89}, the term <<hereditarily disconnected>> is used for this notion.}, meaning it contains no connected subspace with more than one point. Furthermore, \cite[Theorem 7.1.12, p.~388]{Eng89} shows that this is also equivalent to  $\ind X=0$. It is worth noting that every metrizable space is paracompact, as stated in \cite[Theorem 5.1.3, p.~300]{Eng89}.
\end{example}

\begin{example} \normalfont
\label{ex-finite-loc-compact}
By \cite[Remark 39.5 (d) p.~283]{Str06}, a finite-dimensional locally compact group $G$ is a Lie group if and only if it is locally connected\footnote{\thefootnote. This result is stronger than \cite[Exercise 1.6.9 p.~122]{Tao14}, which says without proof that a locally compact group $G$ is a Lie group if and only if it is first-countable, locally connected and finite-dimensional. Moreover, the notion of dimension of \cite[Exercise 1.6.9 p.~122]{Tao14} is different.}. See also \cite[Theorem 70, p.~337]{Pon66} for the compact case.
\end{example}

We need background on local isomorphisms since we will use Iwasawa's local splitting theorem, which provides some local isomorphism.

\paragraph{Local isomorphisms} Recall that two topological groups $G$ and $H$ are said to be locally isomorphic \cite[p.~224]{Bou98} if there exist open neighborhoods $V$ and $W$ of the identity elements $e_G$ and $e_H$ and a homeomorphism $f \co V \to W$ satisfying  $f(xy)=f(x)f(y)$ for all $x,y\in V$ such that $xy \in V$ and if $g$ is the mapping inverse to $f$, then for each pair of points $x', y'$ in $W$ such that $x'y' \in W$, we have $g(x'y') = g(x') g(y')$. We say that $f$ is a local isomorphism of $G$ with $H$. 


The following result from \cite[pp.~18-19]{Bou04b} describes the relationship between Haar measures and local isomorphisms.

\begin{lemma}
\label{Lemma-locally-isomorphic-1}
Let $G$ and $G'$ be locally isomorphic locally compact groups via a local homeomorphism $f \co V \to W$. Consider a left Haar measure $\mu_G$ of $G$ and its restriction $\mu_G^{V}$ on $V$. Then $f(\mu_G^V)$ is the restriction of a unique left Haar measure on $G'$.
\end{lemma}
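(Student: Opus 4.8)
The plan is to show that the image measure $\nu \ov{\mathrm{def}}{=} f(\mu_G^V)$, a positive Radon measure supported on $W$, is \emph{locally left-invariant}, and then to invoke the uniqueness of Haar measure to identify it with the restriction to $W$ of a genuine left Haar measure on $G'$. Throughout I would use that $f \co V \to W$ is a homeomorphism and that both $f$ and its inverse $g \ov{\mathrm{def}}{=} f^{-1} \co W \to V$ are local homomorphisms: $f(xy) = f(x)f(y)$ whenever $x,y,xy \in V$, and $g(x'y') = g(x')g(y')$ whenever $x',y',x'y' \in W$.

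First I would establish the following local left-invariance of $\nu$ on $W$: for every Borel set $A \subseteq W$ and every $s' \in W$ with $s' A \subseteq W$, one has $\nu(s' A) = \nu(A)$. Indeed, writing $s' = f(x_0)$ and $A = f(B)$ with $x_0 \in V$ and $B \subseteq V$, the assumption $s'A \subseteq W$ together with the homomorphism property of $g$ gives $f^{-1}(s'A) = x_0 B \subseteq V$ and $f(x_0 B) = s'A$; hence, by the definition of the image measure and the left-invariance of $\mu_G$,
\[
\nu(s'A) = \mu_G\big(f^{-1}(s'A)\big) = \mu_G(x_0 B) = \mu_G(B) = \mu_G\big(f^{-1}(A)\big) = \nu(A).
\]

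Next, for existence, I would fix an arbitrary left Haar measure $\mu_{G'}^0$ on $G'$ and a symmetric open neighbourhood $W_0 \ni e_{G'}$ with $W_0 \subseteq W$ and $W_0 \cdot W_0 \subseteq W$. Restricting attention to test functions supported in $W_0$, I would run the classical uniqueness argument for left-invariant Radon measures; since all translations occurring there move supports only within $W$, the local left-invariance established above suffices to produce a constant $c > 0$ with $\int h \, d\nu = c \int h \, d\mu_{G'}^0$ for every $h \in \C_c(W_0)$. To extend this identity to all of $W$, I would take $h \in \C_c(W)$, write $h = \sum_i h_i$ via a partition of unity with $\supp h_i \subseteq s_i' W_0$ for suitable $s_i' \in W$, and reduce each term to $W_0$ through the substitution $t \mapsto s_i' t$, which preserves $\int \cdot \, d\nu$ by local invariance and $\int \cdot \, d\mu_{G'}^0$ by left-invariance. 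Summation then yields $\nu = c\,\mu_{G'}^0|_W$, so that $\mu_{G'} \ov{\mathrm{def}}{=} c\,\mu_{G'}^0$ is a left Haar measure whose restriction to $W$ equals $f(\mu_G^V)$.

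Finally, uniqueness is immediate: if $\mu_1,\mu_2$ are left Haar measures on $G'$ both restricting to $\nu$ on $W$, then the uniqueness of Haar measure gives $\mu_2 = a \mu_1$ for some $a > 0$, and evaluating on any compact set $K \subseteq W$ with $0 < \nu(K) < \infty$ forces $a = 1$. The main technical obstacle is the bootstrapping from $W_0$ to all of $W$ together with the careful choice of nested neighbourhoods ensuring that all relevant products stay inside $W$; one must also bear in mind that $G'$ is assumed neither connected nor unimodular, so the argument cannot rely on $W$ generating $G'$ in one step nor on a trivial modular function. This is precisely why I would isolate the purely local invariance statement and feed it into the standard uniqueness theorem, rather than attempting a direct global construction of $\mu_{G'}$.
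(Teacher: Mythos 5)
The paper gives no proof of this lemma at all: it is stated purely as a citation of \cite[p. 18--19]{Bou3}, so your proposal is in effect a reconstruction of the Bourbaki argument, and it is a correct one following the same standard scheme (local invariance of the pushforward, localized Haar uniqueness, bootstrap). Your first step is exact: for $A = f(B) \subseteq W$ and $s' = f(x_0) \in W$ with $s'A \subseteq W$, the local homomorphism property of $g = f^{-1}$ gives $f^{-1}(s'A) = x_0 B \subseteq V$, whence $\nu(s'A) = \mu_G(x_0 B) = \mu_G(B) = \nu(A)$ by global left-invariance of $\mu_G$. The bootstrap is likewise sound: each translator $s_i'$ lies in $\supp h \subseteq W$ and each Borel $A \subseteq s_i'^{-1}\supp h_i \subseteq W_0$ satisfies $s_i'A \subseteq \supp h_i \subseteq W$, so the invariance from step one applies, while $\mu_{G'}^0$ is globally invariant; and your uniqueness argument works since one can take $K = f(\ovl{U})$ for a relatively compact open $U$ with $\ovl{U} \subseteq V$, so that $0 < \nu(K) < \infty$. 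The one step you leave as a black box --- that the classical Weil/Fubini uniqueness argument genuinely localizes --- is true but is exactly where the care you announce is needed: in Folland-type versions of the uniqueness proof (with the normalizing denominator $\int g(tx)\,\d\nu(t)$) all products that occur stay in $W_0 W_0^{-1}$, so a symmetric $W_0$ with $W_0 \cdot W_0 \subseteq W$ suffices, whereas other renditions involve triple products and would force the stronger nesting $W_0^3 \subseteq W$; either way this only shrinks the neighbourhood and does not affect the conclusion. Since you explicitly flag this nesting as the technical crux, I see no genuine gap; what your route buys over the paper's bare citation is a self-contained proof that also makes visible why neither connectedness nor unimodularity of $G'$ is used.
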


We caution the reader that the property of being unimodular is not preserved under local isomorphisms. For an example of a non-unimodular locally compact group $G$ that is locally isomorphic to the unimodular locally compact group $\R$, see \cite[Exercise 5, VII.78]{Bou04b}.

\paragraph{Splitting theorem} 

We will use the following form \cite[Theorem B p.~92]{Glu60} of Iwasawa's local splitting theorem. See also \cite[Exercise 1.6.8 p.~122]{Tao14}  
and \cite[Theorem 70 p.~337]{Pon66} for a version for the particular case of compact groups.

\begin{thm}
\label{cor-spitting-2}
Every second-countable finite-dimensional locally compact group is locally isomorphic to the product of a totally disconnected compact group and a connected Lie group.
\end{thm}

\paragraph{Doubling metric measure spaces}
A Borel regular measure $\mu$ on a metric space $(X,\dist)$ is called a doubling measure \cite[p.~76]{HKST15} if every ball in $X$ has positive and finite measure and if there exists a constant $c \geq 1$ such that
\begin{equation}
\label{doubling-def}
\mu(B(x , 2r)) \leq 
c\,\mu(B(x,r)), \quad x \in X,\, r >0.
\end{equation}
Here $B(x,r) \ov{\mathrm{def}}{=} \{y \in X : \dist(x, y) < r\}$ is the open ball with radius $r$ centred at $x$. We call the triple $(X,\dist,\mu)$ a doubling metric measure space if $\mu$ is a doubling measure on $X$. Such a space $X$ is separable as a topological space by \cite[p.~76]{HKST15}.  We refer to the paper \cite{SoT19} for more information on the least doubling constant $\inf\{ c \text{ as in } \eqref{doubling-def}: \mu\text{ doubling measure on }(X,\dist)\}$ of a metric space $(X,\dist)$.

We introduce and will use the weaker notion of <<doubling measure for small balls>> replacing the inequality \eqref{doubling-def} by
\begin{equation}
\label{doubling-def-local}
\mu(B(x,2r)) 
\leq c\, \mu(B(x,r)), \quad x \in X,\, r \in (0,\tfrac{1}{2}].
\end{equation}

\paragraph{Carnot-Caratheodory distances} Consider a connected Lie group $G$ equipped with a left Haar measure $\mu_G$ and identity element $e$. We consider a finite sequence $X \ov{\mathrm{def}}{=}(X_1,\ldots,X_m)$ of left invariant vector fields, the generated Lie algebra of which is the Lie algebra $\frak{g}$ of the Lie group $G$ such that the vectors $X_1(e),\ldots, X_m(e)$ are linearly independent. We say that it is a family of left invariant H\"ormander vector fields. Let $\gamma \colon [0,1] \to G$ be an absolutely continuous path such that $\dot\gamma(t)$ belongs to the subspace $\Span \{ X_1|_{\gamma(t)}, \ldots, X_m|_{\gamma(t)} \}$ for almost every $t \in [0,1]$. If $\dot\gamma(t) = \sum_{k=1}^m \gamma_k(t) \, X_k|_{\gamma(t)}$ for almost every $t \in [0,1]$, where each $\dot\gamma_k$ is measurable, we can define the length of $\gamma$ by 
$$
\ell(\gamma) 
\ov{\mathrm{def}}{=} \int_0^1  \Big( \sum_{k=1}^m |\dot\gamma_k(t)|^2 \Big)^{1/2} \d t,
$$
which belongs to $[0,\infty]$. For any $s,s' \in G$, there exists such a path $\gamma \co [0,1] \to G$ with finite length such that $\gamma(0) = s$ and $\gamma(1) = s'$. If $s,s' \in G$ then we define the Carnot-Carath\'eodory distance
\begin{equation}
\label{distance-Carnot}
\dist_\CC(s,s')
\ov{\mathrm{def}}{=} \inf_{\gamma(0)=s,\gamma(1)=s'} \ell(\gamma)
\end{equation}
between $s$ and $s'$ to be the infimum of the length of all such paths with $\gamma(0) = s$ and $\gamma(1) = s'$. Then it is known that $\dist_\CC$ is a left invariant distance on $G$, inducing the same topology as the one of $G$, see \cite[Proposition III.4.1 p.~39]{VSCC92} and \cite[pp.~22-23]{DtER03}.

By \cite[p.~124]{VSCC92} there exist $c_1,c_2> 0$ and $d \in \N$ such that for all $r \in (0,1]$ we have
\begin{equation}
\label{Equivalence-measure-ball}
c_1 \, r^d 
\leq \mu_G(B(e,r)) 
\leq c_2 \, r^d.
\end{equation} 
The integer $d$ is called the local dimension of $(G,X)$. We infer that there exists $c > 0$ such that \eqref{doubling-def-local} is satisfied, i.e.~$\mu_G$ is a doubling measure for small balls. By \cite[Proposition 2.4 p.~199]{BEM13}, the metric measure space $(G,\dist_{\CC},\mu_G)$ is a doubling metric measure space if and only if the Lie group $G$ has polynomial growth.
Recall finally that the connected component of a Lie group is second-countable by \cite[Proposition 9.1.15 p.~293]{HiN12}.

\paragraph{Construction of some neighborhoods} We start with a technical result.

\begin{lemma}
\label{lem-Lie-group-estimate}
Let $G$ be a second-countable locally compact group equipped with a left invariant distance $\dist$ and a  doubling left Haar measure $\mu_G$ for small balls. There exists a sequence $(B_j)$ of open balls $B_j \ov{\mathrm{def}}{=} B(e,r_j)$ with decreasing radius $r_j \to 0$ satisfying
\begin{equation}
\label{Lie-estimate}
\mu_G(B_j)^3 
\leq c^3\int_{B_j} \mu_G(B_j \cap sB_j)^2 \d \mu_G(s), \quad j \geq 1,
\end{equation}
where $c$ is a constant satisfying \eqref{doubling-def-local}.
\end{lemma}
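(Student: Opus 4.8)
The plan is to choose the balls directly and reduce the integral estimate to the elementary geometry of two overlapping balls, exploiting \emph{both} the left invariance of the distance and the left invariance of the Haar measure $\mu_G$.

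First I would fix any sequence of radii $r_j \to 0$ with $r_j \in (0,1]$ (for instance $r_j = 2^{-j}$) and set $B_j \ov{\mathrm{def}}{=} B(e,r_j)$, noting that each such ball has positive and finite measure. Since the distance is left invariant, for every $s \in G$ we have $s B_j = s B(e,r_j) = B(s,r_j)$, so that $B_j \cap s B_j = B(e,r_j) \cap B(s,r_j)$. The key geometric observation is then: if $s \in B(e, r_j/2)$, then by the triangle inequality $B(s,r_j/2) \subseteq B_j \cap s B_j$. Indeed, whenever $\dist(y,s) < r_j/2$ we get $\dist(y,e) \leq \dist(y,s) + \dist(s,e) < r_j/2 + r_j/2 = r_j$, so $y \in B(e,r_j)$, while trivially $y \in B(s,r_j)$.

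Next I would convert this into a pointwise lower bound on $\mu_G(B_j \cap s B_j)$. By left invariance of $\mu_G$ we have $\mu_G(B(s,r_j/2)) = \mu_G(B(e,r_j/2))$, and applying the local doubling property \eqref{doubling-def-local} at the admissible radius $r_j/2 \in (0,1/2]$ gives $\mu_G(B_j) = \mu_G(B(e,r_j)) \leq c\,\mu_G(B(e,r_j/2))$. Combining, for every $s \in B(e,r_j/2)$,
\[
\mu_G(B_j \cap s B_j) \geq \mu_G(B(e,r_j/2)) \geq c^{-1}\mu_G(B_j).
\]
Finally I integrate this over $B(e,r_j/2) \subseteq B_j$ and use $\mu_G(B(e,r_j/2)) \geq c^{-1}\mu_G(B_j)$ once more:
\[
\int_{B_j} \mu_G(B_j \cap s B_j)^2 \,\d\mu_G(s) \geq \int_{B(e,r_j/2)} \big(c^{-1}\mu_G(B_j)\big)^2 \,\d\mu_G(s) = c^{-2}\mu_G(B_j)^2\,\mu_G(B(e,r_j/2)) \geq c^{-3}\mu_G(B_j)^3,
\]
and rearranging yields exactly \eqref{Lie-estimate}.

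I do not expect a genuine obstacle here, since the estimate is elementary; the only points requiring care are (i) invoking the doubling inequality solely at admissible radii, which is why one restricts to $r_j \leq 1$ so that $r_j/2 \in (0,1/2]$, and (ii) keeping the two distinct left invariances apart — the invariance of the metric is used to translate balls as $sB(e,r) = B(s,r)$, whereas the invariance of $\mu_G$ is used to compare their measures.
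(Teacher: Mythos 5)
Your proof is correct and follows essentially the same route as the paper: restrict attention to $s$ in the half-radius ball, use the triangle inequality to show a half-radius ball lies inside $B_j \cap sB_j$, and apply the local doubling inequality three times to absorb the loss into $c^3$. The only cosmetic difference is that you place the half-radius ball in the inclusion at center $s$ rather than at $e$ (equivalent by left invariance of $\mu_G$), so no further comment is needed.
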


\begin{proof}
Assume that $0 < \epsi < 1$. For any integer $j \geq 1$, we introduce the ball $B_j \ov{\mathrm{def}}{=} B \big(e,\frac{\epsi}{j}\big)$. We have
\begin{equation}
\label{mesure-Vj}
\mu_G(B_{j})
= \mu_G\big(B \big(e,\tfrac{\epsi}{j}\big)\big) 
\ov{\eqref{doubling-def-local}}{\leq} c\,\mu_G\big(B \big(e,\tfrac{\epsi}{2j}\big)\big)
=c \,\mu_G(B_{2j}).
\end{equation}
For any element $s$ in the open ball $B_{2j}=B(e,\frac{\epsi}{2j})$, we will show that 
\begin{equation}
\label{inclusion-balls}
B\big(e,\tfrac{\epsi}{2j}\big) 
\subset B\big(e,\tfrac{\epsi}{j}\big) \cap sB\big(e,\tfrac{\epsi}{j}\big), \quad \text{i.e.} \quad B_{2j}
\subset B_j \cap sB_j.
\end{equation}
Indeed, if $r \in G$ satisfies $\dist(e,r) < \frac{\epsi}{2j}$ we have obviously $r \in B(e,\frac{\epsi}{j})$ and using left invariance of the distance, we obtain
$$
\dist(e,s^{-1}r) 
=\dist(s,r) 
\leq \dist(s,e) + \dist(e,r) 
< \frac{\epsi}{2j} + \frac{\epsi}{2j}
=\frac{\epsi}{j}.
$$ 
So $s^{-1}r \in B(e,\frac{\epsi}{j})$ and consequently $r \in sB(e,\frac{\epsi}{j})$. So the claim \eqref{inclusion-balls} is proved. For any integer $j \geq 1$, we deduce that 
\begin{align*}
\MoveEqLeft
c^3\int_{B_j} \mu_G(B_j \cap sB_j)^2 \d \mu_G(s) 
\geq c^3\int_{B_{2j}} \mu_G(B_j \cap sB_j)^2 \d \mu_G(s) \\
&\ov{\eqref{inclusion-balls}}{\geq} c^3\int_{B_{2j}} \mu_G(B_{2j})^2 \d\mu_G(s) 
= c^3\,\mu_G(B_{2j})^3 
\ov{\eqref{mesure-Vj}}{\geq} \mu_G(B_j)^3.
\end{align*}
\end{proof}


%
%

We continue by proving another technical result for totally disconnected groups.

\begin{lemma}
\label{lem-disconnected-group-estimate}
Let $G$ be a second-countable totally disconnected locally compact group equipped with a left Haar measure $\mu_G$. Then there exists a basis $(K_j)$ of symmetric open compact neighborhoods $K_j$ of $e$ such that
\begin{equation}
\label{disco-estim}
\mu_G(K_j)^3 
=\int_{K_j} \mu_G(K_j \cap s K_j)^2 \d \mu_G(s), \quad j \geq 1. 
\end{equation}
\end{lemma}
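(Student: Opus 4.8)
The plan is to exploit van Dantzig's theorem, which guarantees that a totally disconnected locally compact group admits a neighbourhood basis of the identity consisting of \emph{compact open subgroups}. Since $G$ is second countable it is in particular first countable, so $e$ has a countable neighbourhood basis; refining it by the compact open subgroups furnished by van Dantzig's theorem yields a countable family $(K_j)$ of compact open subgroups that still forms a neighbourhood basis at $e$, which we may moreover arrange to be decreasing. Each $K_j$, being a subgroup, is automatically symmetric ($K_j = K_j^{-1}$) and is open and compact by construction, so all the structural requirements of the statement come for free; and $\mu_G(K_j) \in (0,\infty)$ since $K_j$ is compact with nonempty interior.

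The crucial observation is that for a \emph{subgroup} $K_j$ the intersection $K_j \cap sK_j$ does not shrink as long as $s$ stays inside $K_j$: if $s \in K_j$ then $sK_j = K_j$, hence $K_j \cap sK_j = K_j$ and therefore $\mu_G(K_j \cap sK_j) = \mu_G(K_j)$ for \emph{every} $s \in K_j$. This makes the integrand constant on the whole domain of integration, and the identity \eqref{disco-estim} reduces to a one-line computation:
\begin{equation*}
\int_{K_j} \mu_G(K_j \cap sK_j)^2 \d \mu_G(s)
= \int_{K_j} \mu_G(K_j)^2 \d \mu_G(s)
= \mu_G(K_j)^2 \, \mu_G(K_j)
= \mu_G(K_j)^3 .
\end{equation*}

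The only point requiring genuine structure theory — and the place where the totally disconnected hypothesis enters decisively — is the existence of arbitrarily small compact \emph{open subgroups}, which is exactly the content of van Dantzig's theorem. This is precisely the feature that fails for connected Lie groups and which forces the genuinely quantitative doubling estimate of Lemma \ref{lem-Lie-group-estimate} in that case, explaining why one obtains here an \emph{equality} (and ultimately the sharp constant $\kappa(G)=1$) rather than merely an inequality. I do not anticipate any serious obstacle beyond invoking this structural result correctly and checking the elementary measure-theoretic points above.
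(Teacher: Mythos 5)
Your proof is correct and follows essentially the same route as the paper: van Dantzig's theorem supplies a neighbourhood basis of compact open subgroups (refined to a sequence by second countability), symmetry is automatic for subgroups, and the key point that $sK_j = K_j$ for $s \in K_j$ makes the integrand constant, so \eqref{disco-estim} follows by the same one-line computation the paper gives.
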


\begin{proof}
According to Van Dantzig's theorem \cite[(7.7) Theorem p.~62]{HeR79} or \cite[Theorem 2.E.6 p.~44]{CoH16}, $G$ admits a basis $(K_j)$ of open compact subgroups. Clearly, each $K_j$ is symmetric, being a group, is a neighborhood of $e$, being an open subset, and of finite measure due to its compactness. Since we have assumed $G$ to be second-countable, it follows from the proof of \cite[Theorem 2.E.6 p.~44]{CoH16} that the basis can be chosen as a sequence. Since $K_j$ is a subgroup, we have $K_j \cap sK_j = \emptyset$ for $s \not\in K_j$ and $K_j \cap sK_j = K_j$ for $s \in K_j$.
Thus,
\[ 
\int_{K_j} \mu_G\left(K_j \cap sK_j \right)^2 \d\mu_G(s) 
= \int_{K_j} \mu_G(K_j)^2 \d\mu_G(s) 
= \mu_G(K_j)^3 . 
\]
\end{proof}

Using a version of Iwasawa's local splitting theorem, we are now able to obtain a result for \textit{finite-dimensional} locally compact groups.

\begin{lemma}
\label{lem-finite-dimensional-group-estimate}
Let $G$ be a second-countable finite-dimensional locally compact group equipped with a left Haar measure $\mu_G$. Then there exists a basis $(V_j)$ of symmetric open neighborhoods $V_j$ of $e$ and a constant $c > 0$ such that
\begin{equation}
\label{subtil-estimate}
 \mu_G(V_j)^3 
\leq c^3\int_{V_j} \mu_G(V_j \cap sV_j)^2 \d \mu_G(s), \quad j \geq 1. 
\end{equation}
\end{lemma}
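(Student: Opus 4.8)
The plan is to reduce the finite-dimensional case to a product of a connected Lie group and a totally disconnected group, where the two preceding lemmas apply, and then to transport the resulting neighborhoods back to $G$ through a local isomorphism. Concretely, I would invoke a version of Iwasawa's local splitting theorem to produce a connected Lie group $L$, a totally disconnected locally compact group $D$, an open neighborhood $\Omega$ of $(e,e)$ in $L \times D$, an open neighborhood $V$ of $e$ in $G$, and a local isomorphism $f \co \Omega \to V$. A first useful observation is that the desired inequality \eqref{subtil-estimate} is invariant under scaling of $\mu_G$: replacing $\mu_G$ by $\alpha \mu_G$ multiplies both sides by $\alpha^3$. Hence, by Lemma \ref{Lemma-locally-isomorphic-1} and the uniqueness of the left Haar measure up to a positive scalar, I may assume that $f$ transports the restriction of the product measure $\mu_L \times \mu_D$ to $\Omega$ onto the restriction of $\mu_G$ to $V$, where $\mu_L$ and $\mu_D$ are left Haar measures of $L$ and $D$.

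Next I would set up the estimate on the product. Equipping $L$ with a Carnot-Carath\'eodory metric \eqref{distance-Carnot} attached to a family of left-invariant H\"ormander vector fields, this metric is left invariant and $\mu_L$ is doubling for small balls by \eqref{Equivalence-measure-ball}, so Lemma \ref{lem-Lie-group-estimate} furnishes a sequence of symmetric open balls $B_j = B(e,r_j)$ with $r_j \to 0$ and a constant $c_L > 0$ for which $\mu_L(B_j)^3 \leq c_L^3 \int_{B_j} \mu_L(B_j \cap s B_j)^2 \d\mu_L(s)$. Applying Lemma \ref{lem-disconnected-group-estimate} to a compact open subgroup of $D$ contained in the chart (which is second countable, being a subspace of the second countable set $V$) produces a basis $(K_j)$ of symmetric open compact subgroups with $\mu_D(K_j)^3 = \int_{K_j} \mu_D(K_j \cap t K_j)^2 \d\mu_D(t)$. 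Writing $\mu = \mu_L \times \mu_D$ and $W_j = B_j \times K_j$, the factorizations $\mu(W_j) = \mu_L(B_j)\mu_D(K_j)$ and $\mu(W_j \cap (s,t)W_j) = \mu_L(B_j \cap sB_j)\,\mu_D(K_j \cap tK_j)$ combined with Fubini's theorem yield
$$ \mu(W_j)^3 \leq c_L^3 \int_{W_j} \mu\big(W_j \cap w W_j\big)^2 \d\mu(w), $$
so that \eqref{subtil-estimate} already holds on $L \times D$ with constant $c = c_L$.

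It then remains to transport this estimate to $G$. After discarding finitely many indices I would shrink so that $W_j \subseteq \Omega$ and, crucially, $W_j \cdot W_j \subseteq \Omega'$ for a neighborhood $\Omega' \subseteq \Omega$ on which $f$ is a genuine isomorphism onto its open image and on which the identity $f(ww') = f(w)f(w')$ holds; this is legitimate by the remark on restrictions of local isomorphisms recalled before Lemma \ref{Lemma-locally-isomorphic-1}. Put $V_j = f(W_j)$. Since $W_j$ is symmetric and $f$ respects inverses locally (because $f(e)=e$), each $V_j$ is a symmetric open neighborhood of $e$, and as $r_j \to 0$ and $(K_j)$ is a neighborhood basis of $e$ in $D$, the $V_j$ form a basis of neighborhoods of $e$ in $G$. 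For $s = f(w) \in V_j$ with $w \in W_j$, the relation $sV_j = f(wW_j)$ together with the injectivity of $f$ on $\Omega'$ gives the pointwise set identity $V_j \cap sV_j = f(W_j \cap wW_j)$, whence, using the measure transport of the first paragraph and the change of variables $s = f(w)$,
$$ \mu_G(V_j) = \mu(W_j), \qquad \int_{V_j} \mu_G(V_j \cap sV_j)^2 \d\mu_G(s) = \int_{W_j} \mu\big(W_j \cap wW_j\big)^2 \d\mu(w). $$
Feeding the product estimate into the right-hand side then yields $\mu_G(V_j)^3 \leq c_L^3 \int_{V_j} \mu_G(V_j \cap sV_j)^2 \d\mu_G(s)$, which is \eqref{subtil-estimate} with $c = c_L$.

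The main obstacle is precisely this transport step: one must control the local isomorphism carefully enough that every product $w w'$ arising in $V_j \cap sV_j$ stays inside the region where $f$ is a bona fide isomorphism, so that the set identity $V_j \cap sV_j = f(W_j \cap wW_j)$ is valid for all $s$ in the integration domain, and one must simultaneously invoke Lemma \ref{Lemma-locally-isomorphic-1} to match the two Haar measures (keeping in mind that $G$ need not be unimodular, which is harmless here since only a left Haar measure intervenes). By contrast, the product (Fubini) estimate and the scale invariance are routine, and the appeal to the structure theory is used only to obtain the local splitting $G \sim L \times D$ in the first place.
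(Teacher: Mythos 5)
Your proposal is correct and follows essentially the same route as the paper: the same local splitting of a finite-dimensional group into (a neighborhood of the identity in) a connected Lie group times a totally disconnected group, the same two preceding lemmas for the two factors, the same product-measure factorization via Lemma \ref{Lemma-locally-isomorphic-1}, and the same Fubini computation. Your only departure is that you make explicit the transport step (shrinking so that $W_j \cdot W_j$ stays in the domain where $f$ is multiplicative and injective, so that $V_j \cap sV_j = f(W_j \cap wW_j)$), which the paper handles implicitly by simply identifying $V$ with $W \times U$ and computing there.
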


\begin{proof}
We denote by $n$ the dimension of $G$. According to Theorem \ref{cor-spitting-2}, 
$G$ is locally isomorphic to the product of a totally disconnected compact group $K$ and a Lie group $L$ of dimension $n$. So there exists a neighborhood $V$ of $e_G$ which is homeomorphic to the direct product $W \times U$ of a neighborhood $W$ of the neutral element $e_K$ and an open neighborhood $U$ of the neutral element $e_L$.
We identify $V$ with $W \times U$. By Lemma \ref{Lemma-locally-isomorphic-1}, we can choose left Haar measures $\mu_K$ and $\mu_L$ on the groups $K$ and $L$ such that
\begin{equation}
\label{equ-1-proof-lem-finite-dimensional-group-estimate}
\mu_G(A \times B) 
= \mu_K(A) \mu_L(B), \quad A \subseteq W, \: B \subseteq U.
\end{equation}

Next, we consider the left invariant metric on the connected Lie group $L$ given by the Carnot-Carath\'eodory distance \eqref{distance-Carnot} with respect to some fixed sequence of left invariant vector fields. 
We consider a neighborhood basis sequence $(K_j)$ of open compact subgroups of $K$ whose existence is guaranteed by Van Dantzig's theorem, see the proof of Lemma \ref{lem-disconnected-group-estimate}.
Furthermore, we let $(B_j)$ be a sequence as in Lemma \ref{lem-Lie-group-estimate} for the Lie group $L$. For any integer $j \geq 1$, we put 
\begin{equation}
\label{Def-V_j}
V_j 
\ov{\mathrm{def}}{=} K_j \times B_j
\end{equation}
and we can suppose that $K_j \subseteq W$ and that $B_j \subseteq U$ for any integer $j \geq 1$. Recall that each $K_j$ resp. each ball $B_j$ is symmetric, being a subgroup resp. a ball with respect to a left invariant metric\footnote{\thefootnote. Note that $\dist_\CC(e,s)=\dist_\CC(s^{-1},e)$.}. We conclude that $V_j$ is symmetric as well.

In view of the previous local product structure of the Haar measure described in \eqref{equ-1-proof-lem-finite-dimensional-group-estimate}, we deduce that for any integer $j$
\begin{align*}
\MoveEqLeft
c^3\int_{V_j} \mu_G(V_j \cap sV_j)^2 \d \mu_G(s) 
\ov{\eqref{Def-V_j}}{=} c^3\int_{K_j \times B_j} \mu_G(V_j \cap sV_j)^2 \d\mu_{K \times L}(s) \\
&\ov{\eqref{equ-1-proof-lem-finite-dimensional-group-estimate}, \eqref{Def-V_j}}{=} c^3\int_{K_j \times B_j} \mu_G\left(\big(K_j \times B_j\big) \cap (r,t)\big(K_j \times B_j\big)\right)^2 \d\mu_K(r) \d \mu_L(t) \\
&\ov{\eqref{equ-1-proof-lem-finite-dimensional-group-estimate}}{=} c^3 \int_{K_j \times B_j} \mu_K(K_j \cap rK_j)^2 \mu_L\left(B_j \cap t B_j\right)^2 \d\mu_K(r) \d\mu_L(t) \\
&= c^3\int_{K_j} \mu_K(K_j\cap rK_j)^2 \d\mu_K(r) \int_{B_j}\mu_L\left(B_j \cap t B_j\right)^2 \d\mu_L(t) \\
&\ov{\eqref{Lie-estimate} }{\geq} \mu_K(K_j)^3 \mu_G(B_j)^3 
\ov{\eqref{equ-1-proof-lem-finite-dimensional-group-estimate}}{=} \mu_G\left(K_j \times B_j\right)^3 
\ov{\eqref{Def-V_j}}{=} \mu_G(V_j)^3.
\end{align*}
\end{proof}

Now, we show the interest of the previous lemmas. 

\begin{prop}
\label{cor-2-referees-proof-step-1-weak-star-convergence-bis}
Let $G$ be a second-countable unimodular locally compact group. Suppose that $1 \leq p \leq \infty$. Let $(V_j)$ be a basis of symmetric neighborhoods of $e$ and a constant $c > 0$ such that
\begin{equation}
\label{subtil-estimate-bis}
 \mu_G(V_j)^3 
\leq c^3\int_{V_j} \mu_G(V_j \cap sV_j)^2 \d \mu_G(s), \quad j \geq 1. 
\end{equation}
Moreover, we put 
\begin{equation}
\label{def-fj}
f_j \ov{\mathrm{def}}{=} 1_{V_j} \ast 1_{V_j},\quad
x_j \ov{\mathrm{def}}{=} a_j \lambda(f_j) 
\quad \text{and} \quad y_j \ov{\mathrm{def}}{=} b_j \lambda(f_j)
\end{equation}
with
\begin{equation}
\label{aj}
a_j 
\ov{\mathrm{def}}{=} \norm{\lambda(f_j)}_{p^*} \norm{\lambda(f_j)}_2^{-2} 
\quad \text{and} \quad 
b_j \ov{\mathrm{def}}{=} \norm{\lambda(f_j)}_{p^*}^{-1} .
\end{equation}
Then the sequences $(x_j)$ and $(y_j)$ satisfy the assumptions from Proposition \ref{th-convergence}. More precisely, we have $\norm{x_j}_p \leq c^3$ and $\norm{y_j}_{p^*} =1$.
\end{prop}

\begin{proof}
We denote by $\mu_G$ a Haar measure on the group $G$. Consider any measurable subset $V$ of $G$ of measure $\mu_G(V) \in (0,\infty)$. Recall the isometric complex interpolation formula $\L^{p}(\VN(G))=(\L^\infty(\VN(G)), \L^2(\VN(G)))_{\frac{2}{p}}$ of \cite[(2.1) p.~1466]{PiX03}. In particular, we have by \cite[Corollary 2.8 p.~53]{Lun18}, the inequality $\norm{\cdot}_{\L^{p}(\VN(G))} \leq \norm{\cdot}_{\L^\infty(\VN(G))}^{1-\frac{2}{p}} \norm{\cdot}_{\L^2(\VN(G))}^{\frac{2}{p}} $. Using this inequality and Young's inequality \cite[Corollary 20.14 p.~293]{HeR79} in the second inequality, we obtain
\begin{align*}
\MoveEqLeft
\norm{\lambda(1_{V^{-1}} \ast 1_{V})}_p
\ov{\eqref{composition-et-lambda}}{=} 
\norm{\lambda(1_{V})^*\lambda(1_{V})}_p
=\norm{|\lambda(1_{V})|^2}_p
= \norm{ \lambda(1_V)}_{2p}^2 
\leq \norm{\lambda(1_V)}_\infty^{\frac{2}{p^*}}\norm{\lambda(1_V)}_2^{\frac{2}{p}}  \\
&=\norm{\lambda(1_V)}_{\L^2(G) \to \L^2(G)}^{\frac{2}{p^*}} \norm{\lambda(1_V)}_2^{\frac{2}{p}}
\leq \mu_G(V)^{\frac{2}{p^*}} \cdot \mu_G(V)^{\frac{1}{p}} 
= \mu_G(V)^{1 + \frac{1}{p^*}}.
\end{align*}
Taking $V = V_j$ and using the previous calculation also for $p^*$ in place of $p$, we deduce with Lemma \ref{lem-finite-dimensional-group-estimate} that
\begin{align}
\label{equ-1-proof-cor-2-referees-proof-step-1-weak-star-convergence}
\MoveEqLeft
\norm{\lambda(1_{V_j} \ast 1_{V_j})}_p \cdot \norm{\lambda(1_{V_j} \ast 1_{V_j})}_{p^*} 
\leq \mu_G(V_j)^{1 + \frac{1}{p^*} + 1 + \frac{1}{p}} \\
&= \mu_G(V_j)^{3} 
\ov{\eqref{subtil-estimate-bis}}{\leq} c^3\int_{V_j} \mu_G(V_j \cap sV_j)^2 \d \mu_G(s).\nonumber 
\end{align}
On the other hand, using $V_j = V_j^{-1}$, we obtain
\begin{align}
\MoveEqLeft
\label{equ-1-proof-cor-2-referees-proof-step-1-weak-star}
\norm{\lambda(1_{V_j} \ast 1_{V_j})}_2^2 
\ov{\eqref{Convolution-formulas}}{=} \int_G \left| \int_G 1_{V_j}(t)1_{V_j}(t^{-1}s) \d \mu_G(t) \right|^2 \d \mu_G(s) \nonumber\\
&= \int_G \left| \int_G 1_{V_j}(t)1_{V_js^{-1}}(t^{-1}) \d \mu_G(t) \right|^2 \d \mu_G(s) 
=\int_G \left| \int_G 1_{V_j}(t)1_{sV_j}(t) \d \mu_G(t) \right|^2 \d \mu_G(s) \nonumber \\
&= \int_G \mu_G^2(V_j  \cap sV_j) \d\mu_G(s).
\end{align}
Combining \eqref{equ-1-proof-cor-2-referees-proof-step-1-weak-star-convergence} and \eqref{equ-1-proof-cor-2-referees-proof-step-1-weak-star}, we see that
\begin{equation}
\label{divers-500}
\norm{\lambda(1_{V_j} \ast 1_{V_j})}_p  \cdot \norm{\lambda(1_{V_j} \ast 1_{V_j})}_{p^*} 
\leq c^3 \norm{\lambda(1_{V_j} \ast 1_{V_j})}_2^2. 
\end{equation}
Note that with the choice of $a_j$ and $b_j$, we finally obtain
\[ 
\norm{x_j}_p 
\ov{\eqref{def-fj}}{=} \norm{a_j \lambda(f_j)}_p 
\ov{\eqref{aj}}{=} \norm{\lambda(f_j)}_{p^*} \norm{\lambda(f_j)}_2^{-2}\norm{\lambda(f_j)}_p 
\ov{\eqref{def-fj} \eqref{divers-500}}{\leq} c^3 
\]
and $\norm{y_j}_{p^*} \ov{\eqref{def-fj}}{=} \norm{b_j \lambda(f_j)}_{p^*} = 1$, as well as $\tau_G(x_j y_j) \ov{\eqref{aj}}{=} \norm{\lambda(f_j)}_2^{-2} \tau_G(\lambda(f_j)^2 ) = 1$.
\end{proof}


A combination of Proposition \ref{cor-2-referees-proof-step-1-weak-star-convergence-bis} and the previous lemmas gives the next result, which is the main result of this section.

\begin{cor}
\label{Cor-38}
\begin{enumerate}
\item Let $G$ be a second-countable finite-dimensional unimodular locally compact group equipped with a Haar measure $\mu_G$. Then there exist sequences $(x_j)$ and $(y_j)$ satisfying the assumptions from Proposition \ref{th-convergence}. More precisely, we have $\norm{x_j}_p \leq c^3$ and $\norm{y_j}_{p^*} =1$ for any integer $j$ for some constant $c>0$.
\item Let $G$ be a second-countable totally disconnected unimodular locally compact group equipped with a Haar measure $\mu_G$. Then there exist sequences $(x_j)$ and $(y_j)$ satisfying the assumptions from Proposition \ref{th-convergence} with $\norm{x_j}_p \leq 1$ and $\norm{y_j}_{p^*} =1$ for any integer $j$.
\end{enumerate}
\end{cor}

%

\subsection{Step 3: the projection on the space of Herz-Schur multipliers}
\label{Section-Schur}

The first part of the following result says that the unit ball of the space $\mathfrak{M}^{p}_\Omega$ of measurable Schur multipliers is closed for the weak* topology of the dual Banach space $\L^\infty(\Omega \times \Omega)$.

\begin{lemma}
\label{Lemma-symbol-weak}
Let $\Omega$ be a $\sigma$-finite measure space. Suppose that $1 \leq p \leq \infty$. Let $(M_{\phi_j})$ be a bounded net of bounded Schur multipliers on the Schatten class $S^p_\Omega$ and suppose that $\phi$ is an element in $\L^\infty(\Omega \times \Omega)$ such that the net $(\phi_j)$ converges to $\phi$ for the weak* topology of $\L^\infty(\Omega \times \Omega)$. Then the function $\phi$ induces a bounded Schur multiplier on $S^p_\Omega$. Moreover, the net $(M_{\phi_j})$ converges to the operator $M_{\phi}$ for the weak operator topology of the space $\cal{B}(S^p_\Omega)$ (point weak* topology if $p = 1$) and
\begin{equation}
\label{estim-divers-35}
\norm{M_{\phi}}_{S^p_\Omega \to S^p_\Omega}
\leq \liminf_{j \to \infty} \norm{M_{\phi_j}}_{S^p_\Omega \to S^p_\Omega}.
\end{equation}
A similar statement is true upon replacing <<bounded>> by <<completely bounded>> and the norm $\norm{\cdot}_{S^p_\Omega \to S^p_\Omega}$ by the norm $\norm{\cdot }_{\cb, S^p_\Omega \to S^p_\Omega}$.
\end{lemma}

\begin{proof}
Consider some functions $f,g \in \L^2(\Omega \times \Omega)$ such that $K_{f} \in S^p_\Omega$ and $K_{g} \in S^{p^*}_\Omega$. Note that we have $f\check{g} \in \L^1(\Omega \times \Omega)$. For any $j$, we have 
\begin{align*}
\MoveEqLeft
\left| \int_{\Omega \times \Omega} \phi_j f \check{g} \right| 
\ov{\eqref{dual-trace}}{=}\left| \big\langle M_{\phi_j}(K_f) , K_g \big\rangle_{S^p_\Omega,S^{p^*}_\Omega} \right| 
\leq \bnorm{M_{\phi_j}(K_f)}_{S^p_\Omega} \bnorm{K_g}_{S^{p^*}_\Omega} \\
&\leq \norm{M_{\phi_j}}_{S^p_\Omega \to S^p_\Omega} \bnorm{K_f}_{S^p_\Omega} \bnorm{K_g}_{S^{p^*}_\Omega}.
\end{align*}
Passing to the limit, we obtain 
\begin{align*}
\MoveEqLeft
\left| \big\langle  K_{\phi f} ,K_g\big\rangle_{S^p_\Omega,S^{p^*}_\Omega} \right|\ov{\eqref{dual-trace}}{=}  \left| \int_{\Omega \times \Omega} \phi f \check{g} \right| 
\leq \liminf_{j \to \infty} \norm{M_{\phi_j}}_{S^p_\Omega \to S^p_\Omega} \bnorm{K_f}_{S^p_\Omega} \bnorm{K_g}_{S^{p^*}_\Omega}.
\end{align*}
By density, we conclude that the function $\phi$ induces a bounded Schur multiplier on $S^p_\Omega$ with the estimate \eqref{estim-divers-35} on the norm of this operator. Using again the weak* convergence of the net $(\phi_j)$, we see  that for any functions $f, g \in \L^2(\Omega \times \Omega)$ such that $K_f \in S^p_\Omega$ and $K_g \in S^{p^*}_\Omega$
\begin{align*}
\MoveEqLeft
\tr\big((M_{\phi}-M_{\phi_j})(K_f)K_g\big)  
=\tr\big(K_{(\phi-\phi_j)f} K_g)\big) 
\ov{\eqref{dual-trace}}{=} \iint_{\Omega \times \Omega} (\phi-\phi_j) f\check{g} \\
&=\big\langle \phi-\phi_j,f\check{g} \big\rangle_{\L^\infty(\Omega \times \Omega),\L^1(\Omega \times \Omega)} 
\xra[\ j \ ]{} 0.            
\end{align*} 
By density, using an $\frac{\epsi}{4}$-argument and the boundedness of the net, we conclude\footnote{\thefootnote. More precisely, if $X$ is a Banach space, if $E_1$ is dense subset of $X$, if $E_2$ is a dense subset of $X^*$ and if $(T_j)$ is a bounded net of $\cal{B}(X)$ with an element $T$ of $\cal{B}(X)$ such that $\langle T_j(x),x^*\rangle \xra[i \to +\infty]{}  \langle T(x), x^*\rangle$ for any $x \in E_1$ and any $x^* \in E_2$, then the net $(T_j)$ converges to $T$ for the weak operator topology of the space $\cal{B}(X)$.} that the net $(M_{\phi_j})$ converges to the operator $M_{\phi}$ for the weak operator topology of $\cal{B}(S^p_\Omega)$ (point weak* topology if $p = 1$).

Now, we prove the last sentence. For any functions $f_{kl},g_{kl} \in \L^2(\Omega \times \Omega)$ where $1 \leq k,l \leq N$, we have $f_{kl} \check{g}_{kl}\in \L^1(\Omega \times \Omega)$. For any $j$, we infer by \cite[Theorem 4.7 p.~49]{Pis98} that
\begin{align*}
\MoveEqLeft
\left| \big\langle \big[ M_{\phi_{j}}(K_{f_{kl}})\big] , \big[K_{g_{kl}}\big] \big\rangle_{\M_N(S^p_\Omega),S^1_N(S^{p^*}_\Omega)} \right|  
\leq \norm{M_{\phi_j}}_{\cb,S^p_\Omega \to S^p_\Omega} \bnorm{\big[K_{f_{kl}}\big]}_{\M_N(S^p_\Omega)} \bnorm{\big[K_{g_{kl}}\big]}_{S^1_N(S^{p^*}_\Omega)},
\end{align*}
that is,
\begin{align*}
\MoveEqLeft
\left|\sum_{k,l=1}^N \int_{\Omega \times \Omega} \phi_j f_{kl} \check{g}_{kl} \right| 
\leq \norm{M_{\phi_j}}_{\cb,S^p_\Omega \to S^p_\Omega} \bnorm{\big[K_{f_{kl}}\big]}_{\M_N(S^p_\Omega)} \bnorm{\big[K_{g_{kl}}\big]}_{S^1_N(S^{p^*}_\Omega)}.
\end{align*}
Passing to the limit, we obtain
\begin{align*}
\MoveEqLeft
\left|\sum_{k,l=1}^N \int_{\Omega \times \Omega} \phi f_{kl} \check{g}_{kl} \right| 
\leq \liminf_{j \to \infty} \norm{M_{\phi_j}}_{\cb,S^p_\Omega \to S^p_\Omega} \bnorm{\big[K_{f_{kl}}\big]}_{\M_N(S^p_\Omega)} \bnorm{\big[K_{g_{kl}}\big]}_{S^1_N(S^{p^*}_\Omega)}.            
\end{align*}  
We deduce that the function $\phi$ induces a completely bounded Schur multiplier on the Schatten space $S^p_\Omega$ with the suitable estimate on the completely bounded norm.
\end{proof}

If $1 \leq p < \infty$, note that the Schatten space $S^p_\Omega$ is a dual Banach space. So the Banach space $\CB(S^p_\Omega)$ is also a dual space with predual $S^p_\Omega \widehat{\ot} S^{p^*}_\Omega$, where $\widehat{\ot}$ denotes the operator space projective tensor product and the duality bracket is given by
\begin{equation}
\label{Belle-dualite}
\langle T, x \ot y \rangle_{\CB(S^p_\Omega),S^p_\Omega \widehat{\ot} S^{p^*}_\Omega} 
=\big\langle T(x), y \big\rangle_{S^p_\Omega, S^{p^*}_\Omega}.
\end{equation}

\begin{lemma}
\label{lem-Schur-weak-star-closed}
Let $\Omega$ be a $\sigma$-finite measure space.
\begin{enumerate}
	\item Let $1 \leq p < \infty$. Then the space $\mathfrak{M}^{p,\cb}_\Omega$ of completely bounded Schur multipliers is weak* closed in $\CB(S^p_\Omega)$ and the space $\mathfrak{M}^{p}_\Omega$ of bounded Schur multipliers is weak* closed in $\cal{B}(S^p_\Omega)$.

\item The space $\mathfrak{M}^{\infty,\cb}_\Omega=\mathfrak{M}^{\infty}_\Omega$ of (completely) bounded Schur multipliers is weak* closed in the space $\CB(S^\infty_\Omega,\cal{B}(\L^2(\Omega)))$.
\end{enumerate}
\end{lemma}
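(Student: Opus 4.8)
The plan is to deduce the statement from the Krein--Smulian theorem, which asserts that a linear subspace of a dual Banach space is weak* closed provided its intersection with every closed ball is weak* closed. Accordingly I first fix the dual structure: for $1\le p<\infty$ I regard $\CB(S^p_\Omega)$ as the dual of the operator space projective tensor product $S^p_\Omega \widehat{\ot} S^{p^*}_\Omega$, with duality bracket given on elementary tensors by \eqref{Belle-dualite}, and $\B(S^p_\Omega)$ as the dual of the corresponding Banach projective tensor product with the same evaluation bracket; for assertion 2 the ambient space $\CB(S^\infty_\Omega,\B(\L^2(\Omega)))=(S^\infty_\Omega\widehat{\ot}S^1_\Omega)^*$ carries the analogous bracket, the inner pairing now being the one of \eqref{Duality-bracket}. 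Since closed balls are weak* compact by Banach--Alaoglu, it suffices to show that whenever a net $(M_{\phi_j})_j$ of Schur multipliers of (completely) bounded norm at most $r$ converges weak* to some $T$, the limit $T$ is again a Schur multiplier.

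So fix $r>0$ and such a net, say $\norm{M_{\phi_j}}_{\cb,S^p_\Omega\to S^p_\Omega}\le r$ in the completely bounded case. The first step is to control the symbols: for any measurable Schur multiplier one has $\norm{\phi}_{\L^\infty(\Omega\times\Omega)}\le\norm{M_\phi}_{S^p_\Omega\to S^p_\Omega}$ (the standard estimate for the symbol, as in \cite[Remark 1.4]{LaS}), so here $\norm{\phi_j}_\infty\le r$ for all $j$. As $\Omega\times\Omega$ is $\sigma$-finite we have $\L^\infty(\Omega\times\Omega)=\L^1(\Omega\times\Omega)^*$, and Banach--Alaoglu lets me pass to a subnet along which $\phi_j\to\phi$ weak* in $\L^\infty(\Omega\times\Omega)$ for some $\phi$ with $\norm{\phi}_\infty\le r$.

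Now Lemma \ref{Lemma-symbol-weak} does the decisive work: $\phi$ induces a completely bounded Schur multiplier $M_\phi$ on $S^p_\Omega$ and $M_{\phi_j}\to M_\phi$ along the subnet in the weak operator topology, that is $\langle M_{\phi_j}(x),y\rangle\to\langle M_\phi(x),y\rangle$ for all $x\in S^p_\Omega$ and $y\in S^{p^*}_\Omega$. On the other hand the original weak* convergence $M_{\phi_j}\to T$, tested against the elementary tensor $x\ot y$ of the predual, gives $\langle M_{\phi_j}(x),y\rangle\to\langle T(x),y\rangle$ via \eqref{Belle-dualite}, and a subnet of a weak* convergent net has the same limit. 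Comparing the two limits yields $\langle T(x),y\rangle=\langle M_\phi(x),y\rangle$ for all $x\in S^p_\Omega$, $y\in S^{p^*}_\Omega$; since $S^{p^*}_\Omega$ separates the points of $S^p_\Omega$ this forces $T=M_\phi\in\mathfrak{M}^{p,\cb}_\Omega$. Hence $\mathfrak{M}^{p,\cb}_\Omega$ meets every closed ball in a weak* closed set, and Krein--Smulian gives the completely bounded part of assertion 1. The non-cb part is verbatim the same, replacing the completely bounded norm and the operator space predual by their Banach-space analogues and invoking the bounded form of Lemma \ref{Lemma-symbol-weak}.

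For assertion 2 I would run the identical scheme, now viewing each multiplier as a map $S^\infty_\Omega\to\B(\L^2(\Omega))$ with predual $S^\infty_\Omega\widehat{\ot}S^1_\Omega$, and using Lemma \ref{Lemma-symbol-weak} in its $p=\infty$ form: there the convergence is tested on pairs $K_f\in S^\infty_\Omega$ (reached by density from $S^2_\Omega$) and $K_g\in S^1_\Omega$, exactly the computation already performed in the proof of that lemma, so no new ingredient is needed. The only genuine point of care throughout is bookkeeping, namely checking that the weak* topology coming from the chosen predual and the weak operator topology furnished by Lemma \ref{Lemma-symbol-weak} are tested against the same functionals (the elementary tensors $x\ot y$), and ensuring the symbol bound $\norm{\phi}_\infty\le\norm{M_\phi}$ is at hand in the measurable setting so that Banach--Alaoglu applies to the symbols; the substantive analytic content is entirely absorbed into Lemma \ref{Lemma-symbol-weak}.
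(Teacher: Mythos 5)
Your proposal is correct and follows essentially the same route as the paper: reduce to bounded parts (your Krein--Smulian reduction is interchangeable with the paper's appeal to the Banach--Dieudonn\'e theorem), bound the symbols via \cite[Remark 1.4]{LaS}, extract a weak* convergent subnet of symbols by Banach--Alaoglu, and identify the weak* limit with the Schur multiplier of the limit symbol through Lemma \ref{Lemma-symbol-weak} and uniqueness of weak operator limits. The treatment of the non-cb case and of the case $p=\infty$ by the same scheme with the appropriate preduals also matches the paper.
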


\begin{proof}
We start by proving the first assertion. By the Banach-Dieudonn\'e theorem \cite[p.~154]{Hol75}, it suffices to show that the closed unit ball of the space $\mathfrak{M}^{p,\cb}_\Omega$ is weak* closed in the space $\CB(S^p_\Omega)$. Let $(M_{\phi_j})$ be a net in that unit ball converging for the weak* topology to some completely bounded map $T \co S^p_\Omega \to S^p_\Omega$. We have for any $j$ the inequality 
$$
\norm{\phi_j}_{\L^\infty(\Omega \times \Omega)} 
\ov{\eqref{ine-infty}}{\leq}  \norm{M_{\phi_j}}_{\cb,S^p_\Omega \to S^p_\Omega} 
\leq 1.
$$ 
By Banach-Alaoglu's theorem, there exists a subnet of $(\phi_j)$ converging for the weak* topology to some function $\phi \in \L^\infty(\Omega \times \Omega)$. It remains to show that $T=M_\phi$. By \eqref{Belle-dualite}, we have $\big\langle M_{\phi_j}(x), y\big\rangle \xra[j]{} \langle T(x),y \rangle$ for any $x \in S^p_\Omega$ and any $y \in S^{p^*}_\Omega$. That means that the net $(M_{\phi_j})$ converges to $T$ for the weak operator topology (point weak* topology if $p = 1$). By Lemma \ref{Lemma-symbol-weak},  the net $(M_{\phi_j})$ converges to $M_\phi$. We conclude by uniqueness of the limit that $T = M_\phi$.

The statement on the space $\mathfrak{M}^{p}_\Omega$ can be proved in a similar manner, using the predual $S^p_\Omega \hat{\ot} S^{p^*}_\Omega$ of the dual Banach space $\cal{B}(S^p_\Omega)$, where $\hat{\ot}$ denotes the Banach space projective tensor product. The second point is also similar.
%
%
%
\end{proof}

The following is essentially folklore. The case $p=\infty$ is explicitly proved in \cite[Proposition 5.2 p.~375]{SpT02} and \cite[Corollary 5.4 p.~183]{Spr04} with a slightly different method relying on the use of an invariant mean. We sketch a proof since it is important for us.

\begin{prop}
\label{prop-referee-step-2}
Let $G$ be an amenable unimodular locally compact group. Suppose that $1 \leq p \leq \infty$. Then there exists a contractive projection $Q \co \mathfrak{M}^{p,\cb}_G \to \mathfrak{M}^{p,\cb}_{G}$ onto the space $\mathfrak{M}^{p,\cb,\HS}_{G}$ of completely bounded Herz-Schur multipliers acting on the Schatten space $S^p_G$ ($\cal{B}(\L^2(G))$ if $p=\infty$), preserving the complete positivity. Moreover, the obtained projections are compatible for all different values of $1 \leq p \leq \infty$.
\end{prop}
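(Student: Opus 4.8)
The plan is to realize $Q$ as an averaging of the diagonal right-translation action of $G$ on Schur multipliers, made convergent by amenability, and to perform the whole construction at the level of symbols so that compatibility across $p$ is automatic. Working in the unimodular setting of this section, for $r \in G$ let $\rho_r \co \L^2(G) \to \L^2(G)$, $(\rho_r\xi)(s) \ov{\mathrm{def}}{=} \xi(sr)$, be the right-translation unitary. Conjugation $C_r \co K_f \mapsto \rho_r^* K_f \rho_r$ is a complete isometry of $S^p_G$ (of $\B(\L^2(G))$ if $p = \infty$) preserving complete positivity, and a direct computation gives $C_r(K_f) = K_{f_r}$ with $f_r(s,y) = f(sr^{-1}, yr^{-1})$. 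Hence $\alpha_r \co T \mapsto C_r T C_r^{-1}$ is a complete isometry of $\CB(S^p_G)$ with $\alpha_r(M_\varphi) = M_{\varphi_r}$, where $\varphi_r(s,t) = \varphi(sr^{-1}, tr^{-1})$, and $r \mapsto \alpha_r$ is a group action. A symbol $\varphi$ is Herz-Schur, i.e. $\varphi(s,t) = \eta(st^{-1})$, precisely when it is invariant under this diagonal action (setting the translation parameter to $t$ gives $\varphi(s,t) = \varphi(st^{-1},e)$); equivalently $\alpha_r(M_\varphi) = M_\varphi$ for all $r$.

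Fix an invariant mean $m$ on $\L^\infty(G)$, which exists because $G$ is amenable. For $1 \leq p < \infty$ the space $\CB(S^p_G)$ is the dual of $S^p_G \widehat{\ot} S^{p^*}_G$ by \eqref{Belle-dualite}, and $\CB(S^\infty_G,\B(\L^2(G)))$ is likewise a dual space. For fixed $M_\varphi \in \mathfrak{M}^{p,\cb}_G$ and $x \in S^p_G$, $y \in S^{p^*}_G$, the function $r \mapsto \langle \alpha_r(M_\varphi)(x), y\rangle = \langle M_\varphi(\rho_r x \rho_r^*), \rho_r y \rho_r^*\rangle$ is bounded by $\norm{M_\varphi}_{\cb}\norm{x}_p\norm{y}_{p^*}$ and measurable, using the strong continuity of $r \mapsto \rho_r$. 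I would therefore define $Q(M_\varphi)$ by the weak* mean formula $\langle Q(M_\varphi)(x), y\rangle \ov{\mathrm{def}}{=} m_r\big(\langle \alpha_r(M_\varphi)(x), y\rangle\big)$. Since $m$ is a state and each $\alpha_r$ is a complete isometry preserving complete positivity, this $Q$ is linear, completely contractive, and preserves complete positivity (a weak* mean of completely positive maps stays in the weak* closed convex completely positive cone).

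Next I would check that $Q(M_\varphi)$ is again a completely bounded Schur multiplier, in fact a Herz-Schur one. Because a mean lies in the weak* closure of convex combinations of point evaluations, $Q(M_\varphi)$ belongs to the weak* closed convex hull of $\{\alpha_r(M_\varphi) : r \in G\}$; as these all lie in the completely bounded Schur multiplier ball of radius $\norm{M_\varphi}_{\cb}$, which is convex and weak* closed by Lemma \ref{lem-Schur-weak-star-closed}, we obtain $Q(M_\varphi) = M_\psi$ for a unique $\psi \in \L^\infty(G \times G)$ with $\norm{M_\psi}_{\cb} \leq \norm{M_\varphi}_{\cb}$. The invariance of $m$ under right translation forces $\psi(sa^{-1}, ta^{-1}) = \psi(s,t)$ for all $a \in G$, so $\psi$ is a Herz-Schur symbol by the first paragraph; and if $\varphi$ is already Herz-Schur then $\alpha_r(M_\varphi) = M_\varphi$ for every $r$, whence $Q(M_\varphi) = M_\varphi$. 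Thus $Q$ is a completely contractive, complete-positivity preserving projection of $\mathfrak{M}^{p,\cb}_G$ onto $\mathfrak{M}^{p,\cb,\HS}_G$.

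Finally, compatibility across $p$ is essentially built in: the whole operation is carried out on the symbol, $\psi = m_r[\varphi(\cdot\, r^{-1},\cdot\, r^{-1})]$, using the single fixed mean $m$ and the single family of unitaries $(\rho_r)$ on $\L^2(G)$, independently of $p$. Hence for a symbol $\varphi$ inducing a Schur multiplier simultaneously on $S^p_G$ and $S^{p'}_G$, the projected symbol $\psi$ is the same element of $\L^\infty(G \times G)$, so $Q_p$ and $Q_{p'}$ agree on the common symbols and are compatible in the interpolation sense. I expect the main obstacle to be the rigorous passage from the weak* mean in $\CB(S^p_G)$ to an honest $\L^\infty(G \times G)$ symbol, i.e. confirming that the averaged operator is still a Schur multiplier and extracting its symbol cleanly, together with the measurability needed to evaluate the mean; both are resolved by the weak* closedness results of Lemma \ref{Lemma-symbol-weak} and Lemma \ref{lem-Schur-weak-star-closed} and the strong continuity of the regular representation.
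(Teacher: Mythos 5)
Your proposal is correct, and it takes a genuinely different (though cognate) route from the paper's. The paper averages over a F\o{}lner net: it forms the Bochner integrals $Q_j(M_\phi) = \frac{1}{\mu_G(F_j)} \int_{F_j} \Ad(\rho_r^*) M_\phi \Ad(\rho_r) \d\mu_G(r)$, identifies each $Q_j(M_\phi)$ as a Schur multiplier with the explicitly averaged symbol \eqref{Divers-234}, extracts a weak* accumulation point $Q$ by Banach--Alaoglu in $\B(\mathfrak{M}^{p,\cb}_G,\mathfrak{M}^{p,\cb}_G)$, obtains the Herz--Schur property only asymptotically through the quantitative estimate $\mu_G(F_j \bigtriangleup r_0 F_j)/\mu_G(F_j) \to 0$, and secures compatibility across $p$ at the very end by a Tychonoff argument choosing one subnet simultaneously for all $p$ (as in Corollary \ref{cor-the-compatible-complementation}). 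You instead apply a single fixed invariant mean $m$ to the orbit functions $r \mapsto \langle \alpha_r(M_\varphi)(x), y\rangle$, which is legitimate since these are bounded and continuous; this yields \emph{exact} invariance $\alpha_a(Q(M_\varphi)) = Q(M_\varphi)$ in one stroke rather than in the limit, and since your construction happens at the symbol level with the same mean for every $p$, the compatibility statement becomes automatic, with no subnet bookkeeping. Both arguments ultimately rest on the same two pillars, namely the weak* closedness results of Lemmas \ref{Lemma-symbol-weak} and \ref{lem-Schur-weak-star-closed} and the stability of complete positivity under point-weak* limits of convex combinations, so the trade-off is: the paper's version is more constructive and meshes with the quantitative style of the surrounding steps, while yours is shorter and makes the $p$-independence structural rather than an afterthought. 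Three small precisions you should make explicit. First, the mean must be evaluated on $C_b(G)$ rather than $\L^\infty(G)$: point evaluations are not defined on a.e.-classes, and it is in $C_b(G)^*$ that $m$ lies in the weak* closed convex hull of the evaluations $\delta_r$, $r \in G$, which is what justifies your convex-hull argument placing $Q(M_\varphi)$ in the weak* closed multiplier ball. Second, the passage from the a.e. invariance $\psi(sa^{-1},ta^{-1}) = \psi(s,t)$ to $\psi(s,t) = \eta(st^{-1})$ a.e. requires the measure-preserving change of variables $(s,t) \mapsto (st^{-1},t)$ together with Fubini; your pointwise substitution of the translation parameter by $t$ is not literally available for $\L^\infty$ classes (the paper glosses over the same point at the end of Lemma \ref{lem-SAIN-Herz-Schur}, so this is not a gap relative to the paper's standard, but it deserves a sentence). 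Third, note that the composition law is $\alpha_a \alpha_r = \alpha_{ra}$ (an anti-action), so the invariance of $m$ you invoke must match this side; since amenable groups admit two-sided invariant means, this is cosmetic.
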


\begin{proof}
Let $(F_j)$ be a F\o{}lner net in $G$ provided by the amenability of the group $G$. For any $K_f \in S^2_G \cap S^p_G$, the map $G \to S^p_G$, $r \mapsto \Ad(\rho_r) (K_f)$ is continuous since the composition of operators is strongly continuous on bounded sets by \cite[Proposition C.19 p.~517]{EFHN15} (recall the notation $\Ad(\rho_s)(x)=\rho_s x \rho_{s^{-1}}$). Similarly for any $M_\phi \in \mathfrak{M}^{p,\cb}_G$, the map $G \to S^p_G$, $r \mapsto \big[\Ad(\rho_r^*) M_\phi \Ad(\rho_r) \big](K_f)$ is also continuous, hence Bochner integrable on the compact $F_j$. Now, for any $K_f \in S^2_G \cap S^p_G$ and any $M_\phi \in \mathfrak{M}^{p,\cb}_G$, put
\begin{equation}
\label{Equa33}
Q_j(M_\phi)(K_f) 
=\frac{1}{\mu_G(F_j)} \int_{F_j} \big[\Ad(\rho_r^*) M_\phi \Ad(\rho_r) \big](K_f) \d\mu_G(r).	
\end{equation}

For any $K_f \in S^2_G \cap S^p_G$ and any completely bounded Schur multiplier $M_\phi \co S^p_G \to S^p_G$, we have
\begin{align*}
\MoveEqLeft
  \bnorm{Q_j(M_\phi)(K_f)}_{S^p_G} 
		=\frac{1}{\mu_G(F_j)}\norm{\int_{F_j} \big[\Ad(\rho_r^*) M_\phi \Ad(\rho_r) \big](K_f)\d\mu_G(r)}_{S^p_G}\\
		&\leq \frac{1}{\mu_G(F_j)}\int_{F_j} \norm{\big[\Ad(\rho_r^*) M_\phi \Ad(\rho_r) \big](K_f)}_{S^p_G} \d\mu_G(r)
		\leq  \norm{M_\phi}_{S^p_G \to S^p_G}\norm{K_f}_{S^p_G}.
\end{align*}
A similar argument shows that $\norm{Q_j(M_\phi)}_{\cb, S^p_G \to S^p_G}\leq \norm{M_\phi}_{\cb, S^p_G \to S^p_G}$. Consequently, we have a well-defined contractive map $Q_j \co \mathfrak{M}^{p,\cb}_G \to \CB(S^p_G)$, $M_\phi \mapsto \frac{1}{\mu_G(F_j)} \int_{F_j} \Ad(\rho_r^*) M_\phi \Ad(\rho_r) \d\mu_G(r)$. If the linear map $M_\phi$ is completely positive then observe that the map $\big[\Ad(\rho_r^*) M_\phi \Ad(\rho_r)\big]$ is also completely positive. Thus the map $Q_j$ preserves the complete positivity. It is easy to check that $Q_j(M_\phi)$ is a Schur multiplier with symbol
\begin{equation}
\label{Divers-234}
\phi_j(s,t) 
\ov{\mathrm{def}}{=} \frac{1}{\mu_G(F_j)} \int_{F_j}\phi(sr,tr) \d\mu_G(r) 
\end{equation}
(Gelfand integral in $\L^\infty(G \times G)$).

We continue with the case $1 \leq p < \infty$. Since the space $\mathfrak{M}^{p,\cb}_G$ is weak* closed in $\CB(S^p_\Omega)$, the space $\mathfrak{M}^{p,\cb}_G$ is a dual Banach space. Hence $\cal{B}( \mathfrak{M}^{p,\cb}_G,\mathfrak{M}^{p,\cb}_G )$ is a dual space. By Banach-Alaoglu's theorem, the uniformly bounded net $(Q_j)$ admits a weak* accumulation point that we denote by $Q$ which is obviously a contraction. So we can suppose that $Q_j \to Q$ for the weak* topology. So, for each completely bounded Schur multiplier $M_\phi \co S^p_G \to S^p_G$ this implies that $Q(M_\phi) = \lim_{j} Q_{j}(M_\phi)$ in the weak operator topology. Recall that the weak* topology on $\CB(S^p_G)$ coincides on bounded subsets with the point weak* topology. Since $Q_{j}(M_\phi)$ belongs to the space $\mathfrak{M}^{p,\cb}_G$ and since the latter space is weak* closed in $\CB(S^p_G)$ according to Lemma \ref{lem-Schur-weak-star-closed}, we obtain that $Q(M_\phi)$ also belongs to the space $\mathfrak{M}^{p,\cb}_G$. Since each map $Q_j$ preserves complete positivity, by \cite[Lemma 2.10 2.~p.~15]{ArK23}, the map $Q$ also preserves complete positivity.

For any completely bounded Schur multiplier $M_\phi \co S^p_G \to S^p_G$, it remains to show that $Q(M_\phi)$ is in addition a Herz-Schur multiplier. That is, for any $r_0 \in G$ we have to show that $\lim_{j} M_{\phi_{j}(sr_0,tr_0)} = \lim_{j} M_{\phi_{j}(s,t)}$. Fix some $r_0 \in G$ and some $j$. Using the F\o{}lner condition in the last line, we  have
\begin{align*}
\MoveEqLeft
\norm{M_{\phi_{j}(s,t)} - M_{\phi_{j}(sr_0,tr_0)} }_{\cb,S^p_G \to S^p_G} \\
&\ov{\eqref{Divers-234}}{=} \norm{\frac{1}{\mu_G(F_j)} \int_{F_j} M_{\phi(sr,tr)} \d\mu_G(r) - \frac{1}{\mu(F_j)} \int_{F_j} M_{\phi(sr_0u,tr_0u)} \d\mu_G(u) }_{\cb,S^p_G \to S^p_G} \\
& = \frac{1}{\mu_G(F_j)} \norm{\int_{F_j} M_{\phi(sr,tr)} \d\mu_G(r) - \int_{r_0 F_j} M_{\phi(sr,tr)} \d\mu_G(r)}_{\cb,S^p_G \to S^p_G} \\
& \leq \frac{1}{\mu_G(F_j)} \int_{F_j \bigtriangleup r_0 F_j} \norm{M_{\phi(sr,tr)}}_{\cb,S^p_G \to S^p_G} \d\mu_G(r) \\
&= \frac{\mu_G(F_j \bigtriangleup r_0 F_j)}{\mu_G(F_j)} \norm{M_\phi}_{\cb,S^p_G \to S^p_G}
\xra[j \to \infty]{} 0.
\end{align*}
Using the weak* lower semicontinuity of the norm \cite[Theorem 2.6.14 p.~227]{Meg98}, we infer that
\[ 
\norm{Q(M_\phi)-Q(M_{\phi_{(\cdot r_0, \cdot r_0)}})}_{\cb,S^p_G \to S^p_G} 
\leq \liminf_{j} \norm{M_{\phi_{j}(s,t)} - M_{\phi_{j}(sr_0,tr_0)}}_{\cb,S^p_G \to S^p_G} 
= 0. 
\]
Finally, it is easy to see that $Q(M_{\phi_{(\cdot r_0, \cdot r_0)}})=Q(M_{\phi})_{(\cdot r_0, \cdot r_0)}$. The case $p = \infty$ is similar.

%
%

In order to obtain that the mappings $Q^{(p)} \co \mathfrak{M}^{p,\cb}_G \to \mathfrak{M}^{p,\cb}_G$ are compatible for different values of $1 \leq p \leq \infty$, it suffices to observe that we can choose the indices $j'$ in the converging subnet $Q_{j'}^{(p)}$ independent of $p$, in the same manner as done in the proof of Corollary \ref{cor-the-compatible-complementation} below by means of an argument relying on Tychonoff's theorem. The proof is complete.
\end{proof}

\begin{remark} \normalfont
\label{Remark-Herz-Schur-amenability}
We have a similar result for spaces of bounded Schur multipliers.
\end{remark}

\begin{remark} \normalfont
\label{Remark-Herz-Schur-amenability-bis}
If $G$ is compact, the proof is simpler. We do not need to use an approximation procedure. See \cite[Proposition 2.3 p.~365]{SpT02} for the case $p=\infty$.
\end{remark}

\subsection{Combining Steps 1-3: Complementation theorems}
\label{Sec-Th-complementation}

Let $G$ be a locally compact group. Note that the space $\cal{B}(\CB(\VN(G)),\CB(S^\infty_G,\cal{B}(\L^2(G))))$ is a dual space and admits the predual
\begin{equation*}
\label{equ-predual-bracket}
\CB(\VN(G)) \hat \ot \big(S^\infty_G \widehat{\ot} S^1_G \big),
\end{equation*}
where $\hat{\ot}$ denotes the Banach space projective tensor product and where $\widehat{\ot}$ denotes the operator space projective tensor product. The duality bracket is given by
\begin{equation}
\label{Duality-bracket-gros}
\big\langle P , T \ot (x \ot y) \big\rangle
=\big\langle P(T) x, y \big\rangle_{\cal{B}(\L^2(G)),S^1_G}.
\end{equation}

Now, we prove one of our main results.

\begin{thm}
\label{thm-SAIN-tilde-kappa}
Let $G$ be a second-countable unimodular inner amenable locally compact group. Then $G$ has property $(\kappa_\infty)$ with $\kappa_\infty(G) = 1$. More precisely, there exists a contractive projection $P_{G}^\infty \co \CB_{\w^*}(\VN(G)) \to \CB_{\w^*}(\VN(G))$ preserving the complete positivity onto the space $\mathfrak{M}^{\infty,\cb}(G)$ of completely bounded Fourier multipliers on the group von Neumann algebra $\VN(G)$.
\end{thm}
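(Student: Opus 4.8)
The plan is to assemble the machinery of Steps~1--3 into a single contractive, complete-positivity-preserving projection. First I would invoke inner amenability through the last point of Theorem~\ref{thm-inner-amenable-Folner} to produce, for each finite subset $F \subseteq G$, a sequence $(V_j^F)_j$ of symmetric open neighbourhoods of $e$ satisfying the inner F\o{}lner condition \eqref{Inner-Folner} for every $s \in F$. Setting $y_j^F \ov{\mathrm{def}}{=} c_j^F |\lambda(1_{V_j^F})|^2$ as in \eqref{Def-ds-inner} with $c_j^F$ normalising $\tau_G(y_j^F) = 1$ (so $\norm{y_j^F}_{\L^1(\VN(G))} = 1$ by positivity) and taking $x = 1 \in \VN(G)$, Step~1 --- namely the case $x \in \VN(G)$ of Proposition~\ref{prop-referee-proof-step-1-calcul-du-symbol-avec-coefficients-L2} --- attaches to every normal completely bounded $T \co \VN(G) \to \VN(G)$ the Schur multiplier with symbol $\varphi_{1,y_j^F,T}$ from \eqref{Def-symbol-varphi-1}. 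This yields maps $P_j^F \co \CB_{\w^*}(\VN(G)) \to \CB(S^\infty_G,\B(\L^2(G)))$, $T \mapsto M_{\varphi_{1,y_j^F,T}}$, which Lemma~\ref{Lemma-estimation-cb} shows to be contractions (since $\norm{x}_\infty = \norm{y_j^F}_1 = 1$) that preserve complete positivity.

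Next I would extract a limit map. Using a suitable dual Banach space structure as in \eqref{equ-predual-bracket}--\eqref{Duality-bracket-gros} together with the Banach--Alaoglu theorem, I would first take, for each fixed $F$, a weak* cluster point $P^F$ of $(P_j^F)_j$, and then a weak* cluster point $P^{(1)}$ of $(P^F)_F$ along the finite subsets directed by inclusion. Contractivity survives by weak* lower semicontinuity of the norm and complete positivity by \cite[Lemma 2.10]{ArK1}. The crucial input is Lemma~\ref{lem-SAIN-Herz-Schur}: its first part forces the symbol of $P^F(T)$ to be invariant under $(s,t) \mapsto (sr,tr)$ for $r \in F$, and its second part forces the symbol of $P^{(1)}(T)$ to be a genuine Herz-Schur symbol. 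Thus $P^{(1)}$ maps $\CB_{\w^*}(\VN(G))$ contractively and completely positively into the space $\mathfrak{M}^{\infty,\cb,\HS}_G$ of completely bounded Herz-Schur multipliers.

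To land back among Fourier multipliers, I would compose with the isometric identification $I \co \mathfrak{M}^{\infty,\cb,\HS}_G \to \CB_{\w^*}(\VN(G))$ of Herz-Schur multipliers with Fourier multipliers from \cite{BoF}, which preserves complete positivity and has range $\mathfrak{M}^{\infty,\cb}(G)$, and set $P_G^\infty \ov{\mathrm{def}}{=} I \circ P^{(1)}$. This is a contraction preserving complete positivity with range in $\mathfrak{M}^{\infty,\cb}(G)$. The final verification that it is a \emph{projection} rests on Lemma~\ref{lemma-symbol-step-1-p=infty}: for $T = M_\phi$ and any positive $y$ with $\tau_G(y) = 1$ one has $\varphi_{1,y,M_\phi}(s,t) = \phi(st^{-1}) = \phi^\HS(s,t)$, so every $P_j^F(M_\phi)$ already equals $M_{\phi^\HS}$; hence $P^{(1)}(M_\phi) = M_{\phi^\HS}$ and $P_G^\infty(M_\phi) = I(M_{\phi^\HS}) = M_\phi$. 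This gives $\kappa_\infty(G) \leq 1$, and the reverse inequality holds because a nonzero idempotent has norm at least $1$, so $\kappa_\infty(G) = 1$.

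The genuinely hard analytic work --- the construction of the symbol $\varphi_{x,y,T}$ and the identity \eqref{MxyT-bis} (Step~1), and the Herz-Schur invariance under inner amenability (Step~2) --- is already carried out in the preceding sections, so the main obstacle here is the organisation of the two successive weak* cluster points: one must ensure simultaneously that contractivity, preservation of complete positivity, and membership in the Herz-Schur class all survive passage to the limit for \emph{every} $T$ at once, and that the double indexing over $j$ and $F$ matches the two-stage structure of Lemma~\ref{lem-SAIN-Herz-Schur}. A minor but essential technical point is that the choice $x = 1$ lies in $\VN(G)$ rather than in $\L^1(\VN(G))$, which is precisely why one needs the $p = \infty$ extension explicitly recorded at the end of Proposition~\ref{prop-referee-proof-step-1-calcul-du-symbol-avec-coefficients-L2}.
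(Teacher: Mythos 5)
Your proposal is correct and follows essentially the same route as the paper's own proof: the same data $x=1$ and $y_j^F = c_j^F|\lambda(1_{V_j^F})|^2$, the same contractivity and complete-positivity estimates from Lemma \ref{Lemma-estimation-cb}, the same two-stage weak* cluster-point extraction (first over $j$, then over the finite sets $F$) feeding into the two parts of Lemma \ref{lem-SAIN-Herz-Schur}, the same composition with the isometry $I$ of \cite{BoF}, and the same verification on Fourier multipliers via $\varphi_{1,y,M_\phi}(s,t)=\phi(st^{-1})$. The only cosmetic deviation is that the paper defines $P_j^F$ on all of $\CB(\VN(G))$ by precomposing with the projection $P_{\w^*}$ of \cite[Proposition 3.1]{ArK1}, whereas you work directly on $\CB_{\w^*}(\VN(G))$; both settings yield the required dual-space structure for Banach--Alaoglu, so this changes nothing of substance.
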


\begin{proof}
Fix some finite subset $F$ of the group $G$. We can consider a sequence $(V_j^F)_j$ of subsets of $G$ satisfying the last point of Theorem \ref{thm-inner-amenable-Folner}. As in Lemma \ref{lem-SAIN-Herz-Schur}, if $T \co \VN(G) \to \VN(G)$ is a weak* continuous completely bounded map then we consider the elements $y_j^F \ov{\eqref{Def-ds-inner}}{=} c_j^F |\lambda(1_{V_j^F}) |^2$ in $\L^1(\VN(G)) \cap \VN(G)$ and the symbol $\phi_{j,T}^F \ov{\eqref{Def-ds-inner}}{=} \varphi_{1,y_j^F,T}$.
Recall that $\norm{y_j^F}_{\L^1(\VN(G))} = 1$.

\paragraph{Step 1} Consider the mapping $P_j^F \co \CB(\VN(G)) \to \CB(S^\infty_G,\cal{B}(\L^2(G)))$, $T \mapsto M_{\phi_{j,P_{\w^*}(T)}^F}$, where the projection $P_{\w^*} \co \CB(\VN(G)) \to \CB(\VN(G))$, preserving the complete positivity, is defined in \cite[Proposition 3.1 p.~24]{ArK23}. By Lemma \ref{Lemma-estimation-cb}, we have the estimate
$$
\bnorm{M_{\phi_{j,T}^F}}_{\cb,S^\infty_G \to \cal{B}(\L^2(G))} 
=\bnorm{M_{\varphi_{1,y_j^F,T}}}_{\cb,S^\infty_G \to \cal{B}(\L^2(G))}
\ov{\eqref{div-987}}{\leq} \norm{T}_{\cb,\VN(G) \to \VN(G)}.
$$ 
Hence the maps $P_j^F$ belong to the unit ball of the space $\cal{B}(\CB(\VN(G)),\CB(S^\infty_G,\cal{B}(\L^2(G))))$. By Banach-Alaoglu's theorem, we can introduce a weak* accumulation point $P^{F} \co \CB(\VN(G)) \to \CB(S^\infty_G,\cal{B}(\L^2(G)))$. So, we have a net $(P_{j(k)}^F)$ which converges to $P^F$ in the weak* topology. Taking into account \eqref{Duality-bracket-gros}, this implies that the bounded net $(P_{j(k)}^F(T))$, that is $\big(M_{\phi_{j(k),P_{\w^*}(T)}^F}\big)$, converges in the point weak* topology of the space $\CB(S^\infty_G,\cal{B}(\L^2(G)))$ to $P^F(T)$. Since the weak* topology on the space $\CB(S^\infty_G,\cal{B}(\L^2(G)))$ coincides, essentially by the same proof as the one of \cite[Lemma 7.2 p.~85]{Pau02}, on bounded subsets with the point weak* topology, we conclude by the second part of Lemma \ref{lem-Schur-weak-star-closed} that the map $P^F(T) \co S^\infty_G \to \cal{B}(\L^2(G))$ is itself a Schur multiplier. 
Note that by the weak* lower semicontinuity of the norm \cite[Theorem 2.6.14 p.~227]{Meg98}, we have 
$$
\bnorm{P^F}_{\CB(\VN(G)) \to \CB(S^\infty_G,\cal{B}(\L^2(G)))} 
\leq \liminf_{k \to \infty} \bnorm{P_{j(k)}^F}_{\CB(\VN(G)) \to \CB(S^\infty_G,\cal{B}(\L^2(G)))} 
\leq 1.
$$ 
We next show that the map $P^F$ preserves the complete positivity. Suppose that the map $T$ is completely positive. Using Lemma \ref{Lemma-estimation-cb}, we see that each map $M_{\phi_{j,P_{\w^*}(T)}^F}$ is completely positive. Since $P^F(T)$ is the limit in the point weak* topology of the $M_{\phi_{j(k),P_{\w^*}(T)}^F}$'s, the complete positivity of $M_{\phi_{j,P_{\w^*}(T)}^F}$ carries over to that of $P^F(T)$ by \cite[Lemma 2.10 p.~15]{ArK23}.

Now, we consider a weak* accumulation point $P^{(1)} \co \CB(\VN(G)) \to \CB(S^\infty_G,\cal{B}(\L^2(G)))$ of the net $(P^F)_F$ and by the same reasoning as before, the map $P^{(1)}(T) \co S^\infty_G \to \cal{B}(\L^2(G))$ is again a completely bounded Schur multiplier and preserves the complete positivity. The map $P^{(1)}$ is contractive.

\paragraph{Step 2} For any weak* continuous completely bounded map $T \co \VN(G) \to \VN(G)$, we claim that the map $P^{(1)}(T) \co S^\infty_G \to \cal{B}(\L^2(G))$ is in fact a Herz-Schur multiplier. It is easy to check that the weak* convergence of a subnet of $(M_{\phi_{j,P_{\w^*}(T)}^F})_j$ to $M_{\varphi^F}\ov{\mathrm{def}}{=} P^{F}(T)$ implies that $\varphi^F$ is a cluster point of $(\phi_{j,P_{\w^*}(T)}^F)_j$ for the weak* topology of the dual space $\L^\infty(G \times G)$. In the same manner, the symbol $\varphi$ of the Schur multiplier $P^{(1)}(T)$ is a cluster point of $(\varphi^F)_F$ for the weak* topology of $\L^\infty(G \times G)$. Thus according to Lemma \ref{lem-SAIN-Herz-Schur}, the function $\varphi$ is a Herz-Schur symbol. This is the step where we use the assumption of inner amenability on the group $G$.

\paragraph{Step 3} By \cite{BoF84} and \cite[Theorem 5.3 p.~181]{Spr04}, we have an isometric map $I \co \mathfrak{M}^{\infty,\cb,\HS}_G \to \CB_{\w^*}(\VN(G))$, $M_{\varphi}^\HS \mapsto M_\varphi$ with range $\mathfrak{M}^{\infty,\cb}(G)$, preserving the complete positivity. Indeed, on the one hand, recall that by \cite[Proposition 6.11 p.~90]{ArK23}, a Fourier multiplier $M_\varphi \co \VN(G) \to \VN(G)$ is completely positive if and only if $\varphi$ is equal almost everywhere to a continuous positive definite function, i.e.~the kernel $(s,t) \mapsto \varphi(st^{-1})$ is of positive type \cite[p.~351]{BHV08} and that a bounded Fourier multiplier necessarily has a symbol that is equal almost everywhere to a continuous function (see the discussion \cite[p.~85]{ArK23}). On the other hand, the Herz-Schur multiplier $M_\varphi^\HS$ is completely positive if and only if $\varphi^\HS \co (s,t) \mapsto \varphi(st^{-1})$ is equal almost everywhere to a bounded and measurable function of positive type 
(see \cite[Proposition 3.3 p.~781 and Remark 4.8 p.~785]{Arh24}).


We introduce the linear map $P \ov{\mathrm{def}}{=} I \circ P^{(1)} \co \CB_{\w^*}(\VN(G)) \to \CB_{\w^*}(\VN(G))$ with values in the space $\mathfrak{M}^{\infty,\cb}(G)$ of completely bounded Fourier multipliers. By composition, this map is contractive and preserves the complete positivity. 

Finally, if $T = M_\varphi \co \VN(G) \to \VN(G)$ is a completely bounded Fourier multiplier, then for any $j$ the symbol $\phi_{j,T}^F$ of the Schur multiplier $P_j^F(T) \co S^\infty_G \to \cal{B}(\L^2(G))$ is given by
\begin{align*}
\MoveEqLeft
\phi_{j,T}^F(s,t) 
\ov{\eqref{Def-ds-inner}}{=} \varphi_{1,y_j^F}(s,t)
\ov{\eqref{Def-symbol-varphi-1}}{=} \tau_G\big( y_j^F \lambda_{s^{-1}} T(\lambda_{st^{-1}})\lambda_t\big) 
= \varphi(st^{-1}) \tau_G\big( y_j^F \lambda_{s^{-1}}  \lambda_{st^{-1}}\lambda_t\big) \\
& = \varphi(st^{-1}) \tau_G\big(y_j^F\big) 
= \varphi(st^{-1}).
\end{align*}
Thus $P^{(1)}(T) = P^F(T) = M_{\varphi}^\HS$ and $P(T) = I \circ P^{(1)}(T)= I(M_{\varphi}^\HS) = M_\varphi = T$.
\end{proof}

Now, we state the following general theorem of complementation of the space of completely bounded Fourier multipliers. The proof uses in a crucial way that completely bounded Herz-Schur multipliers acting on $S^p_G$  are in one-to-one (linear, norm and order) correspondence with completely bounded $\L^p$-Fourier multipliers, thanks to the amenability of the group $G$.

\begin{thm}
\label{thm-general-complementation}
Let $G$ be a second-countable unimodular amenable locally compact group. Suppose that $1 < p < \infty$.
Let $(f_j)$ and $(g_j)$ be nets of positive functions with compact support belonging to the space $\C_e(G)$ such that if $x_j \ov{\mathrm{def}}{=} \lambda(f_j)$, $y_j \ov{\mathrm{def}}{=} \lambda(g_j)$ we have
\begin{itemize}
\item for some positive constant $C$ we have $\norm{x_j}_{\L^p(\VN(G))}, \norm{y_j}_{\L^{p^*}(\VN(G))} \leq C$ for all $j$,

\item $\tau_G(x_j y_j) = 1$ for all $j$,

\item $\supp f_j \to \{e\}$ or $\supp g_j \to \{e\}$. 
\end{itemize}

Then there exists a bounded projection $P_G^p \co \CB(\L^p(\VN(G))) \to \CB(\L^p(\VN(G)))$  
onto the space $\mathfrak{M}^{p,\cb}(G)$ of completely bounded Fourier multipliers with the properties
\begin{enumerate}
\item $\norm{P_G^p} \leq C^2$,
\item $P_G^p(T)$ is completely positive whenever $T$ is completely positive.
\end{enumerate}
\end{thm}

\begin{proof}
The proof consists of several steps.
\paragraph{Step 1}
Recall that the function $\phi_{j,T} \ov{\mathrm{def}}{=} \varphi_{x_j,y_j,T}$ belonging to the space $\L^\infty(G \times G)$ is defined in \eqref{def-symbol-phi-alpha}. Consider the linear map $P_j \co  \CB(\L^p(\VN(G))) \to  \CB(S^p_G)$, $T \mapsto M_{\phi_{j,T}}$. Using Lemma \ref{Lemma-estimation-cb}, we obtain the estimate 
\begin{align*}
\MoveEqLeft
\norm{P_j(T)}_{\cb,S^p_G \to S^p_G}  
\ov{\eqref{div-987}}{\leq} \norm{T}_{\cb, \L^p(\VN(G)) \to \L^p(\VN(G))} \norm{x_j}_{\L^p(\VN(G))} \norm{y_j}_{\L^{p^*}(\VN(G))} \\
&\leq C^2 \norm{T}_{\cb,\L^p(\VN(G)) \to \L^p(\VN(G))},
\end{align*}
according to the assumptions of the theorem. Then $(P_j)$ is a bounded net in the space $\cal{B}(\CB(\L^p(\VN(G))),\CB(S^p_G))$, which is a dual space with predual
\begin{equation*}
\label{equ-predual-bracket-bis}
\CB(\L^p(\VN(G))) \hat \ot \big(S^p_G \widehat{\ot} S^{p^*}_G \big),
\end{equation*}
where $\hat{\ot}$ denotes the Banach space projective tensor product and where $\widehat{\ot}$ denotes the operator space projective tensor product. The duality bracket is given by
\begin{equation}
\label{Derniere-eq}
\big\langle P , T \ot (x \ot y) \big\rangle
=\big\langle P(T) x, y \big\rangle_{S^p_G,S^{p^*}_G}.
\end{equation}
By the Banach-Alaoglu theorem, the net $(P_j)$ admits a convergent subnet $(P_{j(k)})$, which converges to some element $P^{(1)}$, i.e.~the net $(P_{j(k)})$ converges to $P^{(1)}$ for the weak* topology. With \eqref{Derniere-eq}, we see that for any $T \in \CB(\L^p(\VN(G)))$ this implies that $(P_{j(k)}(T))$ converges for the weak operator topology to $P^{(1)}(T)$.

Observe that the weak* topology on the space $\CB(S^p_G)$ coincides on bounded subsets with the weak operator topology by \cite[Lemma 7.2 p.~85]{Pau02}. We conclude by Lemma \ref{lem-Schur-weak-star-closed} that $P^{(1)}(T)$ is a Schur multiplier. 
Lemma \ref{Lemma-estimation-cb} says that the Schur multiplier $P_j(T) \co S^p_G \to S^p_G$ is completely positive if $T$ is completely positive. By \cite[Lemma 2.10 p.~15]{ArK23}, the weak operator limit $P^{(1)}(T)$ is also completely positive if $T$ is so. 

Suppose that $T = M_\phi$ is a Fourier multiplier. By Proposition \ref{th-convergence}, the sequence $(\phi_{j,T})_j$ of elements in $\L^\infty(G \times G)$ converges for the weak* topology to the function $\phi^\HS \co (s,t) \mapsto \phi(st^{-1})$. By Lemma \ref{Lemma-symbol-weak}, we deduce that the net $(P_{j}(T))$ converges to the Schur multiplier $M_{\phi^\HS}$ in the weak operator topology. So the symbol of the Schur multiplier $P^{(1)}(T)$ is $\phi^\HS$.

\paragraph{Step 2}
We will use the contractive projection onto the space $\mathfrak{M}^{p,\cb,\HS}_{G}$ of completely bounded Herz-Schur multipliers of Proposition \ref{prop-referee-step-2}. Seeing this projection as a map $Q \co \mathfrak{M}^{p,\cb}_{G} \to \mathfrak{M}^{p,\cb,\HS}_{G}$, we let $P^{(2)}\ov{\mathrm{def}}{=} Q \circ P^{(1)} \co \CB(\L^p(\VN(G))) \to \mathfrak{M}^{p,\cb,\HS}_{G}$. Then by this proposition together with Step 1, the map $P^{(2)}$ satisfies $\bnorm{P^{(2)}} \leq C^2$ and preserves the complete positivity and $P^{(2)}(M_{\varphi})$ has again symbol $(s,t) \mapsto \varphi(st^{-1})$.

\paragraph{Step 3} With the result of the paper \cite[Corollary 5.3 p.~7008]{CaS15} (see also \cite{NeR11}), we can consider the contractive\footnote{\thefootnote. The result \cite[Corollary 5.3 p.~7008]{CaS15} is stated for some class of symbols. However, the proof shows that the result is true for any completely bounded Herz-Schur multiplier on $S^p_G$.} map $I \co \mathfrak{M}^{p,\cb,\HS}_G \to  \CB(\L^p(\VN(G)))$, $M_\varphi^\HS \mapsto M_\varphi$ onto the space $\mathfrak{M}^{p,\cb}(G)$ of completely bounded Fourier multipliers. As in the proof of Theorem \ref{thm-SAIN-tilde-kappa}, if the Herz-Schur multiplier $M_\varphi^\HS \co S^p_G \to S^p_G$ is completely positive, it is easy to see that the Fourier multiplier $I(M_\varphi^\HS)$ is completely positive.
%

We finally put $P_G^p \ov{\mathrm{def}}{=}  I \circ P^{(2)} \co \CB(\L^p(\VN(G))) \to \CB(\L^p(\VN(G)))$ and need to check the claimed properties. First, we have the estimate $\norm{P_G^p} \leq \norm{I} \norm{P^{(2)}} \leq C^2$. Second, if the operator $T$ is completely positive, then $P^{(2)}(T)$ is also completely positive, and consequently, $P_G^p(T) = I \circ P^{(2)}(T)$ inherits this property.
Third, if $T = M_{\varphi}$ is itself a multiplier, then $P^{(2)}(T)$ is the Herz-Schur multiplier with symbol $(s,t) \mapsto \varphi(st^{-1})$. So the map $P_G^p(T)$ is the Fourier multiplier with symbol $\varphi$ again.
\end{proof}

Note that the proof of the previous result can be adapted to the cases $p=1$ and $p=\infty$ using Section \ref{Section-p=1-p-infty}. It is worth mentioning that the construction of the projection in the case $p=\infty$ differs from that in Theorem \ref{thm-SAIN-tilde-kappa}.

Combining this result with Example \ref{Essai}, we obtain the following result.

\begin{cor}
\label{cor-the-full-referees-complementation}
Let $G$ be a second-countable unimodular amenable locally compact group. Let $1 < p < \infty$ such that $\frac{p}{p^*}$ is rational. Then there exists a contractive projection
\[ 
P^p_G \co \CB(\L^p(\VN(G))) \to \CB(\L^p(\VN(G))) 
\]
on the space $\mathfrak{M}^{p,\cb}(G)$ of completely bounded Fourier multipliers such that the map $P^p_G(T)$ is completely positive whenever $T$ is completely positive.
\end{cor}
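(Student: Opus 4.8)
The plan is to feed the explicit approximating nets constructed in Example \ref{Essai} into Theorem \ref{thm-general-complementation}, and to dispatch the endpoints $p=1$ and $p=\infty$ by the last part of that theorem. The whole role of the rationality hypothesis on $\frac{p}{p^*}$ is to let the construction achieve the optimal constant $C=1$, which upgrades the \emph{bounded} projection supplied by Theorem \ref{thm-general-complementation} to a \emph{contractive} one; everything else is bookkeeping.

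First I would treat the range $1 < p < \infty$. Since $\frac{p}{p^*}$ is rational, Example \ref{Essai} provides sequences $(x_j)$ and $(y_j)$ of positive elements of $\C_e(G) \subseteq \L^1(\VN(G)) \cap \VN(G)$, of the form $x_j = |\lambda(h_j)|^{\frac{2r}{p}}$ and $y_j = |\lambda(h_j)|^{\frac{2r}{p^*}}$ with $h_j = k_j^* \ast k_j$, satisfying $\norm{x_j}_p = 1$, $\norm{y_j}_{p^*} = 1$ and $\tau_G(x_j y_j) = 1$. These are precisely the hypotheses of Theorem \ref{thm-general-complementation} with $C = 1$. Applying that theorem yields a bounded projection $P^p \co \CB(\L^p(\VN(G))) \to \CB(\L^p(\VN(G)))$ onto $\mathfrak{M}^{p,\cb}(G)$ with $\norm{P^p} \leq C^2 = 1$, hence contractive, and which sends completely positive maps to completely positive maps. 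This settles the claim for $1 < p < \infty$.

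For the endpoints $p \in \{1,\infty\}$ I would invoke the final statement of Theorem \ref{thm-general-complementation}, which requires only a single net $(x_j)$ of positive elements of $\L^1(\VN(G)) \cap \VN(G)$ with $\tau_G(x_j) = 1$. Such a net is immediate: following Example \ref{example-p=infy}, pick $g_j \in \C_c(G)$ with $\norm{g_j}_{\L^2(G)} = 1$ and set $x_j = \lambda(g_j^* \ast g_j)$; then $x_j \geq 0$ and $\tau_G(x_j) = \norm{g_j}_{\L^2(G)}^2 = 1$ by \eqref{Formule-Plancherel}. The theorem then delivers contractive projections onto $\mathfrak{M}^{\infty,\cb}(G)$ and $\mathfrak{M}^{1,\cb}(G)$ preserving complete positivity; here the ``usual convention if $p=\infty$'' refers to working in the weakly$^*$ continuous setting $\CB_{\w^*}(\VN(G))$.

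The substantive content has already been absorbed into Theorem \ref{thm-general-complementation} and Example \ref{Essai}, so the only point to watch is the exact role of rationality. The key fact, recorded in Example \ref{Essai}, is that when $\frac{p}{p^*}$ is rational the exponents $\frac{2r}{p}$ and $\frac{2r}{p^*}$ are (twice) integers, so $x_j$ and $y_j$ can be realized as \emph{integer} powers of one positive element $\lambda(h_j^* \ast h_j) = |\lambda(h_j)|^2 \in \C_e(G)$ normalized in $\L^{2r}(\VN(G))$; this forces $\norm{x_j}_p = \norm{y_j}_{p^*} = \tau_G(x_j y_j) = 1$ simultaneously, i.e. $C = 1$. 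When $\frac{p}{p^*}$ is irrational this simultaneous normalization through a single $\C_e(G)$-element is not available, which is exactly why the contractive conclusion is restricted to this range of $p$.
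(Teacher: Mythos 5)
Your proposal is correct and follows the paper's intended route exactly: the paper derives this corollary directly from Theorem \ref{thm-general-complementation}, with the nets of Example \ref{Essai} (whose rationality-enabled simultaneous normalization $\norm{x_j}_p = \norm{y_j}_{p^*} = \tau_G(x_jy_j) = 1$ yields $C=1$, hence contractivity) handling $1<p<\infty$, and the final statement of that theorem handling $p \in \{1,\infty\}$. Your identification of the precise role of the rationality hypothesis matches the paper's discussion in Section \ref{Sec-approach}.
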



\begin{remark}\normalfont
Going through the different steps of the proof, one sees that the projection in the Corollary \ref{cor-the-full-referees-complementation} has the property that $P^p_G(T)^* = P^{p^*}_G(T^*)$ for any completely bounded map $T \co \L^p(\VN(G)) \to \L^p(\VN(G))$ and any $1 \leq p < \infty$. Here, we emphasize that $P = P^p_G$ depends on $1 \leq p \leq \infty$. 
However, it is not clear that the maps $P^p_G(T)$ and $P^q_G(T)$ coincide for $1 \leq  p,q \leq \infty$ and an operator $T$ acting on both $\L^p(\VN(G))$ and $\L^q(\VN(G))$. 
\end{remark}

Using Theorem \ref{thm-general-complementation}, we obtain the following important property of compatibility of projections.

\begin{cor}
\label{cor-the-compatible-complementation}
Any second-countable unimodular finite-dimensional amenable locally compact group $G$ has property $(\kappa)$. More precisely, for all $1 \leq p \leq \infty$ there exists a bounded projection
\[ 
P^p_G \co \CB(\L^p(\VN(G))) \to \CB(\L^p(\VN(G))) 
\]
onto the subspace $\mathfrak{M}^{p,\cb}(G)$ (resp $P^\infty_G \co \CB_{\w^*}(\VN(G)) \to \CB_{\w^*}(\VN(G))$ on $\mathfrak{M}^{\infty,\cb}(G)$) with the properties
\begin{enumerate}
\item $\norm{P_G^p} \leq C$, where the constant $C$ depends on $G$ but not on $p$. 


\item $P^p_G(T)$ is completely positive whenever the map $T$ is completely positive.

\item If $T$ belongs to $\CB(\L^p(\VN(G)))$ and to $\CB(\L^q(\VN(G)))$ for two values $1 \leq p, q \leq \infty$, then the Fourier multipliers $P^p_G(T)$ and $P^q_G(T)$ are compatible mappings coinciding on $\L^p(\VN(G)) \cap \L^q(\VN(G))$, i.e.~have the same symbol.
\end{enumerate}
\end{cor}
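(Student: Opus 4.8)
The plan is to rerun the three-step construction of Theorem~\ref{thm-general-complementation} \emph{simultaneously} for every exponent $1 \leq p \leq \infty$, feeding it the single family of neighborhoods produced by Corollary~\ref{cor-2-referees-proof-step-1-weak-star-convergence-bis}, and to extract all the occurring weak* limits along one common subnet via Tychonoff's theorem. The compatibility (assertion~3) will then be forced: before passing to the limit, the relevant Schur and Herz-Schur symbols will not depend on $p$, and a common subnet transports this independence to the limits.

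First I would fix the nested symmetric open neighborhoods $(V_j)$ of $e$ and the constant $c$ from Lemma~\ref{lem-finite-dimensional-group-estimate}, set $f_j \ov{\mathrm{def}}{=} 1_{V_j} \ast 1_{V_j}$ (a positive element of $\C_e(G)$, with $\supp f_j \to \{e\}$), and form for each $p$ the elements $x_j^{(p)} = a_j^{(p)} \lambda(f_j)$ and $y_j^{(p)} = b_j^{(p)} \lambda(f_j)$ of Corollary~\ref{cor-2-referees-proof-step-1-weak-star-convergence-bis}, so that $\norm{x_j^{(p)}}_p \leq c^3$, $\norm{y_j^{(p)}}_{p^*} = 1$ and $\tau_G(x_j^{(p)} y_j^{(p)}) = 1$. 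The decisive observation is that $a_j^{(p)} b_j^{(p)} = \norm{\lambda(f_j)}_2^{-2}$ is \emph{independent of $p$}; hence by \eqref{Def-symbol-varphi-1} the Schur symbol
\[
\varphi_{x_j^{(p)}, y_j^{(p)}, T}(s,t) = \norm{\lambda(f_j)}_2^{-2}\, \tau_G\big(\lambda_t \lambda(f_j) \lambda_{s^{-1}} T(\lambda_s \lambda(f_j) \lambda_{t^{-1}})\big)
\]
is one and the same function of $(s,t) \in G \times G$ for all $p$, as soon as $T$ is defined compatibly on the various $\L^p(\VN(G))$ --- note that $\lambda_s \lambda(f_j) \lambda_{t^{-1}}$ lies in $\L^1(\VN(G)) \cap \VN(G) \subseteq \L^p(\VN(G)) \cap \L^q(\VN(G))$. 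With $P_j^{(p)} \co T \mapsto M_{\varphi_{x_j^{(p)}, y_j^{(p)}, T}}$ (normal $T$ if $p = \infty$), Lemma~\ref{Lemma-estimation-cb} gives $\norm{P_j^{(p)}} \leq c^3$ uniformly in $j$ and $p$, together with preservation of complete positivity.

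Next I would carry out the common extraction. For fixed $j$ the family $(P_j^{(p)})_{p \in [1,\infty]}$ ranges in the product over $p$ of the closed balls of radius $c^3$ in the dual spaces $\B(\CB(\L^p(\VN(G))),\CB(S^p_G))$ (with $\CB_{\w^*}(\VN(G))$ and $\CB(S^\infty_G,\B(\L^2(G)))$ when $p=\infty$); each ball being weak* compact, Tychonoff's theorem makes the product compact, so I can pick a subnet $(j_k)$ along which every coordinate converges, defining $P^{(1),p}$ as the weak* limit $\lim_k P_{j_k}^{(p)}$. Each $P^{(1),p}$ has norm $\leq c^3$, takes values in $\mathfrak{M}^{p,\cb}_G$ by Lemma~\ref{lem-Schur-weak-star-closed}, and preserves complete positivity by \cite[Lemma~2.10]{ArK1}. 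Crucially, for a compatible $T$ the pre-limit symbols agree for all $p$, so their weak* limits in $\L^\infty(G \times G)$ agree, and $P^{(1),p}(T)$, $P^{(1),q}(T)$ are Schur multipliers with the same symbol. I would then compose with the Herz-Schur projections $Q_p$ of Proposition~\ref{prop-referee-step-2}, whose approximants $Q_{j'}^{(p)}$ act on a given symbol by the $p$-independent averaging \eqref{Divers-234}; choosing the F\o{}lner subnet $(j')$ once for all $p$ by the same Tychonoff device ensures $Q_p$ and $Q_q$ send equal symbols to equal symbols. Finally I compose with the symbol-preserving isometries $I_p$ onto $\mathfrak{M}^{p,\cb}(G)$ --- \cite{BoF}\cite{Spr1} for $p=\infty$, and \cite{CDS}\cite{NR} (where the amenability of $G$ re-enters) for $1 \leq p < \infty$ --- and set $P^p \ov{\mathrm{def}}{=} I_p \circ Q_p \circ P^{(1),p}$.

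It remains to collect the properties. By composition $\norm{P^p} \leq c^3 =: C$ independently of $p$ (assertion~1, and $C = 1$ in the totally disconnected case by Remark~\ref{rem-2-referees-proof-step-1-weak-star-convergence}), and $P^p$ preserves complete positivity (assertion~2), exactly as in Theorem~\ref{thm-general-complementation}. That $P^p$ is a projection onto $\mathfrak{M}^{p,\cb}(G)$ is seen as there: for a Fourier multiplier $M_\phi$, Proposition~\ref{th-convergence} gives that $P^{(1),p}(M_\phi)$ has symbol $(s,t) \mapsto \phi(st^{-1})$, which $Q_p$ fixes and $I_p$ returns to $M_\phi$. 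For compatibility (assertion~3), if $T$ acts compatibly on $\L^p$ and $\L^q$ then, by the preceding paragraph, $P^{(1),p}(T)$ and $P^{(1),q}(T)$ carry one common Schur symbol, the common F\o{}lner subnet makes $Q_p(P^{(1),p}(T))$ and $Q_q(P^{(1),q}(T))$ carry one common Herz-Schur symbol, and the symbol-preserving $I_p, I_q$ turn these into Fourier multipliers with the same symbol, hence coinciding on $\L^p(\VN(G)) \cap \L^q(\VN(G))$. Taking $p \in \{1,\infty\}$ yields property $(\kappa)$ with $\kappa(G) \leq C$. The main obstacle is exactly the orchestration of a \emph{single} subnet governing every weak* limit across the continuum of exponents, so that the $p$-independence of the finite-stage symbols is inherited by the limits; once this is in place the remaining verifications are routine given Theorem~\ref{thm-general-complementation}.
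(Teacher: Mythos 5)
Your proposal is correct and follows essentially the same route as the paper's proof: the same nets $x_j = a_j\lambda(1_{V_j}\ast 1_{V_j})$, $y_j = b_j\lambda(1_{V_j}\ast 1_{V_j})$ from Corollary \ref{cor-2-referees-proof-step-1-weak-star-convergence-bis} with the decisive observation that $a_j b_j = \norm{\lambda(f_j)}_2^{-2}$ is independent of $p$ (so the finite-stage Schur symbols agree across exponents), the same Tychonoff extraction of one common subnet governing all the weak* limits, and the same composition with the compatible projections $Q_p$ of Proposition \ref{prop-referee-step-2} and the symbol-preserving isometries $I_p$. The only (harmless) divergence is at the endpoints $p \in \{1,\infty\}$, where you obtain $P^{(1),p}(M_\phi) = M_{\phi^{\HS}}$ directly from Proposition \ref{th-convergence}, whose hypotheses your nets indeed satisfy for all $1 \leq p \leq \infty$, whereas the paper instead deduces this from compatibility with the case $p=2$ via the inclusion $\mathfrak{M}^{p,\cb}(G) \subseteq \mathfrak{M}^{2,\cb}(G)$.
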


\begin{proof}
Consider first the case $1 < p \leq \infty$.
It suffices to pick the sequence $(f_j)$ in $\C_c(G)$ and the associated $x_j \ov{\eqref{def-fj}}{=} a_j \lambda(f_j)$, $y_j \ov{\eqref{def-fj}}{=} b_j \lambda(f_j)$ from Corollary \ref{cor-2-referees-proof-step-1-weak-star-convergence-bis}.
Then apply this corollary together with Theorem \ref{thm-general-complementation} to deduce the two first points. Let us show the last point 3. Due to the specific choice of $x_j = a_j \lambda(f_j)$ and $y_j = b_j \lambda(f_j)$ with $a_j \cdot b_j$ being independent of $p$ (see Corollary \ref{cor-2-referees-proof-step-1-weak-star-convergence-bis}), we deduce that $P_j^p(T)K_\phi = P_j^q(T)K_\phi$ for $K_\phi \in S^p \cap S^q$, where $P_j^p$ and $P_j^q$ denote the mappings from Step 1 of the proof of Theorem \ref{thm-general-complementation}. Indeed, the symbol of the Schur multiplier $P_j^p(T)$ is 
$$
\phi_{j,T}(s,t) 
\ov{\eqref{def-symbol-phi-alpha}}{=} a_j b_j\tau_G \big(\lambda_t \lambda(f_j) \lambda_{s^{-1}} T(\lambda_s \lambda(f_j) \lambda_{t^{-1}}) \big), \quad s,t \in G.
$$
Consider the product 
$$
X 
\ov{\mathrm{def}}{=} \prod_{1  \leq p < \infty} \cal{B}(\CB(\L^p(\VN(G))),\CB(S^p_G)) \times \cal{B}(\CB(\VN(G)),\CB(S^\infty_G,\cal{B}(\L^2(G)))),
$$ 
which is a topological space when equipped with the product topology of the weak* topology.
Then $P_j = (P_j^p)_{1  \leq p \leq \infty}$ lies in a compact subspace of $X$ for all $j$, since $\norm{P_j^p} \leq C$ with a constant independent of $p$ and Tychonoff's theorem for the product of compact spaces applies here.
Thus, the net $(P_j)$ admits an accumulation point in $X$, that is to say that for the same subnet $j(k)$ for all $p$, we have $P^{(1),p} = \lim_{j(k)} P_{j(k)}^{p}$.
We infer that $P^{(1),p}(T) K_\phi = P^{(1),q}(T) K_\phi$ for any $K_\phi \in S^p \cap S^q$, where $P^{(1),p}$ and $P^{(1),q}$ denote the mappings from Step 1 of the proof of Theorem \ref{thm-general-complementation}.
Since the mapping $Q$ from Proposition \ref{prop-referee-step-2} is compatible for different values $p$ and $q$, also $P^{(2),p}$ and $P^{(2),q}$ from Step 2 of the proof of Theorem \ref{thm-general-complementation} are compatible.
Finally, since the mappings $I=I_p$ from Step 3 of the proof of Theorem \ref{thm-general-complementation} are compatible for different values $p$ and $q$, the mappings $P^p_G(T)$ and $P^q_G(T)$ are compatible for any $T$.
\end{proof}


\begin{example} \normalfont
\label{ex-totally-disconnected-contractive-complementation}
Let $G$ be a second-countable unimodular finite-dimensional amenable locally compact group. Consequently, Corollary \ref{cor-the-compatible-complementation} applies. Suppose that $G$ is in addition totally disconnected. 
Then the projections $P_G^p$ in that result are in fact contractions.  Indeed, an inspection of the proof of Corollary \ref{cor-the-compatible-complementation} (see also Step 1 of the proof of Theorem \ref{thm-general-complementation}) shows that $\norm{P^p_G} \leq \sup_j \norm{x_j}_p \norm{y_j}_{p^*}$. According to Corollary \ref{Cor-38}, the right-hand side is less than $1$.

Therefore, Corollary \ref{cor-the-compatible-complementation} gives a variant of \cite[Theorem 6.38 p.~121]{ArK23}. Note that Corollary \ref{cor-the-compatible-complementation} needs that the group $G$ is amenable in the cases $p=\infty$ and $p=1$ in contrast to \cite[Theorem 6.38 p.~121]{ArK23}.
\end{example}



\begin{remark} \normalfont
\label{rem-K2}
We have resisted to the temptation to write a matricial version of Theorem \ref{thm-SAIN-tilde-kappa} in the spirit of \cite[Theorem 4.2 p.~62]{ArK23}. It is likely that the same method works.
\end{remark}

\begin{remark} \normalfont
\label{non-unimodular}
We make no attempt with non-unimodular locally compact groups. It is likely that the same strategy works in the case $p=\infty$. We leave this case as an exercise for the reader. 
\end{remark}

\section{Final remarks}
\label{Sec-applications}
\subsection{A characterization of the amenability of unimodular locally compact groups}
\label{Sec-charac-amen}

We present a new characterization of amenability, in the same spirit as that of Theorem \ref{Th-Lau-Paterson}. Recall that property $(\kappa_\infty)$ is defined in Definition \ref{Defi-tilde-kappa}.

\begin{thm}
\label{thm-links-K-injective}
Let $G$ be a second-countable unimodular locally compact group. Then the following are equivalent.
\begin{enumerate}
	\item The von Neumann algebra $\VN(G)$ is injective and $G$ has property $(\kappa_\infty)$.
	\item The group $G$ is amenable.
\end{enumerate}
Moreover, the implication 1. $\Rightarrow$ 2. is true without the assumption <<second-countable unimodular>>.
\end{thm}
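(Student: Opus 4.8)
The plan is to assemble three results already established in the excerpt; no genuinely new argument is needed beyond packaging. I would treat the two implications separately, starting with $1 \Rightarrow 2$ since it is the one that avoids unimodularity and therefore carries the ``moreover'' clause.

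For $1 \Rightarrow 2$, assume $\VN(G)$ is injective and $G$ has $(\kappa_\infty)$. First, the injectivity of $\VN(G)$ gives, by Haagerup's theorem \cite[Theorem 1.6]{Haa} (the same input used in the proof of Theorem \ref{Thm-groups-with-bad-multiplier}), the isometric identity $\frak{M}^{\infty,\dec}(G) = \frak{M}^{\infty,\cb}(G)$. Second, property $(\kappa_\infty)$ gives, via Proposition \ref{conj-1-1-correspondance}, that the contractive inclusion $\B(G) \hookrightarrow \frak{M}^{\infty,\dec}(G)$ of Proposition \ref{prop-B(G)-inclus-dec} is in fact a bijection, so $\B(G) = \frak{M}^{\infty,\dec}(G)$. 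Chaining the two equalities yields $\B(G) = \frak{M}^{\infty,\cb}(G)$, and Ruan's characterization of amenability \cite[p. 54]{Pis6} then forces $G$ to be amenable. Since neither cited ingredient requires unimodularity, this simultaneously proves the implication in full generality and establishes the final sentence of the statement.

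For the converse $2 \Rightarrow 1$, I would extract injectivity and inner amenability from amenability and then upgrade the latter. If $G$ is amenable, then $\VN(G)$ is injective and $G$ is inner amenable; both follow from the Lau--Paterson equivalence of Theorem \ref{Th-Lau-Paterson} (amenability being the stronger notion, cf. Example \ref{Ex-inner-2}). The injectivity is already the first half of statement 1, so it only remains to produce $(\kappa_\infty)$. This is exactly Theorem \ref{thm-SAIN-tilde-kappa}: a second countable unimodular inner amenable group has $(\kappa_\infty)$, indeed with $\kappa_\infty(G) = 1$. This is the unique step in which unimodularity is genuinely consumed, which is coherent with the asymmetry of the ``moreover'' clause.

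The analytic heart of the theorem is therefore not in this synthesis at all, but already spent inside Theorem \ref{thm-SAIN-tilde-kappa}, which is where the full Schur-multiplier machinery of Section \ref{Sec-complementation} (Steps 1--3, the inner-F\o lner construction of Lemma \ref{lem-SAIN-Herz-Schur}, and the Herz--Schur/Fourier identification) is used. The present statement merely couples that hard direction with the Fourier--Stieltjes identifications of Section \ref{Sec-FS-and-dec} and Haagerup's injectivity criterion; so the ``main obstacle'' is really an obstacle discharged earlier, and the only care required here is to cite the correct earlier results and to note precisely where unimodularity enters.
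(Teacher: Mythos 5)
Your proof is correct and follows essentially the same route as the paper's: the implication $1 \Rightarrow 2$ chains Proposition \ref{conj-1-1-correspondance} (via $(\kappa_\infty)$) with Haagerup's \cite[Theorem 1.6]{Haa} (via injectivity) and concludes with \cite[p.~54]{Pis6}, while $2 \Rightarrow 1$ extracts injectivity from Theorem \ref{Th-Lau-Paterson} and $(\kappa_\infty)$ from Theorem \ref{thm-SAIN-tilde-kappa}, exactly as in the paper. Your added remarks on where unimodularity is consumed (only in Theorem \ref{thm-SAIN-tilde-kappa}) correctly account for the ``moreover'' clause.
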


\begin{proof}
1. $\Rightarrow$ 2. Since the locally compact group $G$ has property $(\kappa_\infty)$, Proposition \ref{conj-1-1-correspondance} gives the equality
$$
\frak{M}^{\infty,\dec}(G)
=\B(G).
$$
Since $\VN(G)$ is injective, by \cite[Theorem 1.6 p.~184]{Haa85} each completely bounded operator $T \co \VN(G) \to \VN(G)$ is decomposable with $\norm{T}_{\cb,\VN(G) \to \VN(G)} \ov{\eqref{dec=cb}}{=} \norm{T}_{\dec,\VN(G) \to \VN(G)}$. In particular, we have $\frak{M}^{\infty,\cb}(G)=\frak{M}^{\infty,\dec}(G)$ isometrically. We deduce that
$$
\frak{M}^{\infty,\cb}(G)
=\B(G).
$$
Using a result stated in \cite[p.~54]{Pis01} (see also \cite[p.~190]{Spr04}), which says that this equality is equivalent to the amenability of $G$, we conclude that the group $G$ is amenable. 

2. $\Rightarrow$ 1. If the locally compact group $G$ is amenable, then by Theorem \ref{Th-Lau-Paterson} the von Neumann algebra $\VN(G)$ is injective. Note that the group $G$ is inner amenable by Example \ref{Ex-inner-2}. Consequently, for the second property, it suffices to use Theorem \ref{thm-SAIN-tilde-kappa}.
\end{proof}


As a result, we can provide explicit examples of locally compact groups that do not satisfy property $(\kappa_\infty)$.

\begin{cor}
\label{Cor-gropes-without-K}
Any non-amenable second-countable connected locally compact group $G$ does not have property $(\kappa_\infty)$.
\end{cor}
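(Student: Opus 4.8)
The plan is to derive this as an immediate contrapositive of the characterization in Theorem~\ref{thm-links-K-injective}, exploiting the crucial fact that the implication 1.~$\Rightarrow$~2.\ there was established \emph{without} any unimodularity hypothesis. This matters because a connected locally compact group need not be unimodular, so the full equivalence of Theorem~\ref{thm-links-K-injective} is not directly available; only the one-directional statement is, and that is precisely what is needed here.

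First I would record the injectivity input: since $G$ is second countable and connected, the group von Neumann algebra $\VN(G)$ is injective by \cite[Corollary 7]{Con2} (the same fact already invoked in the Example following Theorem~\ref{Thm-groups-with-bad-multiplier}). Next, I would argue by contradiction. Suppose $G$ has property $(\kappa_\infty)$. Then, combining the injectivity of $\VN(G)$ with the standing assumption $(\kappa_\infty)$, condition~1.\ of Theorem~\ref{thm-links-K-injective} is satisfied. Invoking the implication 1.~$\Rightarrow$~2.\ of that theorem, which holds even in the non-unimodular setting, we conclude that $G$ is amenable. This contradicts the hypothesis that $G$ is non-amenable, so $G$ cannot have $(\kappa_\infty)$.

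There is essentially no hard step here, as the corollary is a direct consequence of the machinery already assembled; the only point requiring care is the invocation of the unimodular-free half of Theorem~\ref{thm-links-K-injective}, since connectedness alone does not guarantee unimodularity. If one wished to make the argument fully self-contained, one could unwind the cited implication: property $(\kappa_\infty)$ forces $\frak{M}^{\infty,\dec}(G)=\B(G)$ by Proposition~\ref{conj-1-1-correspondance}, injectivity of $\VN(G)$ forces $\frak{M}^{\infty,\cb}(G)=\frak{M}^{\infty,\dec}(G)$ isometrically by \cite[Theorem 1.6]{Haa}, whence $\frak{M}^{\infty,\cb}(G)=\B(G)$, which by Ruan's characterization \cite[p.~54]{Pis6} is equivalent to amenability of $G$.
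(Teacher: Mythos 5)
Your proof is correct and follows exactly the paper's own argument: injectivity of $\VN(G)$ via \cite[Corollary 7]{Con2}, then a contradiction through the implication 1.~$\Rightarrow$~2.\ of Theorem~\ref{thm-links-K-injective}, whose unimodularity-free validity you rightly flag as the key point since connected groups need not be unimodular.
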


\begin{proof}
By \cite[Corollary 7 p.~75]{Con76}, the von Neumann algebra $\VN(G)$ of a second-countable connected locally compact group is injective. Suppose that the group $G$ has property $(\kappa_\infty)$. By the implication 1. $\Rightarrow$ 2. of Theorem \ref{thm-links-K-injective}, we obtain that $G$ is amenable, i.e.~a contradiction.
\end{proof}

\begin{example} \normalfont
\label{example-SL}
This result applies for example to the connected locally compact group $\SL_2(\R)$, which is non-amenable by \cite[Example G.2.4 (i) p.~426]{BHV08} and unimodular by \cite[p.~4]{Lan75}, contradicting\footnote{\thefootnote. We would like to thank Adam Skalski for his confirmation of this problem in this small remark by email \textit{on his own initiative.} The results of the nice paper \cite{DFSW16} remain correct.} the observation \cite[Remark 7.6 p.~24]{DFSW16}, stated for unimodular locally compact quantum groups.
\end{example}

The following result generalizes Theorem \ref{Thm-conj-discrete-case} when second countability is not assumed.

\begin{cor}
\label{cor-inner-66}
Let $G$ be a second-countable unimodular inner amenable locally compact group. Then the von Neumann algebra $\VN(G)$ is injective if and only if we have $\frak{M}^{\infty,\dec}(G)= \frak{M}^{\infty,\cb}(G)$.
\end{cor}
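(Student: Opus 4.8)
The plan is to assemble the corollary from results already proved, following the template of Theorem \ref{Thm-conj-discrete-case} but feeding in the inner-amenable locally compact input in place of the discrete-group averaging argument.

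First, for the implication ``$\VN(G)$ injective $\Rightarrow \frak{M}^{\infty,\dec}(G) = \frak{M}^{\infty,\cb}(G)$'', I would invoke Haagerup's theorem \cite[Theorem 1.6]{Haa}: when $\VN(G)$ is injective, every completely bounded operator $T \co \VN(G) \to \VN(G)$ is decomposable with $\norm{T}_{\dec} = \norm{T}_{\cb}$. Applying this to completely bounded Fourier multipliers yields the desired equality immediately. I would note that this half uses neither unimodularity nor inner amenability, exactly as in the proof of Theorem \ref{Thm-groups-with-bad-multiplier}.

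For the converse ``$\frak{M}^{\infty,\dec}(G) = \frak{M}^{\infty,\cb}(G) \Rightarrow \VN(G)$ injective'', I would chain four prior results. Since $G$ is second countable, unimodular and inner amenable, Theorem \ref{thm-SAIN-tilde-kappa} furnishes property $(\kappa_\infty)$ with $\kappa_\infty(G) = 1$. Proposition \ref{conj-1-1-correspondance} then upgrades the contractive inclusion $\B(G) \subseteq \frak{M}^{\infty,\dec}(G)$ of Proposition \ref{prop-B(G)-inclus-dec} to an equality $\B(G) = \frak{M}^{\infty,\dec}(G)$. Combining with the standing hypothesis gives $\B(G) = \frak{M}^{\infty,\cb}(G)$, whence $G$ is amenable by Ruan's characterization \cite[p. 54]{Pis6}. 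Finally, the implication $2 \Rightarrow 1$ of Theorem \ref{Th-Lau-Paterson} (amenability forces injectivity of the group von Neumann algebra) shows that $\VN(G)$ is injective, as desired.

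I do not expect a genuine obstacle at the level of this corollary, since all the analytic content has been front-loaded into Theorem \ref{thm-SAIN-tilde-kappa} and Proposition \ref{conj-1-1-correspondance}. The one conceptual point worth emphasizing is the asymmetric role of inner amenability: the forward direction is a soft consequence of injectivity alone, whereas the backward direction depends essentially on inner amenability, entering precisely through $(\kappa_\infty)$ and the resulting identification of $\frak{M}^{\infty,\dec}(G)$ with $\B(G)$. Without that projection one could not pass from the equality ``decomposable $=$ completely bounded'' back to amenability and hence to injectivity.
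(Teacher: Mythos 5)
Your proof is correct and follows essentially the same route as the paper: Haagerup's theorem \cite[Theorem 1.6]{Haa} for the forward direction, and for the converse the chain Theorem \ref{thm-SAIN-tilde-kappa} $\Rightarrow (\kappa_\infty)$, Proposition \ref{conj-1-1-correspondance} $\Rightarrow \B(G)=\frak{M}^{\infty,\dec}(G)$, then amenability via \cite[p.~54]{Pis6} and injectivity via Theorem \ref{Th-Lau-Paterson}. Your remark on the asymmetric role of inner amenability is accurate but adds nothing beyond the paper's argument.
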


\begin{proof}
The <<only if>> part 
is true by a result of Haagerup \cite[Corollary 2.8 p.~201]{Haa85}. Now, assume that $\frak{M}^{\infty,\dec}(G)= \frak{M}^{\infty,\cb}(G)$. By Proposition \ref{conj-1-1-correspondance} and Theorem \ref{thm-SAIN-tilde-kappa}, we have a bijection from the space $\frak{M}^{\infty,\cb}(G)$ onto the Fourier-Stieltjes algebra $\B(G)$. So the group $G$ is amenable by a result stated in \cite[p.~54]{Pis01}. We conclude that the von Neumann algebra $\VN(G)$ is injective by Theorem \ref{Th-Lau-Paterson}.
\end{proof}

\subsection{Bounded Herz–Schur multipliers and their relation to amenability}
\label{Sec-Herz}

Consider a locally compact group $G$. To know whether the amenability of $G$ is characterized by the complementation of the space $\mathfrak{M}^{\infty,\HS}_{G}$ of (completely) bounded Herz-Schur multipliers in the space $\mathfrak{M}^{\infty}_G$ of (completely) bounded Schur multipliers over $\cal{B}(\L^2(G))$ is a well-known open question, explicitly stated in \cite[p.~184]{Spr04}. Now, we present the first progress on this classical question. 

\begin{prop}
Let $G$ be a second-countable unimodular locally compact group such that the von Neumann algebra $\VN(G)$ is injective. Suppose that there exists a bounded projection $Q \co \mathfrak{M}^{\infty}_G \to \mathfrak{M}^{\infty}_{G}$ onto the space $\mathfrak{M}^{\infty,\HS}_{G}$ of bounded Herz-Schur multipliers over the space $\cal{B}(\L^2(G))$, preserving the complete positivity. Then the group $G$ is amenable.
Conversely, if $G$ is amenable, then such a $Q$ exists according to Proposition \ref{prop-referee-step-2}.
\end{prop}

\begin{proof}
Since the von Neumann algebra $\VN(G)$ is injective, it suffices by Theorem \ref{thm-links-K-injective} to show that the group $G$ has property $(\kappa_\infty)$. Now, we follow the proof of Theorem \ref{thm-general-complementation}. For the first step, we use Lemma \ref{lemma-symbol-step-1-p=infty} and we define the linear map $P^{(1)}_G \co  \CB_{\w^*}(\VN(G)) \to  \CB(\cal{B}(\L^2(G)))$, $T \mapsto M_{\varphi_{1,y,T}}$ for some positive element $y$ in the space $\L^1(\VN(G)) \cap \VN(G)$ such that $\tau_G(y) = 1$. The second step of the proof is addressed using the existence of the projection $Q$. If $p=\infty$, the third step is done without amenability. So we obtain property $(\kappa_\infty)$.
\end{proof}

\begin{example} \normalfont
By Example \ref{Example-almost}, the von Neumann algebra $\VN(G)$ of a second-countable almost connected unimodular locally compact group is injective. In particular, the previous result applies to any connected unimodular Lie group. Such a group is amenable if and only if there exists a bounded projection $Q \co \mathfrak{M}^{\infty}_G \to \mathfrak{M}^{\infty}_G$ onto the subspace $\mathfrak{M}^{\infty,\HS}_{G}$  that preserves complete positivity.
\end{example}

\paragraph{Declaration of interest} None.

\paragraph{Competing interests} The authors declare that they have no competing interests.

\paragraph{Acknowledgment}
The authors are deeply grateful to the anonymous referee of \cite{ArK23}. His report suggested a promising research direction which, when combined with unpublished results, new ideas, and further work, led to the development of this paper. We extend our thanks to Claire Anantharaman-Delaroche and Martin Walter for brief but valuable discussions, as well as to Matthew Daws, Adam Skalski, and Ami Viselter for earlier discussions and their continued encouragement. We are also grateful to Marco Peloso for his insights regarding \eqref{Equivalence-measure-ball} in the non-unimodular case. 

This work was supported by the ANR-18-CE40-0021 grant (HASCON project). The second author also acknowledges partial support from the ANR-17-CE40-0021 grant (FRONT project), funded by the French National Research Agency.

\small


{\footnotesize

\vspace{0.1cm}

\noindent C\'edric Arhancet\\
6 rue Didier Daurat \\
81000 ALBI \\
FRANCE\\
URL: \href{http://sites.google.com/site/cedricarhancet}{https://sites.google.com/site/cedricarhancet}\\
cedric.arhancet@protonmail.com\\
ORCID: 0000-0002-5179-6972 \\

\noindent Christoph Kriegler\\
Universit\'e Clermont Auvergne\\
CNRS\\
LMBP\\
F-63000 CLERMONT-FERRAND\\
FRANCE \\
URL: \href{https://lmbp.uca.fr/~kriegler/indexenglish.html}{https://lmbp.uca.fr/{\raise.17ex\hbox{$\scriptstyle\sim$}}\hspace{-0.1cm} kriegler/indexenglish.html}\\
christoph.kriegler@uca.fr \\
ORCID: 0000-0001-8120-6251
}
\end{document}